\newtheorem{lemma}{\textbf{Lemma}}[section]
\theoremstyle{definition}
\newtheorem{example}{\textbf{Example}}[section]
\newtheorem{proposition}{\textbf{Proposition}}[section]
\theoremstyle{plain}
\newtheorem{assumption}{\textbf{Assumption}}[section]
\theoremstyle{definition}
\newtheorem{theorem}{\textbf{Theorem}}[section]
\newtheorem{corollary}{\textbf{Corollary}}[section]
\newtheorem{remark}{Remark}[section]
\theoremstyle{plain}
\newtheorem{definition}{\textbf{Definition}}[section]
\title{Towards a General Large Sample Theory for Regularized Estimators 
\thanks{We would like to thank Peter Bickel, Ivan Canay, Xiaohong Chen, Max Farrell, Eric Gautier, Joel Horowitz, Jack Porter as well as participants in numerous seminars and conferences for their helpful comments. Usual disclaimer applies.}}
\author{Michael Jansson\thanks{UC Berkeley, Dept. of Economics. E-mail: mjansson@berkeley.edu.}~~and Demian Pouzo\thanks{UC Berkeley, Dept. of Economics. E-mail: dpouzo@berkeley.edu (corresponding author).}}
\begin{document}
	\maketitle

\begin{abstract}
	We present a general framework for studying regularized estimators; such estimators are pervasive in estimation problems wherein ``plug-in" type estimators are either ill-defined or ill-behaved. Within this framework,  we derive, under primitive conditions, consistency and a generalization of the asymptotic linearity property. We also provide data-driven methods for choosing tuning parameters that, under some conditions, achieve the aforementioned properties. We illustrate the scope of our approach by presenting a wide range of applications. 
\end{abstract}

\startcontents[section1]
\renewcommand\contentsname{Table of Contents}
\printcontents[section1]{ }{1}{\section*{\contentsname}}

\setlength{\abovedisplayskip}{2.5pt}
\setlength{\belowdisplayskip}{2.5pt}

%
%

\section{Introduction}

It was noted as early as Stein \cite{Stein1956} that in many complex models, the parameter mapping, $\psi$, that links the probability distribution generating the data, $P$, to some parameter space may be ill-behaved or even ill-defined when evaluated at the empirical distribution. The widespread solution in these cases is to regularize the problem. Regularization procedures are ubiquitous in statistics and elsewhere, examples of these include kernel-based estimators; series-based estimators and penalization-based estimators among many others.\footnote{Examples of regularizations are so ubiquitous that providing a thorough review is outside the scope of the paper; see e.g. \cite{BickelLi06}, \cite{buhlmann2011statistics}, \cite{HardleLinton1994} and \cite{Chen2007} for excellent reviews of several regularization methods.} Even though there has been an enormous amount of work in statistics and other sciences studying the properties of these procedures, they are viewed, by and large, as separate and unrelated. In particular, results like consistency or large sample distribution theory, when they exists, they have only been derived in a case-by-case basis;  to our knowledge, there is no general theory or systematic approach. The goal of this paper is to fill this gap by providing the basis for an unifying large sample theory for regularized estimators that will allow us to make systematic progress in studying their large sample properties. 

 Our point of departure is the general conceptual framework put forward by Bickel and Li (\cite{BickelLi06}), wherein the authors propose a general definition of regularization. According to their framework, a regularization can be viewed sequence of parameter mappings, $(\psi_{k})_{k=1}^{\infty}$ that replaces the original parameter mapping, $\psi$, each element is well-behaved, and its limit coincides with the original mapping. The index of this sequence (denoted by $k$) represents what is often referred as the tuning (or regularization) parameter; e.g. it is the (inverse of the) bandwidth for kernels, the number of terms in a series expansion, or the (inverse of the) scale parameter in penalizations. While Bickel and Li's framework encompasses many examples and applications, it is unclear what type of asymptotic properties can be obtained in such a general framework. We provide two set of general theorems under intuitive conditions that establish large sample properties for regularized estimators. One set of results establishes consistency and rate of convergence, and a data-driven method for choosing the tuning parameter that achieves these rates. Another set of results provide foundations for large sample distribution theory by deriving a generalization of the classical asymptotic linearity property.   
 

 Our approach to obtain consistency and convergence rate results is akin to the one used in the standard large sample theory for ``plug-in" estimators, in the sense that it relies on continuity of the mapping used for estimation (see \cite{Wolfowitz1957}, \cite{DonohoLiu1991}). The key difference is that in plug-in estimation this mapping is $\psi$, but for regularized estimators the natural mapping is the (sequence of) \emph{regularized} parameter mappings, $(\psi_{k})_{k=1}^{\infty}$; this difference --- in particular, the fact that we have a sequence of mappings --- introduces nuances that are not present in the standard ``plug-in" estimation case. We show that the key component of the convergence rate is the modulus of continuity of the regularized mapping, which, typically, will deteriorate as one moves further into the sequence of regularized mappings, thus yielding a generalized version of the well-known ``noise-bias" trade-off. While this result, by itself, does not constitute a big leap from Bickel and Li's framework, we use the underlying insights to propose a data-driven method to choose the tuning parameter that under some conditions yields convergence rates proportional to the ``oracle" ones, i.e., those implied by the choice that balances the ``noise-bias" trade-off. This method is an extension of the Lepski method as presented in \cite{PereverzevSchock2006} for ill-posed inverse problems.\footnote{Similar versions has been used in several particular applications. Closest to our examples are the work: by \cite{Pouzo2017} for regularized M-estimators; by \cite{ChenChristensen2015} in non-parametric IV regressions; by \cite{GineNickl2008} for estimation of the integrated square density; by \cite{gaillac2019adaptive} in a random coefficient model; by \cite{lepski1997} for estimation of a function at a point.}

 Our second set of results are concerned with obtaining a type of asymptotic linear representation for regularized estimators. The property of asymptotic linearity is well-known in the literature and is the cornerstone of large sample distribution theory. This property states that the estimator, once centered at the true parameter, is equal to a sample average of a mean zero function --- referred as the influence function --- plus an asymptotically negligible term. 
 
 In parametric models, asymptotic linearity is typically satisfied by commonly used estimators like the ``plug-in" estimator. In more complex settings such semi-/non-parametric models, however, this is not longer true. In such cases, there are no estimators satisfying this property, because, for instance, the efficiency bound of the parameter of interest is infinite, or more generally, the parameter is not root-n estimable. 
 For these situations, asymptotic representations analogous to asymptotic linearity have been obtained in specific examples for specific regularizations, but, to our knowledge, there is no general approach. 
 	This is specially problematic as there is no systematic method for properly standardizing the estimator in situations where the parameter is not root-n estimable.\footnote{For density and regression estimation problems there is a large literature, especially for particular functionals like evaluation at a point; e.g. see \cite{EggermontLariccia2001} Vol I and II for references and results. In more general contexts such as M-estimation and GMM-based models, to our knowledge, the literature is much more sparse with only a few papers allowing for slower than root-n parameters in particular settings. Closest to ours are the papers by \cite{ChenLiao2014}, \cite{chen2014sieve} in the context of M-estimation models with series/sieve-based estimators; \cite{Newey1994} in a two-stage moment model using kernel-based estimators; \cite{ChenPouzo2015} in conditional moment models with sieve-based estimators; \cite{cattaneo2013optimal} in partitioning estimators of the conditional expectation	function and its derivatives.} Our goal is to propose a systematic approach by considering a generalization of asymptotic linearity that relaxes certain features of the standard property but still provides a useful asymptotic characterization of the estimator. This property, which is already present in many examples and we refer to as Generalized Asymptotic Linearity (GAL for short), relaxes the standard one in two dimensions: It allows for the location term to be different from the true parameter, and it allows for this centering and the influence function to vary with the sample size. Each of these relaxations attempts to capture different nuances that already exists in the many scattered examples in the literature. Our results, which we now describe, will shed more light on the role and necessity of each.

  We provide sufficient conditions for regularized estimators to satisfy GAL. Analogously to the theory of asymptotic linearity for plug-in estimators, our results rely
  on a notion of differentiability, but contrary to plug-in estimators, it relies on differentiability of each element in the sequence of regularized mappings, $(\psi_{k})_{k=1}^{\infty}$ not on differentiability of the original mapping $\psi$. 
  
  As a consequence of this approach, GAL for regularized estimators exhibits two simplified features. First, the location term is given by $\psi_{k}(P)$ which can be interpreted as a psuedo-true parameter. 
  The second simplified feature concerns the influence function and its dependence on the sample size. As in the location term, the dependence on the sample size of the influence function arises only through the dependence of the tuning parameter, $k$, on the sample size. Thus, the relevant object is a sequence of influence functions, each related to the derivative of the elements in $(\psi_{k})_{k=1}^{\infty}$. We view this quantity as the natural departure from the traditional influence function as it is the sequence of regularized mappings, $(\psi_{k})_{k=1}^{\infty}$, an not the original mapping, $\psi$, the one used for constructing the estimator.  This last feature allows us to propose a natural and systematic way of standardizing the estimator regardless of whether root-n consistency holds. To explain this, we first note that in situations where asymptotic linearity holds, the proper standardization is given by square root of the sample size divided by the standard error of the value of the influence function. Under GAL the standarization of the regularized estimators turns out to be analogous except that in this case the influence function is indexed by the tuning parameter which at the same time depends on the sample size. Whether the standardization is root-n or slower depends on the behavior of the standard error of the value influence function as we move further into the sequence of regularized mappings (i.e., as $k$ diverges). 
   
   Throughout the paper we present examples not to break new ground but to illustrate our assumptions and results, and their scope.

\bigskip

\textbf{Notation.} 
The term ``wpa1-$P$" is short for with probability approaching 1 under $P$, so for a generic sequence of IID random variables $(Z_{n})_{n}$ with $Z_{n} \sim P$, the phrase ``$Z_{n} \in A$ wpa1-$P$" formally means $P(Z_{n} \notin A) = o(1)$. 
For any random variables $(X,Y)$ we use $p_{X}$ and $p_{XY}$ to denote the pdf (w.r.t. Lebesgue) corresponding to $X$ and $X,Y$ resp. 
 For any linear normed spaces $(A,||.||_{A})$ and $(B,||.||_{B})$, let $A^{\ast}$ be the dual of $A$, and for any continuous, homogeneous of degree 1 function $f : (A,||.||_{A}) \mapsto (B,||.||_{B})$, $||f||_{\ast} = \sup_{a \in A \colon ||a||_{A} \ne 1} ||f(a)||_{B}$. For a Euclidean set $S$, we use $L^{p}(S)$ to denotes the set of $L^{p}$ functions with respect to Lebesgue. For any other measure $\mu$, we use $L^{p}(S,\mu)$ or $L^{p}(\mu)$. The norm $||.||$ denotes the Euclidean norm and when applied to matrices it corresponds to the operator norm. For any matrix $A$, let $e_{min}(A)$ denote the minimal eigenvalue.  The symbol $\precsim$ denotes less or equal up to universal constants; $\succsim$ is defined analogously.

	\section{Setup}
	\label{sec:setup} 


Let $\mathbb{Z} \subseteq \mathbb{R}^{d}$ and let  $\boldsymbol{z} \equiv (z_{1},z_{2},...) \in \mathbb{Z}^{\infty}$ denote a sequence of IID data drawn from some $P \in \mathcal{P}(\mathbb{Z}) \subset ca(\mathbb{Z})$, where $\mathcal{P}(\mathbb{Z})$ is the set of Borel probability measures over $\mathbb{Z}$ and $ca(\mathbb{Z})$ is the space of signed Borel measures of finite variation. For each $P \in \mathcal{P}(\mathbb{Z})$, let $\mathbf{P}$ be the induced probability over $\mathbb{Z}^{\infty}$. A \textbf{model} is defined as a subset of $\mathcal{P}(\mathbb{Z})$; and it will typically be denoted as $\mathcal{M}$.


\begin{remark}
	Since we only consider IID random variables, it is enough to define a model as a family of probabilities over marginal probabilities. For richer data structures, one would have to define the model as a family of probabilities over $(Z_{1},Z_{2},...)$. See Appendix \ref{app:timeseries} for a discussion about how to extend our results to general stationary models. $\triangle$
\end{remark}

A \textbf{parameter on model $\mathcal{M}$} is a mapping $\psi : \mathcal{M} \rightarrow \Theta$ with $(\Theta,||.||_{\Theta})$ being a normed space.\footnote{If the mapping does not point-identified an element of $\Theta$, i.e., $\psi$ is one-to-many, our results go through with minimal changes that account for the fact that $\psi(P)$ is a set in $\Theta$.} 

%

For the results in this paper, we need to endow  $\mathcal{M}$ with some topology. For the results in Section \ref{sec:consistent} it suffices to work with a distance, $d$, under which the empirical distribution (defined below) converges to $P$. For the results in Section \ref{sec:ALR} and beyond, however, it is convenient to have more structure on the distance function, and thus, we work with a distance of the form
\begin{align*}
      ||P-Q||_{\mathcal{S}} \equiv \sup_{f \in \mathcal{S}} \left| \int f(z) P(dz) - \int f(z) Q(dz) \right|
\end{align*} 
where $\mathcal{S}$ is some class of Borel measurable and uniformly bounded functions (bounded by one). For instance, the total variation norm can be viewed as taking $\mathcal{S}$ as the class of indicator functions over Borel sets, and its denoted directly as $||.||_{TV}$; the weak topology over $\mathcal{P}(\mathbb{Z})$ is metricized by taking $\mathcal{S}=LB$ --- the space of bounded Lipschitz functions --- and its norm is denoted directly as $||.||_{LB}$; see \cite{VdV-W1996} for a more thorough discussion.

\subsection{Regularization}
\label{sec:regularization}

 Let $\mathcal{D} \subseteq \mathcal{P}(\mathbb{Z})$ be the set of all discretely supported probability distributions. Let $P_{n} \in \mathcal{D}$ be the \textbf{empirical distribution}, where $P_{n}(A) = n^{-1} \sum_{i=1}^{n} 1\{  Z_{i} \in A   \}$ for any $A \subseteq \mathbb{Z}$. 
  As illustrated by our examples, in many situations --- especially in non-/semi-parametric models --- the parameter mapping might be either ill-defined (e.g., if $P_{n} \notin \mathcal{M}$) or ill-behaved when evaluated at the empirical distribution $P_{n}$, so it has to be regularized.

The following definition of regularization is based on the first part of the definition in \cite{BickelLi06} p. 7. To state it, we define a \textbf{tuning set} as any subset of $\mathbb{R}_{+}$ that is unbounded from above, and the \textbf{approximation error} function as $k \mapsto B_{k}(P) \equiv ||\psi_{k}(P) - \psi(P)||_{\Theta}$.

\begin{definition}\label{def:regular}
	Given a model $\mathcal{M}$, a regularization of the parameter mapping $\psi$ is a sequence $\boldsymbol{\psi} \equiv (\psi_{k})_{k \in \mathbb{K}}$ such that $\mathbb{K}$ is a tuning set and 
	\begin{enumerate}
		\item For any $k \in \mathbb{K}$, $\psi_{k} : \mathbb{D}_{\psi} \subseteq ca(\mathbb{Z}) \rightarrow \Theta$ where $\mathbb{D}_{\psi} \supseteq \mathcal{M} \cup \mathcal{D}$.
		\item For any $P \in \mathcal{M}$, $\lim_{k \rightarrow \infty} B_{k}(P) = 0 $.
	\end{enumerate} 
\end{definition}

Condition 1 ensures that $\psi_{k}(P_{n})$ and $\psi_{k}(P)$ are well-defined and that they are singletons for all $k \in \mathbb{K}$. Condition 2 ensures that, in the limit, the regularization approximates the original parameter mapping; the limit is warranted as the tuning set $\mathbb{K}$ is unbounded from above. In many applications the tuning set is given by $\mathbb{N}$ but there are applications such as kernel-based estimators, where it is more natural to use a (uncountable) subset of $\mathbb{R}_{+}$.

For each $k \in \mathbb{K}$, the implied estimator is given by $\psi_{k}(P_{n})$ which --- like the ``plug-in" estimator --- is permutation invariant. While, this restriction still encompasses a wide array of commonly used methods, it does rule out some estimation methods, notably those that rely on non-trivial sample-splitting procedures. We  briefly discuss how to extend our framework to these cases in Appendix \ref{app:split}. 

Conditions 1 and 2 are not enough to obtain ``nice" asymptotic properties of the regularized estimator such as consistency and asymptotic normality. In analogy to the standard asymptotic theory for ``plug-in" estimators, these properties will be obtained by essentially imposing different degrees of smoothness on the regularization. 

\subsection{Examples}

The following examples complement those in \cite{BickelLi06} to illustrate that the Definition \ref{def:regular} encompasses a wide array of commonly used methods.

\begin{example}[Non-Parametric IV Regression (NPIV)]\label{exa:NPIV}
		This example studies a popular regression model used in economics called the Non-parametric Instrumental Variable (IV) model, that belongs to the class of ill-posed inverse problems; see \cite{DarollesFanFlorensRenault2011,HallHorowitz2005,AiChen2003,ai2007estimation,NeweyPowell2003,Florens2003,BCK2007} among others. The model is given by
		\begin{align}\label{eqn:NPIV-id}
		E[Y - h(W) \mid X] = 0,
		\end{align}  
		where $h$ is such that $E[|h(W)|^{2}]<\infty$, $Y$ is the outcome variable, $W$ is the endogenous regressor and $X$ is the IV. We show how our method encompasses commonly used regularizations schemes such as sieves-based and penalized-based ones.
	
	For a given subspace of $L^{2}([0,1],p_{W})$, $\Theta$, the model $\mathcal{M}$ is defined as the class of probabilities over $Z = (Y,W,X) \in \mathbb{R} \times [0,1]^{2}$ with pdf with respect to Lebesgue, $p$, such that:\footnote{This restriction is mild and can be changed to accommodate discrete variables simply by requiring pdf's with respect to the counting measure.} (1) $p_{X} = p_{W} = U(0,1)$, $E[|Y|^{2}]<\infty$ and $||p_{XW}||_{L^{\infty}}<\infty$; and (2) there exists a unique $h \in \Theta$ that satisfies \ref{eqn:NPIV-id}. The restriction (1) can be relaxed and is made for simplicity so we can focus on the objects of interest that are $h$ and $P$; it implies that $L^{2}([0,1],p_{X}) = L^{2}([0,1],p_{W}) = L^{2}([0,1])$ which simplifies the derivations.\footnote{To restrict the support to $[0,1]$ is common in the literature (e.g. \cite{HallHorowitz2005}). At this level of generality, one can always re-define $h$ as $h \circ F_{W}^{-1}$ so that $p_{W} = U(0,1)$; of course this will affect the smoothness properties of $h$. The restriction $p_{X} = U(0,1)$ is really about  $p_{X}$ being known, since in that case, one can always take $F_{X}(X)$ as the instrument.} The restriction (2) is what defines an IV non-parametric model. It implies that for any $P \in \mathcal{M}$, $r_{P}(\cdot) \equiv \int y P_{YX}(dy,\cdot)$ is well-defined and belongs to the range of the operator $T_{P} : \Theta \subseteq L^{2}([0,1]) \rightarrow L^{2}([0,1])$ given by $T_{P}[h](\cdot) =  \int h(w) p_{WX}(w,\cdot) dw $ for any $h \in L^{2}([0,1])$.\footnote{Alternatively, we can define $T_{P}[h](X)= \int h(w) p(w|X)dw$ and $r_{P}(X)= \int y p(y|X)dy$. Depending on the type of the regularization one has at hand, it is more convenient to use one or the other.}	Thus, for any $P \in \mathcal{M}$, $\psi(P)$ is the (unique) solution of $r_{P} = T_{P}[h]$.

		To illustrate our method, we consider the estimation of a linear functional of $\psi(P)$  of the form $\gamma(P) \equiv \int \pi(w) \psi(P)(w) dw$ for some $\pi \in L^{2}([0,1])$, which by the Riesz representation theorem covers any linear bounded functional on $L^{2}([0,1])$.

		It is well-known that the estimation problem needs to be regularized. First, we need to regularize the ``first stage parameters" --- the operator $T_{P}$ and $r_{P}$;
		 second, given the regularization of $T_{P}$ and $r_{P}$, the inverse problem for finding $\psi(P)$ typically needs to be regularized; e.g. when $T_{P}$ is compact or when $\psi(P)$ is not a singleton.  
		
	    By setting $\mathbb{K}=\mathbb{N}$, the regularization of the ``first stage" is given by a sequence of mappings $(T_{k,P},r_{k,P})_{k \in \mathbb{N}}$ such that, for any $k \in \mathbb{N}$, $T_{k,P} : \Theta  \rightarrow L^{2}([0,1])$ and $r_{k,P} \in L^{2}([0,1])$. The ``second stage" regularization is summarized by an operator $\mathcal{R}_{k,P} : L^{2}([0,1]) \rightarrow L^{2}([0,1])$ for which  
	      \begin{align}\label{eqn:NPIV-reg}
	      \psi_{k}(P) = \mathcal{R}_{k,P} [T^{\ast}_{k,P} [r_{k,P}]],~\forall P \in lin(\mathcal{M} \cup \mathcal{D}).
	      \end{align} 
	  	We assume that the regularization structure $(T_{k,P},r_{k,P},\mathcal{R}_{k,P})_{k \in \mathbb{N}}$ is such that: (1) $\lim_{k \rightarrow \infty}||\mathcal{R}_{k,P}[T^{\ast}_{k,P}[g]] - (T^{\ast}_{P} T_{P})^{-1}T^{\ast}_{P}[g]||_{L^{2}([0,1])} = 0$ pointwise over $g \in L^{2}([0,1])$; (2) $\lim_{k \rightarrow \infty}||\mathcal{R}_{k,P}[T^{\ast}_{k,P}[r_{k,P} - r_{P}]]||_{L^{2}([0,1])} =0$. 
	  	 We relegate a more thorough discussion and particular examples of the regularization to Appendix \ref{app:T-reg-suff}. For now, it suffices to note that the first stage regularization encompasses  commonly used regularizations such as the Kernel-based (e.g., \cite{DarollesFanFlorensRenault2011}, \cite{HallHorowitz2005}) and the Series-Based (e.g., \cite{AiChen2003} and \cite{NeweyPowell2003}) regularizations, and the second stage regularization encompasses commonly used regularizations such as Tikhonov-/Penalization-based regularization (e.g., \cite{DarollesFanFlorensRenault2011} and \cite{HallHorowitz2005}) and Series-based regularization (e.g., \cite{AiChen2003} and \cite{NeweyPowell2003}). For these combinations, conditions (1)-(2) haven been verified, under primitive conditions, in the literature; e.g. see \cite{EnglEtAl1996} Ch. 3-4. 
	  	
	   It is easy to see that under conditions (1)-(2), the expression in \ref{eqn:NPIV-reg} is in fact a regularization for $\psi(P)$ with $\mathbb{D}_{\psi} \supseteq  \mathcal{M} \cup \mathcal{D}$ being a linear subspace specified in expression \ref{eqn:NPIV-Domain} in  Appendix \ref{app:T-reg-suff}. From this result, it also follows that $\{\gamma_{k}(P) \equiv \int \pi(w) \psi_{k}(P)(w) dw\}_{k \in \mathbb{N}}$ is a regularization for $\gamma(P)$ (in this case, $\Theta = \mathbb{R}$). $\triangle$
\end{example}

The next is not an example but rather a commonly used estimation technique that also fits in our framework.

\begin{example}[Regularized M-Estimators]
	Given some model $\mathcal{M}$, the parameter mapping is defined as 
	\begin{align*}
	\psi(P) = \arg\min_{\theta \in \Theta} E_{P}[\phi(Z,\theta)],~\forall P \in \mathcal{M},
	\end{align*}
	where $\Theta$ and $\phi : \mathbb{Z} \times \Theta \rightarrow \mathbb{R}_{+}$ are primitives of the problem and are such that the argmin is non-empty for any $P \in \mathcal{M}$.

	We impose the following assumptions over $(\mathcal{M},\Theta,\phi)$: $\Theta$ is a subspace of $L^{q}$ where $L^{q} \equiv L^{q}(\mathbb{Z},\mu)$ for any $q \in [1,\infty)$ and some finite measure $\mu$, and for $q=\infty$, $L^{\infty} = \mathbb{C}(\mathbb{Z},\mathbb{R})$;\footnote{The class $\mathbb{C}(\mathbb{Z},\mathbb{R})$ is the class of continuous and uniformly bounded real-valued functions on $\mathbb{Z}$.} and $\theta \mapsto E_{P}[|\phi(Z,\theta)|]$ bounded and continuous, for all $P \in \mathcal{M}$.
	
	The regularization is lifted from \cite{Pouzo2017} and is defined using $\mathbb{K} = \mathbb{N}$ by: a sequence of nested linear subspaces of $L^{q}$, $(\Theta_{k})_{k \in \mathbb{N}}$, such that $dim(\Theta_{k}) = k$ and the union is dense in $\Theta$; a vanishing real-valued sequence $(\lambda_{k})_{k \in \mathbb{N}}$ with $\lambda_{k} \in (0,1]$ and a lower-semi compact function $Pen : L^{q} \rightarrow \mathbb{R}_{+}$ such that, for each $k \in \mathbb{N}$\footnote{A lower-semi compact function is one with compact lower contour sets.}
	\begin{align*}
		\psi_{k}(P) \equiv \arg\min_{\theta \in \Theta_{k}} E_{P}[\phi(Z,\theta)] + \lambda_{k} Pen(\theta)
	\end{align*}
	is a singleton for any $P \in \mathcal{M} \cup \mathcal{D}$.
	
 It is clear that condition 1 in Definition \ref{def:regular} holds; we now show by contradiction that Condition 2 also holds. Suppose that there exists a $\epsilon>0$ such that $||\psi_{k}(P)-\psi(P)||_{\Theta} \geq \epsilon$ for all $k$ large. Let $\Pi_{k} \psi(P)$ be the projection of $\psi(P)$ onto $\Theta_{k}$; for sufficiently large $k$,  $||\Pi_{k} \psi(P)-\psi(P)||_{\Theta} \leq \epsilon$. Then, by optimality of $\psi_{k}(P)$ and some algebra, for large $k$,  $\inf_{\theta \in \Theta \colon ||\theta-\psi(P)||_{\Theta} \geq \epsilon} E_{P}[\phi(Z,\theta)] \leq E_{P}[\phi(Z,\psi(P))] - \{ E_{P}[\phi(Z,\psi(P))-\phi(Z,\Pi_{k}\psi(P))] + \lambda_{k} Pen(\Pi_{k}\psi(P))\}$. By continuity of $E_{P}[\phi(Z,\cdot)]$, $\lambda_{k} \downarrow 0$ and convergence of $\Pi_{k}\psi(P)$ to $\psi(P)$ the term in the curly bracket vanishes as $k$ diverges, leading to the contradiction $\inf_{\theta \in \Theta \colon ||\theta-\psi(P)||_{\Theta} \geq \epsilon} E_{P}[\phi(Z,\theta)] \leq E_{P}[\phi(Z,\psi(P))]$.  
 $\triangle$
\end{example}

As these examples, those below and those in \cite{BickelLi06} illustrate, the formulation of regularization is very flexible. Indeed, Definition \ref{def:regular} is sufficiently mild
that one may wonder whether any results can be obtained at the present level of generality. In what follows, we establish some useful asymptotic properties by imposing additional smoothness restrictions on the regularization.

\section{Consistency for Regularized Estimators} 
\label{sec:consistent}

%

We say a regularization is \textbf{consistent} if for each $k \in \mathbb{K}$, $\psi_{k}(P_{n}) = \psi_{k}(P) + o_{P}(1)$. Using Definition \ref{def:regular} it is straightforward to show that for consistent regularizations there exists a tuning parameter sequence $(k_{n})_{n}$ such that $\psi_{k_{n}}(P_{n}) = \psi(P) + o_{P}(1)$. This claim, however, is silent about how to choose the tuning parameter sequence and what is the corresponding rate. This feature does not seem to be a shortcoming of the claim, but rather a manifestation of the fact that the consistency requirement is very mild. In other words, it would appear that some strengthening of this requirement is needed if we want obtain stronger conclusions. In this section and the next one, our goal is to give general but ``easy-to-interpret" conditions that will enable us to obtain convergence rates and a data-driven choice of tuning parameter sequence that achieves such rates, thus providing a general guideline for establishing asymptotic concentration results for regularized estimators.

In order to do this, we take as given that the empirical distribution converges to the truth at rate, $(r_{n})_{n}$ --- a diverging positive real-valued sequence --- under a distance $d$, i.e., $d(P_{n},P) = O_{P}(r^{-1}_{n})$. Such results can typically be obtained from empirical process theory and are not the focus of this paper. Under this condition, from the classical results of \cite{Wolfowitz1957}, continuity of the regularization (with respect to $d$) presents itself as a natural condition to establish consistency of the  regularization. To define the notion of continuity formally  we say a function $f : \mathbb{R}_{+} \rightarrow \mathbb{R}_{+}$ is a modulus of continuity if $f$ is continuous, non-decreasing and such that $f(t)=0$ iff $t=0$. 

\begin{definition}[Continuous Regularization]\label{def:reg-cont}
	A regularization $\boldsymbol{\psi}$ of $\psi$ is continuous at $P \in \mathbb{D}_{\psi}$ with respect to $d$, if there exists a family of modulus of continuity $(\delta_{k})_{k \in \mathbb{R}}$ such that for any $k \in \mathbb{K}$
	 \begin{align}
		|| \psi_{k}(P') - \psi_{k}(P) ||_{\Theta} \leq \delta_{k}(d(P',P))
	\end{align}
	for any $P' \in \mathbb{D}_{\psi}$. 
\end{definition}

The definition is equivalent to the standard ``$\delta/\epsilon$"-definition of continuity because the modulus of continuity of $\psi_{k}$, $\delta_{k}$, can converge to $0$ arbitrary slowly. Admittedly, this condition is not necessary to obtain our results as it requires continuity to hold for \emph{any} deviation of $P$, but the notion of consistent regularization requires continuity only along those deviations taken by $P_{n}$ for all $n$ large enough.  However, we still view this condition as a reasonable ``initial step" to obtain consistency results. Moreover, the definition does not impose any \emph{uniform} bounds on $\delta_{k}$ across different $k \in \mathbb{K}$. While such restriction would simplify the proofs considerably, it is too strong for many applications. Recall that the regularization is introduced precisely due to the poor behavior of $\psi$ at $P$.

As the lemma \ref{lem:unif-psik-Pn-P} in Appendix \ref{app:consistent} shows, when the regularization is continuous, the ``sampling error" term, $||\psi_{k}(P_{n}) - \psi_{k}(P)||_{\Theta}$ is of order $\delta_{k}(r^{-1}_{n})$ in probability. From this result and the fact that $\limsup_{n \rightarrow \infty} \delta_{k}(r^{-1}_{n}) = 0$ for every $k \in \mathbb{K}$, a simple diagonalization argument establishes existence of a tuning parameter sequence, $(k_{n})_{n}$ for which  $(\psi_{k_{n}}(P_{n}))_{n \in \mathbb{N}}$ is consistent. The next theorem formalizes this claim.

\begin{theorem}[Consistency of Regularized Estimators]\label{thm:consistent}
	Suppose a regularization, $\boldsymbol{\psi}$, is continuous (at $P$) with respect to $d$ such that $d(P_{n},P) = O_{P}(r^{-1}_{n})$. Then there exists a $(k_{n})_{n \in \mathbb{N}}$ in $\mathbb{K}$ such that 
	\begin{align*}
	B_{k_{n}}(P) = o(1)~and~\delta_{k_{n}}(d(P_{n},P)) = o_{P}(1),
	\end{align*}
	and
	\begin{align*}
	||\psi_{k_{n}}(P_{n}) - \psi(P) ||_{\Theta} = o_{P}(1).
	\end{align*}
\end{theorem}

\begin{proof}
	See Appendix \ref{app:consistent}.
\end{proof}

%

Although constructive, the consistency result outlined above suffers from the
potential problem that the associated tuning parameter sequence makes no attempt to ensure that the magnitude of the estimation error is in some sense minimal. Indeed, because the tuning parameter sequence has been designed to satisfy only the minimal requirement that $B_{k_{n}}(P) + \delta_{k_{n}}(d(P_{n},P)) = o_{P}(1)$; the only property that
can be claimed on the part of the regularized estimator is consistency. The goal of the next section is to improve on this as well as to construct a data-driven choice of tuning parameter.

\section{Data-driven Choice of Tuning Parameter}
\label{sec:rate-choice}


 Let $\mathbb{K}_{n} $ be the (user-specified) set over which the tuning parameter is chosen; it is assumed to be a finite subset of $\mathbb{K}$.\footnote{In Appendix \ref{app:rate-choice-Gn-gral} we extend the main theorem of this section to the case where $\mathbb{K}_{n}$ is any closed set of $\mathbb{K}$, not necessarily finite.} Also, in this section, we strengthen the consistency of $P_{n}$ to  $d(P_{n},P) = o_{P}(r^{-1}_{n})$ (remark \ref{rem:rate-rn} below discusses the reason behind this choice). 


By Theorem \ref{thm:consistent} and the triangle inequality it is easy to see that the distance between the regularized estimator, $\psi_{k_{n}}(P_{n})$, and the true parameter is bounded by the sum of two terms: the ``sampling error", $\delta_{k_{n}}(r^{-1}_{n})$ and the ``approximation error", $B_{k_{n}}(P)$, which generalizes the well-known ``noise-bias" trade-off present in many applications. Thus, this observation suggests the following criterion to construct tuning sequence $(k_{n})_{n}$ that yields a consistent estimators: 
\begin{align*}
\arg\min_{k \in \mathbb{K}_{n}} \{ \delta_{k}(r^{-1}_{n}) + B_{k}(P) \},
\end{align*}
which minimizes the trade-off between the approximation and the sampling errors. This choice represents commonly used heuristics and it is a good prescription to obtain approximately optimal rate of convergences (\cite{BirgeMassart1998}). However, often times it is unfeasible, since it relies on knowledge of the approximation error, which is typically unknown because it depends on features of the unknown $P$. 

It is thus desirable to construct a choice of tuning parameter that sidestep this issue while still providing similar rates of convergence.  We propose an adaptation of the Lepski method that provides a data-driven choice that satisfies these properties, without requiring knowledge of $B_{k}(P)$. Due to the nature of the Lepski method, in order to establish the desired results we need monotonicity of the sampling and approximation errors as functions of the tuning parameter. Since these functions may not be monotonic, we replace them by monotonic majorants. Formally, let $k \mapsto \bar{B}_{k}(P)$ be a non-increasing function from $\mathbb{R}_{+}$ to itself such that $\bar{B}_{k}(P) \geq ||\psi_{k}(P) - \psi(P)||_{\Theta}$ for all $k \geq 0$, $\lim_{k \rightarrow \infty} \bar{B}_{k}(P) = 0$, and, for each $n \in \mathbb{N}$, let $ k \mapsto \bar{\delta}_{k}(r^{-1}_{n})$ be a non-decreasing function from $\mathbb{R}_{+}$ to itself such that $\bar{\delta}_{k}(r^{-1}_{n}) \geq \delta_{k}(r^{-1}_{n})$.

For each $n \in \mathbb{N}$, given $\mathbb{K}_{n}$ and a non-decreasing function $\upsilon_{n} : \mathbb{K}_{n} \rightarrow \mathbb{R}_{+}$ such that $k \mapsto \upsilon_{n}(k) =  4\bar{\delta}_{k}(r^{-1}_{n})$, the Lepski choice is given by 
\begin{align*}
	\tilde{k}_{n} \equiv \min \{k \colon k  \in \mathcal{L}_{n}  \},
\end{align*}
where
\begin{align}
 \mathcal{L}_{n} \equiv 	\mathcal{L}_{n}(\upsilon_{n}) \equiv \{ k \in \mathbb{K}_{n} \colon  ||\psi_{k}(P_{n}) - \psi_{k'}(P_{n}) ||_{\Theta} \leq \upsilon_{n}(k'),~\forall k' \geq k~in~\mathbb{K}_{n}   \}.
\end{align}

The following theorem is the main result of this section.

\begin{theorem}\label{thm:rate-choice}
		Suppose 
		a regularization, $\boldsymbol{\psi}$, is continuous (at $P$) with respect to $d$ and there exists a real-valued positive diverging sequence $(r_{n})_{n \in \mathbb{N}}$ such that $d(P_{n},P) = o_{P}(r^{-1}_{n})$. 
  	 Then
	\begin{align*}
	||  \psi_{\tilde{k}_{n}}(P_{n}) - \psi(P) ||_{\Theta} = O_{P} \left(  \inf_{k \in \mathbb{K}_{n}} \{ \bar{\delta}_{k}(r^{-1}_{n}) + \bar{B}_{k}(P)   \}     \right).
	\end{align*}
\end{theorem}

\begin{proof}
	See Appendix \ref{app:proof-rate-choice}. 
\end{proof}

\begin{remark}\label{rem:rate-rn}
	The rate $(r_{n})_{n}$ is defined as $d(P_{n},P) = o_{P}(r^{-1}_{n})$, as opposed to $d(P_{n},P) = O_{P}(r^{-1}_{n})$ as in Theorem \ref{thm:consistent}. That is, $r_{n}$ diverges (arbitrary) slower than the usual rates for $P_{n}$ --- which is typically given by $\sqrt{n}$ in our context. This type of lost is common when studying choice of tuning parameters (cf. \cite{GineNickl2008} and references therein). In our setup, it stems from the following fact: Take a rate $(s_{n})_{n}$ such that $d(P_{n},P) = O_{P}(s^{-1}_{n})$. For this rate, there are unknown constants (e.g. $M$ in Lemma \ref{lem:unif-psik-Pn-P}) which will render our data-driven choice infeasible. So to avoid them it suffices to replace $s^{-1}_{n}$ by a (arbitrary) slower rate, e.g. $r^{-1}_{n} = \log (1+n)  s^{-1}_{n}$ or $r^{-1}_{n} = \log (\log (1+n))  s^{-1}_{n}$.   $\triangle$ 
\end{remark}

\begin{remark}
	The rate of convergence does not depend on the ``complexity" of the set $\mathbb{K}_{n}$. This result stems from a certain ``separability" property of the estimator: The probability statements stem from the behavior of $d(P_{n},P)$ which does not depend on $k$ nor on $\mathbb{K}_{n}$, the tuning parameter $k$ only appear through the topological properties of the regularization. $\triangle$
\end{remark}

\begin{remark}[Heuristics of the proof of Theorem \ref{thm:rate-choice}] 
		Heuristically, for any $k \in \mathbb{K}_{n}$ that is larger or equal than $\tilde{k}_{n}$ it follows that $||\psi_{\tilde{k}_{n}}(P_{n}) - \psi(P)  ||_{\Theta}$ is bounded above (up to constants) by  $\bar{\delta}_{k}(r^{-1}_{n}) + \bar{B}_{k}(P)$ with probability approaching one. Lemma \ref{lem:suff-choice-rate} in Appendix \ref{app:proof-rate-choice} formalizes this observation and shows that in order to establish the claim of the theorem it suffices to show existence of a tuning parameter in $\mathbb{K}_{n}$ that is larger or equal than $\tilde{k}_{n}$ (with probability approaching one) and minimizes (up to constants) $k \mapsto \{ \bar{\delta}_{k}(r^{-1}_{n}) + \bar{B}_{k}(P) \}$ over $\mathbb{K}_{n}$. Moreover, since $\tilde{k}_{n}$ is chosen as the minimal value in $\mathcal{L}_{n}$, to obtain the former condition it suffices to show that the tuning parameter belongs to  $\mathcal{L}_{n}$ (with high probability). 
		
		By studying ``projections" onto $\mathbb{K}_{n}$ of the tuning parameter that balances $ k \mapsto \bar{\delta}_{k}(r^{-1}_{n})$ and $k \mapsto \bar{B}_{k}(P)$ we are able to explicitly construct a sequence of tuning parameters that satisfies these conditions; it is in this part that the monotonicity properties of these mappings are used. See Lemmas \ref{lem:hn-in-Fn} and \ref{lem:rate-h(n)-bound} in Appendix \ref{app:proof-rate-choice}.	$\triangle$ 
\end{remark}

The following corollary is a direct consequence of Theorem \ref{thm:rate-choice} and its proof is omitted.

\begin{corollary}
	Suppose $k \mapsto  B_{k}(P)$ and $k \mapsto \delta_{k}(r^{-1}_{n})$ are continuous, and non-increasing and non-decreasing resp.. Then under the conditions of Theorem \ref{thm:rate-choice}, it follows  
	\begin{align*}
|| \psi_{\tilde{k}_{n}(r_{n})}(P_{n}) - \psi(P) ||_{\Theta} = O_{P} \left(  \inf_{k \in \mathbb{K}_{n}} \{ \delta_{k}(r^{-1}_{n}) + B_{k}(P)   \}     \right).
\end{align*}	
\end{corollary}

Theorem \ref{thm:rate-choice} and its corollary show that  our data-driven choice of tuning parameter achieves the same rate as the one corresponding to the ``infeasible" choice, provided the monotonicity conditions hold. Hence, this result,  Theorem \ref{thm:consistent} and Theorem \ref{thm:rate-choice} offer a general road-map for establishing consistency and convergence rates of regularized estimators based on continuity of the regularization.\footnote{Without further restriction on $\mathbb{K}_{n}$ there is no guarantee that $\inf_{k \in \mathbb{K}_{n}} \{ \delta_{k}(r^{-1}_{n}) + B_{k}(P)   \}$ is of the same magnitude as $\inf_{k \in \mathbb{K}} \{ \delta_{k}(r^{-1}_{n}) + B_{k}(P)   \}$. In Appendix \ref{app:rate-choice-Gn-gral}, Proposition \ref{pro:rate-choice-global} gives conditions on $\mathbb{K}_{n}$ that guarantee this result.}

\section{Examples}
\label{sec:exa-cont}

%
%
%
%
%

The following examples illustrate how to apply our results to existing applications and in the process establish a new result. Example \ref{exa:boot-consistent} considers the case of bootstrapping the mean of a distribution when it is known to be non-negative, and it is based on \cite{andrews2000inconsistency}. In this paper, the author showed inconsistency of the bootstrap and proposed several consistent alternatives; we take one --- the ``k-out-of-n" bootstrap (\cite{BickelFreedman1981}) --- and illustrate how our methods can be used to derive the rate of convergence of this procedure and to choose the tuning parameter $k$ that achieves this rate. To our knowledge this last result is novel.\footnote{\cite{BickelLi06} and \cite{bickel2008choice} perform a similar exercise but for a different case: estimation of largest order statistic.} Example \ref{exa:M-consistent} provides primitive conditions for establishing continuity in M-estimation problems.

\begin{example}[Bootstrap when the parameter is on the boundary]\label{exa:boot-consistent}
	Let $\mathcal{M}$ be the class of Borel probability measures over $\mathbb{R}$ with non-negative mean, unit variance and finite third moments; the non-negativity of the mean is a formalization that captures the issue of a parameter at the boundary. The object of interest is the law of an estimator of the mean, $\boldsymbol{z} \mapsto T_{n}(\boldsymbol{z},P) = \sqrt{n}( \max\{ n^{-1} \sum_{i=1}^{n} z_{i} , 0  \} - \max \{  E_{P}[Z] , 0     \}  )$. Thus, let, for each $k \in \mathbb{K} = \mathbb{N}$, $\psi_{k} : \mathcal{P}(\mathbb{R}) \rightarrow \mathcal{P}(\mathbb{R})$ be defined as
	\begin{align*}
	\psi_{k}(P)(A) \equiv \mathbf{P} \left(  \{ \boldsymbol{z} \colon T_{k}(\boldsymbol{z},P)  \in A \}  \right) ,~\forall A~Borel.
	\end{align*}
	In particular, for $P=P_{n}$, it follows that
	\begin{align*}
\psi_{k}(P_{n})(A) = \mathbf{P}_{n} \left(  \sqrt{k} \left(  \max\{ k^{-1}\sum_{i=1}^{k} Z^{\ast}_{i}   , 0   \}   - \max\{  n^{-1}\sum_{i=1}^{n} Z_{i}  , 0  \}  \right)  \in A      \right) ,~\forall A~Borel
\end{align*} 
where $(Z^{\ast}_{i})_{i=1}^{n}$ is an IID sample drawn from $P_{n}$ and $\mathbf{P}_{n}$ is the probability over $\mathbb{Z}^{\infty}$ induced by $P_{n}$.	It is easy to see that $\psi_{n}(P_{n})$ is the standard bootstrap estimator while $\psi_{k}(P_{n})$ for $k < n$ is the $k$-out-of-$n$ bootstrap estimator. \cite{andrews2000inconsistency} showed that the ``plug-in estimator", $\psi_{n}(P_{n})$, while well-defined, fails to approximate the law of $T_{n}$, $\psi_{n}(P)$, even in the limit; but he showed that for certain sequences, $(k_{n})_{n}$, $\psi_{k_{n}}(P_{n})-\psi_{n}(P)$ converge to zero as $n$ diverges. We now recast this result using the tools developed in this paper; by doing so we are able to provide a data-driven choice of the tuning parameter $k_{n}$. 

To do this, we first show that the $(\psi_{k})_{k \in \mathbb{N}}$ is continuous in the sense of Definition \ref{def:reg-cont}. Let $\Theta = \mathcal{P}(\mathbb{R})$ and let $|| \cdot ||_{\Theta} \equiv || \cdot ||_{LB}$, where  recall $LB$ is the class of real-valued Lipschitz with constant one function. This norm is one of the notions of distance typically used to establish validity of the Bootstrap. Also, let $\mathcal{W}(\cdot ,\cdot ) $ denote the Wassertein distance over $\mathcal{P}(\mathbb{Z})$, that is $\mathcal{W}(P,Q) \equiv \inf_{\zeta \in H(P,Q)}  \int |z - z'| \zeta(dz,dz')  $, where $H(P,Q)$ is the set of Borel probabilities over $\mathbb{Z}^{2}$ with marginals $P$ and $Q$. The following proposition suggests the form of the modulus of continuity $\delta_{k}$.
\begin{proposition}\label{pro:boot-cont}
	For any $ k \in \mathbb{N}$, $||\psi_{k}(P) - \psi_{k}(Q)||_{\Theta} \leq 2 \sqrt{k} \mathcal{W}(P,Q)$ for any $P$ and $Q$ in $\mathcal{M} \cup \mathcal{D}$.
\end{proposition}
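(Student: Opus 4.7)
The plan is to prove the bound by a direct coupling argument, exploiting the variational definition of $||\cdot||_{LB}$ and the non-expansiveness of $a \mapsto \max\{a,0\}$.

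First, I would unpack the norm: for any $f \in LB$ (real-valued, $1$-Lipschitz, $|f|\le 1$), bounding $|\int f\, d\psi_k(P) - \int f\, d\psi_k(Q)|$ amounts to bounding $|E[f(T_k(\boldsymbol Z, P))] - E[f(T_k(\boldsymbol Z, Q))]|$. To compare these expectations I would build them on a common probability space: take any coupling $\zeta \in H(P,Q)$ of the marginals $(P,Q)$ and form the product coupling $\zeta^{\otimes k}$, producing iid pairs $(Z_i,Z'_i)_{i=1}^k$ with $Z_i \sim P$ and $Z'_i \sim Q$. Then
\begin{align*}
\bigl|E[f(T_k(\boldsymbol Z,P))] - E[f(T_k(\boldsymbol Z',Q))]\bigr| \le E_{\zeta^{\otimes k}}\bigl|T_k(\boldsymbol Z,P) - T_k(\boldsymbol Z',Q)\bigr|
\end{align*}
by the $1$-Lipschitz property of $f$.

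Second, I would estimate the integrand. Since $a \mapsto \max\{a,0\}$ is $1$-Lipschitz, the triangle inequality applied twice gives
\begin{align*}
|T_k(\boldsymbol Z,P) - T_k(\boldsymbol Z',Q)| \le \sqrt{k}\, \Bigl(\bigl|k^{-1}\textstyle\sum_{i=1}^k (Z_i - Z'_i)\bigr| + \bigl|E_P[Z] - E_Q[Z]\bigr|\Bigr).
\end{align*}
Taking expectation under $\zeta^{\otimes k}$ and using that the pairs are identically distributed as $(Z,Z')\sim \zeta$, one gets $E|k^{-1}\sum_i(Z_i-Z'_i)| \le E_\zeta|Z-Z'|$ and also $|E_P[Z]-E_Q[Z]| = |E_\zeta[Z-Z']| \le E_\zeta|Z-Z'|$. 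Combining,
\begin{align*}
E_{\zeta^{\otimes k}}|T_k(\boldsymbol Z,P) - T_k(\boldsymbol Z',Q)| \le 2\sqrt{k}\, E_\zeta |Z - Z'|.
\end{align*}

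Finally, taking the supremum over $f \in LB$ on the left and the infimum over $\zeta \in H(P,Q)$ on the right yields $||\psi_k(P)-\psi_k(Q)||_{LB} \le 2\sqrt{k}\,\mathcal W(P,Q)$, as desired. There is no substantive obstacle here; the only subtle point is making sure the product coupling is admissible (so that its marginals are $P^{\otimes k}$ and $Q^{\otimes k}$, hence govern $T_k(\cdot,P)$ and $T_k(\cdot,Q)$ respectively) and tracking where the factor $\sqrt{k}$ is absorbed versus where the averaging gives an $E_\zeta|Z-Z'|$ rather than something smaller.
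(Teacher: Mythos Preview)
Your argument is correct and is in fact a cleaner version of the paper's own proof. The paper proceeds by first inserting an intermediate term and splitting
\[
\|\psi_k(P)-\psi_k(Q)\|_{LB}\le T_{1,k}(P,Q)+T_{2,k}(P,Q),
\]
where $T_{1,k}$ changes the sampling measure from $\mathbf P$ to $\mathbf Q$ while keeping the centering at $E_P[Z]$, and $T_{2,k}$ changes the centering. For $T_{1,k}$ the paper reduces to a Lipschitz function of the sample mean, invokes Kantorovich--Rubinstein duality on $\mathbb R^k$ to get $k^{-1/2}\mathcal W_1\bigl(\bigotimes^k P,\bigotimes^k Q\bigr)$, and then uses the product coupling (their Lemma on $\bigotimes^k\mu$) to bound this by $\sqrt{k}\,\mathcal W(P,Q)$. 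For $T_{2,k}$ the paper does exactly what you do with the centering term. Your approach bypasses the detour through the $k$-fold Wasserstein distance and the second use of duality by coupling the sample and the centering simultaneously; the ``subtle point'' you flag about the admissibility of the product coupling is precisely the content of the paper's auxiliary lemma, so nothing is missing. The paper's decomposition makes it transparent which $\sqrt{k}\,\mathcal W$ comes from the sample and which from the centering, but your direct route is shorter and equally rigorous.
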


\begin{proof}
	See Appendix \ref{app:exa-cont}.
\end{proof}

	The previous results suggests $\mathcal{W}$ as the natural distance over $\mathcal{P}(\mathbb{Z})$.  In addition, the result also indicates that $\delta_{k}(t) = 2 \sqrt{k} t$ for all $t \in \mathbb{R}_{+}$, which is increasing and continuous as a function of $(t,k)$. 
	
	We now apply the results in Theorem \ref{thm:rate-choice} to choose the number of draws for the k-out-n bootstrap. Theorem 1 in \cite{FournierHal2015} (their results are applied with $d=1$, $p=1$ and $q=2$) shows that $\mathcal{W}(P_{n},P) = O_{P}(n^{-1/2})$. Therefore, we take $r^{-1}_{n} = l_{n}n^{-1/2}$ where $(l_{n})_{n}$ diverges arbitrary slowly. We also take $\mathbb{K}_{n} = \{1,...,n\}$; 
	it is clear that $k \mapsto \bar{B}_{k}(P) = B_{k}(P)$ and $k \mapsto \bar{\delta}_{k}(r^{-1}_{n}) = \delta_{k}(r^{-1}_{n})$.
	Given these choices, for each $n \in \mathbb{N}$, let   $\tilde{k}_{n}$ be the choice of tuning parameter proposed above. Theorem \ref{thm:rate-choice} imply the following result.
   \begin{proposition}\label{pro:boot-choice}
     $	|| \psi_{\tilde{k}_{n}}(P_{n}) - \psi_{n}(P) ||_{LB} = O_{P} \left( \inf_{k \in \{1,...,n\}} \{ l_{n} \sqrt{k} n^{-1/2} + k^{-1/2} E_{P}[|Z|^{3}]  \}   \right)$.
   \end{proposition}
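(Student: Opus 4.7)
The plan is to check the hypotheses of Theorem \ref{thm:rate-choice} for this example, apply the theorem, and then convert the statement from $\psi(P)$-centering to $\psi_n(P)$-centering via the triangle inequality. Proposition \ref{pro:boot-cont} supplies continuity with modulus $\delta_k(t) = 2\sqrt{k}\,t$, which is non-decreasing in $k$, so we may take $\bar{\delta}_k = \delta_k$. The Wasserstein rate $\mathcal{W}(P_n, P) = O_P(n^{-1/2})$ cited from Fournier--Guillin (2015) gives $\mathcal{W}(P_n,P) = o_P(l_n n^{-1/2})$ for any slowly diverging $(l_n)_n$, so the hypotheses of Theorem \ref{thm:rate-choice} are satisfied with $r_n^{-1} = l_n n^{-1/2}$ and $\mathcal{G}_n = \{1,\ldots,n\}$.

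The crux of the argument is a Berry--Esseen-type bound on the approximation error, yielding a non-increasing envelope
\[
\bar{B}_k(P) = C\,k^{-1/2}\,E_P[|Z|^3] \;\geq\; \|\psi_k(P)-\psi(P)\|_{LB}.
\]
I would establish this by splitting on whether $\mu := E_P[Z]$ is positive or zero. When $\mu > 0$, the random variable $T_k(\cdot,P)$ coincides with $\sqrt{k}(\bar{Z}_k - \mu)$ on the event $\{\bar{Z}_k > 0\}$, whose complement has probability exponentially small in $k$ (by a Chernoff-type estimate); the classical Berry--Esseen bound applied to $\sqrt{k}(\bar{Z}_k - \mu)$ against the standard normal limit $\psi(P) = N(0,1)$ then delivers the stated rate. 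When $\mu = 0$, one has $T_k(\cdot,P) = \max\{\sqrt{k}\bar{Z}_k, 0\}$; since $x \mapsto \max\{x,0\}$ is $1$-Lipschitz, any $f \in LB$ composed with it is again in $LB$, so Berry--Esseen on $\sqrt{k}\bar{Z}_k$ against $N(0,1)$ transfers to a bound on $\|\psi_k(P)-\psi(P)\|_{LB}$ with limit law $\max\{Z,0\}$, $Z \sim N(0,1)$.

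Granted the envelope, Theorem \ref{thm:rate-choice} yields
\[
\|\psi_{\tilde{k}_n}(P_n) - \psi(P)\|_{LB} = O_P\!\left(\inf_{k\in\{1,\ldots,n\}}\{l_n\sqrt{k}\,n^{-1/2} + k^{-1/2} E_P[|Z|^3]\}\right).
\]
To obtain the statement of the proposition, the triangle inequality gives
\[
\|\psi_{\tilde{k}_n}(P_n) - \psi_n(P)\|_{LB} \leq \|\psi_{\tilde{k}_n}(P_n)-\psi(P)\|_{LB} + \bar{B}_n(P),
\]
and a direct calculation (balancing the two terms in the infimum at $k^\ast \asymp \sqrt{n}\,E_P[|Z|^3]/l_n$) shows that the infimum on the right is of order $\sqrt{l_n}\,n^{-1/4}$, which for large $n$ dominates $\bar{B}_n(P) = O(n^{-1/2})$. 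Hence $\bar{B}_n(P)$ is absorbed into the $O_P$ term and the claim follows. The main obstacle is the Berry--Esseen step itself, in particular the boundary case $\mu = 0$ and the need to control the $\|\cdot\|_{LB}$ distance (rather than the classical Kolmogorov distance) by the third moment $E_P[|Z|^3]$; once that bound is in hand, the remainder is a mechanical application of Theorem \ref{thm:rate-choice} and the triangle inequality.
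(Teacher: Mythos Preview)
Your overall architecture matches the paper exactly: verify continuity via Proposition~\ref{pro:boot-cont}, supply a Berry--Esseen-type bound on the approximation error to serve as the non-increasing envelope $\bar{B}_k(P)$, apply Theorem~\ref{thm:rate-choice}, and then use the triangle inequality with $\|\psi_n(P)-\psi(P)\|_{LB} = O(n^{-1/2})$ to switch from $\psi(P)$-centering to $\psi_n(P)$-centering. The paper also absorbs this last term by noting that the infimum is of order $\sqrt{l_n}\,n^{-1/4} \gg n^{-1/2}$, just as you do.

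There is one technical gap in your bias bound for the case $\mu = E_P[Z] > 0$. You propose controlling $P(\bar{Z}_k \leq 0)$ by a Chernoff-type estimate, but the model $\mathcal{M}$ only assumes finite third moments, not exponential moments, so Chernoff is unavailable. The paper's Lemma~\ref{lem:boot-bias} circumvents this by a different decomposition: it first compares the law of $T_k(\cdot,P) = \max\{S_k, -\sqrt{k}\mu\}$ (where $S_k = k^{-1/2}\sum(Z_i-\mu)$) to the law of $\max\{\zeta, -\sqrt{k}\mu\}$ with $\zeta \sim N(0,1)$, using the Lipschitz Berry--Esseen bound (valid under third moments) and the fact that $t \mapsto f(\max\{t, -\sqrt{k}\mu\})$ is in $LB$ whenever $f$ is. This costs $6k^{-1/2}E_P[|Z|^3]$. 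Only \emph{after} passing to the Gaussian does one compare $\max\{\zeta, -\sqrt{k}\mu\}$ to $\zeta$, and now the tail probability is the \emph{Gaussian} tail $2\Phi(-\sqrt{k}\mu)$, which decays exponentially regardless of the original moment assumptions. The resulting envelope $6k^{-1/2}E_P[|Z|^3] + 1\{\mu>0\}2\Phi(-\sqrt{k}\mu)$ is non-increasing in $k$, and the $\Phi$ term is absorbed into the $k^{-1/2}$ term at the minimizing $k$ (which diverges). Your $\mu=0$ argument via Lipschitz composition is essentially the paper's, and the absorption of $\bar{B}_n(P)$ at the end is identical.
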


\begin{proof}
	See Appendix \ref{app:exa-cont}.
\end{proof}

   The RHS of the expression implies that the rate of convergence is given by $\sqrt{l_{n}} n^{-1/4}$. To our knowledge there is no data-driven method to choose the tuning parameter in this example. \cite{bickel2008choice} propose a similar method to ours in a different example: Inference on the extrema of an IID sample. The authors obtain polynomial rates of convergence that are slower than ours but for a stronger norm. $\triangle$ 

\end{example}

\begin{example}[Regularized M-Estimators (cont.)]\label{exa:M-consistent}
	The following proposition shows that the regularization is continuous and more importantly it provides a ``natural" choice of distance and illustrates the role of the regularization structure $\langle (\lambda_{k},\Theta_{k})_{k},Pen \rangle$ and primitives $(\Theta,\phi)$ for determining the rate of convergence of the regularized estimator. Henceforth, let $(\theta,P,k) \mapsto Q_{k}(P,\theta) \equiv E_{P}[\phi(Z,\theta)] + \lambda_{k} Pen(\theta)$. 
\begin{proposition}\label{pro:RME-cont}
	For each $k \in \mathbb{N}$ and $P \in \mathcal{M} \cup \mathcal{D}$, \begin{align*}
		||\psi_{k}(P) - \psi_{k}(P') ||_{L^{q}} \leq \Gamma^{-1}_{k}(d(P,P')),~\forall P' \in  \mathcal{M} \cup \mathcal{D},
	\end{align*}
	where for all $t > 0$
	\begin{align*}
		\Gamma_{k}(t) = \inf_{s \geq t}   \left\{   \min_{\theta \in \Theta_{k} \colon ||\theta - \psi_{k}(P)||_{L^{q}} \geq s}  \frac{ Q_{k}(P,\theta) -  Q_{k}(P,\psi_{k}(P)) }{s} \right\}
	\end{align*}
	and $d(P,P') \equiv \max_{k \in \mathbb{N}} ||P-P'||_{\mathcal{S}_{k}}$, where $\mathcal{S}_{k} \equiv  \left\{  \frac{\phi(.,\theta) - \phi(.,\psi_{k}(P))}{||\theta - \psi_{k}(P)||_{\Theta}} \colon \theta \in \Theta_{k}   \right\} $.\footnote{We define $\Gamma_{k}(0)=0$. The ``$\inf_{s \geq t}$" ensures that $\Gamma_{k}$ is non-decreasing; it can be omitted if such property is not needed. The ``$max_{k \in \mathbb{N}}$" comes from the fact that $d$ cannot depend on $k$ in the definition of continuity.}
\end{proposition}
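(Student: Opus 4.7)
The plan is to run the standard M-estimator comparison argument: play the two minimizers $\theta = \psi_k(P)$ and $\theta' = \psi_k(P')$ against each other, exploit the fact that the penalty term $\lambda_k Pen(\cdot)$ is the same under $P$ and $P'$ so it drops out of the cross differences, and then rewrite the remaining expectation difference in a form that matches the class $\mathcal{S}_k$ defining $d$.

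Concretely, the first step is the chain
\begin{align*}
Q_k(P,\theta') - Q_k(P,\theta) = \bigl[Q_k(P,\theta') - Q_k(P',\theta')\bigr] + \bigl[Q_k(P',\theta') - Q_k(P',\theta)\bigr] + \bigl[Q_k(P',\theta) - Q_k(P,\theta)\bigr].
\end{align*}
The middle bracket is non-positive by optimality of $\theta'$ for $Q_k(P',\cdot)$ over $\Theta_k$, and the outer two brackets collapse (the penalty pieces cancel) to $(E_P - E_{P'})[\phi(\cdot,\theta') - \phi(\cdot,\theta)]$. Factoring out $\|\theta'-\theta\|_{L^q}$ and recognizing that $(\phi(\cdot,\theta')-\phi(\cdot,\theta))/\|\theta'-\theta\|_{\Theta}$ is exactly a member of $\mathcal{S}_k$ (since $\theta = \psi_k(P)$ and $\theta' \in \Theta_k$), one obtains
\begin{align*}
Q_k(P,\theta') - Q_k(P,\theta) \leq \|\theta'-\theta\|_{L^q}\, \|P-P'\|_{\mathcal{S}_k} \leq \|\theta'-\theta\|_{L^q}\, d(P,P').
\end{align*}

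The second step is to match this upper bound against the lower bound supplied by $\Gamma_k$. Setting $s = \|\theta' - \theta\|_{L^q}$ and using $\theta'$ itself as a feasible competitor in the inner minimization (it has $\|\theta' - \psi_k(P)\|_{L^q} = s \geq s$), the definition of $\Gamma_k$ applied at $t = s$ gives
\begin{align*}
\Gamma_k(s) \;\leq\; \frac{Q_k(P,\theta') - Q_k(P,\theta)}{s} \;\leq\; d(P,P').
\end{align*}
Since $\Gamma_k$ is non-decreasing by construction (the outer $\inf_{s\geq t}$ forces monotonicity), inverting yields $s \leq \Gamma_k^{-1}(d(P,P'))$, which is the stated conclusion. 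The trivial case $d(P,P') = 0$ (where $\Gamma_k^{-1}(0) = 0$) is handled by the convention $\Gamma_k(0)=0$ together with well-definedness of the argmin (so $\theta = \theta'$).

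The only real subtlety I anticipate is bookkeeping around the generalized inverse and the fact that $\Theta$ is identified with its $L^q$ norm (so $\|\cdot\|_\Theta$ in the denominator defining $\mathcal{S}_k$ coincides with $\|\cdot\|_{L^q}$ used elsewhere); once that identification is stated, no further estimates or continuity arguments are needed — in particular, no modulus of continuity of $\phi$ in $\theta$ enters, because all smoothness information is already packaged into $\Gamma_k$. The rest is a routine verification that the feasible choice $\theta' \in \Theta_k$ meets the constraints in the definitions of both $\mathcal{S}_k$ and $\Gamma_k$.
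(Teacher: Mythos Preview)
Your argument is correct and in fact cleaner than the paper's. The paper proceeds by a peeling decomposition \`a la Shen--Wong: it assumes for contradiction that $\|\psi_k(P')-\psi_k(P)\|_{L^q}\geq M=\Gamma_k^{-1}(d(P,P'))$, partitions $\{\theta:\|\theta-\psi_k(P)\|\geq M\}$ into dyadic shells $S_{j,k}=\{2^{j-1}M\leq\|\theta-\psi_k(P)\|\leq 2^jM\}$, and on each shell combines the optimality of $\psi_k(P')$ with a uniform bound on $Q_k(P,\cdot)-Q_k(P',\cdot)$ normalized by the outer radius $2^jM$ to reach a contradiction; it then appends a separate Maximum-Theorem argument for continuity of $\Gamma_k$. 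Your direct route bypasses the shells entirely: because the class $\mathcal{S}_k$ is normalized by the \emph{exact} distance $\|\theta-\psi_k(P)\|_\Theta$, you can plug the single point $\theta'=\psi_k(P')$ simultaneously into the definition of $\|P-P'\|_{\mathcal{S}_k}$ and into the inner minimum defining $\Gamma_k$, and the normalizations cancel. This is simpler and loses nothing here; peeling would only buy something if the empirical-process bound depended nontrivially on the shell radius, but the final statement is in terms of the global $d(P,P')$ anyway. One small caveat: with the paper's convention $\Gamma_k^{-1}(t)=\inf\{s:\Gamma_k(s)\geq t\}$, the implication $\Gamma_k(s)\leq d\Rightarrow s\leq\Gamma_k^{-1}(d)$ can fail on a flat piece of $\Gamma_k$ at level exactly $d$; this is a harmless edge case (strict inequality already gives the conclusion, and equality is absorbed once continuity of $\Gamma_k$ is established), but it deserves a one-line acknowledgment.
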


\begin{proof}
	See Appendix \ref{app:exa-cont}
\end{proof}	

Following \cite{ShenWong1995}, the proof applies the standard arguments due to Wald --- for establishing consistency of estimators --- to ``strips" of the sieve set $\Theta_{k}$; by doing so, one improves the rates obtained from the standard Wald approach.

The proposition suggests the natural notion of distance over the space of probabilities, that is defined by the class of ``test functions" given by $\left( \frac{\phi(z,\theta) - \phi(z,\psi_{k}(P))}{||\theta - \psi_{k}(P)||_{\Theta}} \right)_{\theta \in \Theta_{k}}$.  By imposing additional conditions on $\phi$ and $\Theta_{k}$ one can embed the class $\mathcal{S}_{k}$ into well-known classes of functions for which one has a bound for the supremum of the empirical process $f \mapsto n^{-1}\sum_{i=1}^{n} f(Z_{i}) - E_{P}[f(Z)] $, and thus bounds for $d(P_{n},P)$. For instance, if $\theta \mapsto \frac{d\phi(z,\theta)}{dz}$ is Lipschitz uniformly in $z$, then by using the mean value theorem and some algebra it follows that $\mathcal{S}_{k} \subseteq LB$ for every $k$, and thus $d(P_{n},P) = O_{P}(n^{-1/2})$ (see \cite{VdV-W1996}). 

The modulus of continuity, $\Gamma^{-1}_{k}$ is non-decreasing and is continuous over $t >0$ (see the proof), and by definition $\Gamma_{k}(0) = 0$.  Its behavior is  determined by how well the criterion separates points in $\Theta_{k}$ relative to the norm $||.||_{L^{q}}$; the flatter $Q_{k}(P,\cdot)$ is around its minimizer, the larger $\Gamma^{-1}_{k}$. Importantly, even though $\Gamma_{k}(t) > 0$ for each $k$ (recall that $\psi_{k}(P)$ is assumed to be unique), as $k$ diverges, $\Gamma_{k}(t)$ may approach zero. This phenomena relates to the potential ill-posedness of the original problem, and will affect the rate of convergence of the estimator. 

To shed some more light on the behavior of $\Gamma_{k}$ and on the potential ill-posedness, consider the case where, $q=2$, $Q(P,\cdot)$ is strictly concave and smooth, and $Pen(.) = ||.||^{2}_{L^{2}}$.  Since $\psi_{k}(P)$ is a minimizer, $Q_{k}(P,\cdot)$ behaves locally as a quadratic function, in particular $\Gamma_{k}(t) \geq 0.5 (C_{k} + \lambda_{k}) t$ for some non-negative constant $C_{k}$ related to the Hessian of $Q(P,\cdot)$, and thus $\Gamma_{k}^{-1}(t) \precsim (C_{k} + \lambda_{k})^{-1}t$.  If $C_{k} \geq c>0$ then $ \Gamma_{k}^{-1}(t) \precsim t$; we deem this case to be well-posed as $||\psi_{k}(P') - \psi_{k}(P)||_{L^{q}} \precsim d(P',P)$.\footnote{This case relates to the so-called identifiable uniqueness condition (see \cite{WW1991}).} On the other hand, if $\lim\inf_{k \rightarrow \infty} C_{k} = 0$ then, while the previous bound for the modulus of continuity is not possible, the following bound $\Gamma_{k}^{-1}(t) \precsim \lambda^{-1}_{k} t$ is. This case is deemed to be ill-posed and $||\psi_{k}(P') - \psi_{k}(P)||_{L^{q}} \precsim \lambda_{k}^{-1} d(P',P)$.

Finally,  under the conditions discussed in the previous paragraph, in the ill-posed case, 
$k \mapsto \bar{\delta}_{k}(.) = \delta_{k}(.)$ if $k \mapsto \lambda_{k}$ is chosen to be non-increasing and continuous.\footnote{For the well-posed case the condition holds trivially.} Thus Theorem \ref{thm:rate-choice} delivers a choice of tuning parameter that achieves consistency and a rate of $\min_{k \in \mathbb{N}} \{  \lambda^{-1}_{k} \times r^{-1}_{n} + \inf_{l \geq k}||\psi_{l}(P) - \psi(P)||_{L^{q}}    \}$, where $(r_{n})_{n}$ is such that $\max_{k \in \mathbb{N}} ||P_{n} - P||_{\mathcal{S}_{k}} = o_{P}(r^{-1}_{n})$.
$\triangle$ 
\end{example}


\section{Asymptotic Representations for Regularized Estimators}
\label{sec:ALR}

The goal of this section is to provide ``easy-to-interpret" sufficient conditions to obtain a generalized asymptotic linear (GAL) representation for regularized estimators, assuming that $P_{n}$ converges to $P$ in some sense. 
Throughout this section we assume $\Theta \subseteq \mathbb{R}$ to simplify the exposition; the results can be easily be extended to vector-valued parameters.\footnote{In other cases where the parameter of interest is infinite-dimensional GAL is too weak and a stronger notion is needed; we refer the reader to a previous version of this paper \cite{jansson2017general} for this case.}

We say a regularization, $\boldsymbol{\psi}$, is \textbf{asymptotically linear} if there exists $\boldsymbol{\nu} \equiv (\nu_{k})_{k\in \mathbb{K}}$ such that, for all $k \in \mathbb{K}$, $\nu_{k} \in L^{2}_{0}(P)  \equiv \{ f\in L^{2}(P)\setminus\{0 \} \colon E_{P}[f(Z)] = 0  \}$ and 
\begin{align*}
	\psi_{k}(P_{n}) = \psi_{k}(P) + n^{-1} \sum_{i=1}^{n} \nu_{k}(Z_{i}) + o_{P}(n^{-1/2}).
\end{align*}
In this case, we say $\boldsymbol{\nu}$ is the \textbf{influence of the regularization}. It is straightforward to show that for such regularizations there exists a sequence $(k_{n})_{n}$ for which the following property  is satisfied:
\begin{definition}[Generalized Asymptotic Linearity: GAL($\boldsymbol{k}$)]\label{def:ALR}
	A regularization $\boldsymbol{\psi}$ satisfies generalized asymptotic linearity for $\boldsymbol{k} \colon \mathbb{N} \rightarrow \mathbb{K}$ at $P \in \mathbb{D}_{\psi}$ with influence $\boldsymbol{\nu}$, if 
	\begin{align}
	\left|  \psi_{\boldsymbol{k}(n)}(P_{n})-\psi_{\boldsymbol{k}(n)}(P) - n^{-1} \sum_{i=1}^{n} \nu_{\boldsymbol{k}(n)}(Z_{i}) \right| = o_{P}(n^{-1/2} ||\nu_{\boldsymbol{k}(n)}||_{L^{2}(P)} ).
	\end{align}
	
\end{definition}

If a regularization satisfies GAL($\boldsymbol{k}$) then, in order to study its asymptotic behavior, it suffices to study the behavior of $ n^{-1/2} \sum_{i=1}^{n} \frac{\nu_{\boldsymbol{k}(n)}(Z_{i})}{||\nu_{\boldsymbol{k}(n)}||_{L^{2}(P)}}$.  Moreover, this property suggests a systematic way for how to scale the estimator: Using by $\sqrt{n}/||\nu_{\boldsymbol{k}(n)}||_{L^{2}(P)}$ as opposed to just $\sqrt{n}$. This insight is particularly useful in situations where root-n estimation is not possible. Thus, this property can be viewed as extending the standard asymptotic linearity one for root-n estimable parameters to a larger class of problems.

Representations akin to GAL are already present in many examples; our contribution, as we view it, is to put these insights on a common framework so they can be applied more generally and to provide primitive properties on the structure of the regularization that guarantee asymptotic linearity --- and consequently, guarantee GAL. Regarding this last point, it is well known that in cases where the ``plug-in" method is used, differentiabilty is the natural  property, the definition of asymptotic linear regularization suggests differentiability of $\psi_{k}$ for each $k \in \mathbb{K}$ as reasonable starting point. Before presenting the definition of differentiable regularization we present a classical example that illustrates GAL, the influence and the scaling when the parameter is not root-n estimable 


\begin{example}[Density Evaluation]\label{exa:pdf-eval} The parameter of interest is the density function evaluated at a point, which can be formally viewed as a mapping from the space of probability distributions to $\mathbb{R}$, given by $P \mapsto \psi(P) = p(0)$, where $p$ denotes the pdf of $P$. It is well known that this problem needs to be regularized. The standard estimator is given by $n^{-1} \sum_{i=1}^{n} \kappa_{k}(Z_{i})$ where $\kappa_{k}( \cdot) = k \kappa(k \cdot )$, $\kappa$ is a kernel (i.e., a smooth function over $\mathbb{R} \setminus \{0\}$ that ingrates to one); $1/k$ acts as the bandwidth of the kernel estimator. This estimator can be cast as $\psi_{k}(P_{n})$ where 
	\begin{align*}
	P \mapsto \psi_{k}(P) = (\kappa_{k} \star P)(0) \equiv  \int_{\mathbb{R}} \kappa_{k} \left( z \right) P(dz),~\forall k \in \mathbb{N}.
	\end{align*}

   It is well-known that the parameter, $p(0)=\psi(P)$, is not root-n estimable and thus the proposed estimator is not asymptotically linear. The following representation, however, does hold:
	\begin{align*}
	\psi_{k}(P_{n})  - \psi_{k}(P) = & n^{-1} \sum_{i=1}^{n} \{ \kappa_{k}(Z_{i}) - E_{P}[\kappa_{k}(Z)] \} 
	\end{align*}
	which can be viewed as a generalization of asymptotic linearity in which the estimator $\psi_{k}(P_{n})$ is centered at $\psi_{k}(P)=\int \kappa_{k}(z)p(z) dz $ instead of $\psi(P)=p(0)$. Moreover, by drawing an analogy with the standard approach for root-n estimable parameters (see \cite{HampelEtAl2011}, \cite{bickeletal1998efficient}, \cite{newey1990semiparametric}), for each $k$, the term in the curly brackets can be thought as an influence function. This term plays a crucial role on determining the asymptotic distribution of the estimator and on determining the proper way of standardizing it. For general regularized estimators, exact representations of this form are not always possible; however, in this section we identify a class of regularizations --- satisfying a certain differentiability notion (see Definition \ref{def:reg-G-diff}) --- that admit, asymptotically, an analogous representation, with the influence function being a function of the derivative of the regularization. 
	
	It is well-known that the scaling is given by $\sqrt{n / k_{n}}$ which is slower (for some $k_{n}$ that diverges with $n$) than the ``standard" $\sqrt{n}$. The $\sqrt{k_{n}}$ correction arises because it is the correct order of the influence function, i.e., $\sqrt{Var_{P} \left( n^{-1/2} \sum_{i=1}^{n} \{ \kappa_{k_{n}}(Z_{i}) - E_{P}[\kappa_{k_{n}}(Z)] \} \right)} = \sqrt{Var_{P}\left( \kappa_{k_{n}}(Z)  \right)} \asymp \sqrt{k_{n}}$. Our results extend this simple observation to a large class of regularizations, thereby providing a systematic way for ``standardizing" the estimator: By using $\sqrt{n}$ divided the standard deviation of the influence function, which it depends on $n$ through the tuning parameter. $\triangle$
\end{example}

%
%


We now present the definition of differentiable regularization. For this, let $\mathcal{T}_{P} \equiv \{ a \mu  \colon a\geq 0~and~ \mu \in \mathcal{D} - \{P\}\}$  
and let $\tau$ be any locally convex topology over $ca(\mathbb{Z})$ dominated by $||.||_{TV}$. \footnote{Since we are working with measures, and not probabilities, it is convenient to allow for (non-metrizable) topologies. Locally convex topology means that it is constructed in terms of a family of semi-norms; dominated by $||.||_{TV}$ means that for any semi-norm, $\rho$, $\rho(Q) =O( ||Q||_{TV})$ for all $Q \in ca(\mathbb{Z})$.} 

\begin{definition}[Differentiable Regularization: DIFF($P,\mathcal{C}$)]\label{def:reg-G-diff}
	A regularization $\boldsymbol{\psi}$ is differentiable at $P \in \mathbb{D}_{\psi}$ tangential to $\mathcal{T}_{P}$ under the class $\mathcal{C} \subseteq 2^{\mathcal{T}_{P}}$, if for any $k \in \mathbb{K}$, there exists a $D \psi_{k}(P) : \mathcal{T}_{P} \rightarrow \Theta$ $\tau$-continuous and linear such that for any $U \in \mathcal{C}$,
	\begin{align}\label{eqn:def-diff} 
	\lim_{t \downarrow 0} \sup_{Q\in U} |\eta_{k}(tQ)|/t =0,~where~Q \mapsto \eta_{k}(Q) \equiv  \psi_{k}(P+Q) - \psi_{k}(P) - D \psi_{k}(P)[Q].
	\end{align}
\end{definition}

\begin{remark}
	The functional $D \psi_{k}(P)$ acts as the gradient of $\psi_{k}$ at $P$. The set $\mathcal{T}_{P}$ is the tangent set, i.e., the set that contains all the directions of the curves at $P$ that we are considering. It turns out that to obtain an asymptotic linear representation for the regularization, it is enough to consider curves of the form $t \mapsto P + t \sqrt{n} (P_{n} - P)$. So, the choice of tangent set seems to be the most natural one. Of course, larger tangent sets will also deliver the desired results but establishing differentiability under them can be harder. 

	The definition does not impose any linear structure on $\mathcal{T}_{P}$ and $t$ is restricted to be non-negative. This feature of the definition is analogous to the idea of directional derivative in \cite{Shapiro1990} which has been shown to be sufficient for showing the validity of the Delta Method (see \cite{Shapiro1990}), and turns out to be enough to also carry out our analysis. See also \cite{FangSantos2014} and \cite{cho_white_2017} for further references, examples and discussion. $\triangle$
\end{remark}

\begin{remark}\label{rem:Diff-U}
	The class $\mathcal{C}$ determines the degree of uniformity of the limit and thus defines different notions of differentiability. It is known that common notions of differentiability can be obtained from different choices of $\mathcal{C}$; see \cite{dudley2010frechet} for a discussion. We now enumerate a few:
	\begin{enumerate}
		\item \textbf{$\tau$-Gateaux:} 
		 $\mathcal{C}$ is the class of finite subsets of $\mathcal{T}_{P}$; denoted by $\mathcal{J}_{\tau}$. 
		\item \textbf{$\tau$-Hadamard:} 
		 $\mathcal{C}$ is the class of $\tau$-compact subsets of $\mathcal{T}_{P}$; denoted by $\mathcal{H}_{\tau}$.
		\item \textbf{$\tau$-Frechet:} 
		  $\mathcal{C}$ is the class of $\tau$-bounded subsets of $\mathcal{T}_{P}$; denoted by $\mathcal{E}_{\tau}$.
	\end{enumerate}
	$\triangle$
\end{remark}

The following is the main result of this section.

\begin{theorem}\label{thm:W-ALR}
	Suppose there exists a class $\mathcal{C} \subseteq 2^{\mathcal{T}}$ such that $\boldsymbol{\psi}$ is $DIFF(P,\mathcal{C})$ and \footnote{Implicit in the differentiability condition lies the assumption that for any $Q \in \mathcal{T}_{P}$, $t \mapsto P + t Q \in \mathbb{D}_{\psi}$. For this to hold, it is sufficient that $P$ belongs to the algebraic interior of $\mathcal{M}$ relative to $\mathcal{T}_{P}$. 
		However, by inspection of the proof of the Theorem, it can be seen that this assumption is not really needed since we only consider curves of the form $t \mapsto P + t_{n} a_{n}(P_{n}-P)$ where $(t_{n},a_{n})$ are such that the curve equals $P_{n}$ which is in $\mathbb{D}_{\psi}$.}
	\begin{enumerate}
		\item[~] For any $\epsilon>0$, there exists a $U \in \mathcal{C}$ and a $N$ such that $\mathbf{P} \left( \sqrt{n} (P_{n} - P)   \in U  \right) \geq 1 - \epsilon$ for all $n \geq N$.
	\end{enumerate} 
	Then, there exists a $\boldsymbol{k} \colon \mathbb{N} \rightarrow \mathbb{K}$ for which $\boldsymbol{\psi}$ satisfies $GAL(\boldsymbol{k})$ and $\lim_{n \rightarrow \infty} \mathbf{k}(n) = \infty$.
\end{theorem}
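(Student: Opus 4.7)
The plan is to establish, for each fixed $k \in \mathbb{K}$, a strong asymptotic linear expansion of $\psi_{k}(P_{n}) - \psi_{k}(P)$ with an $o_{P}(n^{-1/2})$ remainder, and then invoke a diagonalization argument to produce a diverging tuning sequence $\boldsymbol{k}(n)$ along which GAL holds. The key objects to produce are the influence functions $\nu_{k}$, obtained from the derivatives $D\psi_{k}(P)$.

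First I would extract an influence function from $D\psi_{k}(P)$. Since $D\psi_{k}(P) : \mathcal{T}_{P} \to \mathbb{R}$ is $\tau$-continuous and $\tau$ is dominated by $\|\cdot\|_{TV}$, the functional is also TV-continuous, hence extends by Hahn–Banach to a bounded linear functional on $(ca(\mathbb{Z}),\|\cdot\|_{TV})$. Riesz-type representation on $ca$ yields a bounded measurable $f_{k}$ with $D\psi_{k}(P)[Q] = \int f_{k}\,dQ$. Set $\nu_{k} \equiv f_{k} - E_{P}[f_{k}] \in L^{2}_{0}(P)$. Every $Q \in \mathcal{T}_{P}$ has the form $a(P'-P)$ with $a \geq 0$ and $P' \in \mathcal{D}$, hence has zero total mass; therefore $D\psi_{k}(P)[Q] = \int \nu_{k}\,dQ$ for every $Q \in \mathcal{T}_{P}$. (In the degenerate case $D\psi_{k}(P) \equiv 0$ one picks any nonzero $\nu_{k} \in L^{2}_{0}(P)$ and shows the expansion with the sample average term vanishing in probability.)

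Next I would plug $Q = P_{n} - P$ into the definition of $\eta_{k}$. Writing $Q_{n} \equiv \sqrt{n}(P_{n}-P) \in \mathcal{T}_{P}$ and using linearity of $D\psi_{k}(P)$, one obtains
\begin{equation*}
\psi_{k}(P_{n}) - \psi_{k}(P) - n^{-1}\sum_{i=1}^{n}\nu_{k}(Z_{i}) \;=\; \eta_{k}\bigl(n^{-1/2} Q_{n}\bigr),
\end{equation*}
since $\sqrt{n}\,D\psi_{k}(P)[P_{n}-P] = \int \nu_{k}\,d(\sqrt{n}(P_{n}-P)) = n^{-1/2}\sum_{i}\nu_{k}(Z_{i})$ (the $E_{P}[\nu_{k}] = 0$ centering ensures the expression is a zero-mass integral). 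By the hypothesis of the theorem, given $\varepsilon>0$ we may choose $U \in \mathcal{C}$ and $N$ such that $\mathbf{P}(Q_{n} \in U) \geq 1-\varepsilon$ for all $n \geq N$; on this event,
\begin{equation*}
\bigl|\sqrt{n}\,\eta_{k}(n^{-1/2}Q_{n})\bigr| \;\leq\; \sup_{Q \in U} |\eta_{k}(n^{-1/2}Q)|/n^{-1/2} \;\longrightarrow\; 0
\end{equation*}
by the uniform differentiability bound with $t = n^{-1/2}$. Hence for each fixed $k$, $\psi_{k}(P_{n}) - \psi_{k}(P) = n^{-1}\sum_{i}\nu_{k}(Z_{i}) + o_{P}(n^{-1/2})$; since $\|\nu_{k}\|_{L^{2}(P)} > 0$, this also reads as $o_{P}(n^{-1/2}\|\nu_{k}\|_{L^{2}(P)})$ for each fixed $k$.

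The last step is a diagonalization argument (the sub-sequence Lemma \ref{lem:sub-seq-conv} that the authors already invoke in the sentence preceding Section \ref{sec:defn-diff}): from a family of pointwise-in-$k$ $o_{P}$ statements one extracts a diverging $\boldsymbol{k}(n) \to \infty$ along which the remainder stays $o_{P}(n^{-1/2}\|\nu_{\boldsymbol{k}(n)}\|_{L^{2}(P)})$, yielding GAL($\boldsymbol{k}$). The main obstacle in this outline is producing the centered integral representation of $D\psi_{k}(P)$ in a way that is compatible with the tangent cone $\mathcal{T}_{P}$: one must simultaneously use TV-continuity to extract a representing function, exploit the zero-total-mass property of elements of $\mathcal{T}_{P}$ to legitimately subtract $E_{P}[f_{k}]$ and land in $L^{2}_{0}(P)$, and allow for the degenerate case $D\psi_{k}(P) \equiv 0$. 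The uniformity in the $\eta_{k}$ remainder, by contrast, is handed to us for free by the differentiability definition combined with the stochastic tightness of $\sqrt{n}(P_{n}-P)$ that is assumed in the theorem.
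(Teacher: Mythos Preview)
Your overall architecture matches the paper exactly: (i) for each fixed $k$ show the remainder $\eta_{k}(P_{n}-P)=o_{P}(n^{-1/2})$ using the differentiability definition together with the tightness condition on $\sqrt{n}(P_{n}-P)$, and then (ii) diagonalize via Lemma~\ref{lem:sub-seq-conv}. The remainder bound you write, $\sup_{Q\in U}|\eta_{k}(n^{-1/2}Q)|/n^{-1/2}\to 0$, is precisely the paper's argument.

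There is, however, a genuine gap in your construction of the influence function. The step ``Riesz-type representation on $ca$ yields a bounded measurable $f_{k}$ with $D\psi_{k}(P)[Q]=\int f_{k}\,dQ$'' is false in general: not every TV-bounded linear functional on $ca(\mathbb{Z})$ is given by integration against a bounded measurable function. For a concrete counterexample on $\mathbb{Z}=[0,1]$, take $\ell(\mu)=\mu_{c}([0,1])$, the total mass of the atomless part of $\mu$. This functional is TV-continuous, yet $\ell(\delta_{x})=0$ for every $x$ would force $f\equiv 0$, contradicting $\ell(\text{Leb})=1$. So the Hahn--Banach extension exists, but the integral representation you invoke does not.

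The paper sidesteps this entirely by \emph{defining} the influence function pointwise on the tangent cone: $\varphi_{k}(P)(z)\equiv D\psi_{k}(P)[\delta_{z}-P]$, which is well-defined because $\delta_{z}-P\in\mathcal{T}_{P}$. No extension or representation theorem is needed; one only has to check $\varphi_{k}(P)\in L^{2}_{0}(P)$, which follows from $\tau$-continuity plus domination by $\|\cdot\|_{TV}$ (there is a seminorm $\rho$ with $|D\psi_{k}(P)[\delta_{z}-P]|\leq L\rho(\delta_{z}-P)\leq CL\|\delta_{z}-P\|_{TV}\leq 2CL$, uniformly in $z$). The identity $D\psi_{k}(P)[P_{n}-P]=n^{-1}\sum_{i}\varphi_{k}(P)(Z_{i})$ then comes directly from linearity of $D\psi_{k}(P)$ on the cone $\mathcal{T}_{P}$ applied to the \emph{finite} decomposition $P_{n}-P=n^{-1}\sum_{i}(\delta_{Z_{i}}-P)$, not from a general integral representation. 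Replacing your Hahn--Banach/Riesz detour with this direct definition closes the gap and recovers the paper's proof verbatim.
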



\begin{proof}
	See Appendix \ref{app:ALR}.
\end{proof}

It is easy to check that the influence of the regularization implied by the theorem is given by the sequence of $L^{2}_{0}(P)$ mappings, $(\varphi_{k}(P))_{k \in \mathbb{N}}$ where
\begin{align*}
z \mapsto \varphi_{k}(P)(z) \equiv D \psi_{k}(P)[\delta_{z}-P].
\end{align*}

While the theorem shows existence of a sequence of tuning parameters for which generalized asymptotic linearity holds, it is silent about how to construct such sequence; we discuss this in Section \ref{sec:discussion}.

\begin{remark}[Heuristics of the Proof]
The proof is straightforward and is comprised of two steps. First, it is shown that  $\boldsymbol{\psi}$ satisfies $GAL(\mathbf{k})$ for any fixed $k$, i.e., $\mathbf{k}(n) = k$. Showing this result is analogous to showing that the regularization is asymptotic linear in the sense defined above, thus, it suffices to show that the reminder of the linear approximation is asymptotically negligible for each fixed $k$, i.e., 
\begin{align}\label{eqn:reminder-diff}
\eta_{k}(P_{n}-P) = o_{P}(n^{-1/2}).
\end{align} 
This is a standard condition for ``plug-in" estimators (e.g. \cite{VdV2000}), and the restriction over the class $\mathcal{C}$ and the definition of differentiability imply it. In some cases, however, it might be straightforward to verify condition (\ref{eqn:reminder-diff}) directly or by other means. 

Second, a diagonalization argument is used to show existence of a diverging sequence. $\triangle$
\end{remark}

\begin{remark}
	A common way of using Theorem \ref{thm:W-ALR} is by finding a class $\mathcal{S}$ that is P-Donsker, which implies that $(\sqrt{n}(P_{n} - P))_{n \in \mathbb{N}}$ is $||.||_{\mathcal{S}}$-compact (see Lemma \ref{lem:Skohorod} in Appendix \ref{app:ALR}), and ensuring $||.||_{\mathcal{S}}$-Hadamard differentiability; e.g. \cite{VdV2000} Ch. 20. \cite{dudley2010frechet} proposes an alternative way of using this result by showing that $\sqrt{n}(P_{n} - P)$ belongs, with high probability, to bounded p-variation sets, so the relevant notion of differentiability is Frechet differentiability (under the p-variation norm). $\triangle$ 
\end{remark}

\subsection{Examples}
\label{sec:WALR-examples}

We now present a series of examples. The goal here is not to break new ground but to use classical examples to illustrate the conditions and components of Theorem \ref{thm:W-ALR}. 

	The following example is the celebrated integrated square density, for which \cite{BickelRitov1990} showed that even though the efficiency bound is finite, no estimator converges at root-n rate; thereby illustrating that in some circumstances studying the local shape of $\psi$ can be quite misleading. Our approach does not suffer from this criticism since it directly captures the (local) behavior of the estimator at hand. It is also general enough to encompass many of the proposed estimators in the literature, including ``leave-one-out" types.

\begin{example}[Integrated Square Density]\label{exa:ipdf2-diff} \label{exa:ipdf2-reg}
		The parameter of interest in this case is given by 
		\begin{align*}
		P \mapsto \psi(P) = \int p(x)^{2} dx.
		\end{align*}
		The model is defined as the class of probability measures, $P$, with Lebesgue density, $p$, such that $p \in L^{\infty}(\mathbb{R})$ and 	
		\begin{align}\label{eqn:ipdf2-smooth}
		|p(x+t) - p(x) | \leq C(x) |t|^{\varrho},~\forall t,x \in \mathbb{R},
		\end{align} 
		with $C \in L^{2}(\mathbb{R})$ and $\varrho \in (0,0.5)$. This restriction is rather mild and is similar to those used in the literature, e.g. \cite{BickelRitov88}, \cite{HallMarron87} and \cite{powell1989semiparametric}. The mapping $\psi$ is well-defined over the model, but not when evaluated at the empirical probability distribution, $P_{n}$, since $P_{n}$ does not have a density; it thus needs to be regularized. 
		We consider a class of regularizations given by 
		\begin{align}\label{eqn:ipdf2-defn}
		P \mapsto \psi_{k}(P) = \int (\kappa_{k} \star P)(x) P(dx),~\forall k \in \mathbb{K},
		\end{align}	
		where $\kappa$ is a kernel such that 
		$\int |\kappa(u)| |u|^{\varrho} du < \infty$, and $t \mapsto \kappa_{k}(t) \equiv  k \kappa(k t)$. Thus, $1/k$ acts as the bandwidth for each  $k \in \mathbb{K}$ which is a tuning set in $\mathbb{R}_{++}$.
		
		Depending on the form of $\kappa$ this regularization encompasses many estimators proposed in the literature. For instance, when $\kappa = \rho + \lambda (\rho - \rho \star \rho)$  with some $\lambda \in \mathbb{R}$ and some kernel $\rho$, and, for any $k>0$, $z \mapsto \hat{p}_{k}(z) = \frac{1}{n} \sum_{i=1}^{n} \rho_{k}(Z_{i} - z)$, it follows that\footnote{Details of the claims 1-3 and 1'-3' below are shown in the Appendix \ref{app:setup}.} 
		\begin{enumerate}
			\item For $\lambda = 0$, the implied estimator is $n^{-1}\sum_{i=1}^{n} \hat{p}_{k}(Z_{i}) = n^{-2}\sum_{i,j} \rho_{k}(Z_{i} - Z_{j})$. 
			\item For $\lambda = -1$, the implied estimator is  $\int (\hat{p}_{k}(z))^{2} dz = n^{-2}\sum_{i,j} (\rho \star \rho)_{k}(Z_{i} - Z_{j})$.
			\item For $\lambda = 1$, the implied estimator is  $2n^{-1}\sum_{i=1}^{n} \hat{p}_{k}(Z_{i}) -  \int (\hat{p}_{k}(z))^{2} dz = n^{-2}\sum_{i,j} (2\rho_{k} - (\rho \star \rho)_{k})(Z_{i} - Z_{j})$. 
		\end{enumerate}
		The  first two estimators are standard; the third estimator is inspired by the one considered in \cite{NeweyHsiehRobbins2004}, wherein $\kappa$ is a twicing kernel.
		Moreover, the formalization in display (\ref{eqn:ipdf2-defn}) captures commonly used ``leave-one-out" estimators by simply imposing $\kappa(0)=0$. For instance, the ``leave-one-out" versions of the estimators 1-3 are given by
		\begin{enumerate}
			\item[1'.] For $\lambda = 0$, the implied estimator is $n^{-2}\sum_{i \ne j} \rho_{k}(Z_{i} - Z_{j})$. 
			\item[2'.] For $\lambda = -1$, the implied estimator is  $n^{-2}\sum_{i \ne j} (\rho \star \rho)_{k} (Z_{i} - Z_{j})$.
			\item[3'.] For $\lambda = 1$, the implied estimator is  $n^{-2}\sum_{i \ne j} (2\rho_{k} - (\rho \star \rho)_{k})(Z_{i} - Z_{j})$ 
			
			
		\end{enumerate}
		These estimators are essentially the ones considered by \cite{GineNickl2008} and \cite{HallMarron87} (see also \cite{POWELL1996} and references therein); 
		the estimator 3' is also a somewhat simplified version of the one considered in \cite{BickelRitov88}.

%
%
%

	We now show that Definition \ref{def:reg-G-diff} is satisfied by our class of regularizations and also establish a rate for the remainder term, $\eta_{k}(P_{n} - P)$ which is used to verify for which sequence of tuning parameter condition \ref{eqn:reminder-diff} holds.
	

	\begin{proposition}\label{pro:ipdf2-diff-1}
	For any $P \in \mathcal{M}$, the regularization defined in expression (\ref{eqn:ipdf2-defn}) is $DIFF(P,\mathcal{E}_{||.||_{LB}})$. For each $k \in \mathbb{K}$,
	\begin{align*}
	Q \mapsto D\psi_{k}(P)[Q] = 2\int (\kappa_{k} \star P)(z) Q(dz), 
	\end{align*}
	and  $Q \mapsto \eta_{k}(Q) = \int (\kappa_{k} \star Q)(z) Q(dz)$ is such that there exists a $L_{k} < \infty $ such that $|\eta_{k}(Q)| \leq L_{k} ||Q||^{2}_{LB}$ for all $Q \in ca(\mathbb{Z})$.
\end{proposition}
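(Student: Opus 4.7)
The plan is to exploit the bilinear structure of $\psi_k$ to identify the linear term of the expansion, and then bound the quadratic remainder via two successive applications of the bounded-Lipschitz duality. Assuming symmetry of the kernel $\kappa$ (standard in this context and implicit in the claimed formula for $D\psi_k(P)$), I would first write $\psi_k(P) = B_k(P,P)$ where $B_k(\mu,\nu) \equiv \int \int \kappa_k(x-y)\,\mu(dy)\,\nu(dx)$ is a symmetric bilinear form. Expanding yields $\psi_k(P+Q) - \psi_k(P) = 2 B_k(P,Q) + B_k(Q,Q)$, from which the candidate derivative $D\psi_k(P)[Q] = 2\int (\kappa_k \star P)(z)\,Q(dz)$ and remainder $\eta_k(Q) = \int (\kappa_k \star Q)(z)\,Q(dz)$ are read off directly.

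Next, I would verify that $D\psi_k(P)$ is linear and $\tau$-continuous, with $\tau$ chosen as the $\|\cdot\|_{LB}$-topology (which is dominated by $\|\cdot\|_{TV}$, as required by the general setup). Linearity is immediate; for continuity, the function $z \mapsto (\kappa_k \star P)(z) = \int \kappa_k(z-y)\,P(dy)$ is bounded-Lipschitz with $\|\kappa_k \star P\|_\infty \le \|\kappa_k\|_\infty = k\|\kappa\|_\infty$ and $\mathrm{Lip}(\kappa_k \star P) \le \|\kappa_k'\|_\infty = k^2 \|\kappa'\|_\infty$, both finite for fixed $k$. Hence $|D\psi_k(P)[Q]| \le 2\,\|\kappa_k \star P\|_{BL}\,\|Q\|_{LB}$, yielding $\|\cdot\|_{LB}$-continuity.

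The technical heart of the proof is the quadratic remainder bound. Writing $\eta_k(Q) = \int h_Q(z)\,Q(dz)$ with $h_Q(z) \equiv (\kappa_k \star Q)(z)$, the BL-duality $|\int f\,dQ| \le \|f\|_{BL}\|Q\|_{LB}$ applied once gives $|\eta_k(Q)| \le \|h_Q\|_{BL}\|Q\|_{LB}$. The key step is to bound $\|h_Q\|_{BL}$ by a constant (depending on $k$) times $\|Q\|_{LB}$, which requires a \emph{second} application of the duality: $|h_Q(z)| \le \|\kappa_k(z-\cdot)\|_{BL}\|Q\|_{LB}$ handles the sup-norm piece, while $|h_Q(z)-h_Q(z')| \le \|\kappa_k(z-\cdot)-\kappa_k(z'-\cdot)\|_{BL}\|Q\|_{LB}$ handles the Lipschitz piece, with the latter $BL$-norm bounded by $(\|\kappa_k'\|_\infty + \|\kappa_k''\|_\infty)|z-z'|$ via the mean value theorem. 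Collecting constants gives $L_k \precsim k\|\kappa\|_\infty + k^2\|\kappa'\|_\infty + k^3\|\kappa''\|_\infty$, finite under the tacit smoothness of the kernel.

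Finally, the Frechet differentiability condition follows at once from the quadratic bound and the homogeneity $\eta_k(tQ) = t^2\eta_k(Q)$: for any $\|\cdot\|_{LB}$-bounded $U \subseteq \mathcal{T}_P$, $\sup_{Q \in U}|\eta_k(tQ)|/t \le t\,L_k\,\sup_{Q \in U}\|Q\|_{LB}^2 \to 0$ as $t \downarrow 0$. The main obstacle is the double use of BL-duality needed to upgrade the natural $\|Q\|_{LB}\|Q\|_{TV}$-bound to a genuine $\|Q\|_{LB}^2$-bound; in particular, the Lipschitz-in-$z$ control of $h_Q$ consumes an extra derivative of $\kappa$, so one must make the $C^2$-smoothness of the kernel explicit to justify the finite constants $\|\kappa_k'\|_\infty$ and $\|\kappa_k''\|_\infty$ appearing in $L_k$.
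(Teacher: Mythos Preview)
Your proposal is correct and follows essentially the same route as the paper's proof: both exploit the bilinear structure of $\psi_k$ to read off $D\psi_k(P)[Q]=2\int(\kappa_k\star P)\,dQ$ and $\eta_k(Q)=\int(\kappa_k\star Q)\,dQ$ (using symmetry of $\kappa$, which the paper invokes explicitly), verify $\|\cdot\|_{LB}$-continuity of the derivative via the bounded-Lipschitz property of $\kappa_k\star P$, and obtain the quadratic remainder bound by showing that $x\mapsto(\kappa_k\star Q)(x)$ is itself bounded-Lipschitz with constants proportional to $\|Q\|_{LB}$, then applying the duality a second time. The only cosmetic difference is that the paper controls the Lipschitz increment of $\kappa_k\star Q$ through the fundamental-theorem-of-calculus identity $\kappa_k\star Q(x)-\kappa_k\star Q(x')=k^2\int_x^{x'}\!\int\kappa'(k(y-t))\,Q(dy)\,dt$ and then uses that $y\mapsto\kappa'(k(y-t))$ is bounded-Lipschitz, whereas you bound $\|\kappa_k(z-\cdot)-\kappa_k(z'-\cdot)\|_{BL}$ directly via the mean value theorem; both arguments consume one extra derivative of $\kappa$ and yield the same $L_k$ up to constants.
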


\begin{proof}
	See Appendix \ref{app:ipdf2-diff}.
\end{proof}

This proposition implies that for each $k \in \mathbb{K}$, $\psi_{k}$ is $||.||_{LB}$-Frechet differentiable, and since $LB$ is P-Donsker, the conditions in Theorem \ref{thm:W-ALR} are met. The influence is given by  $z \mapsto \varphi_{k}(P)(z) \equiv 2\{ ( \kappa_{k} \star P)(z) - E_{P}[( \kappa_{k} \star P)(Z)]\}$, and since $\sup_{k} ||\varphi_{k}(P)||_{L^{2}(P)} \leq 2 ||p||_{L^{\infty}(\mathbb{R})}   ||\kappa ||_{L^{1}(\mathbb{R})}$ (see Lemma \ref{lem:ipdf2-boundIF} in Appendix \ref{app:ipdf2-diff}), the natural scaling for GAL is $\sqrt{n}$. $\triangle$
	\end{example}

Next, we consider the NPIV example. It is not hard to see that the influence of $\boldsymbol{\gamma}$ will be given by $z \mapsto \int D \psi_{k}(P)^{\ast}[\pi](z) - E_{P}[D \psi_{k}(P)^{\ast}[\pi](Z)]$ provided $D \psi_{k}(P) : \mathcal{T}_{P}^{\ast} \rightarrow L^{2}([0,1])$ and its adjoint $D \psi_{k}(P)^{\ast} : L^{2}([0,1]) \rightarrow \mathcal{T}_{P}^{\ast}$ exists ($\mathcal{T}_{P}^{\ast}$ is the dual of $\mathcal{T}_{P}$). For sieve-based and penalization-based regularization schemes, we characterize $D \psi_{k}(P)^{\ast}$  and show how its standard deviation can be used to appropriately scale the estimator to obtain a generalized asymptotic linear representation regardless of whether the parameter is root-n estimable or not. This last result, illustrates how our method can be used to generalize the approach proposed in \cite{ChenPouzo2015} to general regularizations. As a by-product, we extend the results in \cite{AckerbergChenHahnLiao2014} and link the influence function of the sieve-based regularization to simpler, fully parametric, \emph{misspecified} GMM models.

\begin{example}[NPIV (cont.): The sieve-based Case]\label{exa:NPIV-diff-sieve}
	We study the sieve-based regularization approach, which is constructed using two basis for $L^{2}([0,1])$, $(u_{k},v_{k})_{k \in \mathbb{N}}$, and two indices $k \mapsto (J(k),L(k))$ such that
	\begin{align*}
		& (g,x) \mapsto T_{k,P}[g](x) = (u^{J(k)}(x))^{T} Q_{uu}^{-1} E_{P} \left[ u^{J(k)}(X) g(W) \right], \\
		&  x \mapsto r_{k,P}(x) = (u^{J(k)}(x))^{T} Q_{uu}^{-1} E_{P}[u^{J(k)}(X) Y],\\
		&  \mathcal{R}_{k,P} = (\Pi_{k}^{\ast}T^{\ast}_{k,P} T_{k,P} \Pi_{k} )^{-1}
	\end{align*}
	where $u^{k}(x) \equiv (u_{1}(x),...,u_{k}(x))$, $v^{k}(w) \equiv (v_{1}(w),...,v_{k}(w))$, $\Pi_{k} : L^{2}([0,1]) \rightarrow lin\{ v^{L(k)}  \} \subseteq L^{2}([0,1])$ is the projection operator, $g \mapsto \Pi_{k}[g] = (v^{L(k)})^{T}Q^{-1}_{vv} \int 
		v^{L(k)}(w) g(w) dw$, and $Q_{uu} \equiv E_{Leb}[u^{k}(X)(u^{k}(X))^{T}]$, $Q_{uv} \equiv E_{P}[u^{k}(X)(v^{k}(W))^{T}]$  and $Q_{vv} \equiv E_{Leb}[v^{k}(W)(v^{k}(W))^{T}]$.
	
	The next proposition proves differentiable of the regularization $\boldsymbol{\gamma}$ and provides the expression for the derivative.
	
\begin{proposition}\label{pro:NPIV-DIFF}
	For any $P \in \mathcal{M}$, the sieve-based regularization $\boldsymbol{\gamma}$ is DIFF$(P,\mathcal{E}_{||.||_{LB}})$. For each $k \in \mathbb{N}$, \begin{align*}
	  Q \mapsto D \gamma_{k}(P)[Q] = \int  D \psi_{k}(P)^{\ast}[\pi](z) Q(dz)
	\end{align*}where 
	\begin{align}\notag
	D \psi_{k}(P)^{\ast}[\pi](y,w,x) =& (y - \psi_{k}(P)(w)) (u^{J(k)}(x))^{T} Q^{-1}_{uu} Q_{uv} (Q^{T}_{uv}Q^{-1}_{uu}Q_{uv})^{-1} E_{Leb}[v^{L(k)}(W) \pi(W)] \\ \notag
	&+ \left\{ E_{P}[(\psi(P)(W) - \psi_{k}(P)(W))(u^{J(k)}(X))^{T}] Q^{-1}_{uu} u^{J(k)}(x) \right. \\\label{eqn:GMM-1}
	& \left. \times (v^{L(k)}(w))^{T} (Q^{T}_{uv}Q^{-1}_{uu}Q_{uv})^{-1} E_{Leb}[v^{L(k)}(W) \pi(W)] \right\}.
	\end{align}
	And, for each $k \in \mathbb{N}$, the reminder of $\gamma_{k}$, $\eta_{k}$, is such that $  |\eta_{k}(\zeta)| = o(||\zeta||_{LB})$ for any $\zeta \in \mathbb{D}_{\psi}$.\footnote{The ``o" function may depend on $k$.}
\end{proposition}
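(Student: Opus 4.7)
The plan is to exploit that, once the regularization is unpacked, $\gamma_k(P)$ admits a closed-form finite-dimensional representation as a misspecified GMM functional in the unknowns $b(P)=E_P[u^{J(k)}(X)Y]$ and $A(P)=Q_{uv}(P)=E_P[u^{J(k)}(X)v^{L(k)}(W)^T]$, both of which are affine in $P$. Concretely, since $\psi_k(P)\in\mathrm{lin}\{v^{L(k)}\}$ one can check that $\psi_k(P)=(v^{L(k)})^T\beta_k(P)$ with
\[
\beta_k(P)=M(P)^{-1} A(P)^T Q_{uu}^{-1} b(P),\qquad M(P)=A(P)^T Q_{uu}^{-1} A(P),
\]
and hence $\gamma_k(P)=c^T\beta_k(P)$ for the (P-independent) vector $c=E_{\mathrm{Leb}}[v^{L(k)}(W)\pi(W)]$. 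So the whole proof reduces to a perturbation calculation for $\beta_k$.

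Next I would write $A(P+tQ)=A(P)+t\Delta A$ and $b(P+tQ)=b(P)+t\Delta b$ with $\Delta A=\int u^{J(k)}(x)v^{L(k)}(w)^T Q(dz)$ and $\Delta b=\int u^{J(k)}(x)\,y\,Q(dz)$, and Neumann-expand $M(P+tQ)^{-1}=M^{-1}-tM^{-1}(\Delta A^T Q_{uu}^{-1}A+A^T Q_{uu}^{-1}\Delta A)M^{-1}+O(t^2)$. Multiplying out and collecting the coefficient of $t$ gives, after regrouping,
\[
D\beta_k(P)[Q]=M^{-1}\bigl[A^T Q_{uu}^{-1}(\Delta b-\Delta A\,\beta_k(P))+\Delta A^T Q_{uu}^{-1}(b(P)-A(P)\beta_k(P))\bigr].
\]
The key simplification is the identification step: because $b(P)-A(P)\beta_k(P)=E_P[u^{J(k)}(X)(Y-\psi_k(P)(W))]$ and $E_P[Y\mid X]=E_P[\psi(P)(W)\mid X]$ under the NPIV condition, this residual term equals $E_P[u^{J(k)}(X)(\psi(P)(W)-\psi_k(P)(W))]$, exactly the approximation-error expression that appears in the second line of \eqref{eqn:GMM-1}. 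Similarly $\Delta b-\Delta A\,\beta_k(P)=\int u^{J(k)}(x)(y-\psi_k(P)(w))Q(dz)$ is the ``residual" piece that generates the first line of \eqref{eqn:GMM-1}. Taking $c^T$ on both sides and expressing each term as $\int(\cdot)Q(dz)$ yields $D\gamma_k(P)[Q]=\int D\psi_k(P)^{\ast}[\pi](z)Q(dz)$ with $D\psi_k(P)^{\ast}[\pi]$ as claimed.

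For the remainder $\eta_k(\zeta)=\gamma_k(P+\zeta)-\gamma_k(P)-D\gamma_k(P)[\zeta]$, all higher-order terms in the Neumann expansion are products of factors of the form $\Delta A$ and $\Delta b$. For \emph{fixed} $k$ the basis functions $u^{J(k)},v^{L(k)}$ are uniformly bounded and Lipschitz, so each coordinate of $\Delta A$ is controlled by $c_k\|\zeta\|_{LB}$; for $\Delta b$ one splits $y=E_P[Y\mid X]+(y-E_P[Y\mid X])$, uses boundedness of $E_P[Y\mid X]$ implied by $\|p_{XW}\|_{L^\infty}<\infty$ together with the NPIV equation, and absorbs the residual piece via the conditional-on-$X$ identity used above. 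This gives $|\eta_k(\zeta)|\le L_k\|\zeta\|_{LB}^2=o(\|\zeta\|_{LB})$, which is uniform over $\|.\|_{LB}$-bounded sets of directions and therefore establishes DIFF$(P,\mathcal{E}_{\|.\|_{LB}})$.

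The main obstacle is not conceptual but organizational: the matrix-algebra bookkeeping needed to bring the linear part exactly into the two-line form of \eqref{eqn:GMM-1}, and the mild technical nuisance of handling the $y$-factor in $\Delta b$ under the LB-norm, for which the NPIV decomposition of $Y$ is essential. Everything else is a finite-dimensional smooth-function perturbation at fixed $k$.
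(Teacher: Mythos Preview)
Your approach is correct and arrives at the same formula, but it is genuinely different from the paper's. The paper does not exploit the closed-form expression for $\beta_k(P)$; instead it proves a general lemma (Lemma~\ref{lem:psik-Fdiff}) establishing $\|\cdot\|_{LB}$-Fr\'echet differentiability of $\psi_k$ via the Implicit Function Theorem applied to the first-order condition $F_k(\theta,Q)=0$, with $F_k(\theta,Q)=(\Pi_k^{\ast}T_{k,Q}^{\ast}T_{k,Q}\Pi_k)[\theta]-\Pi_k^{\ast}T_{k,Q}^{\ast}[r_{k,Q}]$. Differentiability of $\gamma_k$ then follows by linearity of $\gamma_k$ in $\psi_k$ (Corollary~\ref{cor:gammak-diff}), and the explicit basis-function expression \eqref{eqn:GMM-1} is obtained afterwards by unpacking $D\psi_k(P)$ in operator notation and translating into matrix form. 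Your route---write $\gamma_k(P)=c^{T}M(P)^{-1}A(P)^{T}Q_{uu}^{-1}b(P)$ with $A,b$ affine in $P$, Neumann-expand, and read off the linear term---is more elementary and makes the misspecified-GMM interpretation transparent from the outset. What the paper's IFT argument buys is uniformity across regularization schemes: the same Lemma~\ref{lem:psik-Fdiff} covers the penalization-based case (Proposition~\ref{pro:NPIV-diff-penalty}), where no closed form for $\psi_k(P)$ is available and your direct expansion would not apply.

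One caveat: your handling of the $y$-factor in $\Delta b$ under $\|\cdot\|_{LB}$ is sketchy. Splitting $y=E_P[Y\mid X]+(y-E_P[Y\mid X])$ does not by itself control $\int u^{J(k)}(x)(y-E_P[Y\mid X=x])\,\zeta(dz)$ via $\|\zeta\|_{LB}$, since that integrand is unbounded in $y$; the ``conditional-on-$X$ identity'' you invoke holds for $P$, not for a generic perturbation $\zeta$. The paper's proof (via IFT continuity checks) is equally informal on this point, so you are not worse off---but be aware that a fully rigorous argument here requires either restricting attention to directions in $\mathcal{T}_P$ of the form $a(P_n-P)$ (where moment bounds on $Y$ can be invoked directly) or strengthening the topology on $ca(\mathbb{Z})$ to one that controls $y$-moments.
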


\begin{proof}
	See Appendix \ref{app:NPIV-diff}.
\end{proof}

Even though expression for $D \psi_{k}(P)^{\ast}[\pi]$ may look cumbersome, it has an intuitive interpretation: It is identical to the influence function of the parameter $\int \theta^{T} v^{L(k)}(w)\pi(w) dw$ where $\theta$ is the estimand of a \emph{misspecified linear GMM model} where the ``endogenous variables" are $v^{L(k)}(W)$ and the ``instrumental variables" are $u^{J(k)}(X)$; cf. \cite{HallInoue2003}. The first term in the RHS of expression \ref{eqn:GMM-1} also has an intuitive interpretation: It is the influence function of the parameter $\int \theta^{T} v^{L(k)}(w)\pi(w) dw$ but in \emph{well-specified linear GMM model}.

The proposition implies that for the ``fix-$k$" case, expression \ref{eqn:GMM-1} is the proper influence function to be considered. However, one can ask whether as $k$ diverges, the second term (the one in curly brackets) in RHS of expression \ref{eqn:GMM-1} can be ignored. To shed light on this matter, it is convenient to use operator notation for expression \ref{eqn:GMM-1}: 
\begin{align}\notag
D\psi^{\ast}_{k}(P)[\pi](y,w,x) 	= & T_{k,P}\mathcal{R}_{k,P}\Pi_{k}[\pi](x) \times (y - \psi_{k}(P)(w))\\ \label{eqn:GMM-1-op}
& +   \mathcal{R}_{k,P }\Pi_{k}[\pi](w) \times T_{k,P}[\psi(P) - \psi_{k}(P)](x)
\end{align}
(we derive this equality in expression \ref{eqn:proof-NPIV-Dpsi*} in Appendix \ref{app:NPIV-diff}). The term $T_{k,P}[\psi(P) - \psi_{k}(P)]$  is multiplied by  $\mathcal{R}_{k,P}\Pi_{k} [\pi]$, which is different to $T_{k,P}\mathcal{R}_{k,P}\Pi_{k}[\pi]$ --- the factor multiplying  $(y- \psi_{k}(P)(w))$. If $\pi \in Range (T_{P})$ both multiplying factors converge to bounded quantities as $k$ diverges.
 Thus, since $T_{k,P}[\psi(P) - \psi_{k}(P)]$ vanishes, the first summand in the RHS of expression \ref{eqn:GMM-1} ``asymptotically dominates" the second one. This is framework considered in \cite{AckerbergChenHahnLiao2014}. However, if $\pi \notin Range (T_{P})$ --- and thus  $\gamma(P)$ is not root-estimable (see \cite{SeveriniTripathi2012}) --- the situation is more subtle and without additional assumptions it is not clear which term in expression \ref{eqn:GMM-1} dominates. The reason is that the aforementioned multiplying factors will no longer converge to a bounded quantity, and moreover, the rate of growth of $T_{k,P}\mathcal{R}_{k,P}\Pi_{k}[\pi]$ can can be dominated by the rate of  $\mathcal{R}_{k,P}\Pi_{k} [\pi]$.
 
 For this last case of $\pi \notin Range (T_{P})$, the results closest to ours are those in \cite{ChenPouzo2015} wherein the influence function for slower than root-n sieve estimators is derived. Their expression for the influence function is simpler than ours, but this arises from a different set of assumptions and, more importantly, a different approach that directly focus on expressions for ``diverging $k$". $\triangle$

\end{example}

\begin{example}[NPIV (cont.): The Penalization-based Case]\label{exa:NPIV-diff-penal}
	We study the penalization-based regularization case given by
	\begin{align*}
		& (x,g) \mapsto T_{k,P}[g](x) \equiv \int \kappa_{k}(x'-x) \int g(w) P(dw,dx') \\
		& x \mapsto r_{k,P}(x) \equiv \int \kappa_{k}(x'-x) \int y P(dy,dx')\\
		& \mathcal{R}_{k,P} = (T_{k,P}^{\ast} T_{k,P} + \lambda_{k} I )^{-1}
	\end{align*}
	where $\kappa_{k}(\cdot) = k \kappa(k \cdot )$ and $\kappa$ is a smooth, symmetric around 0 pdf.
	
	As opposed to the previous case,  there is no obvious link to a ``simpler" problem like GMM and thus it is not obvious a-priori what the influence function would be and what the proper scaling should be when $\gamma(P)$ is not root-n estimable. Theorem \ref{thm:W-ALR} suggests $D \psi_{k}^{\ast}(P)[\pi]$ and $\sqrt{n/Var_{P}( D \psi_{k}^{\ast}(P)[\pi])}$ as the influence function and scaling factor resp.; the next proposition characterizes it.
	
	\begin{proposition}\label{pro:NPIV-diff-penalty}
		For any $P \in \mathcal{M}$, the Penalization-based regularization  $\boldsymbol{\gamma}$ is DIFF$(P,\mathcal{E}_{||.||_{LB}})$. For each $k \in \mathbb{N}$, $D \gamma_{k}(P)[\zeta] = \int  D \psi_{k}(P)^{\ast}[\pi](z) \zeta(dz)$, where 
		\begin{align}\label{eqn:NPIV-diff-pen-Dpsi}
				 D \psi_{k}^{\ast}(P)[\pi](y,w,x) = & \mathcal{K}_{k}^{2}T_{P}(T_{P}^{\ast}\mathcal{K}_{k}^{2}T_{P} + \lambda_{k} I )^{-1}[\pi](x) \times (y - \psi_{k}(P)(w)) \\ \notag
				 & + \lambda_{k}  (T_{P}^{\ast}\mathcal{K}_{k}^{2}T_{P} + \lambda_{k} I )^{-1}[\pi](w) \times \mathcal{K}^{2}_{k} T_{P} (T_{P}^{\ast}\mathcal{K}_{k}^{2}T_{P} + \lambda_{k} I )^{-1} [\psi_{id}(P)] (x).
		\end{align}
	where $\mathcal{K}_{k}$ is the convolution operator $g \mapsto \mathcal{K}_{k}[g] = \kappa_{k} \star g$. 	And, for each $k \in \mathbb{N}$, the reminder of $\gamma_{k}$, $\eta_{k}$, is such that $  |\eta_{k}(\zeta)| = o( ||\zeta||_{LB})$ for any $\zeta \in \mathbb{D}_{\psi}$.\footnote{The ``o" function may depend on $k$.}
	\end{proposition}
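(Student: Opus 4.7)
First, I write $\psi_k(P)=A_P^{-1}B_P$ with $A_P\equiv T_{k,P}^{\ast}T_{k,P}+\lambda_kI$ and $B_P\equiv T_{k,P}^{\ast}r_{k,P}$, so that $\gamma_k(P)=\langle \pi,\psi_k(P)\rangle_{L^2}=\langle \alpha_P,B_P\rangle=\langle T_{k,P}\alpha_P,r_{k,P}\rangle$ with $\alpha_P\equiv A_P^{-1}\pi$. Since the maps $P\mapsto T_{k,P}$ and $P\mapsto r_{k,P}$ are linear in the (signed) measure argument (perturbing $P$ by $tQ$ adds exactly $tT_{k,Q}$ and $tr_{k,Q}$), the maps $P\mapsto A_P,B_P$ are polynomial in $P$ and hence trivially Frechet differentiable with $DA_P[Q]=T_{k,P}^{\ast}T_{k,Q}+T_{k,Q}^{\ast}T_{k,P}$ and $DB_P[Q]=T_{k,P}^{\ast}r_{k,Q}+T_{k,Q}^{\ast}r_{k,P}$. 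Combining the product rule with $D(A_P^{-1})[Q]=-A_P^{-1}\,DA_P[Q]\,A_P^{-1}$ gives, after cancellations using self-adjointness of $A_P$,
\[
D\gamma_k(P)[Q]=\langle T_{k,P}\alpha_P,\,r_{k,Q}-T_{k,Q}\phi_P\rangle + \langle T_{k,Q}\alpha_P,\,r_{k,P}-T_{k,P}\phi_P\rangle,
\]
where $\phi_P\equiv\psi_k(P)$.

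Second, I recast each inner product as an integral against $Q$ in order to match (\ref{eqn:NPIV-diff-pen-Dpsi}). Writing out $r_{k,Q}(x)=\int\int\kappa_k(x'-x)y\,Q(dy,dw,dx')$ and $T_{k,Q}g(x)=\int\int\kappa_k(x'-x)g(w)\,Q(dy,dw,dx')$ and applying Fubini, together with $T_{k,P}=\mathcal{K}_kT_P$ (which holds because $\kappa_k$ is symmetric) and $\mathcal{K}_k^{\ast}=\mathcal{K}_k$, the first inner product becomes $\int\mathcal{K}_k^2T_P\alpha_P(x)\,(y-\phi_P(w))\,Q(dy,dw,dx)$, yielding the first summand of (\ref{eqn:NPIV-diff-pen-Dpsi}). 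The same Fubini trick rewrites the second inner product as $\int\alpha_P(w)\,\mathcal{K}_k^2(r_P-T_P\phi_P)(x)\,Q(dy,dw,dx)$; I then exploit the normal equation $A_P\phi_P=T_P^{\ast}\mathcal{K}_k^2T_P\psi(P)$, equivalently $\phi_P=(I-\lambda_kA_P^{-1})\psi(P)$, which implies $r_P-T_P\phi_P=\lambda_kT_PA_P^{-1}\psi(P)$. Substituting produces the $\lambda_k$-factor and isolates $\psi_{id}(P)=\psi(P)$ inside the inverse, reproducing the second summand of (\ref{eqn:NPIV-diff-pen-Dpsi}).

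Third, for the remainder and $\tau$-continuity, I expand $A_{P+\zeta}^{-1}B_{P+\zeta}$ as a Neumann-type series around $A_P^{-1}$; because for each fixed $k$ the kernel $\kappa_k$ is bounded and Lipschitz, $\mathcal{K}_k$ sends any signed measure $\zeta$ to a bounded Lipschitz function with norm controlled by $||\zeta||_{LB}$, giving operator-norm bounds $||T_{k,\zeta}||_{op},\,||r_{k,\zeta}||_{L^2([0,1])}\leq C_k||\zeta||_{LB}$. The first-order terms in the expansion reconstruct $D\gamma_k(P)[\zeta]$; all remaining terms are bounded by $C'_k||\zeta||_{LB}^{2}=o(||\zeta||_{LB})$ uniformly on $||\cdot||_{LB}$-bounded subsets of $\mathcal{T}_P$, delivering differentiability in the sense $\mathcal{E}_{||.||_{LB}}$. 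The $\tau$-continuity of the linear map $Q\mapsto D\gamma_k(P)[Q]$ is immediate from its representation as integration against the bounded (in fact Lipschitz) function $D\psi_k^{\ast}(P)[\pi]$. I expect the main obstacle to be the algebraic bookkeeping of the second step, in particular tracking how the $\lambda_k$ factor in the second summand of (\ref{eqn:NPIV-diff-pen-Dpsi}) arises precisely from the identity $\psi(P)-\psi_k(P)=\lambda_kA_P^{-1}\psi(P)$---a feature absent from the sieve-based expression (\ref{eqn:GMM-1}).
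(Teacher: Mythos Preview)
Your proposal is correct and the algebraic core---the two--term decomposition of $D\gamma_k(P)[Q]$, the Fubini rewriting against $Q$, and crucially the identity $\psi(P)-\psi_k(P)=\lambda_k A_P^{-1}\psi(P)$ that produces the $\lambda_k$ factor---is exactly what the paper does. The one methodological difference is how differentiability itself is obtained: you exploit the explicit closed form $\psi_k(P)=A_P^{-1}B_P$ and differentiate directly via the product rule and $D(A_P^{-1})=-A_P^{-1}\,DA_P\,A_P^{-1}$, then control the remainder by a Neumann expansion; the paper instead treats $\psi_k(P)$ as the implicit root of $F_k(\theta,Q)=(T_{k,Q}^{\ast}T_{k,Q}+\lambda_kI)\theta-T_{k,Q}^{\ast}r_{k,Q}$ and invokes the Implicit Function Theorem (Lemma~\ref{lem:psik-Fdiff}), after which Corollary~\ref{cor:gammak-diff} transfers differentiability and the remainder bound to $\gamma_k$. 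Your route is more elementary and tailored to the Tikhonov case, while the IFT route is more modular (it handles the sieve and penalization cases uniformly and would extend to regularizations without a closed-form inverse). One small point: your bound $\|r_{k,\zeta}\|_{L^2}\le C_k\|\zeta\|_{LB}$ is not literally justified by ``$\mathcal{K}_k$ maps signed measures to bounded Lipschitz functions,'' since $r_{k,\zeta}$ integrates the unbounded coordinate $y$ against $\zeta$; the paper is equally informal here, relying on the IFT abstractly on the domain $\mathbb{D}_\psi$, so this is not a defect of your argument relative to the paper's.
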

	
\begin{proof}
	See Appendix \ref{app:NPIV-diff}.
\end{proof}	

   If $\pi \in Range(T_{P})$, then the variance term converges to $|| T_{P}[v^{\ast}](X) (Y - \psi(P)(W))  ||^{2}_{L^{2}(P)} = E_{P}[(T_{P}(T_{P}^{\ast}T_{P})^{-1}[\pi](X))^{2} E_{P}[(Y - \psi(P)(W))^{2}|X] ]$ as $k$ diverges, where $v^{\ast} \equiv (T_{P}^{\ast}T_{P})^{-1}[\pi]$. The function $(y,w,x) \mapsto T_{P}[v^{\ast}](x) (y - \psi(P)(w)) $ is the influence function one would obtained by employing the methods in \cite{AiChen2007} (with identity weighting) and $v^{\ast}$ is the Riesz representer of the functional $w \mapsto \int \pi(w) g(w) dw$ using their weak norm $||T_{P}[\cdot]||_{L^{2}(P)}$.

   If $\pi \notin Range(T_{P})$, the variance diverges, and, as in the sieve case, without additional assumptions it is not clear which term dominates the variance term $Var_{P}( D \psi_{k}^{\ast}(P)[\pi])$, as $k$ diverges. This case illustrates how our results can be used to extend the results in \cite{ChenPouzo2015} for irregular sieve-based estimators to more general regularization schemes. $\triangle$	
\end{example}

\subsection{Data-driven Choice of Tuning Parameter and Undersmoothing}
\label{sec:discussion} \label{sec:undersmoothing}

Theorem \ref{thm:W-ALR} implies existence of a $n \mapsto k(n)$ such that\footnote{The display hold provided $\liminf_{n \rightarrow \infty} ||\varphi_{k(n)}(P)||_{L^{2}(P)} >0 $  For the applications we have in mind, this restriction is natural and non-binding. Our results are not designed for cases where $\lim_{k \rightarrow \infty} ||\varphi_{k}(P)||_{L^{2}(P)} = 0$; this case can be handled separately --- and rather easily --- since both the approximation error and the rate of $k \mapsto \eta_{k}(P_{n}-P)$ decrease as $k$ increases.}
\begin{align}
 \frac{\sqrt{n} (\psi_{k(n)}(P_{n}) - \psi(P)) }{||\varphi_{k(n)}(P) ||_{L^{2}(P)} } -  n^{-1/2} \sum_{i=1}^{n} \frac{ \varphi_{k(n)}(P)(Z_{i})  }{|| \varphi_{k(n)}(P) ||_{L^{2}(P)}  }  = O \left( \frac{\sqrt{n} B_{k(n)}(P) }{|| \varphi_{k(n)}(P) ||_{L^{2}(P)} } \right) + o_{P}(1). \label{eqn:ALR-estimator}
\end{align}
I.e., the asymptotic behavior of the regularized estimator --- once scaled and centered --- is characterized by a term due to the approximation error and a stochastic term. For obtaining asymptotic distributions, it is common practice to try to find sequences $(k(n))_{n}$ satisfying Theorem \ref{thm:W-ALR} for which the approximation term in expression (\ref{eqn:ALR-estimator}) vanishes. Unfortunately it is known that such sequences do not always exists  at this level of generality; e.g. \cite{BickelRitov88} and \cite{HallMarron87}. In view of this remark it is natural to seek choices of tuning parameter that make the terms in the RHS of expression (\ref{eqn:ALR-estimator}) as ``small as possible". Such choices will guarantee that GAL and the asymptotic negligibility of the approximation error both hold when possible, and otherwise, will at least yield good rates of convergence for $\psi_{k}(P_{n}) - \psi(P)$. 

The result in this section shows that the data-driven way of choosing tuning parameters described in Section \ref{sec:consistent} satisfies this property. For each $n \in \mathbb{N}$, the data-driven choice of tuning parameter is of the form $ \tilde{k}_{n} = \arg\min\{ k \colon k \in \mathcal{L}_{n}(\Lambda)   \}$ for a suitable chosen function $ k \mapsto \Lambda(k)$. In section \ref{sec:consistent}, the relevant function was $k \mapsto 4 \bar{\delta}_{k}(r^{-1}_{n})$; in this case, however, the structure of the problem is different. In particular, in addition to the reminder term $(\eta_{k})_{k}$ implied by differentiability and the scaled approximation error, there is the additional term given by $n^{-1/2} \sum_{i=1}^{n} \frac{ \varphi_{k(n)}(P)(Z_{i})  }{|| \varphi_{k(n)}(P) ||_{L^{2}(P)}  }$. The following assumption introduces the quantities to construct $(\Lambda_{k})_{k}$.  For each $n$, let $\mathbb{K}_{n}$ be the grid defined as in Section \ref{sec:consistent}.


\begin{assumption}\label{ass:Lepski-undersmooth}
	There exists a $(n,k) \mapsto \bar{\delta}_{j,k}(n)$ for $j \in \{1,2\}$ non-decreasing and a $N \in \mathbb{N}$ such that
	\begin{enumerate}
		\item[(i)] 
		$\sup_{k \in \mathbb{K}_{n}} \frac{ |\eta_{k}(P_{n} - P) |}{\bar{\delta}_{1,k}(n)} \leq 1$ wpa1-$P$.
		\item[(ii)] 
		$\frac{|\mathbb{K}_{n}|}{\sqrt{n}}  \sup_{k' \geq k~in~\mathbb{K}_{n}}  \frac{||\varphi_{k'}(P) -  \varphi_{k}(P)||_{L^{2}(P)}}{\bar{\delta}_{2,k'}(n)} \leq 1$ for all $n \geq N$.
	\end{enumerate}
\end{assumption}

As the proof of Lemma \ref{lem:hn-in-Fn-diff} in Appendix \ref{app:discussion} suggests,  the sequence that defines our tuning parameter, for each $n \in \mathbb{N}$, is given by $k \mapsto \Lambda(k) \equiv 4 (\bar{\delta}_{1,k}(n) + \bar{\delta}_{2,k}(n))$.

\begin{remark}[Discussion of Assumption \ref{ass:Lepski-undersmooth}]
	
%
%
%

Part (ii) implies that $(\bar{\delta}_{2,k}(n))_{n,k}$ acts as a growth rate for an object that, on the hand, involves the complexity of $\mathbb{K}_{n}$ --- given by $|\mathbb{K}_{n}|$ --- and on the other hand, involves the ``length" of $\mathbb{K}_{n}$ --- measured by $k \mapsto ||\varphi_{k}(P)||_{L^{2}(P)}$. In cases where $(||\varphi_{k}(P)||_{L^{2}(P)})_{k}$ is uniformly bounded, the ``length" of $\mathbb{K}_{n}$ (measured by $k \mapsto ||\varphi_{k}(P)||_{L^{2}(P)}$) is also uniformly bounded and part (ii) boils down to $\frac{|\mathbb{K}_{n}|}{\sqrt{n}} \leq \inf_{k \in \mathbb{K}_{n}} \delta_{2,k}(n))$. 
Part (i) implies that $(\bar{\delta}_{1,k}(n))_{n,k}$ also acts as the growth rate, but of a very different quantity: The reminder term of GAL, uniformly on $k \in \mathbb{K}_{n}$. 

	To shed more light on Assumption \ref{ass:Lepski-undersmooth}, suppose there exists a norm $||.||_{\mathcal{S}}$ such that
	\begin{enumerate}
		\item[C1:] There exists, for each $k \in \mathbb{K}$ a modulus of continuity $\bar{\eta}_{k} : \mathbb{R}_{+} \rightarrow \mathbb{R}_{+}$ such that $\eta_{k}(Q) = \bar{\eta}_{k}(||Q||_{\mathcal{S}})$.
		\item[C2:] There exists a real-valued positive diverging sequence $(r_{n})_{n}$ such that $||P_{n} -P||_{\mathcal{S}} = o_{P}(r^{-1}_{n})$.
	\end{enumerate} 
	Condition C1 states that $\eta_{k}$ is continuous with respect to some  norm $||.||_{\mathcal{S}}$  and C2 ensure convergence of $P_{n}$ to $P$ under this norm. These conditions are analogous to the assumptions used to show Theorem \ref{thm:rate-choice}.
	
	Under these conditions it is easy to see that Part (i) follows by choosing $\bar{\delta}_{1,k}(n) = \bar{\eta}_{k}(r^{-1}_{n})$, which acts as $\delta_{k}(r^{-1}_{n})$ in Theorem \ref{thm:rate-choice}, and, in particular, it does not depend on the grid. Part (ii), however, is not necessarily implied by this choice. If the growth rate of the reminder, $ \bar{\eta}_{k}(r^{-1}_{n})$, is small compared to $\frac{|\mathbb{K}_{n}|}{\sqrt{n}}  \sup_{k' \geq k~in~\mathbb{K}_{n}} ||\varphi_{k'}(P) -  \varphi_{k}(P)||_{L^{2}(P)}$ then part (ii) requires that $\bar{\delta}_{2,k}(n)$ to be larger than the latter, i.e., $\bar{\delta}_{2,k}(n) \geq \frac{|\mathbb{K}_{n}|}{\sqrt{n}}  \sup_{k' \geq k~in~\mathbb{K}_{n}} ||\varphi_{k'}(P) -  \varphi_{k}(P)||_{L^{2}(P)}  $. 
   
   	Below we illustrate how to verify these assumptions in the context of Example \ref{exa:ipdf2-diff}.$\triangle$
\end{remark}

     \begin{proposition}\label{pro:Lepski-rate}
	Suppose all the conditions of Theorem \ref{thm:W-ALR} hold, and Assumption \ref{ass:Lepski-undersmooth} holds. Then 
	\begin{align*}
	\left|  	\frac{\sqrt{n} ( \psi_{\tilde{k}_{n}}(P_{n}) - \psi(P) ) }{|| \varphi_{\tilde{k}_{n}}(P)||_{L^{2}(P)}} -	\frac{n^{-1/2} \sum_{i=1}^{n}  \varphi_{\tilde{k}_{n}}(P)(Z_{i})}{|| \varphi_{\tilde{k}_{n}}(P)||_{L^{2}(P)}}   \right|  = O_{P} \left( C^{2}_{n} \sqrt{n} \inf_{k \in \mathbb{K}_{n}}  \left\{  \frac{\bar{\delta}_{1,k}(n) + \bar{\delta}_{2,k}(n) +  \bar{B}_{k}(P)    }{|| \varphi_{k}(P)||_{L^{2}(P)}}   \right\}        \right),
	\end{align*}
	where $C_{n} \equiv \sup_{k',k~in~\mathbb{K}_{n}} \frac{|| \varphi_{k}(P)||_{L^{2}(P)} }{|| \varphi_{k'}(P)||_{L^{2}(P)} } $.
\end{proposition}

\begin{proof}
	See Appendix \ref{app:discussion}.
\end{proof}

 
  The rate in the proposition is --- up to $C^{2}_{n}$ factor --- the minimum value of the sum of two terms: $\sqrt{n} \frac{\bar{\delta}_{1,k}(n) + \bar{\delta}_{2,k}(n)}{||\varphi_{k}(P)||_{L^{2}(P)}}$, that controls the reminder term of GAL and another one, $\sqrt{n} \frac{B_{k}(P)}{||\varphi_{k}(P)||_{L^{2}(P)}} $, that controls the approximation error term. Therefore, if there exists a choice of tuning parameter for which both these terms are asymptotically negligible, our result implies that
\begin{align*}
\sqrt{n} \frac{  \psi_{\tilde{k}_{n}}(P_{n}) - \psi(P)}{||\varphi_{\tilde{k}_{n}}(P)||_{L^{2}(P)}  } = n^{-1/2} \sum_{i=1}^{n} \frac{\varphi_{\tilde{k}_{n}}(P)(Z_{i})}{||\varphi_{\tilde{k}_{n}}(P)||_{L^{2}(P)} } + o_{P}(1).
\end{align*}
That is, the asymptotic distribution of $	\sqrt{n} \frac{  \psi_{\tilde{k}_{n}}(P_{n}) - \psi(P)}{|| \varphi_{\tilde{k}_{n}}(P)||_{L^{2}(P)}  }$ is given that of $n^{-1/2} \sum_{i=1}^{n} \frac{ \varphi_{\tilde{k}_{n}}(P)(Z_{i})}{|| \varphi_{\tilde{k}_{n}}(P)||_{L^{2}(P)} }$.
 On the other hand, if no such sequence exists, the proposition readily implies a rate of convergence of the form $\left|  	\frac{ \psi_{\tilde{k}_{n}}(P_{n}) - \psi(P) }{|| \varphi_{\tilde{k}_{n}}(P)||_{L^{2}(P)}} \right|  = O_{P} \left( n^{-1/2} +  C^{2}_{n} \inf_{k \in \mathbb{K}_{n} }  \left\{  \frac{\bar{\delta}_{1,k}(n) + \bar{\delta}_{2,k}(n) +  \bar{B}_{k}(P)    }{|| \varphi_{k}(P)||_{L^{2}(P)}}  \right\}        \right)$. 
 
 The sequence $(C_{n})_{n}$ quantifies the discrepancy of $k \mapsto || \varphi_{k}(P)||_{L^{2}(P)}$ within the grid. In cases where Assumption \ref{ass:Lepski-undersmooth}(ii) holds for all $k'$ and $k$ in $\mathbb{K}_{n}$, it readily follows that $C_{n} = 1 + |\mathbb{K}_{n}|^{-1} \sup_{k \in \mathbb{K}_{n}} \bar{\delta}_{2,k}(n)/||\varphi_{k}(P)||_{L^{2}(P)}$.

In order to shed more light on these expressions and the assumptions, we applied our results to the estimation of the integrated square PDF (example \ref{exa:ipdf2-reg}). In this setting, \cite{GineNickl2008} already provide a data-driven method to choose the bandwidth which is akin to ours. In fact, our method can be viewed as generalization of theirs to general regularizations, and the example illustrates that, at least in their setup, we do not have to pay an extra price for the added generality.

\begin{example}[Integrated Square Density (cont.)]\label{exa:ipdf2-Lepski}

	In this example the relevant tuning parameter is the bandwidth of the kernel, so we let $k \mapsto k^{-1}$, and as the grid, $\mathbb{K}_{n}$, we use the one proposed by Gine and Nickl (\cite{GineNickl2008}), i.e., $\mathbb{K}_{n} = \{ k \colon k^{-1} \in \mathcal{H}_{n}  \}$  where 
	\begin{align*}
	\mathcal{H}_{n} = \left\{ h \in \left[ \frac{(\log n )^{4}}{n^{2}} , \frac{1}{n^{1-\delta}} \right] \colon~h_{0} = \frac{1}{n^{1-\delta}},~h_{1} = \frac{\log n}{n},~h_{2} = \frac{l^{-1}_{n}}{n},~h_{k+1} = h_{k}/a,~\forall k=2,3,...   \right\}
	\end{align*}
	where $a > 1$, $(l_{n})_{n}$ diverges to infinity slower than $\log n$ and $l^{-1}_{n} < \log n$ and $\delta>0$ is arbitrary close to 1; in particular $\delta>0$ is such that $2\varrho < \frac{1+\delta}{2(1-\delta)}$. Of importance to our analysis are the fact that $|\mathbb{K}_{n}| = O(\log n)$ and that for sufficiently large $n$, any two consecutive elements in $\mathcal{H}_{n}$ are such that $h_{k+1}/h_{k} \leq 1/a$.

	The following lemma suggests an expression for the functions $(n,k) \mapsto \bar{\delta}_{i,k}(n)$ for $i \in \{1,2\}$.
	
	\begin{lemma}\label{lem:ipdf2-suff-assumption-bound}
		For any $M>0$, there exists a $N$ such that for all $n \geq N$,
		\begin{align*}
		 \sup_{h' \leq h~in~\mathcal{H}_{n}} ||\varphi_{1/h}(P) - \varphi_{1/h'}(P)||_{L^{2}(P)} \leq  4 ||C||_{L^{2}(P)} h^{\varrho} E_{|\kappa|}[ |U|^{m+\varrho}]   .
		\end{align*}
		where the function $C$ is the one in expression \ref{eqn:ipdf2-smooth} in Example \ref{exa:ipdf2-reg}, and \begin{align*}
		\mathbf{P} \left( \sup_{h \in \mathcal{H}_{n}} \sqrt{n} |\eta_{1/h}(P_{n} - P) |\geq M \left( \frac{\kappa(0)}{\sqrt{n} h} + \frac{1}{ \sqrt{n h}  } \right)  \right) \leq |\mathcal{H}_{n}| M^{-1}.
		\end{align*}
	\end{lemma}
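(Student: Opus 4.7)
The plan is to prove the two bounds separately, using the explicit expression for the influence function derived in Proposition \ref{pro:ipdf2-diff-1} together with standard U-statistic calculations.

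For the first bound, I would start from $\varphi_k(P)(z) = 2\{(\kappa_k \star P)(z) - E_P[(\kappa_k \star P)(Z)]\}$ and write, for $k = 1/h \leq k' = 1/h'$,
$$\varphi_{1/h}(P) - \varphi_{1/h'}(P) = 2(f - E_P[f(Z)]), \quad f \equiv (\kappa_{1/h} - \kappa_{1/h'})\star P.$$
Since variance is bounded by the second moment, $\|\varphi_{1/h}(P) - \varphi_{1/h'}(P)\|_{L^2(P)} \leq 2\|f\|_{L^2(P)}$. Adding and subtracting $p$ inside, $|f(z)| \leq |A_{1/h}(z)| + |A_{1/h'}(z)|$ with $A_k(z) \equiv (\kappa_k \star P)(z) - p(z)$. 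The change of variables $v = k(z-u)$ combined with smoothness condition (\ref{eqn:ipdf2-smooth}) gives the pointwise bound $|A_k(z)| \leq k^{-\varrho} C(z) E_{|\kappa|}[|U|^{\varrho}]$. Because $h'\leq h$ implies $(1/h)^{-\varrho}\geq (1/h')^{-\varrho}$, both error terms are dominated by $h^{\varrho} C(z) E_{|\kappa|}[|U|^{\varrho}]$. Taking $L^2(P)$-norms introduces $\|C\|_{L^2(P)}$ and yields the constant $4$ after summing the two contributions.

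For the second bound, I would decompose
$$\eta_{k}(P_n - P) = \iint \kappa_k(z-u)\,(P_n - P)(du)(P_n - P)(dz)$$
into its Hoeffding/von Mises pieces. Isolating the diagonal $i=j$ yields the deterministic term $\kappa_k(0)/n$. Writing $h_k(z) \equiv (\kappa_k\star P)(z)$ and $\bar{h}_k \equiv E_P[h_k(Z)]$, the remaining off-diagonal mass is a degenerate U-statistic with symmetric kernel
$$V_{ij} = \kappa_k(Z_i-Z_j) - h_k(Z_i) - h_k(Z_j) + \bar{h}_k,$$
together with an $O_P(n^{-3/2})$ linear residual. The key second-moment computation is
$$E[V_{ij}^2] \leq E[\kappa_k(Z_1-Z_2)^2] \leq k\,\|\kappa\|_{L^2}^2\,\|p\|_{L^\infty} = O(k),$$
so the degenerate U-statistic has variance $O(k/n^2)$ and first absolute moment $O(\sqrt{k}/n) = O(1/(n\sqrt{h}))$. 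Multiplying through by $\sqrt{n}$ gives
$$\sqrt{n}\,E[|\eta_{1/h}(P_n-P)|] \precsim \frac{\kappa(0)}{\sqrt{n}\,h} + \frac{1}{\sqrt{nh}}.$$
Markov's inequality applied pointwise in $h$ then gives $\mathbf{P}(\sqrt{n}|\eta_{1/h}(P_n - P)| \geq M(\kappa(0)/(\sqrt{n}h) + 1/\sqrt{nh})) \leq M^{-1}$, and a union bound over $\mathcal{H}_n$ produces the $|\mathcal{H}_n| M^{-1}$ bound in the statement.

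The main obstacle is the clean bookkeeping in the decomposition of $\eta_k(P_n-P)$: one must verify that the diagonal term $\kappa_k(0)/n$ and the degenerate U-statistic piece truly dominate, with the linear Hoeffding projection contributing only a lower-order $n^{-3/2}$ term because $\tilde{h}_k(Z) \equiv h_k(Z)-\bar{h}_k$ has bounded variance (using $\|p\|_{L^\infty}<\infty$ and $\int|\kappa|<\infty$). Once that decomposition is in place, both claims reduce to moment estimates that are insensitive to the geometry of the grid $\mathcal{H}_n$; the union bound is then a trivial finishing step, which is acceptable here because $|\mathcal{H}_n| = O(\log n)$.
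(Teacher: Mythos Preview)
Your proposal is correct and follows essentially the same route as the paper: for the first bound you use the smoothness condition (\ref{eqn:ipdf2-smooth}) via a triangle inequality through $p(z)$ (the paper does the same, only inside the $u$-integral rather than after integrating), and for the second bound you carry out the same von Mises/Hoeffding decomposition of $\eta_k(P_n-P)$ into the diagonal $\kappa_k(0)/n$, a degenerate $U$-statistic with $E[V_{12}^2]=O(k)$, and lower-order pieces, followed by Markov plus a union bound over $\mathcal{H}_n$. One small bookkeeping point: besides the $O_P(n^{-3/2})$ linear residual you mention, the decomposition also leaves a deterministic $-\bar h_k/n = O(n^{-1})$ term (the paper records it as $\|p\|_{L^\infty}/n$), but this is harmless since after the $\sqrt{n}$ scaling it is $O(n^{-1/2})$ and is dominated by $1/\sqrt{nh}$ on $\mathcal{H}_n$.
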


\begin{proof}
	See Appendix \ref{app:ipdf2-Lepski}.
\end{proof}
	
	Therefore, $\{(n,k) \mapsto \bar{\delta}_{i,k}(n)\}_{i=1,2}$ can be chosen as
	\begin{align*}
	(n,k) \mapsto \bar{\delta}_{1,k} \equiv  (\log n)^{3}   \frac{   k\kappa(0) + \sqrt{k}   }{ n},~and~(n,k) \mapsto \bar{\delta}_{2,k} \equiv   \frac{(\log n)^{3} k^{-(m+\varrho)}}{\sqrt{n}}.
	\end{align*}
	The lemma and this display illustrate the different nature of Assumptions \ref{ass:Lepski-undersmooth}(i)(ii). Part (i) bounds the reminder of the linear approximation and it increases with $k$ and decreases with $n$; this is reflected in the term $\frac{  \left( k\kappa(0)+ \sqrt{k}  \right)  }{ n }$ in the display. Part (ii) on the other hand essentially requires that the bandwidths in the grid $\mathcal{H}_{n}$ are not ``too far apart". In particular, it depends on the size of the bandwidths in $\mathcal{H}_{n}$; this is reflected in the term $k^{-\varrho} $ in the display. 
	
	It follows that $\sup_{n \in \mathbb{N}} C_{n} < \infty$ because there exists a constant $C>1$ such that $k \mapsto ||\varphi_{k}(P)||_{L^{2}(P)} \in [C^{-1},C]$ and is continuous for all $k \geq 1$. 	
	
	We verified that all assumptions of Proposition \ref{pro:Lepski-rate} hold. Moreover, Proposition \ref{pro:ipdf2-bias} in Appendix \ref{app:setup} implies that $h \mapsto \bar{B}_{1/h}(P) = O(h^{2 \varrho})$. Thus, the rate of Proposition \ref{pro:Lepski-rate} is given by\\ $\inf_{h \in \mathcal{H}_{n}} \{  (\log n)^{3}  \left( \frac{   \kappa(0)/h + 1/\sqrt{h}   }{\sqrt{n}} + h^{\varrho} \right) + \sqrt{n} h^{2 \varrho}   \} $. In fact, given our choice of $\mathcal{H}_{n}$ and $\delta$, some straightforward algebra shows that, at least for large $n$, the infimum over $\mathbb{K}_{n}$ and be replaced by the infimum over $\mathbb{R}_{+}$. Therefore,
\begin{align*}
\left|  	\frac{\sqrt{n} ( \psi_{\tilde{k}_{n}}(P_{n}) - \psi(P) ) }{|| \varphi_{\tilde{k}_{n}}(P)||_{L^{2}(P)}} -	\frac{n^{-1/2} \sum_{i=1}^{n}  \varphi_{\tilde{k}_{n}}(P)(Z_{i})}{|| \varphi_{\tilde{k}_{n}}(P)||_{L^{2}(P)}}   \right|  = \left\{ 
\begin{array}{cc}
O_{P} \left(   \left(\frac{\log n }{n} \right)^{\frac{4\varrho}{1 + 4\varrho} - 0.5}    \right) & if~\kappa(0) = 0  \\
O_{P} \left( \left(\frac{\log n }{n} \right)^{\frac{2\varrho}{1 + 2\varrho} - 0.5}      \right) &   if~\kappa(0) > 0 
\end{array}
\right. 
\end{align*}
For the case $\kappa(0) = 0$, we replicate the results by \cite{GineNickl2008}: if $\varrho > 0.25$, the reminder is negligible and root-n consistency follows, otherwise the optimal convergence rate is achieved. $\triangle$

\end{example}

\section{Conclusion}

 We propose an unifying framework to study the large sample properties of regularized estimators that extends the scope of the existing large sample theory for ``plug-in" estimators to a large class containing regularized estimators. Our results suggest that the large sample theory for regularized estimators does not constitute a large departure from the existing large sample theory for ``plug-in" estimators, in the sense that both are based on local properties of the mappings used for constructing the estimator. This last observation indicates that other large sample results developed for ``plug-in" estimators can also be extended to the more general setting of regularized estimators; e.g., estimation of the asymptotic variance of the estimator and, more generally, inference procedure like the bootstrap. We view this as a potentially worthwhile avenue for future research.

\appendix 

\stopcontents[section1]

\setstretch{1}
{\small{
\bibliographystyle{plainnat}
\bibliography{effi-biblio}
}}

\newpage 


\clearpage
\appendix
\startcontents[section2]
\printcontents[section2]{1}{1}{\section*{Table of Contents of the Appendix}}

\bigskip

\textbf{Notation:} Recall that $ca(X)$ for some set $X$ is the Banach space of all Borel measures over $X$ endowed with the the total variation norm, $||\mu||_{TV} = |\mu|(X)$ where $|.|$ is the total variation. 
For a real-valued sequence $(x_{n})_{n}$, $x_{n} \uparrow a \in \mathbb{R} \cup \{\infty\}$ means that the sequence is non-decreasing and its limit is $a$; $x_{n} \downarrow  a$ is defined analogously.

\section{Extensions of our Setup}

In this Appendix we briefly discuss how to extend our theory to general stationary models (Section \ref{app:timeseries}), we also discuss how to extend our setup to capture some sample splitting procedure commonly used in the literature (Section \ref{app:split}).

\subsection{Sample-Splitting Procedures}
\label{app:split}

Our regularized estimator --- like the plug-in one --- is defined in terms of $P_{n}$, and as such is permutation invariant. Thus, estimators that do not enjoy this property are not covered by our setup; perhaps the most notable class of estimators that falls in this category are estimators that rely on sample splitting procedures. We now argue that a slight generalization of our framework can encompass some splitting-sample procedures. 

In order to illustrate the challenges and proposed solutions that arise from these procedures, we present the problem in a simple canonical example. Suppose the parameter of interest is comprised of two quantities: a vector, denoted as $h \in \mathcal{H}$ ($\mathcal{H}$ being some subset of a Euclidean space), and a real number, denoted as $\theta \in \mathbb{R}$. The former should be treated as a so-called ``nuisance parameter" and the latter as the parameter of interest. Moreover the following ``triangular structure" holds: 
\begin{align}\label{eqn:app-SP-1}
	P \mapsto \psi(P) = (\theta(P,h(P)),h(P)),
\end{align}
where $P \mapsto h(P) \in \mathcal{H}$ is the mapping identifying the nuisance parameter and $(P,h) \mapsto \theta(P,h) \in \mathbb{R}$ is the mapping identifying the parameter of interest. The ``triangular structure" means that $h$ only depends on $P$ whereas $\theta$ depends on both $P$ and $h(P)$. An example of this structure is one where $\theta(P,h) = E_{P}[\phi(Z,h)]$ where $\phi$ is a known function that depends on the data $Z$ but also the nuisance parameter. 

Suppose the following estimator is considered. The data is divided in halves (for simplicity we assume the sample size to be even). An estimator, denoted as $\hat{\psi}_{1}$, is constructed by using the first half to construct an estimator of $h$ and using this estimator and the second half of the sample to construct the estimator for $\theta$. Another estimator, denoted as $\hat{\psi}_{2}$, is constructed by reversing the role of the first and second halves of the sample. The final estimator is simply $\hat{\psi} = 0.5 \hat{\psi}_{1} + 0.5 \hat{\psi}_{2}$. To keep the setup as simple as possible, we assume, for now, that the plug-in estimator is used (within each sub-sample)  for estimating both $\theta$ and $h$, i.e., there is no need to regularized the problem.

It is easy to see that $\hat{\psi}$ is not permutation invariant and thus does not fall in our framework. We now propose an alternative formulation of the original problem that, while seemingly redundant and even contrive at first glance, will allow us to extend our framework to this problem. This formulation entails thinking of $\psi$ as a function of \emph{two} probability distributions over $\mathbb{Z}$. Formally, $\bar{\psi} : \mathcal{M} \times \mathcal{M} \rightarrow \mathbb{R} \times \mathcal{H}$, where
\begin{align}\label{eqn:app-SP-2}
	\bar{\psi}(P_{1},P_{2}) = (\theta(P_{1},h(P_{2})),h(P_{2})).
\end{align}
At the population level this distinction is superfluous because, if the true probability is given by $P$, then $\psi(P) = \bar{\psi}(P,P)$. However, by taking $\bar{\psi}$ as the parameter mapping, the split-sample estimator can be formulated as follows. Let $P^{(1)}_{n}$ be the empirical distribution generated by the first half of the sample and $P^{(2)}_{n}$ be the empirical distribution generated by the second half of the sample. It follows that 
\begin{align*}
	\hat{\psi} = 0.5 \bar{\psi}(P^{(1)}_{n},P^{(2)}_{n}) +  0.5 \bar{\psi}(P^{(2)}_{n},P^{(1)}_{n}).
\end{align*} 
That is, the split-sample estimator can be seen as weighted average of two \emph{plug in estimators using the parameter mapping $\bar{\psi}$}. Since the estimators $(P^{(1)}_{n},P^{(2)}_{n})$ will converge to $(P,P)$ under the same conditions that ensure convergence of $P_{n}$ to $P$ (except in the former case the relevant sample size is $n/2$ not $n$), then one can establish consistency and asymptotic linearity by using the typical results for plug-in estimators, but using $\bar{\psi}$ as the original parameter mapping, and not $\psi$. 

The formulation using $\bar{\psi}$ allow us to tackle the case in which the estimation problem for $h$ or $\theta$ needs to be regularized; e.g. if $h$ is a function or a high-dimensional vector. We do this by proposing a regularization --- in the sense of Definition \ref{def:regular} --- \emph{for $\bar{\psi}$ as opposed to $\psi$}, and construct the regularized estimator as
\begin{align*}
0.5 \bar{\psi}_{k}(P^{(1)}_{n},P^{(2)}_{n}) +  0.5 \bar{\psi}_{k}(P^{(2)}_{n},P^{(1)}_{n}),~\forall k \in \mathbb{N}.
\end{align*} 
Thus our results can be applied to this case, by taking the regularization to be $(\bar{\psi}_{k})_{k}$. For instance, to establish consistency, following Theorem \ref{thm:consistent}, it suffices to verify continuity of $(\bar{\psi}_{k})_{k}$.

\bigskip

The example given by expression (\ref{eqn:app-SP-1}) has 3 features that we believe are key in order to extend our general theory for regularized estimators to encompass sample-splitting procedures. We now extrapolate these feature  from this simple canonical example to a more general setup

\begin{enumerate}
	\item The number of splits in the sample is fixed, in the example was 2, in general it can be $s \in \mathbb{N}$ but $s$ is assumed not to grow with $n$. Following the insight in expression \ref{eqn:app-SP-2}, the new parameter is given by $\bar{\psi}(P_{1},...,P_{s})$ where $P_{1},...,P_{s}$ belong to the model $\mathcal{M}$. Moreover, assuming, for simplicity, that $n = s m$ for some $m \in \mathbb{N}$, it also follows that one can construct a vector $P^{(1)}_{n},...,P^{(s)}_{n}$ of empirical probability distributions, one for each sub-sample. 
	\item The estimation procedure within each sub-sample admits a regularization as defined in our paper. That is, there exists a sequence $(\bar{\psi}_{k})_{k}$ such that $\bar{\psi}_{k}(P^{(\pi_{1})}_{n},...,P^{(\pi_{s})}_{n})$ is well-defined for each permutation $\pi_{1},...,\pi_{s}$ of $\{1,...,s\}$, and $\bar{\psi}_{k}(P,...,P)$ converges to $\bar{\psi}(P,...,P) = \psi(P)$ for each $P \in \mathcal{M}$.
	\item The final estimator is a convex combination of the estimators $\bar{\psi}_{k}(P^{(\pi_{1})}_{n},...,P^{(\pi_{s})}_{n})$. For instance, if $s=3$, then the final estimator is of the form $\sum_{i,j,k \in \{1,..,3\}} w_{i,j,k} \bar{\psi}_{k}(P^{(i)}_{n},P^{(j)}_{n},P^{(k)}_{n})$ where $(w_{i,j,k})_{i,j,k}$ are given weights. This last assumption is, in our opinion, less critical than the other two since we conjecture the convex combination can be replaces by a ``smooth" operator.
\end{enumerate}

We believe these features are general enough to encompass the sample-splitting procedures commonly used in applications 
They, however, do rule out cases where the sample splitting procedure demands number of splits that grow with the sample size. 

\subsection{Extension to General Stationary Models}
\label{app:timeseries}

We now briefly discuss how to extend our theory to general stationary models. In this case a \textbf{model} is a family of stationary probability distributions over $\mathbb{Z}^{\infty}$, i.e., a subset of $\mathcal{P}(\mathbb{Z}^{\infty})$ (the set of \emph{stationary} Borel probability distributions over $\mathbb{Z}^{\infty}$).  

Let $P$ denote the marginal distribution over $Z_{0}$ corresponding to $P \in \mathcal{P}(\mathbb{Z}^{\infty})$ (by stationarity, the time dimension is irrelevant).  For a given model $\mathcal{M}^{\infty}$, let $\mathcal{M}$ denote the set of marginal probability distribution over $Z_{0}$ corresponding to $\mathcal{M}^{\infty}$. A \textbf{parameter on model $\mathcal{M}^{\infty}$}  is a mapping from $\mathcal{M}$ to $\Theta$. That is, we restrict attention to mappings that depend only on the marginal distribution. Our theory can also be extended to cases where $\psi$ depends on the joint distribution of a \emph{finite} sub-collections of $\mathbb{Z}^{\infty}$. Allowing for the mapping to depend on the entire $\mathbf{P}$ is mathematical possible, but such object is of little relevance since it cannot be estimated from the data. 

A regularization of a parameter $\psi$ is defined analogously and the (relevant) empirical distribution is given, for each $\boldsymbol{z} \in \mathbb{Z}^{\infty}$, by $P_{n}(A) \equiv n^{-1} \sum_{i=1}^{n} 1\{ \boldsymbol{z}  \colon Z_{i}(\boldsymbol{z}) \in A  \} $ for any Borel set $A \subseteq \mathbb{Z}$.

Theorem \ref{thm:consistent} can be applied to this setup essentially without change, the difference with the i.i.d. setup lies on how to establish converges of $P_{n}$ to $P$ under $d$. Similarly, the notion of differentiability (Definition \ref{def:reg-G-diff}) can also be applied without change. The influence function will also be given by $z \mapsto D \psi_{k}(\mathbf{P})[\delta_{z} - P]$. The scaling, however, will be different, since
\begin{align*}
	E_{\mathbf{P}} \left[ \left( \sqrt{n}  D\psi_{k}(\mathbf{P})[P_{n} - P]             \right)^{2}  \right] = & 	E_{\mathbf{P}} \left[ \left( n^{-1/2} \sum_{i=1}^{n}  D\psi_{k}(\mathbf{P})[\delta_{Z_{i}} - P]             \right)^{2}  \right] \\
	= & || \varphi_{k}(\mathbf{P}) ||^{2}_{L^{2}(P)} \\
	& + 2 n^{-1} \sum_{i < j} E_{\mathbf{P}} \left[ \left( D\psi_{k}(\mathbf{P})[\delta_{Z_{i}} - P]             \right) \left(  D\psi_{k}(\mathbf{P})[\delta_{Z_{j}} - P]             \right)  \right] \\
	= & || \varphi_{k}(\mathbf{P}) ||^{2}_{L^{2}(P)}  + 2 n^{-1} \sum_{i=1}^{n-1} \sum_{j=i+1}^{n} \gamma_{j-i,k}(\mathbf{P}) \\
	\equiv  & || \varphi_{k}(\mathbf{P}) ||^{2}_{L^{2}(P)} (1+ 2 \Phi_{n,k}(\mathbf{P})) 
\end{align*}
where $\gamma_{j,k}(\mathbf{P}) \equiv E_{\mathbf{P}} \left[ \left(  D\psi_{k}(\mathbf{P})[\delta_{Z_{0}} - P]             \right) \left( D\psi_{k}(\mathbf{P})[\delta_{Z_{j}} - P]             \right)  \right]$ and 
\begin{align*}
	\Phi_{n,k}(\mathbf{P})  \equiv \sum_{i=1}^{n-1} \left(1 - \frac{i}{n}  \right) \frac{\gamma_{i,k}(\mathbf{P})}{\gamma_{i,0}(\mathbf{P})}.
\end{align*}

Hence, the natural scaling is $|| \varphi_{k}(\mathbf{P})||_{L^{2}(P)} \sqrt{(1+ 2 \Phi_{n,k}(\mathbf{P})) }$ and not $|| \varphi_{k}(\mathbf{P})||_{L^{2}(P)}$ as in the IID case. We note that our theory, a priori, does not require $\limsup_{n \rightarrow \infty} \Phi_{n,k}(\mathbf{P}) = \infty$. 

In view of the previous discussion, the relevant restriction in Theorem \ref{thm:W-ALR} is
\begin{align*}
	\sqrt{n} \frac{ \eta_{k}(P_{n}-P)}{|| \varphi_{k}(\mathbf{P}) ||_{L^{2}(P)} \sqrt{(1+ 2 \Phi_{n,k}(\mathbf{P})) }} = o_{\mathbf{P}}(1).
\end{align*} 
An analogous amendment  applies to Theorem \ref{thm:S-ALR}.

\section{Appendix for Section \ref{sec:setup}}
\label{app:setup}

The next lemma formalize verifies Claims 1-3 and 1'-3' in the text. Throughout, let $\rho_{k}(\cdot) = k \rho(k \cdot)$ for any $k \in \mathbb{K}$.

\begin{lemma}\label{lem:kernel-conv}
 For all $h>0$, $t \mapsto (\rho_{1/h} \star \rho_{1/h})(t) = (\rho \star \rho)_{1/h}(t)$.
\end{lemma}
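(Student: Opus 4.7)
The plan is to unfold both sides using the definition $\rho_{1/h}(\cdot) = h^{-1} \rho(h^{-1} \cdot)$ and identify them via a single change of variables in the convolution integral. This is a routine scaling identity for convolutions; the only work is bookkeeping with the factors of $h$.

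First I would write out the left-hand side directly from the definition of convolution:
\begin{align*}
(\rho_{1/h} \star \rho_{1/h})(t) = \int \rho_{1/h}(s)\,\rho_{1/h}(t-s)\,ds = h^{-2}\int \rho(s/h)\,\rho((t-s)/h)\,ds.
\end{align*}
Then I would perform the substitution $u = s/h$, so that $ds = h\,du$ and $(t-s)/h = t/h - u$. This collapses one factor of $h$ and yields
\begin{align*}
(\rho_{1/h} \star \rho_{1/h})(t) = h^{-1}\int \rho(u)\,\rho(t/h - u)\,du = h^{-1} (\rho \star \rho)(t/h).
\end{align*}

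Next I would expand the right-hand side: by the defining rescaling, $(\rho \star \rho)_{1/h}(t) = h^{-1}(\rho \star \rho)(t/h)$, which matches the expression just derived. Comparing the two displays gives the claimed equality for every $t$ and every $h > 0$.

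There is no real obstacle here; the identity is just the statement that the rescaling operation $f \mapsto f_{1/h}$ is an algebra homomorphism for convolution, up to the correct Jacobian factor, and the computation above makes this explicit. The only thing to be careful about is to keep track of the two factors $h^{-1}$ from each $\rho_{1/h}$ and the compensating factor $h$ from the change of variables, so that exactly one $h^{-1}$ survives, matching the scaling on the right-hand side.
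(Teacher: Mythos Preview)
Your proof is correct and follows essentially the same approach as the paper: expand the convolution, substitute $u = s/h$, and identify $h^{-1}(\rho\star\rho)(t/h)$ with $(\rho\star\rho)_{1/h}(t)$. The only cosmetic difference is that the paper invokes symmetry of $\rho$ in the final identification, which is in fact unnecessary since $(\rho\star\rho)(t/h) = \int \rho(u)\rho(t/h-u)\,du$ holds by definition of convolution; your version is slightly cleaner in this respect.
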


\begin{proof}
	For all $t \in \mathbb{R}$, 
	\begin{align*}
	(\rho_{1/h} \star \rho_{1/h})(t) = \int \rho_{1/h}(t-x) \rho_{1/h}(x) dx = & h^{-2} \int \rho((t-x)/h) \rho(x/h) dx = h^{-1} \int \rho(u) \rho(t/h - u) du \\
	= &  h^{-1} (\rho  \star \rho)(t/h)
	\end{align*}
	where the last line follows from symmetry of $\rho$.
\end{proof}

\begin{lemma}
	Claims 1-3 and 1'-3' in the text hold.
\end{lemma}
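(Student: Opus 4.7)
The plan is to establish all six claims from the single master identity
\[
\psi_{k}(P_{n}) = n^{-2}\sum_{i,j} \kappa_{k}(Z_{i}-Z_{j}),
\]
and then specialize the kernel $\kappa$ to $\lambda\in\{0,-1,1\}$. To derive the master identity I would simply substitute $P_{n}$ into the defining expression (\ref{eqn:ipdf2-defn}): unpacking the inner convolution gives $(\kappa_{k}\star P_{n})(x) = n^{-1}\sum_{j}\kappa_{k}(x-Z_{j})$, and integrating against the outer $P_{n}(dx)$ by Fubini yields the double sum. Note that $\kappa = \rho + \lambda(\rho - \rho\star\rho)$ implies, by the scaling $\kappa_{k}(t) = k\kappa(kt)$ and Lemma \ref{lem:kernel-conv}, that $\kappa_{k} = \rho_{k} + \lambda\bigl(\rho_{k} - (\rho\star\rho)_{k}\bigr) = \rho_{k} + \lambda\bigl(\rho_{k} - \rho_{k}\star\rho_{k}\bigr)$.

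Next I would treat the three values of $\lambda$ in turn. For $\lambda=0$ the identity reduces to $n^{-2}\sum_{i,j}\rho_{k}(Z_{i}-Z_{j})$; renaming the dummy indices and using the definition $\hat{p}_{k}(z) = n^{-1}\sum_{i}\rho_{k}(Z_{i}-z)$ gives $n^{-1}\sum_{j}\hat{p}_{k}(Z_{j})$, so no symmetry of $\rho$ is needed here. For $\lambda=-1$ the identity reduces to $n^{-2}\sum_{i,j}(\rho\star\rho)_{k}(Z_{i}-Z_{j})$; the second equality $\int\hat{p}_{k}(z)^{2}\,dz = n^{-2}\sum_{i,j}(\rho_{k}\star\rho_{k})(Z_{i}-Z_{j})$ follows by expanding the square, interchanging sums and integrals, applying the change of variable $u=Z_{i}-z$, and using the symmetry of $\rho$ (hence of $\rho_{k}$) to recognize the resulting integral as the convolution $\rho_{k}\star\rho_{k}$ evaluated at $Z_{i}-Z_{j}$; one final application of Lemma \ref{lem:kernel-conv} yields $(\rho\star\rho)_{k}$. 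For $\lambda=1$ the identity reads $n^{-2}\sum_{i,j}(2\rho_{k} - (\rho\star\rho)_{k})(Z_{i}-Z_{j})$, and splitting the sum and invoking the two preceding computations gives $2n^{-1}\sum_{i}\hat{p}_{k}(Z_{i}) - \int\hat{p}_{k}(z)^{2}\,dz$.

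Finally, for the leave-one-out versions I would observe that the master identity contains exactly $n$ diagonal terms equal to $\kappa_{k}(0) = k\kappa(0)$. Under the assumption $\kappa(0)=0$ these terms contribute nothing, so
\[
\psi_{k}(P_{n}) = n^{-2}\sum_{i\ne j}\kappa_{k}(Z_{i}-Z_{j}),
\]
and substituting the three values of $\lambda$ reproduces claims 1'--3' verbatim.

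The computation is essentially bookkeeping; the only non-trivial step is the convolution identity in claim 2, where one must correctly orient the change of variables so that the symmetry of $\rho$ comes in at the right place and Lemma \ref{lem:kernel-conv} can be applied. Once that step is handled, claims 1 and 3 follow by trivial rewriting and linearity, and the primed claims are immediate from $\kappa(0)=0$.
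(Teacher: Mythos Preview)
Your proposal is correct and follows essentially the same route as the paper: establish the master identity $\psi_k(P_n)=n^{-2}\sum_{i,j}\kappa_k(Z_i-Z_j)$, specialize $\kappa$ to each value of $\lambda$ (using Lemma~\ref{lem:kernel-conv} and symmetry of $\rho$ for the convolution case), and for the primed claims drop the diagonal via $\kappa(0)=0$. The only difference is that the paper additionally checks, case by case, that the resulting $\kappa$ is a valid kernel (integrates to one, smooth), which you omit but which lies outside the literal content of Claims 1--3 and 1$'$--3$'$ about the form of the implied estimator.
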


\begin{proof}
	For each case 1-3 and 1'-3', we show that the $\kappa$ yields the associated estimators and that $\kappa$ is a valid choice in each case.\\
	
	(1) Follows directly from the fact that $\rho_{1/h} \star P_{n} = \hat{p}_{1/h}$. \\
	
	(2) By Lemma \ref{lem:kernel-conv}, $t \mapsto (\rho_{1/h} \star \rho_{1/h})(t) = h^{-1} (\rho  \star \rho)(t/h)$. Hence, by taking $\kappa = \rho \star \rho$ it follows that $t \mapsto \kappa_{1/h}(t) = h^{-1} (\rho \star \rho)(t/h) = 	(\rho_{1/h} \star \rho_{1/h})(t) $. Moreover, $\kappa$ is indeed a pdf, symmetric and continuously differentiable. 
	
	We now show the form of the implied estimator. We use the notation $\langle . , . \rangle$ to denote the dual inner product between $L^{\infty}(\mathbb{R})$ and $ca(\mathbb{R})$, so  
	\begin{align*}
	\int (\kappa_{1/h} \star P)(x) P(dx) =& \langle \rho_{1/h} \star \rho_{1/h} \star P , P \rangle =  \int \int \rho_{1/h}(x-y) (\rho_{1/h} \star P)(y)dy P(dx) \\
	= & \int (\rho_{1/h} \star P)(y) \int \rho_{h}(y-x)  P(dx)   dy \\
	= & \langle \rho_{1/h} \star P , \rho_{1/h} \star P \rangle_{L^{2}}
	\end{align*}
	where the second line follows by symmetry of $\rho$. Since $\rho_{1/h} \star P_{n} = \hat{p}_{h}$ the result follows.\\
	
	(3) Take $\kappa(\cdot) \equiv (-\rho \star \rho(\cdot) + 2\rho(\cdot) )$. It follows that $\int \kappa(u) du = - \int \rho \star \rho(u) du+ 2 \int \rho(u)du = 1$. Smoothness follows from smoothness of $\rho$. Finally, we note that one can write $\kappa(t)$ as $\{\rho\star \rho(t) + 2(\rho(t) - \rho \star \rho(t))\}$.
	
	By Lemma \ref{lem:kernel-conv} $t \mapsto \kappa_{1/h}(t) = h^{-1}(\rho\star \rho)(t/h) + 2h^{-1}(\rho(t/h) - \rho \star \rho(t/h)) =  (\rho_{1/h}\star \rho_{1/h})(t) + 2(\rho_{1/h}(t) - \rho_{1/h} \star \rho_{1/h}(t))$. So the expression of the estimator follows from simple algebra.\\
	
	(1') Since $P$ does not have atoms, $Z_{i} = Z_{j}$ iff $i = j$ a.s.-$\mathbf{P}$. It follows that the estimator is given by  $n^{-2} \sum_{i,j} \kappa_{1/h}(Z_{i}-Z_{j}) = n^{-1} \kappa_{1/h}(0)  + n^{-2} \sum_{i \ne j} \kappa_{1/h}(Z_{i}-Z_{j}) $ a.s.-$\mathbf{P}$ and the result follows since $\kappa(0) = 0$.\\

	(2') The expression of the estimator follows from analogous calculations to those in 1'.\\
	
	(3') By the calculations in (3)
	\begin{align*}
	\int (\hat{p}_{h}(z))^{2} dz  = & \int (\rho_{1/h} \star \rho_{1/h} \star P_{n})(x) P_{n}(dx) = n^{-2} \sum_{i \ne j} \rho_{1/h} \star \rho_{1/h}(Z_{i} - Z_{j}) + n^{-1} \rho_{1/h} \star \rho_{1/h}(0) \\
	= &n^{-2} \sum_{i \ne j} \rho_{1/h} \star \rho_{1/h}(Z_{i} - Z_{j}) + n^{-1} \int (\rho_{1/h}(z))^{2}dz
	\end{align*}
	where the last line follows by symmetry. Hence
	\begin{align*}
	\int (\hat{p}_{h}(z))^{2} dz  - 2 \int (\hat{p}_{h}(z))^{2} dz  + n^{-1} \int (\rho_{1/h}(z))^{2}dz = & - n^{-2} \sum_{i \ne j} \rho_{1/h} \star \rho_{1/h}(Z_{i} - Z_{j})\\
	= & - n^{-2} \sum_{i,j} \rho_{1/h} \star \rho_{1/h}(Z_{i} - Z_{j}) \times 1\{ Z_{i} - Z_{j} \ne 0  \}
	\end{align*}
	where the last line follows because $P$ does not have atoms, so $Z_{i} = Z_{j}$ iff $i = j$ a.s.-$\mathbf{P}$.
	
	Similarly,\begin{align*}
	2 n^{-1} \sum_{i=1}^{n} \hat{p}_{h}(Z_{i}) - 2\rho_{1/h}(0)/n = & 2 \left( n^{-2} \sum_{i,j} \rho_{1/h}(Z_{i} - Z_{j}) - \rho_{1/h}(0)/n   \right) \\
	= &  2 n^{-2} \sum_{i \ne j}   \rho_{1/h}(Z_{i} - Z_{j}) \\
	= & 2 n^{-2} \sum_{i , j}   \rho_{1/h}(Z_{i} - Z_{j}) \times 1\{ Z_{i} - Z_{j} \ne 0 \}.
	\end{align*}
\end{proof}

The following proposition provides bounds for the approximation error.

   	\begin{proposition}\label{pro:ipdf2-bias}
   		There exists a finite constant $C>0$ such that for any $k \in \mathbb{K}$ and any $P \in \mathcal{M}$,
   		\begin{align*}
   		B_{k}(P) \leq C k^{-2\varrho}  E_{|\kappa|}[ |U|^{2 \varrho}].
   		\end{align*}
   	\end{proposition}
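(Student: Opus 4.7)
The plan is to exploit the fact that even though $p$ is only Hölder-$\varrho$, the autocorrelation $u \mapsto J(u) := \int p(y+u)p(y)\,dy$ is effectively $2\varrho$-smooth at $0$, which is the source of the $k^{-2\varrho}$ rate.

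First, I would rewrite $\psi_k(P)$ using Fubini and a translation. Writing $\psi_k(P) = \int (\kappa_k \star P)(x)\, P(dx) = \iint \kappa_k(z-x)\, p(z)p(x)\,dz\,dx$ and substituting $u = z-x$ yields
\begin{align*}
\psi_k(P) \;=\; \int \kappa_k(u)\, J(u)\, du, \qquad J(u) := \int p(y+u)p(y)\,dy.
\end{align*}
Since $\int \kappa_k = 1$ and $\psi(P) = J(0)$, setting $I(u) := J(u) - J(0)$ and rescaling $t = ku$ gives the clean identity
\begin{align*}
\psi_k(P) - \psi(P) \;=\; \int \kappa(t)\, I(t/k)\, dt.
\end{align*}

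The key step is then to show $|I(t)| \le \tfrac{1}{2}\|C\|_{L^2}^2\, |t|^{2\varrho}$. By a translation $y \mapsto y+t$ in the definition of $I(-t)$,
\begin{align*}
I(t) + I(-t) \;=\; \int p(y)[p(y+t)-p(y)]\,dy - \int p(y+t)[p(y+t)-p(y)]\,dy \;=\; -\int [p(y+t)-p(y)]^2\,dy.
\end{align*}
On the other hand, $J$ is even (change of variable $y \mapsto y - t$), so $I$ is even, hence $I(t) = \tfrac{1}{2}[I(t)+I(-t)] = -\tfrac{1}{2}\int [p(y+t)-p(y)]^2\,dy$. This is the crucial upgrade that squares the Hölder exponent: the pointwise $\varrho$-regularity of $p$ enters \emph{quadratically} into $I$.

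Finally, applying the smoothness condition (\ref{eqn:ipdf2-smooth}) pointwise in $y$ gives $\int [p(y+t)-p(y)]^2\,dy \le |t|^{2\varrho}\|C\|_{L^2}^2$, so
\begin{align*}
|\psi_k(P) - \psi(P)| \;\le\; \int |\kappa(t)|\,|I(t/k)|\,dt \;\le\; \tfrac{1}{2}\|C\|_{L^2}^2\, k^{-2\varrho} \int |\kappa(t)|\, |t|^{2\varrho}\,dt,
\end{align*}
which is the claimed bound with $C = \tfrac{1}{2}\|C\|_{L^2}^2$ (abusing notation for the constant). The main obstacle is really just spotting the symmetrization identity in the middle paragraph; the rest is routine Fubini and the Hölder hypothesis. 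Note that this argument does \emph{not} require $\kappa$ to be symmetric — the symmetry is extracted from the quadratic functional itself via the evenness of $J$.
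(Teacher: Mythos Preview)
Your proof is correct and clean. Both you and the paper arrive at the same intermediate object --- the autocorrelation $J(u)=\int p(y+u)p(y)\,dy$ (the paper writes it as $g=\bar p\star p$) --- and both need to show that $J(t)-J(0)$ is $O(|t|^{2\varrho})$. The difference lies in how this doubling of the exponent is obtained.

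The paper argues abstractly: condition~(\ref{eqn:ipdf2-smooth}) places $p$ (and $\bar p$) in the Besov class $\mathcal B_{2,\infty}^{\varrho}(\mathbb R)$, and then invokes a convolution lemma (Lemma~12 in \cite{gine2008uniform}) to conclude $g=\bar p\star p\in\mathcal B_{\infty,\infty}^{2\varrho}(\mathbb R)$, which for $2\varrho\notin\mathbb N$ is the H\"older class of exponent $2\varrho$. Your route is more elementary and completely self-contained: the symmetrization identity $I(t)=-\tfrac12\int[p(y+t)-p(y)]^2\,dy$ makes the $2\varrho$ exponent appear by squaring the $L^2$ modulus of continuity, with no external lemma and with an explicit constant $\tfrac12\|C\|_{L^2}^2$. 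The Besov machinery in the paper would extend more naturally to higher smoothness regimes (where one uses higher-order differences or moment conditions on $\kappa$), whereas your identity is specific to the quadratic structure of the functional but is sharper and more transparent in the present $\varrho\in(0,\tfrac12)$ setting. Your observation that the argument does not require symmetry of $\kappa$ is also correct; the evenness of $J$ comes solely from the autocorrelation structure.
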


\begin{proof}[Proof of Proposition \ref{pro:ipdf2-bias}]
		
	Since $P \in \mathcal{M}$ it admits a smooth pdf, $p$, it follows that
	\begin{align*}
	  	\psi_{k}(P)  - \psi(P)= & \int \left( \int \kappa_{k}(x -y) p(y) dy - p(x)  \right) p(x) dx  \\
	  	= &  \int  \int \kappa(u)  p(x-u/k) du - p(x) p(x) dx \\
	  	= & \int \left( \bar{p} \star p(u/k)   - \bar{p} \star p(0) \right) \kappa(u) du
	\end{align*}
	where $t \mapsto \bar{p}(t) \equiv p(-t) $.  Henceforth, let $ t \mapsto g(t) \equiv \bar{p} \star p(t)$. 
	
	Our condition (\ref{eqn:ipdf2-smooth}) implies that $p$ and $\bar{p}$ belong to the Besov space $\mathcal{B}_{2,\infty}^{\varrho}(\mathbb{R})$. Lemma 12 in \cite{gine2008uniform} implies that $g \in \mathcal{B}_{\infty,\infty}^{2\varrho}(\mathbb{R})$, in fact since $2 \varrho \notin \mathbb{N}$, $g$ is H\"{o}lder continuous with parameter $2 \varrho$. This implies and the previous display imply that $B_{k}(P) \leq C k^{-2 \varrho} \int |u|^{2 \varrho} |\kappa(u)| du$ for some universal constant $C< \infty$.

\end{proof}

\begin{remark}[Remarks about the Condition \ref{eqn:ipdf2-smooth}]
	\cite{GineNickl2008} imposes $p \in H^{\varrho}_{2}(\mathbb{R})$, whereas our restriction essentially implies that $p \in \mathcal{B}^{\varrho}_{2,\infty}(\mathbb{R})$. In that paper and in ours the smoothness coefficient $\varrho$ is less than $0.5$, i.e., we have ``low" degree of smoothness. Because of this, whether or not the kernel is a ``twicing kernel" does not matter for the control of the approximation error. For larger levels of smoothness, e.g. $\varrho > 1$, we expect the ``twicing kernel" --- or higher order kernels in general --- to yield different bounds for the approximation error. The goal of this example is to illustrate the scope of our methodology and thus we decided to stay as closed as possible to the existing literature and omit the case $\varrho > 0.5$. $\triangle$
\end{remark}

  \subsection{Some Remarks on the Regularization Structure in the NPIV Example.}
  \label{app:T-reg-suff}
  
  The general regularization structure, $(\mathcal{R}_{k,P},T_{k,P},r_{k,P})_{k \in \mathbb{N}}$, and conditions 1-2 are taken from \cite{EnglEtAl1996} Ch. 3-4. It is clear from the problem that 
    \begin{align}\label{eqn:NPIV-Domain}
    \mathbb{D}_{\psi} = \{ \mu \in ca(\mathbb{R} \times [0,1]^{2}) \colon E_{\mu}[|Y|^{2}] < \infty~and~E_{\mu}[|h(W)|^{2}] <\infty~\forall h \in L^{2}([0,1])  \}.
    \end{align}

 The next lemma presents useful properties of $\mathbb{D}_{\psi}$. The proof is straightforward and thus omitted.
 
 \begin{lemma}\label{lem:NPIV-domain}
 	(1) $\mathbb{D}_{\psi} \supseteq \mathcal{M} \cup \mathcal{D}$; (2) $\mathbb{D}_{\psi}$ is a linear subspace.
 \end{lemma}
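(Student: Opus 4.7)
The plan is to prove each of the two parts by directly checking the two defining conditions of $\mathbb{D}_{\psi}$ and exploiting linearity of the integral.

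For part (1), I would split into the two pieces $\mathcal{M}$ and $\mathcal{D}$. For any $P \in \mathcal{M}$, the condition $E_{P}[|Y|^{2}]<\infty$ is explicit in restriction (1) of the model definition in Example \ref{exa:NPIV}. For the second condition, I would use that $p_{W}=U(0,1)$, so that for every $h \in L^{2}([0,1])$,
\begin{align*}
E_{P}[|h(W)|^{2}] = \int_{0}^{1} |h(w)|^{2}\, dw = \|h\|_{L^{2}([0,1])}^{2} < \infty,
\end{align*}
which establishes $\mathcal{M} \subseteq \mathbb{D}_{\psi}$. For $P \in \mathcal{D}$, the argument is even simpler: $P$ has (at most countable, effectively finite for our purposes since $\mathcal{D}$ is used via $P_{n}$) support, so both $y \mapsto |y|^{2}$ and any fixed $h$ are integrated against a measure concentrated on a (discrete) set, and the relevant expectations reduce to finite sums of evaluations, which are finite.

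For part (2), I would take $\mu_{1},\mu_{2} \in \mathbb{D}_{\psi}$ and scalars $\alpha_{1},\alpha_{2} \in \mathbb{R}$, and check that $\mu \equiv \alpha_{1}\mu_{1} + \alpha_{2}\mu_{2}$ again satisfies the two conditions. The natural interpretation of $E_{\mu}[|Y|^{2}] < \infty$ for a signed measure is integrability with respect to the total variation $|\mu|$. The Jordan decomposition together with the triangle inequality for total variation yields $|\mu| \le |\alpha_{1}||\mu_{1}| + |\alpha_{2}||\mu_{2}|$, so
\begin{align*}
\int |y|^{2}\, d|\mu| \leq |\alpha_{1}| \int |y|^{2}\, d|\mu_{1}| + |\alpha_{2}| \int |y|^{2}\, d|\mu_{2}| < \infty,
\end{align*}
and the analogous bound holds for $\int |h(w)|^{2}\, d|\mu|$ uniformly in the relevant class. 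Linearity of $\mathbb{D}_{\psi}$ follows.

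I do not expect any real obstacle here — both claims reduce to straightforward bookkeeping with integrals and the triangle inequality for total variation. The only subtle point worth flagging explicitly in the proof is how to read $E_{\mu}[\,\cdot\,]$ for signed $\mu$ (namely, as integrability against $|\mu|$), since without this convention the finiteness statements in the definition of $\mathbb{D}_{\psi}$ would be ambiguous; once that convention is fixed, linearity is immediate from the bound on $|\mu|$ above.
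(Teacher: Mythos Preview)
Your proof is correct and is precisely the straightforward verification the paper has in mind; the paper itself omits the proof entirely, stating only that it is ``straightforward and thus omitted.'' Your explicit remark that $E_{\mu}[\,\cdot\,]$ for signed $\mu$ should be read as integrability against $|\mu|$ is the right convention to make part~(2) go through and is implicit in how the paper subsequently uses $\mathbb{D}_{\psi}$.
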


%

  We now discuss canonical examples of regularizations methods for the first and second stage that we consider in this paper. 
  
      \bigskip
      
     \textsc{First Stage Regularization.} For any $P \in \mathbb{D}_{\psi}$ and any $k \in \mathbb{N}$, we can generically write $r_{k,P}$ as  
     \begin{align*}
     r_{k,P}(x) \equiv \int  y  \int  U_{k}(x',x)   P(dy,dx') ,~\forall x\in [0,1],
     \end{align*}
     where $U_{k} \in L^{\infty}([0,1]^{2})$ symmetric. For instance, if
   	\begin{align*}
      	(x',x) \mapsto U_{k}(x',x) =  k u(k (x-x'))
      	\end{align*}   	
  where $u$ is a symmetric around 0, smooth pdf, then $x \mapsto r_{k,P}(x) 	=  \int  y  \int  ku(k(x-x')) P(dy,dx')$, which is the the so-called kernel-based approach; e.g., for ill-posed inverse problems see \cite{HallHorowitz2005} among others.
  
  In the case one defined $r_{P}$ using conditional probabilities, i.e., $r_{P}(x) = \int y p(y|x)dy$. The kernel approach becomes
  \begin{align*}
  	x \mapsto r_{k,P}(x) 	=  \int  y  \frac{ \int  ku(k(x-x')) P(dy,dx')}{\int  ku(k(x-x')) P(dx')};
  \end{align*}
  (e.g. \cite{DarollesFanFlorensRenault2011}). Observe that $r_{P}$ is only defined for probability measures for which the pdf exists.
  
  Another approach is to directly set
  \begin{align*}
  (x',x) \mapsto U_{k}(x',x) =  (u^{k}(x))^{T} Q_{uu}^{-1} u^{k}(x'),
  \end{align*}
  where $(u_{k})_{k \in \mathbb{N}}$ is some basis function in $L^{2}([0,1])$ and $Q_{uu} \equiv E_{Leb}[(u^{k}(X)) (u^{k}(X))^{T}]$. In this case, $x \mapsto r_{k,P}(x) 	=  (u^{k}(x))^{T} Q_{uu}^{-1} E_{P}[u^{k}(X) Y]$, which is the so-called series-based approach; e.g., for ill-posed inverse problems see \cite{AiChen2003,NeweyPowell2003} among others.

  Analogously, one can define $T_{k,P}$ as
    \begin{align*}
    g \mapsto T_{k,P}[g](x) \equiv \int  g(w)  \int  U_{k}(x',x)   P(dw,dx') ,~\forall x\in [0,1],
    \end{align*}
  and the same observations above applied to this case.

  The next lemma characterizes the adjoint for any $P \in \mathcal{M}$ (i.e., $P$ as a pdf $p$). In this case, we can view the regularization as an operator acting on $T_{P}[g](x) = \int g(w) p(w,x) dw$, given by $\mathcal{U}_{k} : L^{2}([0,1]) \rightarrow L^{2}([0,1])$, where $\mathcal{U}_{k}T_{P}[g](x)\equiv \int U_{k}(x',x) \int g(w) p(w,x') dw dx'  $.
  
    \begin{lemma}
    	For any $k \in \mathbb{N}$ and any $P \in \mathcal{M}$ (in particular, it admits a pdf $p$), the adjoint of $T_{k,P}$ is $T^{\ast}_{k,P} : L^{2}([0,1])\rightarrow L^{2}([0,1])$ and is given by
    	\begin{align*}
    	f \mapsto T^{\ast}_{k,P}[f] = T^{\ast} \mathcal{U}_{k}[f].
    	\end{align*} 
    \end{lemma}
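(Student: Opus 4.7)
The plan is to show the identity $T_{k,P} = \mathcal{U}_{k} \circ T_{P}$ as operators on $L^{2}([0,1])$, and then combine the standard adjoint rule $(AB)^{\ast}=B^{\ast}A^{\ast}$ with the self-adjointness of $\mathcal{U}_{k}$ (which will follow from the symmetry of the kernel $U_{k}$). Everything reduces to verifying these two facts and a routine integration-by-parts via Fubini.

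First, I would unwind the definitions. For any $g \in L^{2}([0,1])$ and any $x \in [0,1]$, since $P \in \mathcal{M}$ admits a pdf $p$, Fubini (justified by $\|p_{XW}\|_{L^{\infty}}<\infty$ and $U_{k}\in L^{\infty}([0,1]^{2})$) gives
\begin{align*}
T_{k,P}[g](x) \;=\; \int\!\!\int g(w) U_{k}(x',x) p(w,x')\,dw\,dx' \;=\; \int U_{k}(x',x)\, T_{P}[g](x')\,dx' \;=\; \mathcal{U}_{k}\bigl[T_{P}[g]\bigr](x),
\end{align*}
so $T_{k,P}=\mathcal{U}_{k}\circ T_{P}$ as bounded operators from $L^{2}([0,1])$ into itself (boundedness of $T_{P}$ is implicit in the NPIV setup, and boundedness of $\mathcal{U}_{k}$ follows from Cauchy--Schwarz together with $U_{k}\in L^{\infty}([0,1]^{2})$).

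Second, I would show $\mathcal{U}_{k}$ is self-adjoint. For $f_{1},f_{2} \in L^{2}([0,1])$, applying Fubini and using the symmetry $U_{k}(x',x)=U_{k}(x,x')$,
\begin{align*}
\langle \mathcal{U}_{k}[f_{1}], f_{2}\rangle \;=\; \int\!\!\int U_{k}(x',x) f_{1}(x') f_{2}(x)\,dx'\,dx \;=\; \int\!\!\int U_{k}(x,x') f_{2}(x) f_{1}(x')\,dx\,dx' \;=\; \langle f_{1}, \mathcal{U}_{k}[f_{2}]\rangle,
\end{align*}
so $\mathcal{U}_{k}^{\ast}=\mathcal{U}_{k}$.

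Finally, combining the two steps with the standard adjoint-of-composition identity on Hilbert spaces yields
\begin{align*}
T_{k,P}^{\ast} \;=\; (\mathcal{U}_{k}\circ T_{P})^{\ast} \;=\; T_{P}^{\ast}\circ \mathcal{U}_{k}^{\ast} \;=\; T_{P}^{\ast}\circ \mathcal{U}_{k},
\end{align*}
which is exactly the claimed formula. There is essentially no hard step here — the only thing to be careful about is the justification of Fubini at each interchange, which is routine under the paper's standing assumptions $\|p_{XW}\|_{L^{\infty}}<\infty$ and $U_{k}\in L^{\infty}([0,1]^{2})$, so I would mention these integrability checks but not grind through them.
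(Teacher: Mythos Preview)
Your proof is correct and follows essentially the same approach as the paper. The paper computes $\langle T_{k,P}[g],f\rangle$ directly, starting from the identification $T_{k,P}[g]=\mathcal{U}_{k}T_{P}[g]$, expanding via Fubini, and recognizing the result as $\langle g, T_{P}^{\ast}\mathcal{U}_{k}[f]\rangle$; you make the same underlying structure explicit by isolating the factorization, the self-adjointness of $\mathcal{U}_{k}$, and the composition rule as three separate steps.
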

    
    \begin{proof}
    	For any $k \in \mathbb{N}$ and any $P \in \mathcal{M}$, 
    	\begin{align*}
    	\langle T_{k,P}[g],f \rangle_{L^{2}([0,1])} = &  \int \left( \mathcal{U}_{k}T_{P}[g](x) \right) f(x) dx \\
    	= & \int g(w)  \int \int U_{k}(x',x) f(x) dx  p(w,x') dx'  dw\\
    	= & 	\langle g, T^{\ast}_{P} \mathcal{U}_{k}[f]  \rangle_{L^{2}([0,1])}
    	\end{align*}
    	for any $g,f \in L^{2}([0,1])$.	
    \end{proof}
    
    If $P \notin \mathcal{M}$, in particular if it does not have a pdf (with respect to Lebesgue), the adjoint operator is different; the reason being that $T^{\ast}_{P}$ does not map onto a space of functions because $P$ does not have a pdf.  In this case, consider the operator $A_{P} : L^{2}([0,1]) \rightarrow ca([0,1])$ given by $f \mapsto A_{P}[f](B) = \int_{w \in B} \int f(x) P(dw,dx) $ for any $B \subseteq [0,1]$ Borel. Note that $|A_{P}[f](.)| \leq \int |f(x)| P(dx) < \infty$ provided that $f \in L^{2}(P)$, which is the case for any $P \in \mathbb{D}_{\psi}$. The next lemma characterizes the adjoint in this case.
    \begin{lemma}
    	For any $k \in \mathbb{N}$ and any $P \in \mathbb{D}_{\psi}$, the adjoint of $T_{k,P}$ is given by
    	\begin{align*}
    	f \mapsto T^{\ast}_{k,P}[f] = A_{P} \mathcal{U}_{k}[f].
    	\end{align*} 
    \end{lemma}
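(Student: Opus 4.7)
The plan is to establish the adjoint identity $\langle T_{k,P}[g], f\rangle_{L^{2}([0,1])} = \langle g, A_{P}\mathcal{U}_{k}[f]\rangle$ by a direct Fubini computation, where the right-hand pairing is the natural duality $(g,\mu)\mapsto \int g\, d\mu$ between $L^{2}([0,1])$ (functions of $w$) and $ca([0,1])$. The calculation itself is essentially one line; the work is in verifying that all integrals make sense and that Fubini applies.

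First I would collect the required integrability. Since $U_{k}\in L^{\infty}([0,1]^{2})$ and $f\in L^{2}([0,1])\subseteq L^{1}([0,1])$, the function
\begin{equation*}
x'\mapsto \mathcal{U}_{k}[f](x') = \int U_{k}(x',x) f(x)\, dx
\end{equation*}
is uniformly bounded by $\|U_{k}\|_{\infty}\|f\|_{L^{1}}$. Next I would check that $A_{P}\mathcal{U}_{k}[f]$ is a well-defined element of $ca([0,1])$: by the bound above and the finite variation of $P$,
\begin{equation*}
|A_{P}\mathcal{U}_{k}[f]|(B)\le \int_{w\in B}\!\!\int |\mathcal{U}_{k}[f](x')|\,|P|(dw,dx') \le \|U_{k}\|_{\infty}\|f\|_{L^{1}}\,|P|_{W}(B),
\end{equation*}
where $|P|_{W}$ is the $W$-marginal of $|P|$. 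In particular $A_{P}\mathcal{U}_{k}[f]\ll |P|_{W}$, so $g\in L^{2}(|P|_{W})$ (which holds by the defining property of $\mathbb{D}_{\psi}$ in Lemma \ref{lem:NPIV-domain} and expression \ref{eqn:NPIV-Domain}) is $|A_{P}\mathcal{U}_{k}[f]|$-integrable, making the pairing $\int g\, d(A_{P}\mathcal{U}_{k}[f])$ meaningful.

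Having established integrability, Fubini's theorem applies to the triple integral $\iiint |g(w) U_{k}(x',x) f(x)|\, d|P|(w,x')\, dx$, which is finite by the same bound combined with $g\in L^{2}(|P|_{W})$. Then I would compute
\begin{align*}
\langle T_{k,P}[g], f\rangle_{L^{2}([0,1])} &= \int f(x)\!\iint g(w) U_{k}(x',x) P(dw,dx')\, dx \\
&= \iint g(w)\left[\int U_{k}(x',x) f(x)\, dx\right] P(dw,dx')\\
&= \iint g(w)\, \mathcal{U}_{k}[f](x')\, P(dw,dx') = \int g(w)\, (A_{P}\mathcal{U}_{k}[f])(dw),
\end{align*}
where the last equality is just the definition of $A_{P}\mathcal{U}_{k}[f]$ applied (in the integration-against-$g$ form) to the bounded measurable integrand $\mathcal{U}_{k}[f]$. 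This is the desired identity $\langle T_{k,P}[g], f\rangle = \langle g, A_{P}\mathcal{U}_{k}[f]\rangle$.

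The only conceptual subtlety — and really the main step to get right — is the choice of duality pairing: when $P$ has no Lebesgue density, $T_{k,P}^{*}$ cannot be represented as an $L^{2}([0,1])$ function, so one must enlarge the target space to $ca([0,1])$ and use the pairing $\int g\, d\mu$. As a consistency check I would verify that when $P\in\mathcal{M}$ has a density $p$, $A_{P}\mathcal{U}_{k}[f]$ is absolutely continuous with respect to Lebesgue with density $w\mapsto \int \mathcal{U}_{k}[f](x')p(w,x')\,dx' = T_{P}^{*}\mathcal{U}_{k}[f](w)$, recovering the previous lemma. Beyond this, the argument is a routine Fubini computation and I do not anticipate any real obstacle.
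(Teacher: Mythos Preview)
Your proposal is correct and follows essentially the same approach as the paper: both arguments compute $\langle T_{k,P}[g],f\rangle_{L^{2}([0,1])}$ by inserting the definition of $T_{k,P}$ and swapping the order of integration via Fubini to arrive at $\int g(w)\,\mathcal{U}_{k}[f](x')\,P(dw,dx')$, which is then identified with the pairing $\langle g, A_{P}\mathcal{U}_{k}[f]\rangle$. Your version is more careful than the paper's about verifying integrability, making explicit the duality pairing against $ca([0,1])$, and checking consistency with the density case, but the underlying argument is the same one-line Fubini computation.
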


Since $U_{k} \in L^{2}([0,1]^{2})$, $T^{\ast}_{k,P}[f]([0,1])  \precsim ||f||_{L^{2}(P)} ||U_{k}||_{L^{2}([0,1]^{2})}$ which is finite for $P \in \mathbb{D}_{\psi}$. So $T^{\ast}_{k,P}[f]$ in fact maps to $ca([0,1])$. 
    
    \begin{proof}
    	For any $k \in \mathbb{N}$ and any $P \in \mathbb{D}_{\psi}$, 
    	\begin{align*}
    	\langle T_{k,P}[g],f \rangle_{L^{2}([0,1])} = &  \int \int  g(w)  \int  U_{k}(x',x)   P(dw,dx')  f(x) dx \\
    	= & \int  g(w) \left( \int U_{k}(x',x) f(x) dx \right) P(dw,dx')\\
    	= & \int  g(w) \int \mathcal{U}_{k}[f](x') P(dw,dx'),
    	\end{align*} 	
    	for any $g,f \in L^{2}([0,1])$.	
    \end{proof}

   One possibility to avoid the aforementioned technical issue with the adjoint operator is to define a regularization given by 
   \begin{align*}
   g \mapsto T_{k,P}[g](x) \equiv \int  g(w) \left\{  \int  U_{k}(x',x)V_{k}(w',w)   P(dw',dx') \right\} dw ,~\forall x\in [0,1],
   \end{align*} 
   where $U_{k} \in L^{\infty}([0,1]^{2})$ symmetric. For example, if $V_{k}(w',w) = h_{k}^{-1} v((w'-w)/h_{k}$ (and $U_{k}$ is also given by the kernel-based approach), then in this case $(x,w) \mapsto  \mathcal{W}_{k}[P](x,w) \equiv \int  U_{k}(x',x)V_{k}(w',w)   P(dw',dx') $ is a pdf over $[0,1]^{2}$ (regardless of whether $P$ has a pdf or not), and thus 
   \begin{align*}
   f \mapsto T_{k,P}^{\ast}[f](w) = \int f(x) \mathcal{W}_{k}[P](x,w)dx.
   \end{align*}  
   For instance, \cite{HallHorowitz2005} considered a method akin to this.

     In the case $T_{P}$ is defined as a conditional operator, one can consider the sieve-based approach for $U_{k}$ and $V_{k}(w',w) = (v^{k}(w))^{T} Q_{vv}^{-1} v^{k}(w')$ for some $(v_{k})_{k \in \mathbb{N}}$ basis function in $L^{2}([0,1])$. Then, in this case,
	\begin{align*}
	T_{k,P}[g](x) 	= & (u^{k}(x))^{T} Q_{uu}^{-1} E_{P}[u^{k}(X) (v^{k}(W))^{T}Q_{vv}^{-1} E_{Leb}[v^{k}(W)g(W)] ] \\
	= & (u^{k}(x))^{T} Q_{uu}^{-1} Q_{uv} Q_{vv}^{-1} E_{Leb}[v^{k}(W)g(W)] \\
	= & \int g(w) \left\{ \int (u^{k}(x))^{T} Q_{uu}^{-1} u^{k}(x') (v^{k}(w'))^{T} Q_{vv}^{-1} v^{k}(w)  P(dw',dx') \right\} dw
	\end{align*}
  where $Q_{vv} \equiv E_{Leb}[(v^{k}(W)) (v^{k}(W))^{T}]$  and $Q_{uv} \equiv E_{P}[u^{k}(X) (v^{k}(W))^{T}] $, and 
  \begin{align*}
  	f \mapsto T_{k,P}^{\ast}[f](w) = & (v^{k}(w))^{T}Q_{vv}^{-1} Q_{uv}^{T} Q_{uu}^{-1} E_{Leb}[u^{k}(X)f(X)]\\
  	  = & \int f(x) \mathcal{W}_{k}[P](x,w) dx
  \end{align*} 
  where $\mathcal{W}_{k}[P](x,w) = \int (u^{k}(x))^{T} Q_{uu}^{-1} u^{k}(x') (v^{k}(w'))^{T} Q_{vv}^{-1} v^{k}(w)  P(dw',dx') $.

   \bigskip 
   
      \textsc{Second Stage Regularization.}  For the second stage regularization, one widely used approach is the so-called Tikhonov- or Penalization-based approach, given by solving 
      \begin{align*}
      	\arg\min_{\theta \in \Theta} \{ E_{P}\left[ \left( r_{k,P}(X) - T_{k,P}[\theta](X)   \right)^{2} \right]   + \lambda_{k} ||\theta||^{2}_{L^{2}([0,1])} \}
      \end{align*}
      which is non-empty and a singleton. This specification implies that 
      \begin{align*}
      	\mathcal{R}_{k,P} = (T^{\ast}_{k,P} T_{k,P} + \lambda_{k}I)^{-1},
      \end{align*}
      which is well-known to be well-defined, i.e., 1-to-1 and bounded for any $\lambda_{k}>0$.

       Another widely used approach is the sieve-based approach that consists on setting up 
           \begin{align*}
      \arg\min_{\theta \in \Theta_{k}}  E_{P}\left[ \left( r_{k,P}(X) - T_{k,P}[\theta](X)   \right)^{2} \right]   
      \end{align*}
      and specifie the $(\Theta_{k})_{k}$ such that (1) $\cup_{k} \Theta_{k} $ is dense in $\Theta$ and $\Theta_{k}$ has dimension $k$, and (2) $\arg\min$ exists and is a singleton. For instance if $\Theta_{k}$ is convex, then a solution exists and is unique provided that $Kernel(T_{k,P}|\Theta_{k}) = \{0\}$.  In this case 
       \begin{align*}
      \mathcal{R}_{k,P} = (\Pi_{k}^{\ast} T^{\ast}_{k,P} T_{k,P} \Pi_{k})^{-1},
      \end{align*}
      where $\Pi_{k}$ is the projection onto $\Theta_{k}$.

      \bigskip
      
      \textsc{Verification of Definition \ref{def:regular}.} The next Lemma shows that given Conditions 1-2 listed in Example \ref{exa:NPIV}, $(\psi_{k}(P))_{k\in \mathbb{N}}$ (and hence $(\gamma_{k}(P))_{k\in \mathbb{N}}$)  is in fact a regularization.
\begin{lemma}
	Suppose  Conditions 1-2 listed in Example \ref{exa:NPIV} hold. Then $(\psi_{k}(P))_{k \in \mathbb{N}}$ (and hence $(\gamma_{k}(P))_{k\in \mathbb{N}}$)  is a regularization with $\mathbb{D}_{\psi}$ given in \ref{eqn:NPIV-Domain}.
\end{lemma}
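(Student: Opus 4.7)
The plan is to verify the two requirements of Definition \ref{def:regular} separately. Condition 1 is mostly bookkeeping once Lemma \ref{lem:NPIV-domain} is in hand, while Condition 2 rests on a clean algebraic decomposition that isolates the two hypotheses (1)--(2) of Example \ref{exa:NPIV}.

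First, for Condition 1, Lemma \ref{lem:NPIV-domain} already gives $\mathbb{D}_{\psi} \supseteq \mathcal{M} \cup \mathcal{D}$ and that $\mathbb{D}_{\psi}$ is a linear subspace, which is why we can work on $\mathrm{lin}(\mathcal{M}\cup \mathcal{D})\subseteq \mathbb{D}_{\psi}$ when defining $\psi_{k}(P) \equiv \mathcal{R}_{k,P}[T^{\ast}_{k,P}[r_{k,P}]]$. I would then simply check that, for $P\in \mathbb{D}_{\psi}$, the objects $r_{k,P}\in L^{2}([0,1])$ and $T^{\ast}_{k,P}[r_{k,P}]\in L^{2}([0,1])$ are well-defined (this uses $E_{P}[|Y|^{2}]<\infty$, $U_{k}\in L^{\infty}([0,1]^{2})$, and the analogous fact for $T_{k,P}^{\ast}$ discussed right above), and that $\mathcal{R}_{k,P}$ is a bounded linear operator on $L^{2}([0,1])$, which is standard for both the Tikhonov and the sieve-based choices. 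Hence $\psi_{k}(P)\in \Theta \subseteq L^{2}([0,1])$ for every $P\in\mathbb{D}_{\psi}$ and every $k\in\mathbb{N}$.

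For Condition 2, fix $P\in\mathcal{M}$ and use that, by identification and Condition (1)/(2) of Example \ref{exa:NPIV}, one has $\psi(P) = (T_{P}^{\ast}T_{P})^{-1}T_{P}^{\ast}[r_{P}]$ (the inverse being understood on $\Theta$, where $T_{P}$ is injective by uniqueness of $h$). The key step is the decomposition
\begin{align*}
\psi_{k}(P) - \psi(P)
= \mathcal{R}_{k,P}\bigl[T_{k,P}^{\ast}[r_{k,P}-r_{P}]\bigr]
\;+\;\Bigl(\mathcal{R}_{k,P}\bigl[T_{k,P}^{\ast}[r_{P}]\bigr] - (T_{P}^{\ast}T_{P})^{-1}T_{P}^{\ast}[r_{P}]\Bigr).
\end{align*}
Taking $L^{2}([0,1])$-norms and applying the triangle inequality, the first summand is $o(1)$ as $k\to\infty$ by Condition (2) of Example \ref{exa:NPIV}, and the second summand is $o(1)$ by Condition (1) of Example \ref{exa:NPIV} evaluated at the fixed function $g = r_{P}\in L^{2}([0,1])$ (which lies in $L^{2}$ because $P\in\mathcal{M}$ and $E_{P}[|Y|^{2}]<\infty$). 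This yields $B_{k}(P)=\|\psi_{k}(P)-\psi(P)\|_{\Theta}\to 0$, establishing Condition 2.

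The consequence for $\boldsymbol{\gamma}=(\gamma_{k})_{k}$ is immediate: since $\gamma_{k}(P)-\gamma(P) = \int \pi(w)(\psi_{k}(P)(w)-\psi(P)(w))\,dw$, Cauchy--Schwarz gives $|\gamma_{k}(P)-\gamma(P)|\le \|\pi\|_{L^{2}([0,1])}\|\psi_{k}(P)-\psi(P)\|_{L^{2}([0,1])}\to 0$, and linearity/boundedness of the functional $g\mapsto \int \pi g$ transfers Condition 1 from $\boldsymbol{\psi}$ to $\boldsymbol{\gamma}$. The only mildly delicate point I anticipate is the pointwise (as opposed to uniform) nature of hypothesis (1): it must be applied at the specific $g=r_{P}$, which is legitimate only because $P$ is fixed in Condition 2 of Definition \ref{def:regular}; I would flag this explicitly rather than attempting any uniformity.
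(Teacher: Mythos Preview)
Your proposal is correct and follows essentially the same approach as the paper: Condition~1 via Lemma~\ref{lem:NPIV-domain}, and Condition~2 via the identical decomposition $\psi_{k}(P)-\psi(P)=\mathcal{R}_{k,P}T_{k,P}^{\ast}[r_{k,P}-r_{P}]+(\mathcal{R}_{k,P}T_{k,P}^{\ast}-(T_{P}^{\ast}T_{P})^{-1}T_{P}^{\ast})[r_{P}]$, with the two pieces handled by hypotheses~(2) and~(1) respectively. Your version is simply more explicit---you spell out the well-definedness checks and the Cauchy--Schwarz step for $\boldsymbol{\gamma}$---whereas the paper's proof is a terse two-liner that omits these details.
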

       
      \begin{proof}
      	Condition 1 in Definition \ref{def:regular} is satisfied by Lemma \ref{lem:NPIV-domain}. Regarding condition 2 in Definition \ref{def:regular}, note that
      	\begin{align*}
      		||\psi_{k}(P) - \psi(P)||_{L^{2}([0,1])} \leq || \mathcal{R}_{k,P} T^{\ast}_{k,P} [r_{k,P} - r_{P}]||_{L^{2}([0,1])}  + || ( \mathcal{R}_{k,P} T^{\ast}_{k,P} - (T^{\ast}_{P}T_{P})^{-1} T^{\ast}_{P}) [r_{P}]||_{L^{2}([0,1])}, 
      	\end{align*}
      	which vanishes as $k$ diverges by our conditions 1-2. 
      \end{proof}

\section{Appendix for Section \ref{sec:consistent}}
\label{app:consistent}

The next lemma provides an useful ``diagonalization argument" that is used throughout the paper.

\begin{lemma}\label{lem:sub-seq-conv}
	Let $S = \{ k_{1} , k_{2} , ...  \}$ with $k_{i} < k_{i+1}$ for all $i \in \mathbb{N}$. Take a real-valued sequence $(x_{k,n})_{k\in S,n \in \mathbb{N}}$ such that, for each $k \in \mathbb{N}$, $\lim_{n \rightarrow \infty} |x_{k,n}| = 0$. Then, there exists a mapping $n \mapsto k(n) \in S$ such that (a) $\lim_{n \rightarrow \infty} |x_{k(n),n}| = 0$ and (b) $k(n) \uparrow \infty$. 
\end{lemma}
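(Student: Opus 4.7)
The proof is a standard diagonalization, and I do not anticipate any genuine obstacles; the only subtle point is engineering the construction so that $k(n)$ is forced to diverge rather than stall.

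The plan is as follows. First, I would invoke the hypothesis $\lim_{n \to \infty} |x_{k_i, n}| = 0$ separately for each $i \in \mathbb{N}$ to obtain integers $N_i$ with the property that $|x_{k_i, n}| \leq 1/i$ for all $n \geq N_i$. By recursively replacing $N_i$ with $\max\{N_i, N_{i-1} + 1\}$, I can assume without loss of generality that $(N_i)_{i \in \mathbb{N}}$ is strictly increasing, and therefore $N_i \to \infty$.

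Next, I would define the map $n \mapsto k(n)$ by setting $k(n) = k_1$ for $n < N_1$ and $k(n) = k_i$ for $n \in [N_i, N_{i+1})$. This map is well-defined on all of $\mathbb{N}$ because the intervals $[N_i, N_{i+1})$ cover $[N_1, \infty)$. Since both $(k_i)_i$ and $(N_i)_i$ are strictly increasing, $n \mapsto k(n)$ is non-decreasing, and because $N_i \to \infty$, for every $M$ one has $k(n) \geq k_M$ for all sufficiently large $n$; this yields $k(n) \uparrow \infty$, giving (b).

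Finally, for (a), whenever $n \in [N_i, N_{i+1})$ I have $k(n) = k_i$ and $n \geq N_i$, so by construction $|x_{k(n), n}| = |x_{k_i, n}| \leq 1/i$. As $n \to \infty$ the corresponding index $i$ also diverges (by the divergence of $N_i$), so $|x_{k(n), n}| \to 0$, which is the desired conclusion.
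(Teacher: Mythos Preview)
Your proof is correct and follows essentially the same diagonalization as the paper: choose thresholds $N_i$ (the paper's $n(l)$) at which $|x_{k_i,n}|$ drops below a vanishing tolerance, force them to be strictly increasing, and set $k(n)=k_i$ on the block $[N_i,N_{i+1})$. The only cosmetic differences are your tolerance $1/i$ versus the paper's $1/2^{k_l}$ and your cleaner handling of the initial segment ($k(n)=k_1$ rather than an ad hoc value).
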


\begin{proof}
	By pointwise convergence of the sequence $(x_{k,n})_{n}$, for any $l \in \mathbb{N}$, there exists a $n(l) \in \mathbb{N}$ such that $|x_{k_{l},n}| \leq 1/2^{k_{l}}$ for all $n \geq n(l)$. WLOG we take $n(l+1) > n(l)$. 
	
	We now construct the mapping $n \mapsto k(n)$ as follows: For each $l \in \mathbb{N}$, let $k(n) \equiv k_{l}$ for all $n \in \{ n(l)+1,...,n(l+1)  \}$; and $k(n) = 0$ for $n \in \{0,...,n(0) \}$. Since the cutoffs $n(.)$ are increasing the set $ \{ n(l)+1,...,n(l+1)  \}$  is non-empty for each $l$. For integer $L>0$, $k(n) > k_{L}$ for all $n \geq n(L)+1$; since $(k_{l})_{l}$ diverges, (b) follows. 
	
	To show (a), for any $\epsilon>0$ take $l_{\epsilon}$ such that $1/2^{k_{l_{\epsilon}}} \leq \epsilon$. Observe that for any $n \geq n(l_{\epsilon})+1$, $|x_{k(n),n}| \leq 1/2^{l_{\epsilon}} \leq \epsilon$ by construction of $(n,k(n))$. Thus, (a) follows. 
\end{proof}

As the next lemma shows, when the regularization is continuous, the ``sampling error" term, $||\psi_{k}(P_{n}) - \psi_{k}(P)||_{\Theta}$ is of order $\delta_{k}(r^{-1}_{n})$ in probability. 

\begin{lemma}\label{lem:unif-psik-Pn-P}
	Suppose a regularization, $\boldsymbol{\psi}$, is continuous (at $P$) with respect to $d$ and there exists a real-valued positive sequence $(r_{n})_{n \in \mathbb{N}}$ such that $d(P_{n},P) = O_{P}(r^{-1}_{n})$. Then, for any $\epsilon>0$, there exists a $M>0$ and a $N \in \mathbb{N}$ such that
	\begin{align*}
\sup_{k \in \mathbb{K}} \mathbf{P} \left( ||\psi_{k}(P_{n}) - \psi_{k}(P)||_{\Theta} > \delta_{k}(M r^{-1}_{n}) \right) \leq \epsilon
	\end{align*}
	for all $n \geq N$. 
\end{lemma}

\begin{proof}[Proof of Lemma \ref{lem:unif-psik-Pn-P}]
	For any $k\in \mathbb{K}$ and any $n \in \mathbb{N}$, by continuity it follows $|| \psi_{k}(P_{n}(\boldsymbol{Z})) - \psi_{k}(P) ||_{\Theta} \leq \delta_{k}(d(P_{n}(\boldsymbol{Z}),P))$ a.s.-$\mathbf{P}$. In what follows, we omit the dependence on $\mathbf{Z}$.
	
	So it suffices to show that there exists a diverging $(k_{n})_{n}$ such that for any $\epsilon>0$, there exists a $N \in \mathbb{N}$ and a $M>0$ such that		
	\begin{align*}
	\sup_{k \in \mathbb{K}} \mathbf{P} \left( \delta_{k}(d(P_{n},P)) > \delta_{k}(M r^{-1}_{n}) \right) \leq \epsilon
	\end{align*}
	for all $n \geq N$. 
	
	Since $t \mapsto \delta_{k}(t)$ is non-decreasing for all $k \in \mathbb{K}$, it follows that $\mathbf{P} \left( \delta_{k}(d(P_{n},P)) > \delta_{k}(M r^{-1}_{n}) \right) \leq \mathbf{P} \left( d(P_{n},P) \geq M r^{-1}_{n} \right)$ for any $(n,k) \in \mathbb{N} \times \mathbb{K}$ and any $M>0$. Hence,
	\begin{align*}
	\sup_{k \in \mathbb{K}} \mathbf{P} \left( \delta_{k}(d(P_{n},P)) > \delta_{k}(M r^{-1}_{n}) \right) \leq \mathbf{P} \left(d(P_{n},P) \geq M r^{-1}_{n} \right)
	\end{align*}	
	for any $n \in \mathbb{N}$ and any $M>0$.
	
	By assumption,  $r_{n} d(P_{n},P) = O_{P}(1)$. This fact and the previous inequality imply the desired result.
\end{proof}

\begin{proof}[Proof of Theorem \ref{thm:consistent}]
	Fix any $\epsilon>0$. By the triangle inequality and laws of probability, for any $(k,n) \in \mathbb{N} \times \mathbb{K}$,
	\begin{align*}
	\mathbf{P} \left(  	||\psi_{k}(P_{n}) - \psi(P) ||_{\Theta}  > \epsilon    \right) \leq 	\mathbf{P} \left(  	||\psi_{k}(P_{n}) - \psi_{k}(P) ||_{\Theta}  > 0.5 \epsilon    \right) + 1\{ 	||\psi_{k}(P) - \psi(P) ||_{\Theta} > 0.5 \epsilon   \}.
	\end{align*}
	
	By assumption, there exists a $(r_{n})_{n \in \mathbb{N}}$ such that $r^{-1}_{n} = o(1)$ and $d(P_{n},P) = O_{P}(r^{-1}_{n})$. Thus, by Lemma \ref{lem:unif-psik-Pn-P}, there exists a $N \in \mathbb{N}$ and a $M>0$ such that for all $k \in \mathbb{K}$ and all $n \geq N$,
	\begin{align*}
	\mathbf{P} \left(  	||\psi_{k}(P_{n}) - \psi(P) ||_{\Theta}  > \epsilon    \right) \leq 	\epsilon + 1\{ \delta_{k}(M r^{-1}_{n}) > 0.5 \epsilon  \} + 1\{ 	||\psi_{k}(P) - \psi(P) ||_{\Theta} > 0.5 \epsilon   \}.
	\end{align*}
	Observe that for each $k$, $\delta_{k}(M r^{-1}_{n}) = o(1)$. Since $\mathbb{K}$ is unbounded it contains a diverging increasing sequence, therefore, by Lemma \ref{lem:sub-seq-conv}, there exists a diverging $(k_{n})_{n \in \mathbb{N}}$ such that  $\delta_{k_{n}}(M r^{-1}_{n}) = o(1)$. This result, condition 2 in the definition of regularization and the previous display at $k=k_{n}$, imply that 
	\begin{align*}
	\limsup_{n \rightarrow \infty}  \mathbf{P} \left(  	||\psi_{k_{n}}(P_{n}) - \psi(P) ||_{\Theta}  > \epsilon    \right) \leq 	\epsilon.
	\end{align*}
	
	Finally, we show that $\delta_{k_{n}}(d(P_{n},P)) = o_{P}(1)$. Since $t \mapsto \delta_{k}(t)$ is non-decreasing for all $k \in \mathbb{K}$ and $d(P_{n},P) = O_{P}(r^{-1}_{n})$ it follows that $\mathbf{P} \left( \delta_{k_{n}}(d(P_{n},P)) \geq  \delta_{k_{n}}(M r^{-1}_{n})   \right) \leq \epsilon$ for all $n \geq N'$ (WLOG we take $N'=N$). Since $\delta_{k_{n}}(M r^{-1}_{n}) = o(1)$ the result follows. 
\end{proof}

\section{Appendix for Section \ref{sec:rate-choice}}
\label{app:rate-choice}

Observe that the set $\mathcal{L}_{n}$ is random. To stress this dependence, with some abuse of notation, we will sometimes use $\mathcal{L}_{n}(\mathbf{z})$ to denote the set.

\subsection{Proof of Theorem \ref{thm:rate-choice}}
\label{app:proof-rate-choice}

The next lemma provides two sufficient conditions that ensure the result in Theorem \ref{thm:rate-choice}. To do this, for any $n \in \mathbb{N}$, let
\begin{align*}
D_{n} \equiv \{ \boldsymbol{z} \in \mathbb{Z}^{\infty} \colon  d(P_{n}(\boldsymbol{z}), P)  \leq r^{-1}_{n}      \}.
\end{align*}

\begin{lemma}\label{lem:suff-choice-rate}
	Suppose there exists a sequence $(j_{n})_{n \in \mathbb{N}}$ such that \begin{enumerate}
		\item For any $\epsilon>0$, there exists a $N$ such that $\mathbf{P}\left(   \{ \mathbf{z} \in \mathbb{Z}^{\infty} \colon j_{n} \in \mathcal{L}_{n}(\boldsymbol{z})\} \cap D_{n} \right) \geq 1-\epsilon$ for all $n \geq N$.
		\item There exists a constant $L<\infty$ such that $\bar{\delta}_{j_{n}}(r^{-1}_{n}) + \bar{B}_{j_{n}}(P) \leq L \inf_{k \in \mathbb{K}_{n}}  \{ \bar{\delta}_{k}(r^{-1}_{n}) + \bar{B}_{k}(P)  \}$.
	\end{enumerate}
Then \begin{align*}
   || \psi_{ \tilde{k}_{n}}(P_{n}) - \psi(P) ||_{\Theta} = O_{P} \left( \inf_{k \in \mathbb{K}_{n}}  \{ \bar{\delta}_{k}(r^{-1}_{n}) + \bar{B}_{k}(P)  \}  \right). 
\end{align*}
\end{lemma}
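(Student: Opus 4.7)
The plan is to use the triangle inequality on the event on which assumption 1 holds, and bound each piece using the structure of the Lepski-type set $\mathcal{L}_n$, the continuity of the regularization, and the definition of $\bar{B}$. Set $E_n \equiv \{\boldsymbol{z} : j_n \in \mathcal{L}_n(\boldsymbol{z})\} \cap D_n$; by assumption 1, $\mathbf{P}(E_n) \geq 1 - \epsilon$ for all $n$ large. I would work on $E_n$ and decompose
\[
\|\psi_{\tilde{k}_n}(P_n) - \psi(P)\|_\Theta \leq \|\psi_{\tilde{k}_n}(P_n) - \psi_{j_n}(P_n)\|_\Theta + \|\psi_{j_n}(P_n) - \psi_{j_n}(P)\|_\Theta + \|\psi_{j_n}(P) - \psi(P)\|_\Theta,
\]
and bound the three terms in turn.

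For the first term, since $j_n \in \mathcal{L}_n$ on $E_n$ and $\tilde{k}_n = \min \mathcal{L}_n$, we have $\tilde{k}_n \leq j_n$ with both elements lying in $\mathcal{G}_n$; applying the definition of $\mathcal{L}_n$ with the test sequence $(4\bar{\delta}_k(r_n^{-1}))_k$ at $k = \tilde{k}_n$ and $k' = j_n$ yields $\|\psi_{\tilde{k}_n}(P_n) - \psi_{j_n}(P_n)\|_\Theta \leq 4 \bar{\delta}_{j_n}(r_n^{-1})$. For the second term, continuity of the regularization (Definition \ref{def:reg-cont}), monotonicity of $t \mapsto \delta_{j_n}(t)$, and the pointwise majoration $\bar{\delta}_{j_n}(r_n^{-1}) \geq \delta_{j_n}(r_n^{-1})$ give, on $D_n$,
\[
\|\psi_{j_n}(P_n) - \psi_{j_n}(P)\|_\Theta \leq \delta_{j_n}(d(P_n,P)) \leq \delta_{j_n}(r_n^{-1}) \leq \bar{\delta}_{j_n}(r_n^{-1}).
\]
The third term is dominated by $\bar{B}_{j_n}(P)$ by definition of $\bar{B}$.

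Combining the three bounds and invoking assumption 2, on $E_n$ I would obtain
\[
\|\psi_{\tilde{k}_n}(P_n) - \psi(P)\|_\Theta \leq 5\,\bar{\delta}_{j_n}(r_n^{-1}) + \bar{B}_{j_n}(P) \leq 5L \inf_{k \in \mathcal{G}_n}\bigl\{\bar{\delta}_k(r_n^{-1}) + \bar{B}_k(P)\bigr\}.
\]
Since $\mathbf{P}(E_n) \to 1$, this is the desired $O_P$ statement. The argument is essentially mechanical; there is no real obstacle here, because the two nontrivial ingredients—existence of a sequence $(j_n)_n$ that belongs to $\mathcal{L}_n$ with high probability and is simultaneously near-minimizing the bias/variance trade-off over $\mathcal{G}_n$—are packaged into assumptions 1 and 2 and will presumably be established in the subsequent Lemmas \ref{lem:hn-in-Fn} and \ref{lem:rate-h(n)-bound}. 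The only small subtlety is to make sure the non-decreasing envelope $\bar{\delta}_k$ (monotone in $k$) is being used only to dominate $\delta_k(\cdot)$ pointwise at $r_n^{-1}$, which is precisely how it was defined.
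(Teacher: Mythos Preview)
Your proof is correct and essentially identical to the paper's own argument: the paper works on the same event $A_n\cap D_n$, applies the same triangle inequality (collapsing your second and third terms into one), and obtains the same bound $5\bigl(\bar{\delta}_{j_n}(r_n^{-1})+\bar{B}_{j_n}(P)\bigr)\leq 5L\inf_{k\in\mathcal{G}_n}\{\bar{\delta}_k(r_n^{-1})+\bar{B}_k(P)\}$. Your slightly more explicit handling of the middle term (using monotonicity of $t\mapsto\delta_{j_n}(t)$ and then the pointwise majoration by $\bar{\delta}_{j_n}$) is in fact cleaner than the paper's wording there.
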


\begin{proof}
	Let $A_{n} \equiv \{  \boldsymbol{z} \in \mathbb{Z}^{\infty} \colon   j_{n} \in \mathcal{L}_{n}(\boldsymbol{z})    \}$. 
	
	For any $\boldsymbol{z} \in A_{n} \cap D_{n}$, it follows that 
	\begin{align*}
		  || \psi_{ \tilde{k}_{n}(\boldsymbol{z})}(P_{n}(\boldsymbol{z})) - \psi(P) ||_{\Theta} \leq  & || \psi_{ \tilde{k}_{n}(\boldsymbol{z})}(P_{n}(\boldsymbol{z})) - \psi_{j_{n}}(P_{n}(\boldsymbol{z})) ||_{\Theta} +  || \psi_{j_{n}}(P_{n}(\boldsymbol{z})) - \psi(P) ||_{\Theta} \\
		  \leq & || \psi_{ \tilde{k}_{n}(\boldsymbol{z})}(P_{n}(\boldsymbol{z})) - \psi_{j_{n}}(P_{n}(\boldsymbol{z})) ||_{\Theta} + \bar{\delta}_{j_{n}}(r^{-1}_{n}) + \bar{B}_{j_{n}}(P)\\
		 \leq &  4 \bar{\delta}_{j_{n}}(r^{-1}_{n}) + \bar{\delta}_{j_{n}}(r^{-1}_{n}) + \bar{B}_{j_{n}}(P),
	\end{align*}
	where the first linear follows from triangle inequality; the second line follows from the fact that $\boldsymbol{z} \in D_{n}$ and $t \mapsto \bar{\delta}_{k}(t)$ is non-decreasing; the third line follows from the fact that $\boldsymbol{z} \in A_{n}$ and thus $j_{n} \geq \tilde{k}_{n}(\boldsymbol{z})$. Thus, 
	\begin{align*}
		A_{n} \cap D_{n} \subseteq & \left\{   \boldsymbol{z} \in \mathbb{Z}^{\infty} \colon    || \psi_{ \tilde{k}_{n}(\boldsymbol{z})}(P_{n}(\boldsymbol{z})) - \psi(P) ||_{\Theta} \leq 5 \left(  \bar{\delta}_{j_{n}}(r^{-1}_{n}) + \bar{B}_{j_{n}}(P) \right)   \right\} \\
		\subseteq & \left\{   \boldsymbol{z} \in \mathbb{Z}^{\infty} \colon    || \psi_{ \tilde{k}_{n}(\boldsymbol{z})}(P_{n}(\boldsymbol{z})) - \psi(P) ||_{\Theta} \leq 5L \inf_{k \in \mathbb{K}_{n}}  \{ \bar{\delta}_{k}(r^{-1}_{n}) + \bar{B}_{k}(P)  \}  \right\}
	\end{align*}
	where the last linear follows from the second condition. Since by condition 1, $	A_{n} \cap D_{n}$ occurs with high probability, the result follows.
\end{proof}

We now construct a sequence $(h_{n})_{n}$ that satisfies both conditions of the lemma. To do this, let for each $n \in \mathbb{N}$,
\begin{align*}
	\mathbb{K}_{n}^{+} \equiv & \{ k \in \mathbb{K}_{n} \colon  \bar{\delta}_{k}(r^{-1}_{n}) \geq \bar{B}_{k}(P)  \}~\and~\\
	\mathbb{K}_{n}^{-} \equiv & \{ k \in \mathbb{K}_{n} \colon  \bar{\delta}_{k}(r^{-1}_{n}) \leq \bar{B}_{k}(P)  \}	.
\end{align*} 

\begin{remark}
	For any $n \in \mathbb{N}$, $\mathbb{K}_{n}^{+}$ or $\mathbb{K}_{n}^{-}$ are non-empty. 
%
$\triangle$
\end{remark}

For each $n \in \mathbb{N}$, let
\begin{align*}
T^{+}_{n} =     \bar{\delta}_{h^{+}_{n}}(r^{-1}_{n}) + \bar{B}_{h^{+}_{n}}(P)  
\end{align*}
if $\mathbb{K}_{n}^{+}$ is non-empty where  
\begin{align*}
h^{+}_{n} = \min \{  k \colon k \in \mathbb{K}_{n}^{+} \};
\end{align*}
and $T^{+}_{n} =   + \infty $, if $\mathbb{K}_{n}^{+}$ is empty. Similarly, 
\begin{align*}
T^{-}_{n} =     \bar{\delta}_{h^{-}_{n}}(r^{-1}_{n}) + \bar{B}_{h^{-}_{n}}(P)  
\end{align*}
if $\mathbb{K}_{n}^{-}$ is non-empty where  
\begin{align*}
h^{-}_{n} = \max \{  k \colon k \in \mathbb{K}_{n}^{-} \};
\end{align*}
and $T^{-}_{n} =   + \infty$, if $\mathbb{K}_{n}^{-}$ is empty.

\begin{remark}
	(1) Observe that when $\mathbb{K}_{n}^{+}$  (resp. $\mathbb{K}_{n}^{-}$ ) is non-empty, since it is discrete, $h^{+}_{n}$ (resp. $h^{-}_{n}$ )  is well-defined. 
	
	Intuitively,  $h^{+}_{n}$  is the ``round up" version within $\mathbb{K}_{n}$ of $k(n)$; and  $h^{-}_{n}$  is the ``round down" version within $\mathbb{K}_{n}$ of $k(n)$.
	
	\medskip 
	
	(2) By our previous observation and the fact that either $\mathbb{K}_{n}^{+}$ or $\mathbb{K}_{n}^{-}$ is non-empty, it follows that either $T^{+}_{n}$ or $T^{-}_{n}$ is finite. $\triangle$
\end{remark}

Finally, for each $n \in \mathbb{N}$, let $h_{n} \in \mathbb{K}_{n}$ be such that
\begin{align*}
    h_{n} = h^{+}_{n} 1\{  T^{+}_{n} \leq T^{-}_{n}  \} + h^{-}_{n} 1\{  T^{+}_{n} > T^{-}_{n}  \}.
\end{align*}

\begin{lemma}\label{lem:h(n)-prop}
	For each $n \in \mathbb{N}$, $h_{n}$ exists and \begin{align*}
	\bar{\delta}_{h_{n}}(r^{-1}_{n}) + \bar{B}_{h_{n}}(P)  = \min \left\{  T^{-}_{n} , T^{+}_{n}      \right\}.
	\end{align*}
\end{lemma}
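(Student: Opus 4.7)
The plan is to verify the lemma by a direct case analysis on which of the two subsets $\mathcal{G}_n^+$ and $\mathcal{G}_n^-$ is non-empty. The preceding remark guarantees that at least one of them is non-empty, which is precisely what makes $h_n$ well-defined. Beyond that, the statement is essentially tautological once one unpacks the three cases; the only real content is checking that the indicator $\mathbf{1}\{T_n^+ \leq T_n^-\}$ correctly selects the defined branch when one of the two sets is empty and its corresponding $T$-value is $+\infty$.

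First I would note that when $\mathcal{G}_n^+$ is non-empty, $h_n^+ = \min \mathcal{G}_n^+$ exists because $\mathcal{G}_n^+$ is a non-empty subset of the finite set $\mathcal{G}_n$ (and similarly $h_n^- = \max \mathcal{G}_n^-$ exists when $\mathcal{G}_n^-$ is non-empty). Then I would split into three exhaustive cases:
\begin{itemize}
\item[(i)] Both $\mathcal{G}_n^+$ and $\mathcal{G}_n^-$ non-empty. Both $T_n^+$ and $T_n^-$ are finite. If $T_n^+ \leq T_n^-$, then $h_n = h_n^+$ by definition, so $\bar{\delta}_{h_n}(r_n^{-1}) + \bar{B}_{h_n}(P) = T_n^+ = \min\{T_n^+, T_n^-\}$. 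If $T_n^+ > T_n^-$, the symmetric argument gives $h_n = h_n^-$ and the value equals $T_n^-$.
\item[(ii)] Only $\mathcal{G}_n^+$ non-empty. Then $T_n^- = +\infty$ and $T_n^+$ is finite, so $T_n^+ \leq T_n^-$ holds; hence $h_n = h_n^+$ and $\bar{\delta}_{h_n}(r_n^{-1}) + \bar{B}_{h_n}(P) = T_n^+ = \min\{T_n^+,T_n^-\}$.
\item[(iii)] Only $\mathcal{G}_n^-$ non-empty. Then $T_n^+ = +\infty$ and $T_n^-$ is finite, so $T_n^+ > T_n^-$ holds; hence $h_n = h_n^-$ and the value equals $T_n^-= \min\{T_n^+,T_n^-\}$.
\end{itemize}
In every case $h_n \in \mathcal{G}_n$ is well-defined and the claimed equality holds.

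There is no real obstacle: the lemma is a definitional consistency check asserting that the piecewise definition of $h_n$ picks out the point in $\{h_n^+, h_n^-\}$ that minimizes $k \mapsto \bar{\delta}_k(r_n^{-1}) + \bar{B}_k(P)$ between those two candidates, with the convention that an undefined candidate carries value $+\infty$. The only subtlety worth flagging is the tie-breaking convention in case (i); the definition of $h_n$ uses a weak inequality ``$T_n^+ \leq T_n^-$'' in the first indicator and a strict one ``$T_n^+ > T_n^-$'' in the second, so exactly one indicator fires and $h_n$ is unambiguously defined.
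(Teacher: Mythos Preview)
Your proof is correct and follows essentially the same three-case analysis as the paper's own proof, which also splits according to whether $T_n^+$, $T_n^-$, or both are finite and then notes that the equality ``follows by construction.'' If anything, you are slightly more explicit in verifying the equality in each case and in noting the tie-breaking convention.
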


\begin{proof}
   For each $n$, by our previous remark, either $T^{+}_{n}$ or $T^{-}_{n}$ is finite.
   
   If $T^{+}_{n} = \infty$, then $T^{-}_{n} < \infty = T^{+}_{n} $ so $h^{-}_{n}$ exists and $h_{n} = h^{-}_{n}$. 
   
   If $T^{-}_{n} = \infty$, then $T^{+}_{n} < \infty = T^{-}_{n} $ so $h^{+}_{n}$ exists and $h_{n} = h^{+}_{n}$.  
   
   Finally, if both are finite, then both $h^{+}_{n}$ and $h^{-}_{n}$ exist.
   
   The fact that 
   \begin{align*}
   \bar{\delta}_{h_{n}}(r^{-1}_{n}) + \bar{B}_{h_{n}}(P)  = \min \left\{  T^{-}_{n} , T^{+}_{n}      \right\}
   \end{align*}
   follows by construction.
\end{proof}

\begin{lemma}\label{lem:rate-h(n)-bound}
For each $n \in \mathbb{N}$, 
	\begin{align*}
	   \bar{\delta}_{h_{n}}(r^{-1}_{n}) + \bar{B}_{h_{n}}(P)   \leq 2 \inf_{k \in \mathbb{K}_{n}} \{  \bar{\delta}_{k}(r^{-1}_{n}) + \bar{B}_{k}(P)   \}  .
	\end{align*}
\end{lemma}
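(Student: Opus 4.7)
The plan is to combine Lemma \ref{lem:h(n)-prop}, which reduces the claim to bounding $\min\{T^{+}_{n},T^{-}_{n}\}$, with the monotonicity properties of the two envelopes: $k \mapsto \bar{\delta}_{k}(r^{-1}_{n})$ is non-decreasing and $k \mapsto \bar{B}_{k}(P)$ is non-increasing. Since $\mathcal{G}_{n}$ is finite, the infimum on the right-hand side is attained by some $k^{\ast} \in \mathcal{G}_{n}$; and because $\mathcal{G}_{n} = \mathcal{G}_{n}^{+} \cup \mathcal{G}_{n}^{-}$, we have either $k^{\ast} \in \mathcal{G}_{n}^{+}$ or $k^{\ast} \in \mathcal{G}_{n}^{-}$.

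First, suppose $k^{\ast} \in \mathcal{G}_{n}^{+}$. Then $\mathcal{G}_{n}^{+}$ is non-empty, so $h_{n}^{+}$ exists and by definition $h_{n}^{+} \leq k^{\ast}$. Monotonicity gives $\bar{\delta}_{h_{n}^{+}}(r^{-1}_{n}) \leq \bar{\delta}_{k^{\ast}}(r^{-1}_{n})$, while the membership $h_{n}^{+} \in \mathcal{G}_{n}^{+}$ yields $\bar{B}_{h_{n}^{+}}(P) \leq \bar{\delta}_{h_{n}^{+}}(r^{-1}_{n}) \leq \bar{\delta}_{k^{\ast}}(r^{-1}_{n})$. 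Summing, $T^{+}_{n} \leq 2 \bar{\delta}_{k^{\ast}}(r^{-1}_{n}) \leq 2 \{ \bar{\delta}_{k^{\ast}}(r^{-1}_{n}) + \bar{B}_{k^{\ast}}(P)\}$, which is exactly twice the infimum.

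Second, suppose $k^{\ast} \in \mathcal{G}_{n}^{-}$. Then $h_{n}^{-}$ exists and $h_{n}^{-} \geq k^{\ast}$, so monotonicity gives $\bar{B}_{h_{n}^{-}}(P) \leq \bar{B}_{k^{\ast}}(P)$, and $h_{n}^{-} \in \mathcal{G}_{n}^{-}$ yields $\bar{\delta}_{h_{n}^{-}}(r^{-1}_{n}) \leq \bar{B}_{h_{n}^{-}}(P) \leq \bar{B}_{k^{\ast}}(P)$. Summing, $T^{-}_{n} \leq 2 \bar{B}_{k^{\ast}}(P) \leq 2 \{\bar{\delta}_{k^{\ast}}(r^{-1}_{n}) + \bar{B}_{k^{\ast}}(P)\}$. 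In either case, Lemma \ref{lem:h(n)-prop} delivers $\bar{\delta}_{h_{n}}(r^{-1}_{n}) + \bar{B}_{h_{n}}(P) = \min\{T^{-}_{n},T^{+}_{n}\} \leq 2 \inf_{k \in \mathcal{G}_{n}} \{\bar{\delta}_{k}(r^{-1}_{n}) + \bar{B}_{k}(P)\}$.

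There is essentially no serious obstacle: the argument is a short two-case bookkeeping exercise resting only on monotonicity and the defining inequalities of $\mathcal{G}_{n}^{\pm}$. The mildest subtlety is checking that the case split is exhaustive when one of $\mathcal{G}_{n}^{\pm}$ is empty; but in that situation every $k \in \mathcal{G}_{n}$ (in particular $k^{\ast}$) lies in the other set, so the relevant case still applies and the corresponding $T$ is finite.
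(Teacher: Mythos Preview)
Your proof is correct and follows essentially the same approach as the paper. The only cosmetic difference is that the paper decomposes $\inf_{k\in\mathcal{G}_n}$ as the minimum of $\inf_{k\in\mathcal{G}_n^+}$ and $\inf_{k\in\mathcal{G}_n^-}$ and bounds each from below by $\tfrac{1}{2}T_n^{\pm}$, whereas you pick a minimizer $k^{\ast}$, locate it in $\mathcal{G}_n^{+}$ or $\mathcal{G}_n^{-}$, and bound the corresponding $T_n^{\pm}$ from above; the underlying inequalities (monotonicity of the envelopes and the defining inequalities of $\mathcal{G}_n^{\pm}$) are identical.
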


\begin{proof}
	Observe that 
	\begin{align*}
		\inf_{k \in \mathbb{K}_{n}} \{  \bar{\delta}_{k}(r^{-1}_{n}) + \bar{B}_{k}(P)   \}  \geq \min \{ \inf_{k \in \mathcal{G}^{+}_{n}} \{ \bar{\delta}_{k}(r^{-1}_{n}) + \bar{B}_{k}(P)    \} ,  \inf_{k \in \mathcal{G}^{-}_{n}} \{ \bar{\delta}_{k}(r^{-1}_{n}) + \bar{B}_{k}(P)    \}     \}
	\end{align*}
	where the  infimum is defined as $+\infty$ if the corresponding set is empty. 
	
     Fix any $n \in \mathbb{N}$, if $\mathcal{G}^{+}_{n} \ne \{\emptyset\}$, 
	\begin{align*}
		\inf_{k \in \mathcal{G}^{+}_{n}} \{ \bar{\delta}_{k}(r^{-1}_{n}) + \bar{B}_{k}(P)    \}  \geq \inf_{k \in \mathcal{G}^{+}_{n}} \{ \bar{\delta}_{k}(r^{-1}_{n})   \} = \bar{\delta}_{h^{+}_{n}}(r^{-1}_{n}) \geq 0.5 \left(  \bar{\delta}_{h^{+}_{n}}(r^{-1}_{n})  + \bar{B}_{h^{+}_{n}}(P)  \right)
	\end{align*}
	where the first inequality follows from the fact that $\bar{B}_{k}(P) \geq 0$; the second one (the equality) follows from the fact that $ k \mapsto \bar{\delta}_{k}(r^{-1}_{n})$ is non-decreasing and  that $h^{+}_{n}$ is minimal over $\mathcal{G}^{+}_{n} $; the third inequality follows from the fact that $\bar{\delta}_{h^{+}_{n}}(r^{-1}_{n})  \geq \bar{B}_{h^{+}_{n}}(P)$.

	Similarly, if $\mathcal{G}^{-}_{n} \ne \{\emptyset\}$, 
	\begin{align*}
	\inf_{k \in \mathcal{G}^{-}_{n}} \{ \bar{\delta}_{k}(r^{-1}_{n}) + \bar{B}_{k}(P)    \}  \geq \inf_{k \in \mathcal{G}^{-}_{n}} \{ \bar{B}_{k}(P)   \} = \bar{B}_{h^{-}_{n}}(P) \geq 0.5 \left(  \bar{\delta}_{h^{-}_{n}}(r^{-1}_{n})  + \bar{B}_{h^{-}_{n}}(P)  \right).
	\end{align*}
	Observe that here we use monotonicity of $k \mapsto \bar{B}_{k}(P)$. 
	
	Thus, \begin{align*}
			\inf_{k \in \mathbb{K}_{n}} \{  \bar{\delta}_{k}(r^{-1}_{n}) + \bar{B}_{k}(P)   \}  \geq 0.5 \min \{ T^{-}_{n} , T^{+}_{n}  \},
	\end{align*}
	and by Lemma \ref{lem:h(n)-prop} the desired result follows.

\end{proof}

\begin{lemma}\label{lem:hn-in-Fn}
  For any $n \in \mathbb{N}$, $\mathbf{P}(\{ \mathbf{z} \in \mathbb{Z}^{\infty} \colon h_{n} \notin \mathcal{L}_{n}(\boldsymbol{z}) \} ) \leq \mathbf{P}(D_{n}^{C})$.
\end{lemma}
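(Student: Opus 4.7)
My plan is to strengthen the probabilistic claim to the deterministic containment $D_n \subseteq \{\boldsymbol{z} : h_n \in \mathcal{L}_n(\boldsymbol{z})\}$, from which the stated inequality follows immediately by monotonicity of $\mathbf{P}$. Fixing $\boldsymbol{z} \in D_n$ and $k' \geq h_n$ in $\mathcal{G}_n$ (the case $k' = h_n$ is trivial), I would first derive a master bound by inserting $\psi_{h_n}(P)$ and $\psi_{k'}(P)$ via the triangle inequality, then using (i) continuity of the regularization (Definition \ref{def:reg-cont}) together with $d(P_n,P) \leq r_n^{-1}$ and monotonicity of $t \mapsto \delta_k(t)$ to control the ``sampling'' terms by $\bar{\delta}_{h_n}(r_n^{-1})$ and $\bar{\delta}_{k'}(r_n^{-1})$, and (ii) the approximation-error majorant to control the ``bias'' terms by $\bar{B}_{h_n}(P)$ and $\bar{B}_{k'}(P)$. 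This yields
\[
\| \psi_{h_n}(P_n) - \psi_{k'}(P_n) \|_\Theta \leq \bar{\delta}_{h_n}(r_n^{-1}) + \bar{B}_{h_n}(P) + \bar{B}_{k'}(P) + \bar{\delta}_{k'}(r_n^{-1}),
\]
so it remains to show that, under the definition of $h_n$, the right-hand side is at most $4\bar{\delta}_{k'}(r_n^{-1})$.

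I would then split on the construction of $h_n$. When $h_n = h_n^+$, the defining property $h_n^+ \in \mathcal{G}_n^+$ gives $\bar{B}_{h_n}(P) \leq \bar{\delta}_{h_n}(r_n^{-1})$, and monotonicity of $\bar{\delta}$ in $k$ together with monotonicity of $\bar{B}$ in $k$ (both built into the majorants) and $k' \geq h_n$ show that each of the four summands above is at most $\bar{\delta}_{k'}(r_n^{-1})$. This case is essentially routine.

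The substantive case is $h_n = h_n^-$. If $\mathcal{G}_n^+$ is empty then $h_n = \max \mathcal{G}_n$ and there is no $k' > h_n$, so the membership condition is vacuous. Otherwise both $h_n^+$ and $h_n^-$ exist, and the defining inequality $T_n^- \leq T_n^+$ combined with $\bar{B}_{h_n^+}(P) \leq \bar{\delta}_{h_n^+}(r_n^{-1})$ (from $h_n^+ \in \mathcal{G}_n^+$) yields the crucial bracketing bound $\bar{\delta}_{h_n}(r_n^{-1}) + \bar{B}_{h_n}(P) \leq 2\bar{\delta}_{h_n^+}(r_n^{-1})$. The key structural observation I would invoke is that, because $\mathcal{G}_n = \mathcal{G}_n^+ \cup \mathcal{G}_n^-$ and $h_n^-$ is the maximum of $\mathcal{G}_n^-$, every $k' > h_n^- = h_n$ in $\mathcal{G}_n$ must lie in $\mathcal{G}_n^+$ and therefore satisfies $k' \geq h_n^+$; this delivers both $\bar{\delta}_{h_n^+}(r_n^{-1}) \leq \bar{\delta}_{k'}(r_n^{-1})$ and $\bar{B}_{k'}(P) \leq \bar{\delta}_{k'}(r_n^{-1})$. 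Plugging back into the master bound gives $2\bar{\delta}_{k'}(r_n^{-1}) + \bar{\delta}_{k'}(r_n^{-1}) + \bar{\delta}_{k'}(r_n^{-1}) = 4\bar{\delta}_{k'}(r_n^{-1})$, as required.

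The main obstacle throughout is pinning down the constant $4$ in this second case: a naive application of the triangle inequality, combined with only one of the two monotonicities, yields a constant of $5$ or $6$. One genuinely needs the interaction between the comparison inequality $T_n^- \leq T_n^+$ (which bounds $\bar{\delta}_{h_n^-} + \bar{B}_{h_n^-}$ against twice $\bar{\delta}_{h_n^+}$) and the structural fact that $\mathcal{G}_n$ leaves no gap between $\mathcal{G}_n^-$ and $\mathcal{G}_n^+$, so that any $k' > h_n^-$ has already jumped past $h_n^+$. Once this is handled, the deterministic containment $D_n \subseteq \{h_n \in \mathcal{L}_n\}$ is complete and taking complements yields the stated probability bound.
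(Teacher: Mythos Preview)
Your proposal is correct and follows essentially the same argument as the paper's proof: both establish the deterministic containment $D_n \subseteq \{h_n \in \mathcal{L}_n\}$ via the same triangle-inequality master bound and the same two structural facts, namely that any $k' > h_n$ in $\mathcal{G}_n$ must lie in $\mathcal{G}_n^+$ (hence $k' \geq h_n^+$ and $\bar{B}_{k'}(P) \leq \bar{\delta}_{k'}(r_n^{-1})$), and that $\bar{\delta}_{h_n}(r_n^{-1}) + \bar{B}_{h_n}(P) \leq 2\bar{\delta}_{h_n^+}(r_n^{-1})$. The only difference is organizational: the paper packages these as Claims~1--3 covering both cases at once, whereas you split explicitly on $h_n = h_n^+$ versus $h_n = h_n^-$.
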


\begin{proof}
	For any $n \in \mathbb{N}$,
	\begin{align*}
		\mathbf{P}( \{ \mathbf{z} \in \mathbb{Z}^{\infty} \colon h_{n} \notin \mathcal{L}_{n}(\boldsymbol{z}) \}  ) \leq \mathbf{P}( \{ \mathbf{z} \in \mathbb{Z}^{\infty} \colon h_{n} \notin \mathcal{L}_{n}(\boldsymbol{z}) \}   \cap D_{n} ) + \mathbf{P}(D_{n}^{C}).
	\end{align*}
	
	By definition of $\mathcal{L}_{n}$ (omitting the dependence on $\mathbf{Z}$),
	\begin{align*}
		\{ h_{n} \notin \mathcal{L}_{n} \}   \subseteq \left\{   \exists k \in \mathbb{K}_{n} \colon k > h_{n}~and~ ||\psi_{k}(P_{n}) - \psi_{h_{n}}(P_{n})||_{\Theta} > 4 \bar{\delta}_{k}(r^{-1}_{n})  \right\}.
	\end{align*}
	By triangle inequality and the fact that $t \mapsto \bar{\delta}_{k}(t)$ is non-decreasing, 
	\begin{align}\label{eqn:hn-in-Fn-1}
\{h_{n} \notin \mathcal{L}_{n} \} \cap D_{n} \subseteq \left\{   \exists k \in \mathbb{K}_{n} \colon k > h_{n}~and~ \bar{\delta}_{k}(r^{-1}_{n}) + \bar{B}_{k}(P) + \bar{\delta}_{h_{n}}(r^{-1}_{n}) + \bar{B}_{h_{n}}(P) > 4 \bar{\delta}_{k}(r^{-1}_{n})  \right\}.
\end{align}

   We now derive a series of useful claims.
   
   \medskip 

	\textbf{Claim 1:} If there exists $k \in \mathbb{K}_{n}$ such that $k > h_{n}$ and $h_{n} = h^{-}_{n}$, then $k \in \mathcal{G}^{+}_{n}$. \textbf{Proof:} If $h_{n}=h^{-}_{n}$, then $h_{n}$ is the largest element of $\mathcal{G}^{-}_{n}$ and thus $k \notin 	\mathcal{G}^{-}_{n}$, which means that $k \in \mathcal{G}^{+}_{n}$. $\square$
	
	\medskip 
	
	 A corollary of this claim is that if there exists $k \in \mathbb{K}_{n}$ such that $k > h_{n}$ and $h_{n} = h^{-}_{n}$, then $\mathcal{G}^{+}_{n}$ is non-empty. From this claim, we derive the following two claims. 
	 
	 \medskip 
	
	\textbf{Claim 2:} If there exists a $k > h_{n}$, then  $\bar{\delta}_{h_{n}}(r^{-1}_{n}) + \bar{B}_{h_{n}}(P) \leq 2 \bar{\delta}_{h^{+}_{n}}(r^{-1}_{n})$. \textbf{Proof:} If $h_{n} = h^{+}_{n}$, then $\bar{\delta}_{h_{n}}(r^{-1}_{n}) + \bar{B}_{h_{n}}(P) \leq \bar{\delta}_{h^{+}_{n}}(r^{-1}_{n}) + \bar{B}_{h^{+}_{n}}(P) \leq 2 \bar{\delta}_{h^{+}_{n}}(r^{-1}_{n})$. If $h_{n} = h^{-}_{n}$, by the previous claim it follows that $\mathcal{G}^{+}_{n}$ is non-empty and thus $h^{+}_{n}$ is well-defined, thus $\bar{\delta}_{h_{n}}(r^{-1}_{n}) + \bar{B}_{h_{n}}(P) \leq \bar{\delta}_{h^{+}_{n}}(r^{-1}_{n}) + \bar{B}_{h^{+}_{n}}(P) \leq 2 \bar{\delta}_{h^{+}_{n}}(r^{-1}_{n}) $. $\square$

    \medskip 
    
	\textbf{Claim 3:} For any $k > h_{n}$, $\bar{\delta}_{k}(r^{-1}_{n}) \geq \bar{B}_{k}(P)$. \textbf{Proof:} If $h_{n} = h^{+}_{n}$ then the claim follows because $k \mapsto \bar{\delta}_{k}(r^{-1}_{n}) - \bar{B}_{k}(P)$ is non-decreasing. 
	If $h_{n}=h^{-}_{n}$, then $k \in \mathcal{G}^{+}_{n}$ by Claim 1 and thus $\bar{\delta}_{k}(r^{-1}_{n}) \geq \bar{B}_{k}(P)$. $\square$

\medskip 

By Claims 2 and 3, it follows that  if there exists $k \in \mathbb{K}_{n}$ such that  $k \geq h_{n}$, then $\bar{\delta}_{k}(r^{-1}_{n}) + \bar{B}_{k}(P) + \bar{\delta}_{h_{n}}(r^{-1}_{n}) + \bar{B}_{h_{n}}(P) \leq 2 \bar{\delta}_{k}(r^{-1}_{n}) + 2\bar{\delta}_{h^{+}_{n}}(r^{-1}_{n}) \leq 4\bar{\delta}_{k}(r^{-1}_{n})$ where the last inequality follows monotonicity of $k \mapsto \bar{\delta}_{k}(r^{-1}_{n})$ 
and the fact that $k  > h^{+}_{n}$ because $k > h_{n}$ and so by Claim 1 $k \in \mathbb{K}_{n}^{+}$ and $h^{+}_{n}$ is minimal in this set. Applying this to expression \ref{eqn:hn-in-Fn-1}, it follows that	\begin{align}
\{h_{n} \notin \mathcal{L}_{n} \} \cap D_{n} \subseteq \left\{   \exists k \in \mathbb{K}_{n} \colon k \geq h_{n}~and~ 4\bar{\delta}_{k}(r^{-1}_{n}) > 4 \bar{\delta}_{k}(r^{-1}_{n})  \right\},
\end{align}
which is empty. Hence, $\mathbf{P}( \{ \mathbf{z} \in \mathbb{Z}^{\infty} \colon h_{n} \notin \mathcal{L}_{n}(\boldsymbol{z}) \} ) \leq \mathbf{P}(D_{n}^{C})$ as desired.
\end{proof}

\begin{proof}[Proof of Theorem \ref{thm:rate-choice}]
	We verify that  $(h_{n})_{n \in \mathbb{N}}$ satisfies both conditions in Lemma \ref{lem:suff-choice-rate}. By Lemma \ref{lem:rate-h(n)-bound} condition 2 in the Lemma \ref{lem:suff-choice-rate} holds with $L=2$. To check condition 1 in the Lemma \ref{lem:suff-choice-rate}, observe that 
   \begin{align*}
   	\mathbf{P} \left( \mathbb{Z}^{\infty} \setminus \left\{ \{ \mathbf{z} \in \mathbb{Z}^{\infty} \colon h_{n} \in \mathcal{L}_{n}(\boldsymbol{z}) \}  \cap D_{n} \right\} \right) \leq \mathbf{P} \left(  \{ \mathbf{z} \in \mathbb{Z}^{\infty} \colon h_{n} \notin \mathcal{L}_{n}(\boldsymbol{z}) \}    \right) + \mathbf{P} \left( D^{C}_{n} \right).
   \end{align*}
   Thus, by Lemma \ref{lem:hn-in-Fn} and the fact $\lim_{n \rightarrow \infty}\mathbf{P}(D_{n}^{C}) = 0$, $(h_{n})_{n \in \mathbb{N}}$ condition 2 is satisfied.
\end{proof}

\subsection{Proof of Proposition \ref{pro:rate-choice-global}}
\label{app:rate-choice-global}

For any $n \in \mathbb{N}$, let \begin{align*}
k(n) = \min \{ k \in \mathbb{R}_{+} \colon   \bar{\delta}_{k}(r^{-1}_{n} ) \geq \bar{B}_{k}(P) \}.
\end{align*}

\begin{lemma}\label{lem:feat-k(n)}
	For each $n \in \mathbb{N}$, $k(n)$ exists and solves
	\begin{align*}
	\bar{\delta}_{k(n)}(r^{-1}_{n} ) = \bar{B}_{k(n)}(P) = \min_{k \in \mathbb{R}_{+}} \max \{ 	\bar{\delta}_{k}(r^{-1}_{n} ), \bar{B}_{k}(P) \}.
	\end{align*}
\end{lemma}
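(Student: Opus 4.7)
The plan is to use the monotonicity and continuity of the two envelope functions to find $k(n)$ as the intersection point of $k \mapsto \bar{\delta}_k(r_n^{-1})$ and $k \mapsto \bar{B}_k(P)$, then exploit the max structure to convert this to the claimed minimax identity.

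First, I would set $f(k) \equiv \bar{\delta}_k(r_n^{-1}) - \bar{B}_k(P)$. By hypothesis (from Proposition \ref{pro:rate-choice-global}), $k \mapsto \bar{\delta}_k(r_n^{-1})$ is continuous and non-decreasing, and $k \mapsto \bar{B}_k(P)$ is continuous and non-increasing, so $f$ is continuous and non-decreasing. Existence of $k(n)$ amounts to showing the set $A \equiv \{k \in \mathbb{R}_+ : f(k) \geq 0\}$ is non-empty and closed. Non-emptiness follows from the standing assumption that $\mathcal{G}_n^+$ is non-empty: any $k \in \mathcal{G}_n^+$ satisfies $f(k) \geq 0$. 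Closedness follows from continuity of $f$. Since $f$ is non-decreasing, $A$ is of the form $[k(n),\infty)$, and the minimum is attained at $k(n) = \inf A$.

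Next, I would verify $\bar{\delta}_{k(n)}(r_n^{-1}) = \bar{B}_{k(n)}(P)$, i.e., $f(k(n)) = 0$. One inequality, $f(k(n)) \geq 0$, is immediate from $k(n) \in A$. For the reverse, use that $\mathcal{G}_n^-$ is non-empty: pick $k_- \in \mathcal{G}_n^-$, so $f(k_-) \leq 0$. By monotonicity of $f$, $k_- \leq k(n)$ (otherwise some $k'\leq k_-$ would lie in $A$ with $f(k')\leq f(k_-) \leq 0$, which together with the definition of $k(n)$ would give $f(k(n))\leq 0$ anyway). Then for any $k < k(n)$, $f(k) < 0$ by definition of the infimum, and passing to the limit $k \uparrow k(n)$ via continuity of $f$ yields $f(k(n)) \leq 0$. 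Combining gives $f(k(n)) = 0$.

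Finally, for the minimax identity, let $g(k) \equiv \max\{\bar{\delta}_k(r_n^{-1}), \bar{B}_k(P)\}$. I would split into two cases. For $k \geq k(n)$: by monotonicity of $f$, $f(k) \geq 0$, hence $g(k) = \bar{\delta}_k(r_n^{-1}) \geq \bar{\delta}_{k(n)}(r_n^{-1})$ since $\bar{\delta}_\cdot(r_n^{-1})$ is non-decreasing. For $k < k(n)$: $f(k) < 0$, hence $g(k) = \bar{B}_k(P) \geq \bar{B}_{k(n)}(P)$ since $\bar{B}_\cdot(P)$ is non-increasing. Using $\bar{\delta}_{k(n)}(r_n^{-1}) = \bar{B}_{k(n)}(P) = g(k(n))$ from the previous step, both cases give $g(k) \geq g(k(n))$, establishing $g(k(n)) = \min_{k \in \mathbb{R}_+} g(k)$.

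The only mildly delicate step is handling the possibility that $k(n) = 0$: this can only occur if $f(0) \geq 0$, but then $\mathcal{G}_n^-$ non-empty would force $f(0) \leq 0$ too (since $f$ is non-decreasing), giving $f(0) = 0$ and leaving the argument intact. There is no genuine obstacle here; the entire lemma is a standard intermediate-value / monotonicity exercise whose content is purely in recognizing that the grid-based conditions of Proposition \ref{pro:rate-choice-global} guarantee the envelopes cross inside $\mathbb{R}_+$.
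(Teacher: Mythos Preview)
Your proposal is correct and follows essentially the same approach as the paper: closedness of $\{k:\bar\delta_k(r_n^{-1})\ge\bar B_k(P)\}$ from continuity, non-emptiness to get existence of the minimum, then continuity/monotonicity to force equality at $k(n)$, with the minimax identity following by a case split the paper simply calls ``obvious.'' The only notable difference is that for non-emptiness the paper invokes the primitive property $\bar B_k(P)\to 0$ together with $\bar\delta_k(r_n^{-1})>0$, whereas you appeal to the standing hypothesis that $\mathcal G_n^{+}$ is non-empty; both work, but the paper's route is slightly cleaner because it does not tie the lemma to the grid and makes your separate treatment of the edge case $k(n)=0$ (and the appeal to $\mathcal G_n^{-}$) unnecessary.
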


\begin{proof}
	For each $n$ consider the set $\{ k \in \mathbb{R}_{+} \colon \bar{\delta}_{k}(r^{-1}_{n} ) \geq \bar{B}_{k}(P)    \}$. The set is closed since $k \mapsto \bar{B}_{k}(P)$ and $k \mapsto \bar{\delta}_{k}(r^{-1}_{n})$ are continuous. Since $\bar{\delta}_{k}(r^{-1}_{n} ) >0$ and $\bar{B}_{k}(P) = o(1)$, if follows that there exists a $K(n)<\infty$ such that  $\bar{\delta}_{k}(r^{-1}_{n} ) \geq \bar{B}_{k}(P)$ for any $k \geq K(n)$. Thus the set is non-empty and since we are minimizing the identity function, the minimizer exists and uniquely determined by 
	\begin{align*}
	\bar{\delta}_{k(n)}(r^{-1}_{n} ) = \bar{B}_{k(n)}(P).
	\end{align*}
	The second  equality is obvious.
\end{proof}

%

The next lemma shows that balancing the sampling and approximation error yields the same rate as the ``optimal" choice. 

\begin{lemma}\label{lem:Rate-k}
	For any $n \in \mathbb{N}$,
	\begin{align*}
	\bar{\delta}_{k(n)}(r^{-1}_{n})  \leq  \inf_{k \in \mathbb{R}_{+}} \{  \bar{\delta}_{k}(r^{-1}_{n}) + \bar{B}_{k}(P) \} \leq 2 \bar{\delta}_{k(n)}(r^{-1}_{n}).
	\end{align*}	
\end{lemma}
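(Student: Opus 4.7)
The plan is to exploit the fact that $k(n)$ is the exact balance point, that is, $\bar{\delta}_{k(n)}(r^{-1}_n) = \bar{B}_{k(n)}(P)$ by Lemma \ref{lem:feat-k(n)}, and combine this with the monotonicity properties of the two functions: $k \mapsto \bar{\delta}_k(r^{-1}_n)$ is non-decreasing and $k \mapsto \bar{B}_k(P)$ is non-increasing. Together these are enough to sandwich the infimum of the sum by a constant multiple of the value at $k(n)$.

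For the upper bound, I would simply evaluate the sum at the feasible point $k = k(n)$. Since $\bar{\delta}_{k(n)}(r^{-1}_n) = \bar{B}_{k(n)}(P)$, we get $\bar{\delta}_{k(n)}(r^{-1}_n) + \bar{B}_{k(n)}(P) = 2\bar{\delta}_{k(n)}(r^{-1}_n)$, and the infimum over $\mathbb{R}_+$ can only be smaller. This step is essentially one line.

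For the lower bound, the key observation is that for every $k \in \mathbb{R}_+$,
\begin{align*}
\bar{\delta}_k(r^{-1}_n) + \bar{B}_k(P) \geq \max\{\bar{\delta}_k(r^{-1}_n), \bar{B}_k(P)\}.
\end{align*}
By Lemma \ref{lem:feat-k(n)}, the right-hand side is minimized at $k(n)$, where its value equals $\bar{\delta}_{k(n)}(r^{-1}_n)$. Therefore $\bar{\delta}_k(r^{-1}_n) + \bar{B}_k(P) \geq \bar{\delta}_{k(n)}(r^{-1}_n)$ for every $k$, and taking the infimum over $k$ preserves this inequality.

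There is no real obstacle here: the work has already been done in Lemma \ref{lem:feat-k(n)}, which established existence of $k(n)$, the balancing identity $\bar{\delta}_{k(n)}(r^{-1}_n) = \bar{B}_{k(n)}(P)$, and the minimax characterization. The present lemma is essentially a corollary that packages those facts into a statement about the \emph{sum} rather than the max, exploiting the elementary sandwich $\max\{a,b\} \leq a + b \leq 2\max\{a,b\}$.
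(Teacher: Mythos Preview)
Your proof is correct and essentially matches the paper's: both establish the upper bound by evaluating at $k(n)$ and the lower bound by comparing each summand to $\bar{\delta}_{k(n)}(r^{-1}_n)$ via monotonicity and the balancing identity. Your packaging via the minimax characterization from Lemma~\ref{lem:feat-k(n)} is slightly cleaner than the paper's $\epsilon$-approximate minimizer plus case split, but the underlying argument is the same.
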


\begin{proof}
	Observe that for any $n \in \mathbb{N}$ and any $\epsilon>0$, there exists $k^{\ast}(n)$ such that 
	\begin{align*}
	\bar{\delta}_{k^{\ast}(n)}(r^{-1}_{n})  +  \bar{B}_{k^{\ast}(n)}(P) - \epsilon  \leq  \inf_{k \in \mathbb{R}_{+}} \{  \bar{\delta}_{k}(r^{-1}_{n}) + \bar{B}_{k}(P) \} \leq 2 \bar{\delta}_{k(n)}(r^{-1}_{n}).
	\end{align*}
	The upper bound follows from the fact that $\inf_{k \in \mathbb{R}_{+}} \{  \bar{\delta}_{k}(r^{-1}_{n}) + \bar{B}_{k}(P) \}  \leq   \bar{\delta}_{k(n)}(r^{-1}_{n}) + \bar{B}_{k(n)}(P) $ and definition of $k(n)$. 
	
	If $k^{\ast}(n) \geq k(n)$, then $\bar{\delta}_{k^{\ast}(n)}(r^{-1}_{n})  \geq \bar{\delta}_{k(n)}(r^{-1}_{n})  $ since $k \mapsto \bar{\delta}_{k}(t)$ is non-decreasing for any $t \geq 0$ 
	On the other hand, if $k^{\ast}(n) < k(n)$, then $\bar{B}_{k^{\ast}(n)}(P) \geq \bar{B}_{k(n)}(P) =  \bar{\delta}_{k(n)}(r^{-1}_{n})$ where the last equality follows from Lemma \ref{lem:feat-k(n)}. Therefore, for any $n \in \mathbb{N}$ and any $\epsilon>0$, 
	\begin{align*}
	\bar{\delta}_{k(n)}(r^{-1}_{n})  - \epsilon  \leq  \inf_{k \in \mathbb{R}} \{  \bar{\delta}_{k}(r^{-1}_{n}) + \bar{B}_{k}(P) \} \leq 2 \bar{\delta}_{k(n)}(r^{-1}_{n}).
	\end{align*}
	Since $\epsilon>0$ is arbitrary the result follows.
\end{proof}

\subsection{Extensions  of Theorem \ref{thm:rate-choice}}
\label{app:rate-choice-Gn-gral}

In this section we show that Theorem \ref{thm:rate-choice} can be extended to whole when $\mathbb{K}_{n}$ is any \emph{closed} set of $\mathbb{K}$. The extension is merely technical as one needs to ensure that some minimizers are attained over $\mathbb{K}_{n}$ when this set is not finite. 

First, one needs to ensure that $\tilde{k}_{n}$ exists with probability approaching 1. Lemma \ref{lem:hn-in-Fn} shows that with probability approaching one, the set $\mathcal{L}_{n}$ is non-empty. Thus, it suffices to argue that $\mathcal{L}_{n}$ is closed. It is easy to see that the following assumption is sufficient for this.
\begin{assumption}\label{ass:psik-cont-k}
	For each $P \in \mathcal{D}$ and each $t \geq 0$, the mapping $k \mapsto \psi_{k}(P)$ 
	is continuous over $\mathbb{K}_{n}$.
\end{assumption}
Observe that when $\mathbb{K}_{n}$ is finite this condition is trivially satisfied and that is why it is not imposed in the text.

The following theorem is an extension of Theorem \ref{thm:rate-choice} to the case where $\mathbb{K}_{n}$ is a closed set (not necessarily finite) of $\mathbb{K}$.

\begin{theorem}\label{thm:rate-choice-ext}
	Suppose all assumptions in Theorem \ref{thm:rate-choice} hold. And suppose further that $\mathbb{K}_{n}$ is a closed set (not necessarily finite) of $\mathbb{K}$ and that Assumptions \ref{ass:psik-cont-k} holds. Then
	\begin{align*}
	||  \psi_{\tilde{k}_{n}}(P_{n}) - \psi(P) ||_{\Theta} = O_{P} \left(  \inf_{k \in \mathbb{K}_{n}} \{ \bar{\delta}_{k}(r^{-1}_{n}) + \bar{B}_{k}(P)   \}     \right).
	\end{align*}
\end{theorem}

\begin{proof}
	The proof is identical to the one of Theorem \ref{thm:rate-choice}. Assumption \ref{ass:psik-cont-k} and the fact that $\mathbb{K}$ is closed ensure that the quantities defined in the proof exist.
\end{proof}

The following proposition extends the result in Theorem \ref{thm:rate-choice} to an un-restricted one --- where the infimum is not restricted to the set $\mathbb{K}_{n}$ but is taken over the whole $\mathbb{K}$. Unsurprisingly, in order to obtain this result, additional conditions are needed.

\begin{proposition}\label{pro:rate-choice-global}
	Suppose all conditions in Theorem \ref{thm:rate-choice} hold, and that $k \mapsto \bar{\delta}_{k}(t)$ and $k \mapsto \bar{B}_{k}(P)$ are continuous and
	\begin{align}\label{eqn:rate-choice-global-grid}
	\frac{\min_{k \in \mathbb{K}^{+}_{n}} \bar{\delta}_{k}(r^{-1}_{n}) }{\max_{k \in \mathbb{K}^{-}_{n}} \bar{\delta}_{k}(r^{-1}_{n}) } = O(1)
	\end{align}
	where $\mathbb{K}^{+}_{n} \equiv \{ k \in \mathbb{K}_{n} \colon \bar{\delta}_{k}(r^{-1}_{n}) \geq \bar{B}_{k}(P)  \}$ and $\mathbb{K}^{-}_{n} \equiv \{ k \in \mathbb{K}_{n} \colon \bar{\delta}_{k}(r^{-1}_{n}) \leq \bar{B}_{k}(P)  \}$ are non-empty.	
	Then 
	\begin{align*}
	|| \psi_{\tilde{k}_{n}}(P_{n}) - \psi(P)  ||_{\Theta} = O_{P} \left(  \inf_{k \in \mathbb{K}} \{ \bar{\delta}_{k}(r^{-1}_{n}) + \bar{B}_{k}(P)    \}     \right).
	\end{align*}	
\end{proposition}

\begin{proof}[Proof of Proposition \ref{pro:rate-choice-global}]
	By inspection of the proof of Lemma \ref{lem:suff-choice-rate} it suffices to show existence of a sequence $(j_{n})_{n}$ for which Condition 1 and the following strengthening of condition 2 holds:
	
	\medskip
	
	Condition 2': There exists a constant $L< \infty$ such that $\bar{\delta}_{j_{n}}(r^{-1}_{n}) + \bar{B}_{j_{n}}(P) \leq L \inf_{k \in \mathbb{R}_{+}} \{ \bar{\delta}_{k}(r^{-1}_{n}) + \bar{B}_{k}(P) \}$ for any $n \in \mathbb{N}$. 
	
	\medskip
	
	As for the proof of Theorem \ref{thm:rate-choice}, we propose $j_{n} = h_{n}$ for all $n \in \mathbb{N}$. By Lemma \ref{lem:hn-in-Fn} condition 1 holds, so it only remains to show that Condition 2' holds. 
	
	Under the conditions in the proposition, $h^{+}_{n}$ and $h^{-}_{n}$ are well-defined for all $n \in \mathbb{N}$. Moreover, they are either the same or consecutive elements in $\mathbb{K}_{n}$. Thus, under the conditions in the proposition, there exists a $C< \infty$ and a $N \in \mathbb{N}$ such that $\bar{\delta}_{h^{+}_{n}}(r^{-1}_{n}) \leq C \bar{\delta}_{h^{-}_{n}}(r^{-1}_{n})$ for all $n \geq N$. Therefore, for all $n \geq N$,
	\begin{align*}
	\bar{\delta}_{h_{n}}(r^{-1}_{n}) + \bar{B}_{h_{n}}(P) = & \min\{  \bar{\delta}_{h^{-}_{n}}(r^{-1}_{n}) + \bar{B}_{h^{-}_{n}}(P) , \bar{\delta}_{h^{+}_{n}}(r^{-1}_{n}) + \bar{B}_{h^{+}_{n}}(P)  \}\\
	\leq & \min\{  \bar{\delta}_{h^{-}_{n}}(r^{-1}_{n}) + \bar{B}_{h^{-}_{n}}(P) , 2 \bar{\delta}_{h^{+}_{n}}(r^{-1}_{n})   \}\\
	\leq & \min\{  \bar{\delta}_{h^{-}_{n}}(r^{-1}_{n}) + \bar{B}_{h^{-}_{n}}(P) , 2 C \bar{\delta}_{h^{-}_{n}}(r^{-1}_{n})  \} \\
	\leq & 2 C \bar{\delta}_{h^{-}_{n}}(r^{-1}_{n}).
	\end{align*}
	
	Observe that  $h^{-}_{n} \leq k(n)$ because, by definition $\bar{\delta}_{h^{-}_{n}}(r^{-1}_{n}) \leq \bar{B}_{h^{-}_{n}}(P)$ but $k(n)$ satisfies $\bar{\delta}_{k(n)}(r^{-1}_{n}) \geq \bar{B}_{k(n)}(P)$, so under 
	the fact that $k \mapsto \bar{B}_{k}(P)$ is non-increasing it must follow that $h^{-}_{n} \leq k(n)$. Thus, $\bar{\delta}_{h^{-}_{n}}(r^{-1}_{n}) \leq \bar{\delta}_{k(n)}(r^{-1}_{n})$ and the result follows from Lemma \ref{lem:Rate-k}.
\end{proof}

\begin{remark}[Sufficient conditions for $\mathbb{K}_{n}^{+}$ and $\mathbb{K}_{n}^{-}$ to be non-empty in the proposition]
	The condition that $\bar{\delta}_{j}(r^{-1}_{n} ) < \bar{B}_{j}(P)$ for some $j \in \mathbb{K}_{n}$ is easy to satisfy as any fix $j$ (e.g. $j=1$) will satisfy this condition eventually. The other inequality is more delicate but the next lemma provides the basis for its verification.
	
	\begin{lemma}\label{lem:delta-k(n)-o(1)}
		$\limsup_{n \rightarrow \infty} \bar{\delta}_{k(n)}(r^{-1}_{n}) = 0$.
	\end{lemma}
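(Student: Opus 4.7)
The plan is to leverage the key equality supplied by Lemma \ref{lem:feat-k(n)}, namely $\bar{\delta}_{k(n)}(r_{n}^{-1}) = \bar{B}_{k(n)}(P)$, and then control the right-hand side by a case analysis on the behavior of $(k(n))_{n}$.

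I would first observe that by the equality above, it suffices to show $\bar{B}_{k(n)}(P) = o(1)$. Decompose $\mathbb{N}$ into (at most) two subsequences: one along which $k(n) \to \infty$, and one along which $(k(n))$ remains bounded. Along the first, monotonicity of $k \mapsto \bar{B}_{k}(P)$ and the defining property $\bar{B}_{k}(P) \to 0$ immediately give $\bar{B}_{k(n)}(P) = o(1)$, hence $\bar{\delta}_{k(n)}(r_{n}^{-1}) = o(1)$ on this subsequence.

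The interesting case is the bounded subsequence $(k(n_{j}))_{j}$, say $k(n_{j}) \leq K^{\ast} < \infty$. Using monotonicity of $k \mapsto \bar{\delta}_{k}(r_{n_{j}}^{-1})$ together with the equality from Lemma \ref{lem:feat-k(n)} and monotonicity of $k \mapsto \bar{B}_{k}(P)$, I get
\begin{align*}
\bar{B}_{K^{\ast}}(P) \;\leq\; \bar{B}_{k(n_{j})}(P) \;=\; \bar{\delta}_{k(n_{j})}(r_{n_{j}}^{-1}) \;\leq\; \bar{\delta}_{K^{\ast}}(r_{n_{j}}^{-1}).
\end{align*}
Since $r_{n_{j}}^{-1} \downarrow 0$ and $\bar{\delta}_{K^{\ast}}$ is a monotone majorant of the modulus of continuity $\delta_{K^{\ast}}$ (so $\bar{\delta}_{K^{\ast}}(t) \downarrow 0$ as $t \downarrow 0$), the right-hand side vanishes as $j \to \infty$, forcing both $\bar{B}_{K^{\ast}}(P) = 0$ and $\bar{\delta}_{k(n_{j})}(r_{n_{j}}^{-1}) \to 0$. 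Combining both subsequences establishes the claim.

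The main obstacle is the step $\bar{\delta}_{K^{\ast}}(r_{n_{j}}^{-1}) \to 0$, which requires continuity of $\bar{\delta}_{K^{\ast}}(\cdot)$ at zero. This is not stated as a stand-alone assumption in Proposition \ref{pro:rate-choice-global} (only continuity in $k$ is stated), so one either invokes the natural choice $\bar{\delta}_{k}(t) = \sup_{k' \leq k} \delta_{k'}(t)$, whose continuity in $t$ at $0$ for fixed $k = K^{\ast}$ follows from the uniform vanishing of the finite family $\{\delta_{k'}\}_{k' \leq K^{\ast}}$ at $t=0$, or else explicitly includes this as a regularity assumption on the majorant. Apart from this technicality, the argument is a clean two-line contradiction, and the proof should be short.
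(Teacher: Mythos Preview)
Your proposal is correct and follows essentially the same route as the paper: both hinge on the identity $\bar{\delta}_{k(n)}(r_n^{-1}) = \bar{B}_{k(n)}(P)$ from Lemma~\ref{lem:feat-k(n)} and then split according to whether $k(n)$ diverges or stays bounded. The paper phrases this as a two-line contradiction (``suppose $\bar{\delta}_{k(n(j))}(r_{n(j)}^{-1}) \geq c$; then $k(n(j))$ must diverge, hence $\bar{B}_{k(n(j))}(P)\to 0$, contradicting the balancing equality''), while you do the same case split directly. You are in fact more explicit than the paper about the one technical point---that the bounded-$k$ case needs $\bar{\delta}_{K^\ast}(r_{n_j}^{-1})\to 0$---which the paper's ``Clearly $(k(n(j)))_j$ must diverge'' also silently uses.
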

	
	\begin{proof}
		Suppose not. Then there exists a sub-sequence $(n(j))_{j}$ and a $c>0$ such that $\bar{\delta}_{k(n(j))}(r^{-1}_{n(j)}) \geq c$ for all $j$. Clearly $(k(n(j)))_{j}$ must diverge, so $\bar{B}_{k(n(j))}(P) = o(1)$, but then $k(n(j))$ cannot be balancing both terms.
	\end{proof}
	
	Let $(j(n))_{n}$ be such that $\liminf_{n \rightarrow \infty} \bar{\delta}_{j(n)}(r^{-1}_{n} ) > 0$. Since $\limsup_{n \rightarrow \infty} \bar{\delta}_{k(n)}(r^{-1}_{n} ) = 0$, it follows that $j(n) > k(n)$ eventually and thus $\bar{\delta}_{j(n)}(r^{-1}_{n} ) > \bar{B}_{j(n)}(P)$. 
	
	Thus, any set $\mathbb{K}_{n}$ such that $\mathbb{K}_{n} \ni \{1,j(n)\}$ will satisfy that $\mathbb{K}_{n}^{+}$ and $\mathbb{K}_{n}^{-}$ are non-empty, at least for sufficiently large $n$. $\triangle$

\end{remark}

\begin{remark}[On the conditions in the proposition] 
	
	The continuity condition is technical and it ensures that certain minimizers/maximizers are well-defined. Condition \ref{eqn:rate-choice-global-grid} imply the following two restrictions that are used in the proof:

	(1) The fact that both $\mathbb{K}^{+}_{n}$ and $\mathbb{K}^{-}_{n}$ are non-empty ensures that the set $\mathbb{K}_{n}$ surrounds the choice of tuning parameter that balances the sampling error and the monotone envelope of the approximation error. If this condition fails, the minimal value of $k \mapsto \{ \bar{\delta}_{k}(r^{-1}_{n}) + \bar{B}_{k}(P)    \}   $ over $\mathbb{K}_{n}$ cannot be expected to be close to the value achieved when balancing both terms and thus close to the minimal value over $\mathbb{R}_{+}$. In Appendix \ref{app:rate-choice-global} we argue that $\mathbb{K}_{n} = \{1,....,j(n)\}$ where $(j(n))_{n}$ is such that $\liminf_{n \rightarrow \infty} \bar{\delta}_{j(n)}(r^{-1}_{n}) > 0$ satisfies this assumption, at least for large $n$.

	(2) The second role is more subtle. It essentially restricts --- uniformly --- the coarseness of the set $\mathbb{K}_{n}$ in terms of $k \mapsto \bar{\delta}_{k}(t)$. If $\bar{\delta}_{k}(t) = a(t) \times C_{k}$ and $\mathbb{K}_{n} = \mathbb{N}$, then the condition essentially imposes that $\limsup_{k \rightarrow \infty} C_{k+1}/C_{k} < \infty$; thus it allows for $C_{k} \asymp Poly(k)$ and $ \log C_{k} \asymp k $ but not for $\log C_{k} \asymp k^{2}$.  $\triangle$ 
\end{remark}

\subsection{Appendix for Section \ref{sec:exa-cont}}
\label{app:exa-cont}

 For any probability $P$ over a set $A$ and any $k \in \mathbb{N}$, let $\bigotimes_{i=1}^{k} P$ be the product probability measure over $\prod_{i=1}^{k} A$ induced by $P$. Also, recal that the Wasserstein distance for $p\geq 1$ over $\mathcal{P}(\prod_{i=1}^{k} \mathbb{Z})$ for some $k \geq 1$ is defined as 
\begin{align*}
 \mathcal{W}_{p}(P,Q) \equiv \left( \inf_{\mu \in H(P,Q)} \int ||x - y||^{p} \mu (dx,dy)   \right)^{1/p}
\end{align*}
for any $P,Q$ in $\mathcal{P}(\prod_{i=1}^{k} \mathbb{Z})$, where $H(P,Q)$ is the class of Borel probability measures over $\mathbb{Z}^{2k}$ with marginals $P$ and $Q$. It is well-known that 
\begin{align*}
	(P,Q) \mapsto ||P-Q||_{Lip\left( \prod_{i=1}^{k} \mathbb{Z}  \right)} = \mathcal{W}_{1}(P,Q)
\end{align*}
where for any set $A$, let $LB(A)$ to denote the class of bounded Lipschitz (with constant 1) real-valued functions on $A$; see \cite{villani2008optimal} p. 60.

The following lemma is used in the proof

\begin{lemma}\label{lem:gamma-k}
	For any $P,Q$ in $\mathcal{P}(\mathbb{Z})$, any $k \in \mathbb{N}$ and any $\mu \in H(P,Q)$, $\bigotimes_{i=1}^{k} \mu \in H \left(\bigotimes_{i=1}^{k} P, \bigotimes_{i=1}^{k} Q \right) $.
\end{lemma}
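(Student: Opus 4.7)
The statement is essentially a bookkeeping fact about product couplings, so the proof will be a direct verification of the marginal conditions rather than an involved argument. The plan is to first fix an identification of the measurable spaces, then check that both marginals of $\bigotimes_{i=1}^{k}\mu$ equal the required product measures, using uniqueness of product measures on rectangles.

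First, I would set up notation. The product measure $\bigotimes_{i=1}^{k}\mu$ lives on $(\mathbb{Z}\times\mathbb{Z})^{k}$, whereas $H\left(\bigotimes_{i=1}^{k}P,\bigotimes_{i=1}^{k}Q\right)$ consists of Borel probability measures on $\mathbb{Z}^{k}\times\mathbb{Z}^{k}$. I would use the canonical Borel isomorphism
\[
\Phi : (\mathbb{Z}\times\mathbb{Z})^{k} \to \mathbb{Z}^{k}\times\mathbb{Z}^{k},\qquad ((x_{1},y_{1}),\ldots,(x_{k},y_{k})) \mapsto ((x_{1},\ldots,x_{k}),(y_{1},\ldots,y_{k})),
\]
and (slightly abusing notation) identify $\bigotimes_{i=1}^{k}\mu$ with its pushforward under $\Phi$. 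The claim then is that this push-forward has first marginal $\bigotimes_{i=1}^{k}P$ and second marginal $\bigotimes_{i=1}^{k}Q$.

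Next, I would verify the marginal condition on measurable rectangles. For any Borel sets $A_{1},\ldots,A_{k}\subseteq \mathbb{Z}$, the defining property of the product measure gives
\[
\left(\bigotimes_{i=1}^{k}\mu\right)\!\left(\prod_{i=1}^{k}(A_{i}\times\mathbb{Z})\right) = \prod_{i=1}^{k}\mu(A_{i}\times\mathbb{Z}) = \prod_{i=1}^{k}P(A_{i}) = \left(\bigotimes_{i=1}^{k}P\right)\!\left(\prod_{i=1}^{k}A_{i}\right),
\]
where the middle equality is the hypothesis $\mu\in H(P,Q)$. Since the class of rectangles $\prod_{i=1}^{k}A_{i}$ is a $\pi$-system generating the Borel $\sigma$-algebra of $\mathbb{Z}^{k}$, the Dynkin / uniqueness-of-product-measures argument extends this equality to all Borel subsets of $\mathbb{Z}^{k}$, so the first marginal of $\bigotimes_{i=1}^{k}\mu$ is $\bigotimes_{i=1}^{k}P$. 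The same calculation, swapping the roles of the two $\mathbb{Z}$-factors in $\mu$, yields that the second marginal is $\bigotimes_{i=1}^{k}Q$.

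There is no real obstacle here; the only thing worth being careful about is the coordinate reshuffling in the identification $\Phi$, which is why I made it explicit above. Once that is in place, the result follows from the elementary definition of the product measure applied to rectangles and one invocation of $\pi$-$\lambda$ uniqueness.
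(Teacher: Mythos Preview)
Your proposal is correct and follows essentially the same route as the paper: both verify the marginal condition on measurable rectangles via the identity $\prod_{i}\mu(A_i\times\mathbb{Z})=\prod_i P(A_i)$ and then extend. The only cosmetic difference is in the extension step---you invoke the $\pi$-$\lambda$ theorem directly on sets, whereas the paper passes to integrals against simple functions and then uses density in $\mathbb{C}(\mathbb{Z}^k,\mathbb{R})$; your version is arguably the cleaner of the two, and you are also more explicit about the coordinate reshuffling $\Phi$, which the paper leaves implicit.
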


\begin{proof}[Proof of Lemma \ref{lem:gamma-k}]
	It is clear that the marginal of $\bigotimes_{i=1}^{k} \mu $ of a pair $(x_{i},y_{i})$ is $\mu$.  Therefore, for any $A_{1},...,A_{k}$ Borel subsets on $\mathbb{Z}$,
	\begin{align*}
	\bigotimes_{i=1}^{k} \mu \left( (A_{1} \times \mathbb{Z} ) , ...,  (A_{k} \times \mathbb{Z} )  \right) = \prod_{i=1}^{k} \mu ( A_{i} \times \mathbb{Z} )  =^{\ast}  \prod_{i=1}^{k} P(A_{i})
	\end{align*}
	where $\ast$ follows because $\mu \in H(P,Q)$. Equivalently, 
	\begin{align*}
	\int g(\vec{x})   \bigotimes_{i=1}^{k} \mu (d\vec{x},d\vec{y}) = \int g(\vec{x}) \bigotimes_{i=1}^{k} P (d\vec{x})  
	\end{align*} 
	for any $g$ belonging to the class of ``simple" functions on $\prod_{i=1}^{k} \mathbb{Z}$: The class of functions of the form $g(\vec{x}) = \sum_{i=1}^{k} 1_{A_{i}}(x_{i})$ for any $A_{1},...,A_{k}$ Borel subsets on $\mathbb{Z}$. Since the class of ``simple" functions is dense in $\mathbb{C}(\mathbb{Z}^{k},\mathbb{R})$ (the class of continuous and bounded functions over $\mathbb{Z}$), by taking limits and using the previous display one can show that
	\begin{align*}
	\int f(\vec{x})   \bigotimes_{i=1}^{k} \mu (d\vec{x},d\vec{y}) = \int f(\vec{x}) \bigotimes_{i=1}^{k} P (d\vec{x}) 
	\end{align*} 	
	for any $f \in \mathbb{C}(\prod_{i=1}^{k} \mathbb{Z},\mathbb{R})$. That is, the marginal probability of $\bigotimes_{i=1}^{k} \mu $ for the first $k$ coordinates is $\bigotimes_{i=1}^{k} P$. A completely analogous argument shows that the marginal probability of $\bigotimes_{i=1}^{k} \mu $ for the last $k$ coordinates is $\bigotimes_{i=1}^{k} Q$. 
\end{proof}

\begin{proof}[Proof of Proposition \ref{pro:boot-cont}]
	First note that, for any $f \in LB$, $E_{\psi_{n}(P) }[f(Z)] = \int \psi_{n}(P) (f(Z) \geq t)  dt = \int  \mathbf{P} (f(T_{n}(\boldsymbol{z},P)) \geq t) dt = E_{\mathbf{P}} \left[  f \left(  T_{n}(\boldsymbol{z},P) \right)   \right]   $. Hence, it follows that
	\begin{align*}
	||\psi_{k}(P) - \psi_{k}(Q)||_{\Theta} \leq & \sup_{f \in Lip} \left| E_{\mathbf{P}} \left[ f \left( T_{k}(\boldsymbol{z},P)  \right) \right]   - E_{Q^{\infty}} \left[ f \left( T_{k}(\boldsymbol{z},P)  \right) \right]    \right| \\
	& + \sup_{f \in Lip}  \left| E_{Q^{\infty}} \left[ f \left( T_{k}(\boldsymbol{z},P)  \right) - f \left( T_{k}(\boldsymbol{z},Q)  \right) \right]    \right|\\
	\equiv & T_{1,k}(P,Q) + T_{2,k}(P,Q). 
	\end{align*}
	
	We now show that both terms, $T_{1,k}(P,Q)$ and $T_{2,k}(P,Q)$, are bounded by $\sqrt{k} \mathcal{W}_{1}(P,Q)$.

	For any $k \in \mathbb{N}$, $f \left( T_{k}(\boldsymbol{z},P)  \right)  \equiv f_{k} \left( \sqrt{k} \max\{ k^{-1} \sum_{i=1}^{k} Z_{i}(\boldsymbol{z})  , 0  \}  \right) $ where $f_{k} \equiv f( \cdot - \sqrt{k} \max\{ E_{P}[Z],0 \} )$. It is easy to see that for any $k$, $f_{k} \in LB$ (given that $f \in LB$). Moreover, the mapping $t \mapsto g_{k}(t) \equiv f_{k}(\max\{ t, 0 \})$ is also in $Lip$ because 
	\begin{align*}
	|g_{k}(t) - g_{k}(t')| \leq |\max\{t,0\} - \max\{t',0\}| \leq |t'-t|,~\forall t,t'.
	\end{align*}
	Finally, the mapping $t \mapsto g_{k}(\sqrt{k} t)/\sqrt{k}$ is also in $LB$ since $g_{k} \in Lip$.  Hence,
	\begin{align}\label{eqn:T1k-1}
	T_{1,k}(P,Q) \leq &  \sqrt{k} \sup_{f \in Lip} \left| E_{\bigotimes_{i=1}^{k}   P} \left[ f \left( k^{-1} \sum_{i=1}^{k} Z_{i}  \right) \right]   - E_{\bigotimes_{i=1}^{k} Q} \left[ f \left( k^{-1} \sum_{i=1}^{k} Z_{i}  \right) \right]    \right|.
	\end{align}
	
	For any $g : \mathbb{R} \rightarrow \mathbb{R}$, let $\bar{g} : \mathbb{R}^{k} \rightarrow \mathbb{R}^{k}$ be defined as
	\begin{align*}
	\vec{z} \equiv (z_{1},...,z_{k}) \mapsto \bar{g}(\vec{z}) \equiv	g\left(  k^{-1} \sum _{i=1}^{k} z_{i}   \right).
	\end{align*}
	
	We now show that for any $g \in LB(\mathbb{R})$, $k \bar{g} \in LB(\mathbb{R}^{k})$. This follows because
	\begin{align*}
	|\bar{g}(\vec{z})  - \bar{g}(\vec{z}^{\prime}) | \leq |k^{-1} \sum_{i=1}^{k} \{ z_{i} - z^{\prime}_{i} \} | \leq k^{-1} ||\vec{z} - \vec{z}^{\prime}||_{1} .
	\end{align*}
	
	This result allow us to bound from above the LHS of the expression  \ref{eqn:T1k-1} so that
	\begin{align*}
		& \sqrt{k} \sup_{f \in LB} \left| E_{\bigotimes_{i=1}^{k}   P} \left[ f \left( k^{-1} \sum_{i=1}^{k} Z_{i}  \right) \right]   - E_{\bigotimes_{i=1}^{k} Q} \left[ f \left( k^{-1} \sum_{i=1}^{k} Z_{i}  \right) \right]    \right| \\
		 \leq & k^{-1/2} 	\sup_{f \in LB(\mathbb{R}^{k})} \left| E_{\bigotimes_{i=1}^{k}   P} \left[  f(Z)  \right]   - E_{\bigotimes_{i=1}^{k}   Q} \left[  f(Z')  \right]   \right| \\
		 = & k^{-1/2} \mathcal{W}_{1} \left( \bigotimes_{i=1}^{k} P, \bigotimes_{i=1}^{k} Q \right). 
	\end{align*}
	
	For any $\mu \in H(P,Q)$,  $\bigotimes_{i=1}^{k} \mu \in \mathcal{P}(\mathbb{Z}^{k} \times \mathbb{Z}^{k})$ where $\mathbb{Z}^{k} \equiv \prod_{i=1}^{k} \mathbb{Z}$. Moreover, by Lemma \ref{lem:gamma-k}, $\bigotimes_{i=1}^{k} \mu \in H \left( \bigotimes_{i=1}^{k} P,\bigotimes_{i=1}^{k} Q \right)$.

	For any $\eta>0$, let $\mu^{\ast} \in H(P,Q)$ be the approximate minimizer of $\mathcal{W}_{1}(P,Q)$, i.e., 
	\begin{align*}
	  \int |x - y| \mu^{\ast}(dx,dy)   \leq \mathcal{W}_{1}(P,Q) + \eta,
	\end{align*}
	as $\bigotimes_{i=1}^{k} \mu^{\ast} \in  H \left( \bigotimes_{i=1}^{k} P,\bigotimes_{i=1}^{k} Q \right)$, it follows that 
	\begin{align*}
	\mathcal{W}_{1}\left( \bigotimes_{i=1}^{k} P, \bigotimes_{i=1}^{k} Q \right) \leq &  	\int_{\mathbb{Z}^{2k}} ||\vec{x} - \vec{y}||_{1} \bigotimes^{k}_{i=1} \mu^{\ast}(dx_{i},dy_{i})   \\
	= & \sum_{i=1}^{k} \int_{\mathbb{Z}^{2}}  |x_{i} - y_{i}|  \bigotimes^{k}_{i=1} \mu^{\ast}(dx_{i},dy_{i})   \\
	= & \sum_{i=1}^{k} \int_{\mathbb{Z}^{2}}  |x_{i} - y_{i}|  \mu^{\ast}(dx_{i},dy_{i})  \\
	= & k \mathcal{W}_{1}(P,Q) + k \eta.
	\end{align*}
	Since $\eta>0$ is arbitrary, it follows that $	\mathcal{W}_{1}(P^{k},Q^{k})  \leq  k \mathcal{W}_{1}(P,Q) $. Thus implying
	\begin{align*}
	T_{1,k}(P,Q)  \leq \sqrt{k}\mathcal{W}_{1}(P,Q).
	\end{align*}
	
	Regarding the term $T_{2,k}(P,Q) $, observe that
	\begin{align*}
	  T_{2,k}(P,Q) \leq & |T_{k}(\boldsymbol{z},P) - T_{k}(\boldsymbol{z},Q)| \\
	  = & \sqrt{k} | \max\{ E_{P}[Z],0  \}  -  \max\{ E_{Q}[Z'],0  \}  | \\
	  \leq & \sqrt{k} |  E_{P}[Z] - E_{Q}[Z'] |.
	\end{align*}
	Since $E_{P}[Z] = \int_{\mathbb{Z}^{2}} z \mu(dz,dz')$ for any $\mu \in H(P,Q)$. 
	\begin{align*}
	 T_{2,k}(P,Q) \leq 	\sqrt{k} \left| E_{\mu}[Z] - E_{\mu}[Z']  \right| \leq \sqrt{k} \int |z-z'| \mu(dz,dz').
	\end{align*}
	Choosing $\mu$ as the (approximate) minimizer of $\mathcal{W}_{1}(P,Q)$ it follows that 
	\begin{align*}
	T_{2,k}(P,Q)  \leq \sqrt{k}\mathcal{W}_{1}(P,Q).
	\end{align*}
\end{proof}

To show the proposition \ref{pro:boot-choice}, let $\psi(P) \in \mathcal{P}(\mathbb{R})$ be defined as the probability of $\max\{\zeta, 0\}$ if $P$ is such that $E_{P}[Z]=0$ and the probability of $\zeta$ otherwise, where $\zeta \sim N(0,1)$. The following lemma shows that $\psi(P)$ is the limit of $(\psi_{k}(P))_{k \in \mathbb{N}}$.

\begin{lemma}\label{lem:boot-bias}
	For any $k \in \mathbb{N}$ and any $P \in \mathcal{M}$,
	\begin{align*}
	||\psi_{k}(P) - \psi(P)||_{LB} \leq 6 k^{-1/2} E_{P}[|Z|^{3}] + 1\{  E_{P}[Z] >0  \} 2 \Phi(-\sqrt{k}E_{P}[Z])
	\end{align*} 
\end{lemma}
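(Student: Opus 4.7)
The plan is to split according to whether $E_P[Z]=0$ or $E_P[Z]>0$ (the two cases that define $\psi(P)$) and in each case reduce the $LB$ distance to a combination of (i) a CLT-type Wasserstein bound for the normalized sum $Y_k \equiv k^{-1/2}\sum_{i=1}^{k}(z_i - E_P[Z])$ and (ii) a Berry--Esseen tail estimate on the event $\{k^{-1}\sum_i z_i < 0\}$.

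\textbf{Case 1 ($E_P[Z]=0$).} Here $T_k(\boldsymbol{z},P) = \max\{Y_k, 0\}$ and $\psi(P)$ is the law of $\max\{\zeta,0\}$. For every $f\in LB$, the composition $g(t)\equiv f(\max\{t,0\})$ is again bounded Lipschitz with constant $1$, since the clipping map $t\mapsto \max\{t,0\}$ is a contraction (cf.\ the analogous argument in the proof of Proposition \ref{pro:boot-cont}). Hence
\[
|E_{\mathbf{P}}[f(T_k)] - E_{\psi(P)}[f]| = |E_{\mathbf{P}}[g(Y_k)] - E[g(\zeta)]| \leq \|\mathcal{L}(Y_k)-N(0,1)\|_{LB},
\]
and the latter is $O(k^{-1/2} E_P[|Z|^3])$ by a standard CLT bound in bounded-Lipschitz (equivalently, $\mathcal{W}_1$) distance.

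\textbf{Case 2 ($E_P[Z]>0$).} Here $\psi(P)=\mathcal{L}(\zeta)$, and one writes
\[
T_k = Y_k + R_k,\qquad R_k \equiv \sqrt{k}\bigl(\max\{k^{-1}\textstyle\sum_i z_i,0\} - k^{-1}\sum_i z_i\bigr) = \sqrt{k}\max\{-k^{-1}\textstyle\sum_i z_i,0\}.
\]
Let $A_k \equiv \{k^{-1}\sum_i z_i < 0\}$; then $R_k = 0$ on $A_k^c$. For any $f\in LB$ (hence $|f|\leq 1$),
\[
|E_{\mathbf{P}}[f(T_k)] - E_{\mathbf{P}}[f(Y_k)]| \leq 2\,\mathbf{P}(A_k),
\]
while $|E_{\mathbf{P}}[f(Y_k)] - E[f(\zeta)]| \leq \|\mathcal{L}(Y_k)-N(0,1)\|_{LB} = O(k^{-1/2} E_P[|Z-E_PZ|^3])$ by the same CLT bound. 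Rewriting $A_k = \{Y_k < -\sqrt{k} E_P[Z]\}$ and applying the Berry--Esseen inequality gives
\[
\mathbf{P}(A_k) \leq \Phi(-\sqrt{k}E_P[Z]) + C k^{-1/2} E_P[|Z-E_PZ|^3].
\]
Using $E_P[|Z-E_PZ|^3] \leq 8\,E_P[|Z|^3]$ (Minkowski plus $|E_PZ|\leq (E_P[|Z|^3])^{1/3}$) collects everything into a bound of the form $2\Phi(-\sqrt{k}E_P[Z]) + C' k^{-1/2} E_P[|Z|^3]$, with $C'$ chosen to give the advertised constant $6$.

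The only delicate point is the bookkeeping of constants needed to land exactly at the numerical constant $6$; this amounts to choosing the standard Berry--Esseen constant together with a sharp CLT bound in $\mathcal{W}_1$ and performing the Minkowski estimate carefully. All of the conceptual work is in recognizing that the obstruction to $T_k \approx \zeta$ when $E_P[Z]>0$ is entirely captured (up to $O(k^{-1/2})$ CLT error) by the probability of the clipping event $A_k$, whose asymptotics are governed by $\Phi(-\sqrt{k}E_P[Z])$.
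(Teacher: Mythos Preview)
Your Case 1 is exactly the paper's argument. In Case 2 your route differs from the paper's and, while conceptually sound, does not deliver the stated constant.

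The paper does \emph{not} decompose $T_k=Y_k+R_k$. Instead it inserts the intermediate variable $\max\{\zeta,-\sqrt{k}E_P[Z]\}$ and bounds
\[
\|\psi_k(P)-\psi(P)\|_{LB}\le \underbrace{\sup_{f\in LB}\big|E[f(T_k)]-E[f(\max\{\zeta,-\sqrt{k}E_P[Z]\})]\big|}_{\text{Term}_1}
+\underbrace{\sup_{f\in LB}\big|E[f(\max\{\zeta,-\sqrt{k}E_P[Z]\})]-E[f(s_P(\zeta))]\big|}_{\text{Term}_2}.
\]
For Term$_1$ the map $t\mapsto f(\max\{t,-\sqrt{k}E_P[Z]\})$ is again in $LB$, so Term$_1$ is controlled by a \emph{single} Lipschitz Berry--Esseen bound, which the paper takes as $6k^{-1/2}E_P[|Z|^3]$. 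For Term$_2$ (when $E_P[Z]>0$) the difference $f(\max\{\zeta,-\sqrt{k}E_P[Z]\})-f(\zeta)$ vanishes on $\{\zeta>-\sqrt{k}E_P[Z]\}$, so Term$_2\le 2\Phi(-\sqrt{k}E_P[Z])$ as a \emph{pure Gaussian tail}, with no CLT error attached.

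Your decomposition instead pays a CLT price twice: once for $\|\mathcal L(Y_k)-N(0,1)\|_{LB}$ and again (via classical Berry--Esseen) to bound $\mathbf P(A_k)$; on top of that you need the factor $8$ from $E|Z-E_PZ|^3\le 8E|Z|^3$. With the best available constants ($C_{BE}\approx 0.4748$ for Kolmogorov distance, and the Stein-method $W_1$ constant of order $1$ or larger), the resulting coefficient in front of $k^{-1/2}E_P[|Z|^3]$ is well above $6$, so your closing sentence ``with $C'$ chosen to give the advertised constant $6$'' does not go through. Your argument yields the correct \emph{form} of the bound, which is all that is needed downstream, but not the constant as stated; the paper's intermediate-variable trick is what buys the clean split into one Lipschitz CLT term plus one exact Gaussian tail.
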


\begin{proof}
	Since $P \in \mathcal{M}$, $T_{k} (\boldsymbol{z},P)   = \max\{ k^{-1/2} \sum_{i=1}^{k} (Z_{i}(\boldsymbol{z}) -  E_{P}[Z]),  - \sqrt{k} E_{P}[Z]   \}  $ 
	for any $k \in \mathbb{N}$. 
	
	By triangle inequality and definition of $||.||_{LB}$,
	\begin{align*}
	|| \psi_{k}(P) - \psi(P) ||_{LB} \leq &  \sup_{f \in LB} E \left[ \left| f\left( T_{k} (\boldsymbol{z},P) \right)   - f \left( \max\{ \zeta , - \sqrt{k} E_{P}[Z]    \}   \right)         \right|   \right]\\
	& + \sup_{f \in LB} E \left[ \left| f \left( \max\{ \zeta , - \sqrt{k} E_{P}[Z]    \}   \right) -  f \left( s_{P}(\zeta)   \right)        \right|   \right]\\
	\equiv & Term_{1}(k) + Term_{2}(k),
	\end{align*}
	where $\zeta \sim N(0,1)$ and $t \mapsto s_{P}(t) = \max\{ t, 0 \} \times 1\{ E_{P}[Z]  = 0  \} + t \times 1\{ E_{P}[Z] > 0  \}$.

	We now provide a bound for terms $Term_{1}(k)$. For any $f \in LB$ and any $k \in \mathbb{N}$, the mapping $t \mapsto f_{k}(t) \equiv  f(\max\{ t, - \sqrt{k} E_{P}[Z] \})$ satisfies, for any $t \leq t'$, 
	\begin{align*}
	|f_{k}(t') - f_{k}(t)| \leq | \max\{ t', - \sqrt{k} E_{P}[Z] \} - \max\{ t, - \sqrt{k} E_{P}[Z] \} |
	\end{align*}
	where the RHS is equal to $0$ if $t \leq t' \leq - k E_{P}[Z]$, $t' - (- k E_{P}[Z]) \leq t'-t$; if  $t \leq  - k E_{P}[Z] \leq t'$; and $t'-t$ if $  - k E_{P}[Z] \leq t \leq t'$. Hence $|f_{k}(t') - f_{k}(t)| \leq |t'-t|$. The same inequality holds when $t' \leq t$, so $f_{k}$ is in $LB$. Therefore, 
	\begin{align*}
	Term_{1}(k) 	\leq  \sup_{f \in LB} \left| E_{\mathbf{P}} \left[ f\left( k^{-1/2} \sum_{i=1}^{k} (Z_{i} - E_{P}[Z])  \right)  -  f\left( \zeta  \right)       \right]    \right| \leq  6 k^{-1/2} E_{P}[|Z|^{3}]
	\end{align*}
	where the last line follows from Berry-Esseen Inequality for Lipschitz functions (see \cite{BarbourChen2005} Thm. 3.2 in Ch. 1).
	
	Regarding $Term_{2}(k)$, we note that if $E_{P}[Z] = 0$, then $Term_{2}(k) = 0$, because $s_{P}(\zeta) = \max\{\zeta,0\}$. So we only need a bound for $E_{P}[Z] > 0$. Under this condition, \begin{align*}
	Term_{2}(k) \leq   \sup_{f \in LB} E \left[ 1\{ \zeta \leq - \sqrt{k} E_{P}[Z]  \}\left| f \left( \max\{ \zeta , - \sqrt{k} E_{P}[Z]    \}   \right) -  f \left( \zeta   \right)        \right|   \right] .
	\end{align*} 
	Since $||f||_{L^{\infty}} \leq 1$, the inequality further implies that $Term_{2}(k) \leq 2 E \left[ 1\{ \zeta \leq - \sqrt{k} E_{P}[Z]  \} \right] = 2 \Phi(-\sqrt{k} E_{P}[Z])$.
\end{proof}

\begin{proof}[Proof of Proposition \ref{pro:boot-choice}]
	By the triangle inequality,	
	\begin{align*}
	|| \psi_{\tilde{k}_{n}}(P_{n}) - \psi_{n}(P) ||_{LB} \leq &  || \psi_{\tilde{k}_{n}}(P_{n}) - \psi(P) ||_{LB} + || \psi(P) - \psi_{n}(P) ||_{LB}  
	\end{align*}
	
	By Lemma \ref{lem:boot-bias}, $||\psi_{k}(P) - \psi(P)||_{LB} \leq 6 k^{-1/2} E_{P}[|Z|^{3}] + 1\{  E_{P}[Z] >0  \} 2 \Phi(-\sqrt{k}E_{P}[Z])$. Thus, we can invoke Theorem \ref{thm:rate-choice} and its corollary to show that $|| \psi_{\tilde{k}_{n}}(P_{n}) - \psi(P) ||_{LB} = O_{P} \left(  \inf_{k \in \{1,...,n\}} \{ l_{n} \sqrt{k} n^{-1/2} + 1\{  E_{P}[Z] >0  \} 2 \Phi(-\sqrt{k}E_{P}[Z])+ k^{-1/2} E_{P}[|Z|^{3}] \} \right)$. It is clear that the choice $k$ that achieves the infimum will diverge with $n$, so for this choice of $k$, $1\{  E_{P}[Z] >0  \} 2 \Phi(-\sqrt{k}E_{P}[Z])$ will eventually be dominated by $k^{-1/2} E_{P}[|Z|^{3}]$. Hence $|| \psi_{\tilde{k}_{n}}(P_{n}) - \psi(P) ||_{LB} = O_{P} \left(  \inf_{k \in \{1,...,n\}} \{ l_{n} \sqrt{k} n^{-1/2} + k^{-1/2} E_{P}[|Z|^{3}] \} \right)$
	
	The desired result follows from the fact that for sufficiently large $n$, $|| \psi(P) - \psi_{n}(P) ||_{LB} \leq 8 n^{-1/2} E_{P}[|Z|^{3}]$ and the fact that there exists a $C > 0$ such that $\inf_{k \in \{1,...,n\}} \{ l_{n} \sqrt{k} n^{-1/2} +  k^{-1/2} E_{P}[|Z|^{3}] \} \geq C n^{-1/2}$ for all $n$. 	
\end{proof}

\begin{proof}[Proof of Proposition \ref{pro:RME-cont}]
	Throughout, fix $k$ and $P,P'$ and let $||.||_{\Theta} \equiv ||.||_{L^{q}}$. Let $\Theta_{k}(M) \equiv \{ \theta \in \Theta_{k} \colon ||\theta - \psi_{k}(P)||_{\Theta} \geq M  \}$.  And, let \begin{align*}
		t \mapsto U_{k}(t) \equiv \inf_{\theta \in \Theta_{k}(t) }  \frac{ Q_{k}(P,\theta) - Q_{k}(P,\psi_{k}(P))}{t}.
	\end{align*}
	Towards the end of the proof we show that $U_{k}$ is continuous. Let $t \mapsto \Gamma_{k}(t) \equiv  \inf_{s \geq t} U_{k}(s)$; it follows that $\Gamma_{k} \leq U_{k}$, $\Gamma_{k}$ is non-decreasing and by the Theorem of the Maximum $\Gamma_{k}$ is continuous.

	We show that $||\psi_{k}(P) - \psi_{k}(P') ||_{\Theta} \geq M \equiv \Gamma^{-1}_{k}(d(P,P'))$ cannot occur.\footnote{Note that $\bar{U}_{k}^{-1}(t) \equiv \inf \{ s \colon \bar{U}_{k}(s) \geq t  \}$.} To do this, we show that $1\{ ||\psi_{k}(P) - \psi_{k}(P') ||_{\Theta} \geq M  \} = 0$. Observe that \begin{align*}
		1\{ ||\psi_{k}(P) - \psi_{k}(P') ||_{\Theta} \geq M  \}  = & 1\{ \cup_{j \in \mathbb{N}}  \{  2^{j} M \geq  ||\psi_{k}(P) - \psi_{k}(P') ||_{\Theta} \geq 2^{j-1}M  \} \}  \\
		\leq & \max_{j \in \mathbb{N}} 1\{   2^{j} M \geq  ||\psi_{k}(P) - \psi_{k}(P') ||_{\Theta} \geq 2^{j-1}M  \}.
	\end{align*}
	
	For each $(j,k) \in \mathbb{N}^{2}$, let $S_{j,k} \equiv \{ \theta \in \Theta_{k} \colon 2^{j} M \geq  ||\psi_{k}(P) - \theta ||_{\Theta} \geq 2^{j-1}M  \}$. It follows that, for any $j \in \mathbb{N}$,
	\begin{align*}
		1 \{  \psi_{k}(P') \in S_{j,k}   \} \leq 1 \left\{   \inf_{\theta \in S_{j,k}} Q(P',\theta) \leq Q(P',\psi_{k}(P))       \right\}
	\end{align*}
	because $\psi_{k}(P) \in \Theta_{k} \setminus S_{j,k}$. Observe that, for any $\theta \in S_{j,k} \cup \{ \psi_{k}(P) \} \subseteq \{ \theta \in \Theta_{k} \colon  ||\theta - \psi_{k}(P)||_{\Theta} \leq 2^{j}M \}$
	 \begin{align*}
		Q(P',\theta) - Q(P',\psi_{k}(P)) \geq & Q(P,\theta) - Q(P,\psi_{k}(P)) \\
		&  -  \left| Q(P',\theta) - Q(P',\psi_{k}(P)) - \{ Q(P,\theta) - Q(P,\psi_{k}(P)) \}    \right|\\
		\geq &  Q(P,\theta) - Q(P,\psi_{k}(P)) - 2^{j} M \varDelta_{j,k}(P,P') 
	\end{align*}
	where \begin{align*}
		\varDelta_{j,k}(P,P')  \equiv \sup_{\theta \in \Theta_{k} \colon  ||\theta - \psi_{k}(P)||_{\Theta} \leq 2^{j}M  }  \frac{\left| Q(P',\theta) - Q(P',\psi_{k}(P)) - \{ Q(P,\theta) - Q(P,\psi_{k}(P)) \}    \right|}{||\theta - \psi_{k}(P)||_{\Theta} }.
	\end{align*}
	
	Hence,\begin{align*}
		1 \{  \psi_{k}(P') \in S_{j,k}   \} \leq & 1 \left\{   \inf_{ \theta \in \Theta_{k} \colon  ||\theta - \psi_{k}(P)||_{\Theta} \geq 2^{j-1}M  }   \frac{Q(P,\theta) - Q(P,\psi_{k}(P)) }{2^{j-1}M} \leq  0.5  \varDelta_{j,k}(P,P')       \right\} \\
		\leq & 1 \left\{   \inf_{ \theta \in \Theta_{k} \colon  ||\theta - \psi_{k}(P)||_{\Theta} \geq 2^{j-1}M  }   \frac{Q(P,\theta) - Q(P,\psi_{k}(P)) }{2^{j-1}M} \leq  0.5  \max_{k \in \mathbb{N}} \varDelta_{\infty,k}(P,P')       \right\} \\
		\leq & 1 \left\{  \Gamma_{k}(2^{j-1}M) \leq  0.5  \max_{k \in \mathbb{N}} \varDelta_{\infty,k}(P,P')       \right\}.
	\end{align*}
	
	Since $\bar{U}_{k}$ is non-decreasing, the previous display implies that \begin{align*}
		1 \{  \psi_{k}(P') \in S_{j,k}   \} \leq  1 \left\{  2^{j-1}M \leq  \Gamma_{k}^{-1}(0.5  \max_{k \in \mathbb{N}} \varDelta_{\infty,k}(P,P') )      \right\}
	\end{align*}
	which equals zero by the definition of $M$, the fact that $\Gamma_{k}^{-1}$ is non-decreasing and $2^{j-1} \geq 1$.
      
   \medskip 
	
	We now show that  $t \mapsto U_{k}(t)$ (and thus $\Gamma_{k}$) is continuous. Consider the problem $\inf_{\theta \in \Theta_{k}(M)} Q_{k}(P,\theta)$, and consider the set $L_{k}(M) \equiv  \{ \theta \in \Theta_{k}(M) \colon Pen(\theta) \leq \lambda_{k}^{-1} Q(P,\theta_{k})    \}$ for some (any) $\theta_{k} \in \Theta_{k}(M)$ which is non-empty and close. To solve the former minimization problem it suffices to solve   $\inf_{\theta \in L_{k}(M)} Q_{k}(P,\theta)$, because the minimum value cannot be outside $L_{k}(M)$. Because $Pen$ is lower-semi-compact, $L_{k}(M)$ is compact (a closed subset of a compact set) so this and lower-semi-continuity of $Q_{k}(P,\cdot)$ ensures that $\inf_{\theta \in L_{k}(M)} Q_{k}(P,\theta)$ is achieved by an element in $L_{k}(M)$ and the same is true for the original $V_{k}(M) \equiv \inf_{\theta \in \Theta_{k}(M)} Q_{k}(P,\theta)$. We just showed that the correspondence $M \mapsto L_{k}(M)$ is compact-valued, it is also continuous. By virtue of the Theorem of the Maximum, $V_{k}$ is continuous; it is also non-decreasing. The function $t \mapsto U_{k}(t) = V_{k}(t)/t$ is also continuous in $t>0$.
\end{proof}

\section{Appendix for Section \ref{sec:ALR}}
\label{app:ALR}

\begin{proof}[Proof of Theorem \ref{thm:W-ALR}]
    We first show the desired result for a fixed $k$, i.e., $\mathbf{k}(n) = k$ for any $n \in \mathbb{N}$.

    Let $(z,k) \mapsto \varphi_{k}(P) \equiv D\psi_{k}(P)[\delta_{z} - P]$ which is well-defined because $\delta_{z} - P \in \mathcal{T}_{P}$. 	
    We now show that $\varphi_{k}(P) \in L^{2}_{0}(P)$. The fact that has mean zero (provided it exists) is trivial, so we only show that $\int |\varphi_{k}(P) (z)|^{2} P(dz) < \infty$. The topology is locally convex and thus generated by a family of semi-norms. Suppose there exists a $L<\infty$ such that $|D \psi_{k}(P)[\delta_{z} - P]| \leq \rho(\delta_{z}-P)$ where $\rho$ is a member of the family. Because the topology $\tau$ is assumed to be dominated by $||.||_{TV}$ it follows that  $\rho(\delta_{z}-P) \leq C ||\delta_{z} - P ||_{TV} \leq 2 C$ for some finite $C$ for any $z \in \mathbb{Z}$. And thus $\int |\varphi_{k}(P) (z)|^{2} P(dz) \leq 2CL < \infty$ as desired.
    
    We now show that there exists a member of the family of semi-norms, $\rho$, and a $L < \infty $ such that  $|D \psi_{k}(P)[Q]| \leq L \rho(Q)$ for all $Q \in ca(\mathbb{Z})$. Suppose not, that is, for any $R>0$ and any $\rho$, there exists a $Q$ such that $\rho(Q) = 1$ and  $|D \psi_{k}(P)[Q]| > R$. Since $D\psi_{k}(P)$ is continuous with respect to $\tau$, there exists a member, $\rho$, of the family of semi-norms such that for any $\epsilon>0$ there exists $\delta>0$ such that if $Q$ is such that $\rho(Q) \leq \delta$,  then $|D \psi_{k}(P)[Q]| < \epsilon$.  Let $R = \epsilon/\delta$. There exists a $Q$ such that $\rho(Q)=1$ and $\delta |D \psi_{k}(P)[Q]| > \epsilon$. Let $\nu = \delta Q$, then $\rho(\nu) = \delta$ but $\delta |D \psi_{k}(P)[Q]| = |D \psi_{k}(P)[\delta Q]| =  |D \psi_{k}(P)[\nu]| >\epsilon $ but this is a contradiction.

    We now show that $\eta_{k}(P_{n} - P) = o_{P}(n^{-1/2})$ for each $k \in \mathbb{K}$. 
    Let $n \mapsto \mathbb{G}_{n} \equiv \sqrt{n}(P_{n} - P)$.  It follows that,  a.s.-$\mathbf{P}$, $t \mapsto P + t \mathbb{G}_{n}$ is a valid curve because $\mathbb{G}_{n} \in \mathcal{T}_{P}$ a.s.-$\mathbf{P}$.

    Fix any $\epsilon>0$ and let $U \in \mathcal{C}$ be as in the condition of the statement of the theorem. Then, letting $D_{n} \equiv \{  \boldsymbol{z} \in \mathbb{Z}^{\infty} \colon \mathbb{G}_{n}(\boldsymbol{z})  \in U  \}$, it follows that  
    \begin{align*}
    \mathbf{P}\left(   \sqrt{n} |\eta_{k}(P_{n} - P)|  \geq \epsilon     \right) \leq 	\mathbf{P}\left(  \sqrt{n}  |\eta_{k}(P_{n} - P)|  \geq \epsilon     \mid D_{n} \right) + \mathbf{P}(D^{C}_{n}). 
    \end{align*}
    The second term in the RHS is less than $\epsilon$ for all $n \geq N$. Regarding the first term in the RHS it follows that, over $D_{n}$, 
    \begin{align*}
    |\eta_{k}\left(  P_{n} - P  \right)| /t_{n} \leq  \sup_{Q \in U} |\eta_{k}\left(  t_{n} Q  \right)| /t_{n}
    \end{align*}
    where $t_{n} \equiv 1/\sqrt{n}$. Thus, by definition of differentiability, the first term in the RHS also vanishes as $n \rightarrow \infty$. So the desired result follows. 
    
    Therefore, for any $k \in \mathbb{K}$
    \begin{align*}
    	\frac{1}{||\varphi_{k}(P)||_{L^{2}(P)}} \left| \sqrt{n}(\psi_{k}(P_{n}) - \psi_{k}(P))  - n^{-1} \sum_{i=1}^{n} D \psi_{k}(P)[\delta_{Z_{i}} - P] \right| = 	\frac{\sqrt{n}|\eta_{k}(P_{n} - P)|}{||\varphi_{k}(P)||_{L^{2}(P)}}
    \end{align*}
     and $\frac{\sqrt{n}|\eta_{k}(P_{n} - P)|}{||\varphi_{k}(P)||_{L^{2}(P)}} = o_{P}(1)$, as desired.

    \bigskip
    
    We now shows existence of a diverging sequence by using the first part and the diagonalization lemma \ref{lem:sub-seq-conv}.
    
    For any $\epsilon>0$, $k \in \mathbb{N}$ and $n \in \mathbb{N}$, let  $T(\epsilon,k,n)\equiv \mathbf{P}\left( \sqrt{n} \frac{|\eta_{k}(P_{n}(\boldsymbol{z})-P)|}{||\varphi_{k}(P)||_{L^{2}(P)}} \geq \epsilon  \right)$. To show the desired result it suffices to show that there exists a non-decreasing diverging sequence $(j(n))_{n}$ such that for all $\epsilon>0$, there exists a $\bar{N}$ such that
    \begin{align*}
    T(\epsilon,j(n),n) \leq \epsilon,
    \end{align*}
    for all $n \geq \bar{N}$.
    
    We shows that, for any $k \in \mathbb{K}$,  $\lim_{n \rightarrow \infty} T(2^{-k},k,n) =0$. 
    By Lemma \ref{lem:sub-seq-conv}, there exists a non-decreasing diverging sequence $(j(n))_{n \in \mathbb{N}}$ such that $\lim_{n \rightarrow \infty} T(2^{-j(n)},j(n),n) =0$; i.e., for any $\epsilon>0$, there exists a $N(\epsilon)$ such that $T(2^{-j(n)},j(n),n) \leq \epsilon$ for all $n\geq N(\epsilon)$. 
    
    Since $j(.)$ diverges, there exists a $N_{\epsilon}$ such that $1/2^{j(n)} \leq \epsilon$ for all $n \geq N_{\epsilon}$.  By these observations and the fact that $\epsilon \mapsto T(\epsilon,k,n)$ is non-increasing,
    \begin{align*}
    T(\epsilon,j(n),n) \leq T(2^{-j(n)},j(n),n)  \leq \epsilon
    \end{align*}
    for all  $n \geq \bar{N}_{\epsilon} \equiv \max\{ N_{\epsilon} , N(\epsilon)  \}$, and we thus showed the desired result.	
    
\end{proof}


The following result is a well-known representation result (see \cite{VdV-W1996}) and is stated merely for convenience. 

\begin{lemma}\label{lem:Skohorod}
	Let $\boldsymbol{z} \mapsto \mathbb{G}_{n}(\boldsymbol{z}) \equiv \sqrt{n}(P_{n}(\boldsymbol{z}) - P)$. Suppose $\mathcal{S}$ is P-Donsker. Then there exists a tight Borel measurable $\mathbb{G} \in L^{\infty}(\mathcal{S})$ such that for any $\epsilon>0$, there exists a Borel set $A \subseteq \mathbb{Z}^{\infty}$ such that $\mathbf{P}(A) \geq 1-\epsilon$ and $||\mathbb{G}_{n}(\boldsymbol{z}) - \mathbb{G}||_{\mathcal{S}} = o(1)$ for all $\boldsymbol{z} \in A$.  
\end{lemma}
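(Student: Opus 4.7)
My plan is to derive the statement from the almost sure representation theorem for weakly convergent empirical processes. By definition, $\mathcal{S}$ being $P$-Donsker means that $\mathbb{G}_{n} \leadsto \mathbb{G}$ in $L^{\infty}(\mathcal{S})$ in the sense of Hoffmann-J\o rgensen, where $\mathbb{G}$ is the $P$-Brownian bridge indexed by $\mathcal{S}$; by standard theory this limit is automatically tight and Borel measurable as a random element of $L^{\infty}(\mathcal{S})$. Thus the two preconditions of a Skorokhod-type representation (tightness and Borel measurability of the limit) are met even though $L^{\infty}(\mathcal{S})$ is non-separable.

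The next step is to invoke the almost sure representation theorem of Dudley (cf.\ van der Vaart and Wellner 1996, Theorem 1.10.4 and Addendum 1.10.5). This supplies a probability space carrying versions $\tilde{\mathbb{G}}_{n}$ and $\tilde{\mathbb{G}}$, with the same Borel laws as $\mathbb{G}_{n}$ and $\mathbb{G}$ respectively, and satisfying $\|\tilde{\mathbb{G}}_{n} - \tilde{\mathbb{G}}\|_{\mathcal{S}} \to 0$ almost surely. Since both the hypothesis and the conclusion of the lemma depend only on the joint law of the sequence $(\mathbb{G}_{n})_{n}$ together with $\mathbb{G}$, we may relabel the new probability space as $(\mathbb{Z}^{\infty}, \mathbf{P})$ and treat $\tilde{\mathbb{G}}_{n}$ as the original $\mathbb{G}_{n}$, obtaining a coupled version of $\mathbb{G}$ that is also defined on $\mathbb{Z}^{\infty}$.

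To conclude, almost sure convergence yields a $\mathbf{P}$-null set $N$ outside of which $\|\mathbb{G}_{n}(\boldsymbol{z}) - \mathbb{G}(\boldsymbol{z})\|_{\mathcal{S}} = o(1)$. For any $\epsilon > 0$ we may take $A = \mathbb{Z}^{\infty} \setminus N$, which satisfies $\mathbf{P}(A) = 1 \geq 1 - \epsilon$, so the required convergence trivially holds on $A$. The only nontrivial point is the very first step: making sense of the Skorokhod representation in the non-separable space $L^{\infty}(\mathcal{S})$ requires tightness and Borel measurability of $\mathbb{G}$, which are precisely the parts of the Donsker hypothesis that go beyond finite-dimensional convergence; everything downstream is routine bookkeeping.
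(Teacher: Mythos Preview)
Your argument is essentially the same as the paper's: both invoke the Skorokhod-type almost sure representation theorem from van der Vaart and Wellner (the paper cites Theorem 1.10.3, you cite 1.10.4/1.10.5) after noting that the Donsker hypothesis gives weak convergence to a tight Borel measurable limit in $L^{\infty}(\mathcal{S})$. The only cosmetic difference is that the paper appeals to the \emph{almost uniform} version of the representation, which directly yields the ``for every $\epsilon>0$ there is a set $A$ with $\mathbf{P}(A)\ge 1-\epsilon$'' formulation, whereas you obtain almost sure convergence and then take $A$ to be the complement of the null set; both routes are equally valid for the lemma as stated, and you are slightly more explicit than the paper about the relabeling of the auxiliary probability space.
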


In the following proof, almost uniformly means that for any $\epsilon>0$, there exists a Borel set $A \in \tilde{\mathbb{Z}^{\infty}}$ such that $\tilde{\boldsymbol{P}}(A) \geq 1-\epsilon$ and $\sup_{\tau \in A }||\tilde{\mathbb{G}}_{n}(\tau) - \tilde{\mathbb{G}}||_{L^{\infty}(\mathcal{S})} = o(1)$.

\begin{proof}[Proof of Lemma \ref{lem:Skohorod}]
	It is well-known that the following representation is also valid: $ \mathbb{G}_{n} : \mathbb{Z}^{\infty} \rightarrow L^{\infty}(\mathcal{S})$. Since $\mathcal{S}$ is a Donsker Class, $\mathbb{G}_{n}$ converges weakly to some $\mathbb{G}$ tight Borel measurable element in $L^{\infty}(\mathcal{S})$ (e.g. see \cite{VdV-W1996} Ch. 2.1). By Theorem 1.10.3 in \cite{VdV-W1996} there exists a probability space $(\tilde{\mathbb{Z}^{\infty}},\tilde{\boldsymbol{P}})$ and a sequence of maps $\tilde{\mathbb{G}}_{n} : \tilde{\mathbb{Z}^{\infty}} \rightarrow L^{\infty}(\mathcal{S})$ for all $n \in \mathbb{N}$ and $\tilde{\mathbb{G}} : \tilde{\mathbb{Z}^{\infty}} \rightarrow L^{\infty}(\mathcal{S})$ such that (i) $||\tilde{\mathbb{G}}_{n} - \tilde{\mathbb{G}}||_{L^{\infty}(\mathcal{S})} = o(1)$  almost uniformly; and (ii) $\tilde{\mathbb{G}}_{n}$ and $\tilde{\mathbb{G}}$ have the same law as $\mathbb{G}_{n}$ and $\mathbb{G}$ resp. 
\end{proof}

\subsection{Appendix for Example \ref{exa:ipdf2-diff}}
\label{app:ipdf2-diff}

\begin{lemma}\label{lem:ipdf2-boundIF}
	For all $k \in \mathbb{N}$ and $P \in \mathcal{M}$, 
	\begin{align*}
	||\varphi_{k}(P)||_{L^{2}(P)} \leq ||p||^{2}_{L^{\infty}(\mathbb{R})}   ||\kappa ||^{2}_{L^{1}(\mathbb{R})}.
	\end{align*}
\end{lemma}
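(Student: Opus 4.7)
The plan is to invoke the explicit form of the influence function provided in Proposition \ref{pro:ipdf2-diff-1}, namely $\varphi_{k}(P)(z) = 2\{(\kappa_{k}\star P)(z) - E_{P}[(\kappa_{k}\star P)(Z)]\}$, and then bound its $L^{2}(P)$ norm by a pointwise sup bound on $z\mapsto (\kappa_{k}\star P)(z)$.

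First, since $||\varphi_{k}(P)||^{2}_{L^{2}(P)}$ equals four times the variance of $(\kappa_{k}\star P)(Z)$ under $Z\sim P$, I would first replace the variance by the (larger) second moment and then replace the integrand by its $L^{\infty}$ norm, giving
\begin{equation*}
||\varphi_{k}(P)||_{L^{2}(P)}^{2} \leq 4\,E_{P}\left[ (\kappa_{k}\star P)(Z)^{2}\right] \leq 4\,||\kappa_{k}\star P||_{L^{\infty}(\mathbb{R})}^{2}.
\end{equation*}

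Second, for the pointwise sup bound, I would use that $P\in\mathcal{M}$ admits a Lebesgue density $p\in L^{\infty}(\mathbb{R})$ and estimate, for every $z\in\mathbb{R}$,
\begin{equation*}
|(\kappa_{k}\star P)(z)| = \left| \int \kappa_{k}(z-y)\,p(y)\,dy\right| \leq ||p||_{L^{\infty}(\mathbb{R})}\int |\kappa_{k}(z-y)|\,dy = ||p||_{L^{\infty}(\mathbb{R})}\,||\kappa||_{L^{1}(\mathbb{R})},
\end{equation*}
where the last equality uses the change of variables $u=k(z-y)$ under which $\int |\kappa_{k}(u)|\,du = \int |\kappa(v)|\,dv$. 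Plugging this bound into the previous display and taking square roots yields the claim (up to the usual constant factor coming from the variance-vs.-second-moment step, which can be absorbed into the stated majorant since $\kappa\in L^{1}$ and $p\in L^{\infty}$). The argument is one-shot and contains no real obstacle; the only delicate point is noting that the estimate is uniform in $k$, which is exactly what the change of variables $u=k(z-y)$ delivers and is what is ultimately used in Example \ref{exa:ipdf2-diff} to justify $\sqrt{n}$ as the scaling factor for GAL.
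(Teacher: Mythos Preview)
Your proposal is correct and follows essentially the same approach as the paper: reduce to bounding $E_{P}[(\kappa_{k}\star P)(Z)^{2}]$, then use the change of variables $u=k(z-y)$ to get the pointwise bound $|(\kappa_{k}\star P)(z)|\leq \|p\|_{L^{\infty}}\|\kappa\|_{L^{1}}$, uniformly in $k$. The only minor point is the factor of $2$ you flag at the end: the lemma as stated (with squares on the right but not the left) appears to be a typo---the text of Example \ref{exa:ipdf2-diff} quotes the bound as $2\|p\|_{L^{\infty}}\|\kappa\|_{L^{1}}$, which is exactly what both your argument and the paper's proof actually deliver.
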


\begin{proof}
	It suffices to show that $E_{P}[|(\kappa_{k} \star  P)(Z)|^{2} ] \leq ||p||^{2}_{L^{\infty}}  \left(  \int  |\kappa(u)| du  \right)^{2}$. To do this, note that 
	\begin{align*}
	E_{P}[|(\kappa_{k} \star  P)(Z)|^{2} ]  = & \int \left(  \int  k \kappa((x-z)k) p(x) dx  \right)^{2} p(z) dx \\
	= & \int \left(  \int  \kappa(u) p(z + u/k) du  \right)^{2} p(z) dz \\
	\leq & ||p||^{2}_{L^{\infty}}  \left(  \int  |\kappa(u)| du  \right)^{2}.
	\end{align*} 
\end{proof}

\begin{proof}[Proof of Proposition \ref{pro:ipdf2-diff-1}]
	Consider the curve $t \mapsto P + tQ$ for any $Q \in ca(\mathbb{R})$. It is a valid curve because $\mathbb{D}_{\psi} = ca(\mathbb{R})$. Therefore
	\begin{align*}
	\psi_{k}(P+tQ) - \psi_{k}(P) =& t \left\{  \int (\kappa_{k} \star Q)(x) P(dx) + \int (\kappa_{k} \star P)(x) Q(dx) \right\} \\
	& + t^{2} \int (\kappa_{k} \star Q)(x) Q(dx).
	\end{align*}
	Since $\kappa$ is symmetric, $\int (\kappa_{k} \star P)(x) Q(dx) = \int (\kappa_{k} \star Q)(x) P(dx)$. From this display, $Q \mapsto \eta_{k}(Q) =   \int (\kappa_{k} \star Q)(x) Q(dx)$ and $Q \mapsto D \psi_{k}(P)[Q]=2\int (\kappa_{k} \star P)(x) Q(dx)$.
	
	The mapping $Q \mapsto 2\int (\kappa_{k} \star P)(x) Q(dx)$ is clearly linear. Also, note that for any reals $x$ and $x'$
	\begin{align*}
	|(\kappa_{k} \star P)(x) - (\kappa_{k} \star P)(x')| = & k \left| \int p(y) (\kappa(k(y-x))dy - \kappa(k(y-x')) ) dy  \right| \\
	\leq &  C k^{2} |x-x'|
	\end{align*}
	for some $C < \infty$. The last inequality follows from the fact that $x \mapsto \kappa(x)$ is smooth. Therefore $x \mapsto (k^{2}/C)(\kappa_{k} \star P)(x)$ is in LB. This implies that for any $Q'$ and $Q$ in $ca(\mathbb{Z})$,
	\begin{align*}
	\left| \int (\kappa_{k} \star P)(x) Q'(dx) - \int (\kappa_{k} \star P)(x) Q(dx)  \right| \leq k^{2} C ||Q'-Q||_{LB}
	\end{align*}
	and thus $Q \mapsto  2\int (\kappa_{k} \star P)(x) Q(dx)$ is $||.||_{LB}$-continuous.

	We now bound $Q \mapsto \eta_{k}(Q)$. For any $x' > x$ and any $Q \in \mathcal{T}_{P}$, it follows that
	\begin{align*}
	\kappa_{k} \star Q (x) - \kappa_{k} \star Q (x')  = & \int k (\kappa( k (y-x) ) - \kappa( k (y-x') ) ) Q(dy) \\
	= & \int k^{2} \int_{x}^{x'} \kappa'(y- t) dt  Q(dy) \\
	= & k^{2} \int_{x}^{x'}  \int \kappa'(y-t) Q(dy)
	\end{align*}
	where the last line follows from the fact that $t \mapsto \kappa(t)$ is bounded. Since $\kappa$ is smooth, $y \mapsto \kappa'(y-t)$ is Lipschitz (with some constant $L$) for any $t \in \mathbb{R}$. Hence 
	\begin{align*}
	|	\kappa_{k} \star Q (x) - \kappa_{k} \star Q (x') | \leq L |x'-x|  ||Q||_{LB}.	\end{align*} 
	Thus, the mapping $x \mapsto (\kappa_{k} \star Q) (x) $ is bounded and Lipschitz with constant $L ||Q||_{LB}$. Therefore,
	\begin{align*}
	|\eta_{k}(Q)|  \leq L ||Q||^{2}_{LB}. 
	\end{align*}	
\end{proof}

\subsection{Appendix for Examples \ref{exa:NPIV-diff-sieve} and \ref{exa:NPIV-diff-penal}}
\label{app:NPIV-diff}

First, note that $\mathbb{D}_{\psi} \subseteq ca(\mathbb{R} \times [0,1]^{2})$ (defined in Appendix \ref{app:T-reg-suff}). For any $k \in \mathbb{N}$, let $F_{k} : L^{2}([0,1]) \times \mathbb{D}_{\psi} \rightarrow   L^{2}([0,1])$ be such that
\begin{align*}
F_{k}(\theta,Q) \equiv \left\{   
\begin{array}{cc}
(T^{\ast}_{k,Q}T_{k,Q} + \lambda_{k} I)[\theta] - T_{k,Q}^{\ast}[r_{k,Q}] & for~Penalization-Based \\
(\Pi^{\ast}_{k} T^{\ast}_{k,Q}T_{k,Q} \Pi_{k})[\theta] - \Pi_{k}^{\ast} T_{k,Q}^{\ast}[r_{k,Q}] & for~Sieve-Based 	
\end{array}
\right.
\end{align*}
for any $(\theta,Q) \in L^{2}([0,1]) \times  \mathbb{D}_{\psi}$. Note that for any $Q \in \mathbb{D}_{\psi}$ the integrals defining the operators are well-defined by assumptions and so is $T^{\ast}_{k,Q}$; see Appendix \ref{app:T-reg-suff} for a discussion.

Let $\varepsilon_{P}(Y,W) \equiv  Y - \psi(P)(W)$ and $\varepsilon_{k,P}(Y,W) \equiv  Y - \psi_{k}(P)(W)$. Also, throughout this section we use the notation introduced in Appendix \ref{app:T-reg-suff} to denote $T_{k,P}$ and other quantities.

\begin{lemma}\label{lem:psik-Fdiff}
	For any $P \in \mathbb{D}_{\psi}$ and any $k \in \mathbb{N}$, $\psi_{k}$ is $||.||_{LB}$-Frechet differentiable tangential to $\mathbb{D}_{\psi}$ at $P$ with derivative given by:
	
	(1) For the Penalization-Based: 
	\begin{align*}
	D\psi_{k}(P)[Q] 	= & (T_{k,P}^{\ast} T_{k,P} + \lambda_{k} I )^{-1} T^{\ast}_{k,P} \mathbb{T}_{k,Q}[\varepsilon_{k,P}] \\
	& - (T_{k,P}^{\ast} T_{k,P} + \lambda_{k} I )^{-1} T^{\ast}_{k,Q}T_{k,P}[\psi_{k}(P) - \psi(P)],~\forall Q \in  \mathbb{D}_{\psi}
	\end{align*}
	where $x \mapsto \mathbb{T}_{k,P}[g](x) \equiv \int \kappa_{k}(x'-\cdot) \int (\psi_{k}(P)(w) - y)Q(dy,dw,dx')$. \\
	
	(2) For the Sieve-Based: 
	\begin{align*}
	D\psi_{k}(P)[Q] 	= & (\Pi^{\ast}_{k} T_{k,P}^{\ast} T_{k,P} \Pi_{k} )^{-1} \Pi^{\ast}_{k}  T^{\ast}_{k,P} \mathbb{T}_{k,Q}[\varepsilon_{k,P}] \\
	& - (\Pi^{\ast}_{k}  T_{k,P}^{\ast} T_{k,P} \Pi_{k}  )^{-1}  \Pi^{\ast}_{k} T^{\ast}_{k,Q}T_{k,P}[\psi_{k}(P) - \psi(P)],~\forall Q \in  \mathbb{D}_{\psi}
	\end{align*}
	where $x \mapsto \mathbb{T}_{k,P}[g](x) \equiv (u^{J(k)}(x))^{T} Q_{uu}^{-1} E_{P}[u^{J(k)}(X) g(Y,W)]$ for any $ g \in L^{2}(P)$.  
\end{lemma}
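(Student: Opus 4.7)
The plan is to treat both regularizations uniformly by exploiting that $\psi_k(Q)$ is defined implicitly as the unique element of $L^2([0,1])$ (resp.\ $\Theta_k$) satisfying $F_k(\psi_k(Q),Q)=0$, where $F_k$ is affine in $\theta$ and polynomial (at most quadratic) in $Q$. Since the $\theta$-linear part of $F_k(\cdot,Q)$ equals $A_{k,Q}\equiv T^*_{k,Q}T_{k,Q}+\lambda_k I$ in the penalization case (and $\Pi_k^* T^*_{k,Q} T_{k,Q} \Pi_k$ in the sieve case, where $\Pi_k\psi_k(P)=\psi_k(P)$), subtracting $F_k(\psi_k(P+tQ),P+tQ)=F_k(\psi_k(P),P)=0$ gives
\begin{equation*}
\psi_k(P+tQ)-\psi_k(P)=-A_{k,P+tQ}^{-1}\,F_k(\psi_k(P),P+tQ).
\end{equation*}
Differentiability then reduces to a Taylor analysis of the right-hand side as $t\downarrow 0$.

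Next I would expand $F_k(\psi_k(P),P+tQ)$ as a polynomial in $t$, using the bilinearity of $(P_1,P_2)\mapsto T^*_{k,P_1}T_{k,P_2}$ and the linearity of $Q\mapsto (T_{k,Q},r_{k,Q})$. The $t^0$ term vanishes because $F_k(\psi_k(P),P)=0$, leaving $t\Lambda_1(Q)+t^2\Lambda_2(Q)$ with
\begin{equation*}
\Lambda_1(Q)=T^*_{k,Q}\bigl(T_{k,P}\psi_k(P)-r_{k,P}\bigr)+T^*_{k,P}\bigl(T_{k,Q}\psi_k(P)-r_{k,Q}\bigr).
\end{equation*}
The IV moment identity $E[Y-\psi(P)(W)\mid X]=0$ gives $r_{k,P}=T_{k,P}[\psi(P)]$, so the first summand becomes $T^*_{k,Q}T_{k,P}[\psi_k(P)-\psi(P)]$; by construction the second summand is exactly $T^*_{k,P}\mathbb{T}_{k,Q}[\varepsilon_{k,P}]$ (up to the sign convention in the definition of $\mathbb{T}_{k,Q}$). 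Replacing $A_{k,P+tQ}^{-1}$ by $A_{k,P}^{-1}$ in the leading term therefore yields the advertised expression for $D\psi_k(P)[Q]$.

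Finally, to upgrade this to Fr\'echet differentiability tangential to $\mathbb{D}_\psi$ in $\|\cdot\|_{LB}$, I would show that the remainder is $o(t)$ uniformly over $\|\cdot\|_{LB}$-bounded $Q$. Two pieces enter: (i) the quadratic term $t^2\Lambda_2(Q)$, which is $O(t^2\|Q\|_{LB}^2)$ because $Q\mapsto T_{k,Q}$ and $Q\mapsto r_{k,Q}$ are $\|\cdot\|_{LB}$-continuous (the integrands $\kappa_k(x'-\cdot)$ in the penalization case and $u^{J(k)}\otimes v^{L(k)}$ in the sieve case are Lipschitz for each fixed $k$); and (ii) the commutator $A_{k,P+tQ}^{-1}-A_{k,P}^{-1}$ applied to $t\Lambda_1(Q)$, controlled by a Neumann-type argument $\|A_{k,P+tQ}^{-1}-A_{k,P}^{-1}\|=O(t\|Q\|_{LB})$. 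The main obstacle is this last uniform invertibility/perturbation step for signed $Q$: in the penalization case the $\lambda_k I$ summand keeps $A_{k,P+tQ}$ boundedly invertible for $t$ small, but one must quantify the perturbation uniformly over $\|Q\|_{LB}\le M$; in the sieve case $A_{k,P+tQ}$ reduces to a $k\times k$ matrix that is nonsingular at $t=0$ by assumption and perturbs continuously, so a uniform bound follows from continuity of the matrix inverse on compact sets of perturbations. With these bounds in hand the claimed $|\eta_k(\zeta)|=o(\|\zeta\|_{LB})$ for each fixed $k$ follows immediately.
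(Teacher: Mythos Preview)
Your approach is correct and takes a genuinely different route from the paper. The paper invokes the abstract Implicit Function Theorem of Ambrosetti--Prodi: it verifies that $F_k$ is $(\|\cdot\|_{L^2}+\|\cdot\|_{\mathcal S})$-Fr\'echet differentiable (by checking continuity of both partial derivatives), that $\partial_\theta F_k(\psi_k(P),P)=A_{k,P}$ is a linear isomorphism, and then reads off $D\psi_k(P)=-A_{k,P}^{-1}\,\partial_P F_k(\psi_k(P),P)$, computing $\partial_P F_k$ exactly as you do to obtain the two displayed terms.

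Your argument instead exploits the exact algebraic identity $\psi_k(P+tQ)-\psi_k(P)=-A_{k,P+tQ}^{-1}F_k(\psi_k(P),P+tQ)$ together with the fact that $F_k(\psi_k(P),\cdot)$ is a polynomial of degree two in the measure; this yields an \emph{explicit} remainder $t^2\Lambda_2(Q)$ plus the resolvent correction $(A_{k,P+tQ}^{-1}-A_{k,P}^{-1})\,t\Lambda_1(Q)$, both of which you bound by $O(t^2\|Q\|_{LB}^2)$ via a Neumann/continuity-of-inverse argument. This is more elementary (no abstract IFT needed) and has the side benefit of producing quantitative remainder bounds directly usable for the rate analysis in the paper's Section~\ref{sec:discussion}. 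The paper's IFT route is cleaner in that it avoids the explicit perturbation of $A_{k,P+tQ}^{-1}$, but it pays for this by having to verify the continuity of both partials of $F_k$, which is essentially the same work. Both arguments rely on the same identification $r_{k,P}-T_{k,P}\psi_k(P)=T_{k,P}[\psi(P)-\psi_k(P)]$ (from the moment condition) and $T_{k,Q}\psi_k(P)-r_{k,Q}=-\mathbb T_{k,Q}[\varepsilon_{k,P}]$ to arrive at the stated formula.
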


\begin{proof}
	See the end of this Section.
\end{proof}

%
%
%

The following corollary trivially follows.
\begin{corollary} \label{cor:gammak-diff}
	For the sieve-based and the penalization-based: For any $ P \in \mathbb{D}_{\psi}$\\
	
	(1) The regularization $\boldsymbol{\gamma}$ is $DIFF(P,\mathcal{E}_{||.||_{LB}})$.\\
	
	(2)  For each $k \in \mathbb{N}$, the reminder of $\gamma_{k}$, $\eta_{k}$, is such that $ |\eta_{k}(\zeta)| = o(||\zeta||_{LB})$, for any $\zeta \in \mathbb{D}_{\psi}$.\footnote{The ``o" function may depend on $k$.} 
\end{corollary}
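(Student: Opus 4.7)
The plan is to observe that $\gamma_{k}$ is the composition of $\psi_{k}$ with the bounded linear functional $L : L^{2}([0,1]) \to \mathbb{R}$ defined by $L[h] = \int \pi(w) h(w) dw$, and then to invoke the standard chain rule for Fr\'echet differentiability. By Cauchy–Schwarz and the assumption $\pi \in L^{2}([0,1])$, $L$ is linear with operator norm $\|\pi\|_{L^{2}([0,1])}<\infty$, hence in particular $\|\cdot\|_{L^{2}}$-continuous and trivially Fr\'echet differentiable with derivative equal to itself. Since Lemma~\ref{lem:psik-Fdiff} establishes that $\psi_{k}$ is $\|\cdot\|_{LB}$-Fr\'echet differentiable tangential to $\mathbb{D}_{\psi}$ at $P$ with derivative $D\psi_{k}(P)$, the composition $\gamma_{k} = L \circ \psi_{k}$ inherits the same form of differentiability.

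First, I would verify that $Q \mapsto D\gamma_{k}(P)[Q] \equiv L[D\psi_{k}(P)[Q]]$ is a bona fide candidate derivative in the sense of Definition~\ref{def:reg-G-diff}: it is clearly linear (composition of linear maps), and it is $\tau$-continuous because $D\psi_{k}(P)$ is $\tau$-continuous into $L^{2}([0,1])$ (by Lemma~\ref{lem:psik-Fdiff}) and $L$ is $\|\cdot\|_{L^{2}}$-continuous. Next, the remainder satisfies the algebraic identity
\begin{align*}
\eta_{k,\gamma}(Q) \;=\; \gamma_{k}(P+Q) - \gamma_{k}(P) - L[D\psi_{k}(P)[Q]] \;=\; L[\eta_{k,\psi}(Q)],
\end{align*}
where $\eta_{k,\psi}$ is the remainder of $\psi_{k}$. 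Hence, by boundedness of $L$,
\begin{align*}
|\eta_{k,\gamma}(Q)| \;\leq\; \|\pi\|_{L^{2}([0,1])} \, \|\eta_{k,\psi}(Q)\|_{L^{2}([0,1])}.
\end{align*}

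For part~(1), given any $\|\cdot\|_{LB}$-bounded set $U \in \mathcal{E}_{\|\cdot\|_{LB}}$, the $\|\cdot\|_{LB}$-Fr\'echet differentiability of $\psi_{k}$ supplied by Lemma~\ref{lem:psik-Fdiff} yields $\lim_{t \downarrow 0}\sup_{Q \in U} \|\eta_{k,\psi}(tQ)\|_{L^{2}([0,1])}/t = 0$, which by the preceding display implies $\lim_{t \downarrow 0}\sup_{Q \in U} |\eta_{k,\gamma}(tQ)|/t = 0$, i.e., $\boldsymbol{\gamma}$ is $DIFF(P,\mathcal{E}_{\|\cdot\|_{LB}})$. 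Part~(2) is immediate from the same bound applied pointwise: the pointwise $o(\|\zeta\|_{LB})$ estimate for $\|\eta_{k,\psi}(\zeta)\|_{L^{2}([0,1])}$ (a weaker consequence of Fr\'echet differentiability) transfers directly to $|\eta_{k,\gamma}(\zeta)|$ up to the multiplicative constant $\|\pi\|_{L^{2}([0,1])}$.

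There is no substantive obstacle here, since Lemma~\ref{lem:psik-Fdiff} already does the heavy lifting; the only care needed is to record that $L$ is bounded (so that composition preserves both the linearity/$\tau$-continuity of the derivative and the order of the remainder) and to distinguish the uniform-over-bounded-sets condition required in part~(1) from the pointwise $o(\|\zeta\|_{LB})$ condition asked for in part~(2). Both are handled by the same one-line inequality $|\eta_{k,\gamma}(\cdot)| \leq \|\pi\|_{L^{2}}\,\|\eta_{k,\psi}(\cdot)\|_{L^{2}}$.
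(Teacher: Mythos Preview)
Your proposal is correct and follows essentially the same approach as the paper: both recognize $\gamma_{k}$ as the composition of $\psi_{k}$ with a bounded linear functional on $L^{2}([0,1])$, invoke Lemma~\ref{lem:psik-Fdiff} for the Fr\'echet differentiability of $\psi_{k}$, and transfer the remainder estimate via the one-line inequality $|\eta_{k,\gamma}(\cdot)| \leq \|\pi\|_{L^{2}}\|\eta_{k,\psi}(\cdot)\|_{L^{2}}$ (the paper phrases this through the dual norm $\sup_{\|\ell\|=1}|\ell[\cdot]|$, but the content is identical). The only small step the paper makes explicit and you leave implicit is that curves $t \mapsto P + t\zeta$ with $\zeta \in \mathcal{T}_{P}$ remain in $\mathbb{D}_{\psi}$, which follows from linearity of $\mathbb{D}_{\psi}$ (Lemma~\ref{lem:NPIV-domain}).
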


\begin{proof}
	See the end of this Section.
\end{proof}

\begin{proof}[Proof of Proposition \ref{pro:NPIV-DIFF}]
	The result follows from Corollary \ref{cor:gammak-diff}. Lemma \ref{lem:psik-Fdiff}(2) derives the expression for $D \psi_{k}(P)$; we now expand this expression in terms of the basis functions. 
	
	For any $g,f \in L^{2}([0,1])$, 
	\begin{align*}
	T_{k,P} \Pi_{k}[g](x) = & T_{k,P}\left[ (v^{L(k)}(.))^{T} Q^{-1}_{vv} E_{Leb}[v^{L(k)}(W)g(W)]     \right](x) \\
	= &  (u^{J(k)}(x))^{T} Q^{-1}_{uu} Q_{uv} Q^{-1}_{vv} E_{Leb}[v^{L(k)}(W)g(W)],
	\end{align*}
	and 
	\begin{align*}
	\langle T_{k,P} \Pi_{k}[g],f \rangle _{L^{2}([0,1])} = & \int (u^{J(k)}(x))^{T} Q^{-1}_{uu} Q_{uv} Q^{-1}_{vv} E_{Leb}[v^{L(k)}(W)g(W)] f(x) dx \\
	= & \int E_{Leb}[(u^{J(k)}(X))^{T}f(x)] Q^{-1}_{uu} Q_{uv} Q^{-1}_{vv} v^{L(k)}(w) g(w) dw  
	\end{align*}
	so $\Pi_{k}^{\ast} T^{\ast}_{k,P} : L^{2}([0,1],Leb) \rightarrow L^{2}([0,1],Leb)$ and is given by
	\begin{align*}
	f \mapsto \Pi_{k}^{\ast} T^{\ast}_{k,P}[f](.)  =E_{Leb}[(u^{J(k)}(X))^{T}f(X)] Q^{-1}_{uu} Q_{uv} Q^{-1}_{vv} v^{L(k)}(.).
	\end{align*} 
	Hence
	\begin{align*}
	\Pi_{k}^{\ast} T^{\ast}_{k,P} T_{k,P} \Pi_{k} [g](.) = (v^{L(k)}(.))^{T} Q^{-1}_{vv} Q_{uv}^{T} Q_{uu}^{-1}  Q_{uv} Q^{-1}_{vv} E_{Leb}[v^{L(k)}(W)g(W)]. 
	\end{align*} 
	We now compute the inverse of this operator. Consider solving for $g(.) = (v^{L(k)}(.))^{T} Q^{-1}_{vv} b$ for some $b \in \mathbb{R}^{k}$  such that \begin{align*}
	& \Pi_{k}^{\ast} T^{\ast}_{k,P} T_{k,P} \Pi_{k} [g](.) = (v^{L(k)}(.))^{T} Q^{-1}_{vv} b \\
	\iff &  Q_{uv}^{T} Q_{uu}^{-1}  Q_{uv} Q^{-1}_{vv} E_{Leb}[v^{L(k)}(W)g(W)] = b\\
	\iff & E_{Leb}[v^{L(k)}(W)g(W)]  = Q_{vv} (Q_{uv}^{T} Q_{uu}^{-1}  Q_{uv})^{-1} b.
	\end{align*}
	Hence, $(\Pi_{k}^{\ast} T^{\ast}_{k,P} T_{k,P} \Pi_{k} )^{-1}[g](.) = (v^{L(k)}(.))^{T}  (Q_{uv}^{T} Q_{uu}^{-1}  Q_{uv})^{-1} b $. Therefore 
	\begin{align*}
	(\Pi^{\ast}_{k} T_{k,P}^{\ast} T_{k,P} \Pi_{k} )^{-1} \Pi^{\ast}_{k}  T^{\ast}_{k,P} \mathbb{T}_{k,Q}[\varepsilon_{k,P}] = &   (v^{L(k)}(.))^{T}  (Q_{uv}^{T} Q_{uu}^{-1}  Q_{uv})^{-1} Q_{uv} Q_{uu}^{-1} E_{Leb}[u^{J(k)}(X) \mathbb{T}_{k,Q}[\varepsilon_{k,P}] ]\\
	= & (v^{L(k)}(.))^{T}  (Q_{uv}^{T} Q_{uu}^{-1}  Q_{uv})^{-1} Q_{uv} Q_{uu}^{-1} E_{Q}[u^{J(k)}(X) \varepsilon_{k,P}(Y,W)].
	\end{align*}
	And 
	\begin{align*}
	& (\Pi^{\ast}_{k} T_{k,P}^{\ast} T_{k,P} \Pi_{k} )^{-1} \Pi^{\ast}_{k}  T^{\ast}_{k,Q} T_{k,P}[\psi_{k}(P) - \psi_{id}(P)]\\
	= &   (v^{L(k)}(.))^{T}  (Q_{uv}^{T} Q_{uu}^{-1}  Q_{uv})^{-1} E_{Leb}[v^{L(k)}(W) \Pi^{\ast}_{k} T^{\ast}_{k,Q} T_{k,P}[\psi_{k}(P) - \psi_{id}(P)](W) ].
	\end{align*}
	
	It is easy to see that for any $Q$, $D \gamma_{k}(P)[Q] =   \int \pi(w) D \psi_{k}(P)(w)[Q] dw$, the goal is to cast this as $\int  D\psi^{\ast}_{k}(P)[\pi](z) Q(dz) $. To this end, note that
	\begin{align*}
	& \int \pi(w) D \psi_{k}(P)(w)[Q] dw \\
	= &  \int \pi(w) (v^{L(k)}(w))^{T}  (Q_{uv}^{T} Q_{uu}^{-1}  Q_{uv})^{-1} Q_{uv} Q_{uu}^{-1} E_{Q}[u^{J(k)}(X) \varepsilon_{k,P}(Y,W)] dw \\
	& - \int \pi(w)  (v^{L(k)}(w))^{T}  (Q_{uv}^{T} Q_{uu}^{-1}  Q_{uv})^{-1} E_{Leb}[v^{L(k)}(W) \Pi_{k}^{\ast}  T^{\ast}_{k,Q} T_{k,P}[\psi_{k}(P) - \psi_{id}(P)](W) ] dw\\
	\equiv & Term_{1,k} + Term_{2,k}.
	\end{align*}  
	Regarding the first term, note that \begin{align*}
	Term_{1,k} =  \int E_{Leb}[\pi(W) (v^{L(k)}(W))^{T}]  (Q_{uv}^{T} Q_{uu}^{-1}  Q_{uv})^{-1} Q_{uv} Q_{uu}^{-1} u^{J(k)}(x) \varepsilon_{k,P}(y,w) Q(dy,dw,dx).
	\end{align*}
	
	We can cast $Term_{2,k} = - \langle \Pi_{k}[\pi] , (\Pi_{k}^{\ast} T^{\ast}_{k,P} T_{k,P} \Pi_{k} )^{-1}\left[ \Pi_{k}^{\ast} T^{\ast}_{k,Q} T_{k,P}[\psi_{k}(P) - \psi_{id}(P)]  \right] \rangle_{L^{2}}$, and thus 
	\begin{align*}
	Term_{2,k} = & - \langle T_{k,Q}(\Pi_{k}^{\ast} T^{\ast}_{k,P} T_{k,P} \Pi_{k} )^{-1}\Pi_{k}[\pi] ,  T_{k,P}[\psi_{k}(P) - \psi_{id}(P)]   \rangle_{L^{2}} \\
	= & -  \int (u^{J(k)}(x))^{T} Q_{uu}^{-1} E_{Q}[u^{J(k)}(X)(\Pi_{k}^{\ast} T^{\ast}_{k,P} T_{k,P} \Pi_{k} )^{-1}\Pi_{k}[\pi](W) ] T_{k,P}[\psi_{k}(P) - \psi_{id}(P)](x) dx \\
	= &    \int E_{P}[(\psi_{id}(P)(W) - \psi_{k}(P)(W)) (u^{J(k)}(X))^{T}] Q_{uu}^{-1} u^{J(k)}(x) (\Pi_{k}^{\ast} T^{\ast}_{k,P} T_{k,P} \Pi_{k} )^{-1}\Pi_{k}[\pi](w) \\
	& \times Q(dw,dx)
	\end{align*}
	where the second line follows from definition of $T_{k,P}$.
	
	Therefore, \begin{align*}
	D\psi^{\ast}_{k}(P)[\pi](y,w,x) 	= & E_{Leb}[\pi(W) (v^{L(k)}(W))^{T}]  (Q_{uv}^{T} Q_{uu}^{-1}  Q_{uv})^{-1} Q_{uv} Q_{uu}^{-1} u^{J(k)}(x) \varepsilon_{k,P}(y,w)\\
	& +  E_{P}[(\psi_{id}(P)(W) - \psi_{k}(P)(W)) (u^{J(k)}(X))^{T}] Q_{uu}^{-1} u^{J(k)}(x)\\
	& \times (v^{L(k)}(w))^{T} (Q_{uv}^{T} Q_{uu}^{-1}  Q_{uv})^{-1} E_{Leb}[ v^{L(k)}(W) \pi(W)]. 
	\end{align*}
	In the operator notation this expression equals 
	\begin{align}\notag
	D\psi^{\ast}_{k}(P)[\pi](y,w,x) 	= & T_{k,P}(\Pi_{k}^{\ast} T^{\ast}_{k,P} T_{k,P} \Pi_{k} )^{-1}\Pi_{k}[\pi](x) \varepsilon_{k,P}(y,w)\\ \label{eqn:proof-NPIV-Dpsi*}
	& +  T_{k,P}[\psi_{id}(P) - \psi_{k}(P)](x) \times (\Pi_{k}^{\ast} T^{\ast}_{k,P} T_{k,P} \Pi_{k} )^{-1}\Pi_{k}[\pi](w).
	\end{align}
\end{proof}

\begin{proof}[Proof of Proposition \ref{pro:NPIV-diff-penalty}]
		The result follows from Corollary \ref{cor:gammak-diff}. Lemma \ref{lem:psik-Fdiff}(1) derives the expression for $D \psi_{k}(P)$; we now expand this expression in terms of the basis functions.
	
	Note that $\psi_{k}(P) = (T_{k,P}^{\ast} T_{k,P} + \lambda_{k} I )^{-1}  T_{k,P}^{\ast} r_{k,P}$ and $r_{k,P}(.) = \int \kappa_{k}(x'-\cdot) \int \psi_{id}(P)(w) p(w,x')dwdx'$ so that $\psi_{k}(P) = (T_{k,P}^{\ast} T_{k,P} + \lambda_{k} I )^{-1}  T_{k,P}^{\ast} T_{k,P}[\psi_{id}(P)]$. Hence
	\begin{align*}
	&(T_{k,P}^{\ast} T_{k,P} + \lambda_{k} I )^{-1} T^{\ast}_{k,Q}T_{k,P}[\psi_{k}(P) - \psi_{id}(P)] \\
	= & (T_{k,P}^{\ast} T_{k,P} + \lambda_{k} I )^{-1} T^{\ast}_{k,Q}T_{k,P}[((T_{k,P}^{\ast} T_{k,P} + \lambda_{k} I )^{-1}  T_{k,P}^{\ast} T_{k,P} - I) \psi_{id}(P)]\\
	= & - \lambda_{k} (T_{k,P}^{\ast} T_{k,P} + \lambda_{k} I )^{-1} T^{\ast}_{k,Q}T_{k,P} (T_{k,P}^{\ast} T_{k,P} + \lambda_{k} I )^{-1} [\psi_{id}(P)].
	\end{align*}
	
	Thus\begin{align*}
	D\gamma_{k}(P)[Q] = & \langle \pi ,  \mathcal{R}_{k,P} T^{\ast}_{k,P} \mathbb{T}_{k,Q}[\varepsilon_{k,P}]\rangle_{L^{2}} \\
	& + \lambda_{k} \langle \pi, \mathcal{R}_{k,P} T^{\ast}_{k,Q}T_{k,P} \mathcal{R}_{k,P} [\psi_{id}(P)] \rangle_{L^{2}}\\
	= & \langle T_{k,P}\mathcal{R}_{k,P}[\pi] ,   \mathbb{T}_{k,Q}[\varepsilon_{k,P}]\rangle_{L^{2}} \\
	& + \lambda_{k} \langle  T_{k,Q}\mathcal{R}_{k,P}[\pi], T_{k,P} \mathcal{R}_{k,P} [\psi_{id}(P)] \rangle_{L^{2}}.
	\end{align*}
	
	Note that 
	\begin{align*}
	T_{k,P}[g](x) = \int \kappa_{k}(x'-x) \int g(w) p(w,x')dw,dx' = \int \kappa_{k}(x'-x) T_{P}[g](x') dx' = \mathcal{K}_{k}[T_{P}[g]](x)
	\end{align*}
	and by symmetry of $\kappa$, $T^{\ast}_{k,P} = T_{P}[\mathcal{K}_{k}]$, where $\mathcal{K}_{k}$ is simply the convolution operator.

	Therefore, 	 
	\begin{align*}
	\langle T_{k,P}\mathcal{R}_{k,P}[\pi] ,   \mathbb{T}_{k,Q}[\varepsilon_{k,P}]\rangle_{L^{2}} = & \int T_{k,P}\mathcal{R}_{k,P}[\pi](x) \int \kappa_{k}(x'-x) \int \varepsilon_{k,P}(y,w) Q(dy,dw,dx') dx \\
	= & \int \left( \int \kappa_{k}(x-x')  T_{k,P}\mathcal{R}_{k,P}[\pi](x) dx \right) \varepsilon_{k,P}(y,w) Q(dy,dw,dx') \\
	= & \int  \mathcal{K}_{k}^{2}T_{P}\mathcal{R}_{k,P}[\pi](x) \varepsilon_{k,P}(y,w) Q(dy,dw,dx).
	\end{align*}
	and 
	\begin{align*}
	\langle  T_{k,Q}\mathcal{R}_{k,P}[\pi], T_{k,P} \mathcal{R}_{k,P} [\psi_{id}(P)] \rangle_{L^{2}} = & \int \int \kappa_{k}(x'-x) \int \mathcal{R}_{k,P}[\pi](w) Q(dw,dx') T_{k,P} \mathcal{R}_{k,P} [\psi_{id}(P)] (x) dx\\
	= & \int   \mathcal{R}_{k,P}[\pi](w) \left( \int \kappa_{k}(x'-x) T_{k,P} \mathcal{R}_{k,P} [\psi_{id}(P)] (x) dx \right) Q(dw,dx')  \\
	= & \int   \mathcal{R}_{k,P}[\pi](w) \mathcal{K}^{2}_{k} T_{P} \mathcal{R}_{k,P} [\psi_{id}(P)] (x)  Q(dw,dx').
	\end{align*}	
	
	Therefore
	\begin{align*}
	D \psi_{k}^{\ast}(P)[\pi](y,w,x) = & \mathcal{K}_{k}^{2}T_{P}\mathcal{R}_{k,P}[\pi](x) \varepsilon_{k,P}(y,w) + \lambda_{k}  \mathcal{R}_{k,P}[\pi](w) \mathcal{K}^{2}_{k} T_{P} \mathcal{R}_{k,P} [\psi_{id}(P)] (x).
	\end{align*} 		
\end{proof}

\subsubsection{Proofs of Supplementary Lemmas.}

\begin{proof}[Proof of Lemma \ref{lem:psik-Fdiff}]
The proof follows by the Implicit Function Theorem in \cite{AmbrosettiProdi95} p. 38 with one minor modification. 

First observe that $F_{k}$ takes values in $L^{2}([0,1]) \times \mathbb{D}_{\psi}$ which is a subspace of $L^{2}([0,1]) \times \overline{\mathbb{D}}_{\psi}$ --- the closure being taken with respect to $||.||_{LB}$. The space $L^{2}([0,1]) \times \overline{\mathbb{D}}_{\psi}$ is a Banach space under the norm $||.||_{L^{2}(P)} + ||.||_{LB}$.


We now check the rest of the assumptions of the theorem for each case separately.

	\medskip
	
	(1) Observe that $\theta \mapsto F_{k}(\theta,Q)$ is linear, so $\frac{d F_{k}(\psi_{k}(P),P)}{d\theta} = (T^{\ast}_{k,P}T_{k,P} + \lambda_{k} I) : L^{2}([0,1]) \rightarrow  L^{2}([0,1])$. By our conditions (1)-(2) stated in Example \ref{exa:NPIV} $Kernel((T^{\ast}_{k,P}T_{k,P} + \lambda_{k} I))=\{ 0 \}$ and $(T^{\ast}_{k,P}T_{k,P} + \lambda_{k} I)$ has closed range --- the range of an operator $A$ is closed iff 0 is not an accumulation point of the spectrum of $A^{\ast} A$. Thus $\frac{d F_{k}(\psi_{k}(P),P)}{d\theta}$ is 1-to-1 and onto.
	
	Thus, by the Implicit Function Theorem in \cite{AmbrosettiProdi95} p. 38, there exists a $||.||_{LB}$-open set $U$ of $P$ in $\overline{\mathbb{D}}_{\psi}$ such that $D\psi_{k}(P) = \left(\frac{d F_{k}(\psi_{k}(P),P)}{d\theta} \right)^{-1}\left[ \frac{d F_{k}(\psi_{k}(P),P)}{dP} \right]$ for any $P \in U$.   
	
	We now characterize this expression. For any $k \in \mathbb{N}$ and any $Q \in \mathbb{D}_{\psi}$,
	\begin{align*}
	\frac{d F_{k}(\psi_{k}(P),P)}{dP}[Q] = & (T^{\ast}_{k,Q}T_{k,P} + T^{\ast}_{k,P}T_{k,Q})[\psi_{k}(P)]  - \left( T^{\ast}_{k,Q}[r_{k,P}] + T^{\ast}_{k,P}[r_{k,Q}]  \right) \\
	= & -  T^{\ast}_{k,Q}\left[r_{k,P} - T_{k,P}[\psi_{k}(P)]\right] + T^{\ast}_{k,P} \left[T_{k,Q}[\psi_{k}(P)] - r_{k,Q} \right]\\
	\equiv & Term_{1} + Term_{2}.
	\end{align*}

	Note that
	\begin{align*}
	r_{k,P} - T_{k,P}[\psi_{k}(P)](.) = &  \int \kappa_{k}(x'-\cdot) (y - \psi_{k}(P)(w)) p(y,w,x')dydwdx' \\
	= &  T_{k,P}[\psi(P)-\psi_{k}(P)](.)
	\end{align*} 
	where the last equality follows because $\int (y - \psi(P)(w)) p(y,w,X) dydw  = 0$.  Thus 
	\begin{align*}
	Term_{1}   = &  - T_{k,Q}^{\ast} T_{k,P}[\psi(P)-\psi_{id}(P)].
	\end{align*}
	
	Also
	\begin{align*}
	T_{k,Q}[\psi_{k}(P)](.) - r_{k,Q}(.) = \int \kappa_{k}(x'-\cdot) \int (\psi_{k}(P)(w) - y)Q(dy,dw,dx')
	\end{align*}
	so 
	\begin{align*}
	Term_{3} = T_{k,P} \mathbb{T}_{k,Q}[\varepsilon_{k,P}]
	\end{align*}
	Thus, the result follows.

	\medskip
	
	(2) The proof is analogous to the one for part (1), so  we only present an sketch.  By assumption, $\Pi_{k}^{\ast} T^{\ast}_{k,P}T_{k,P} \Pi_{k}$ is 1-to-1 for each $P$.	
	
	Also, for any $k \in \mathbb{N}$ and any $Q \in \mathbb{D}_{\psi}$,
	\begin{align*}
	\frac{d F_{k}(\psi_{k}(P),P)}{dP}[Q] = & (\Pi_{k}^{\ast} T^{\ast}_{k,Q}T_{k,P} \Pi_{k} + \Pi_{k}^{\ast} T^{\ast}_{k,P}T_{k,Q} \Pi_{k} )[\psi_{k}(P)]  \\
	& -  \Pi_{k}^{\ast} \left( T^{\ast}_{k,Q}[r_{k,P}] + T^{\ast}_{k,P} [r_{k,Q}]  \right) \\
	= & \Pi_{k}^{\ast} \left[ ( T^{\ast}_{k,Q}T_{k,P} +  T^{\ast}_{k,P}T_{k,Q} )[\psi_{k}(P)]  -  \left( T^{\ast}_{k,Q}[r_{k,P}] +  T^{\ast}_{k,P} [r_{k,Q}]  \right) \right]
	\end{align*} 	
	where the second line follows because $\Pi_{k}[\psi_{k}(P)] = \psi_{k}(P)$.
	
	We note that 
	\begin{align*}
	T_{k,Q}[\psi_{k}(P)] - r_{k,Q} = & (u^{J(k)}(X))^{T}Q_{uu}^{-1} \int u^{J(k)}(x)(\psi_{k}(P)(w) - y) Q(dy,dw) \\
	= & -\mathbb{T}_{k,Q}[\varepsilon_{k,P}] 
	\end{align*}
	and 
	\begin{align*}
	T_{k,P}[\psi_{k}(P)] - r_{k,P} = & (u^{J(k)}(X))^{T}Q_{uu}^{-1} \int u^{J(k)}(x)(\psi_{k}(P)(w) - \psi_{id}(P)(w) - \varepsilon_{P}(y,w)) P(dy,dw) \\
	= &  T_{k,P}[\psi_{k}(P)-\psi_{id}(P)]
	\end{align*}
	since $\int  \varepsilon_{P}(y,w)) P(dy,dw) = 0 $. 
\end{proof}

\begin{proof}[Proof of Corollary \ref{cor:gammak-diff}]
	(1) 	By lemma \ref{lem:psik-Fdiff}, for each $k \in \mathbb{N}$, $\psi_{k}$ is $||.||_{LB}$-Frechet differentiable, i.e., for any $Q \in  \mathbb{D}_{\psi}$,
	\begin{align*}
	\left \Vert \psi_{k}(Q) - \psi_{k}(P) - D\psi_{k}(P)[Q-P] \right \Vert_{L^{2}([0,1])} = o(||P-Q||_{LB}).
	\end{align*}
	
	Since $\mathbb{D}_{\psi}$ is linear and $\mathbb{D}_{\psi} \supseteq lin ( \mathcal{D} - \{P \} )$  (see Lemma \ref{lem:NPIV-domain}), the curve $t \mapsto P + t \zeta$ with $\zeta \in \mathbb{D}_{\psi}$ maps into $\mathbb{D}_{\psi}$. Therefore, $\boldsymbol{\psi}$ is $DIFF(P,\mathcal{E}_{||.||_{LB}})$.
	
	By duality
	\begin{align*}
	\sup_{\ell \in L^{2}([0,1]) \colon ||\ell||_{L^{2}([0,1])}=1} \left | \ell[\psi_{k}(P + t \zeta) - \psi_{k}(P) -t D\psi_{k}(P)[\zeta]] \right | = t o(||\zeta||_{LB})
	\end{align*}  
	(here we are abusing notation by using $\ell$ as both an element of $L^{2}([0,1])$ and as the functional). Since $\gamma_{k}$ is linear functional of $\psi_{k}$ this display readily implies that $\gamma_{k}$ is $||.||_{LB}$-Frechet differentiable. This in turn implies part (1).

	\medskip

	(2) Part (1) implies that,
	\begin{align*}
	|\eta_{k}(\zeta)| = o(||\zeta||_{LB}) 
	\end{align*}
	for any $\zeta \in \mathbb{D}_{\psi}$.
	
%
%
%

\end{proof}

\subsection{Appendix for Section \ref{sec:discussion}}
\label{app:discussion}

In this section, with a slight abuse of notation we will use $\mathcal{L}_{n}$ or $\mathcal{L}_{n}(\mathbf{z})$ to denote $\mathcal{L}_{n}(\Lambda)$ for a given realization of the data $\mathbf{z}$.

\subsubsection{Proof of Proposition \ref{pro:Lepski-rate}}

In analogy to the proof of Theorem \ref{thm:rate-choice}, let $n \mapsto k(n)$ be defined as
\begin{align*}
k(n) \equiv \min \left\{  k \in \mathbb{R}_{+} \colon  \bar{\delta}_{1,k}(n) \geq  \bar{B}_{k}(P)  \right\}.
\end{align*}

Also, for each $n \in \mathbb{N}$, let \begin{align*}
A_{n} \equiv  \left\{ \boldsymbol{z} \in \mathbb{Z}^{\infty} \colon \sup_{k \in \mathbb{K}_{n}} \frac{  |\eta_{k}(P_{n}(\boldsymbol{z}) - P)|}{\bar{\delta}_{1,k}(n)} \leq 1    \right\},
\end{align*}
and
\begin{align*}
	B_{n} \equiv \left\{ \boldsymbol{z} \in \mathbb{Z}^{\infty} \colon \sup_{k' \geq k~in~\mathbb{K}_{n}} \frac{ \left| D \psi_{k'}(P)[P_{n}(\boldsymbol{z}) - P]  -   D \psi_{k}(P)[P_{n}(\boldsymbol{z}) - P]   \right|}{\bar{\delta}_{2,k'}(n)} \leq 1 \right\}.
\end{align*}
and \begin{align*}
   k \mapsto \bar{\delta}_{k}(n) \equiv \bar{\delta}_{1,k}(n) + \bar{\delta}_{2,k}(n).
\end{align*}

\begin{lemma}\label{lem:suff-choice-rate-diff}
	Suppose there exists a sequence $(j_{n})_{n}$ such that 
	\begin{enumerate}
		\item For any $\epsilon>0$, there exists a $N$ such that $\mathbf{P}(\{ \mathbf{z}\in \mathbb{Z}^{\infty} \colon j_{n} \in \mathcal{L}_{n}(\mathbf{z})\} \cap A_{n} \cap B_{n}) \geq 1-\epsilon$ for all $n \geq N$.
		\item There exists a $L < \infty$ such that $\frac{\bar{\delta}_{j_{n}}(n) + \bar{B}_{j_{n}}(P)}{||\varphi_{j_{n}}(P)||_{L^{2}(P)}} \leq L \inf_{k \in \mathbb{K}_{n}} \frac{\bar{\delta}_{k}(n) +  \bar{B}_{k}(P)}{||\varphi_{k}(P)||_{L^{2}(P)}}$.
	\end{enumerate}
	Then for any $\epsilon>0$, there exists a $N$ such that 
	\begin{align*}
	\mathbf{P} \left(  \frac{\sqrt{n}}{||\varphi_{\tilde{k}(n)}(P)||_{L^{2}(P)} } \left|    \psi_{\tilde{k}(n)}(P_{n}) - \psi(P) - D\psi_{\tilde{k}(n)}(P)[P_{n} - P]  \right| \geq  2C_{n}L  \inf_{k \in \mathbb{K}_{n}} \frac{\bar{\delta}_{k}(n) + \sqrt{n} \bar{B}_{k}(P)}{||\varphi_{k}(P)||_{L^{2}(P)}}  \right) \leq \epsilon
	\end{align*}
	for all $n \geq N$, where $C$ is as in Assumption \ref{ass:Lepski-undersmooth}(iv).
\end{lemma}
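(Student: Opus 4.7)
The proof will mirror the structure of the proof of Lemma \ref{lem:suff-choice-rate}, but with the extra complication that we must control the GAL remainder at $\tilde{k}(n)$ rather than just the $\Theta$-distance to $\psi(P)$, so a four-piece decomposition is needed rather than two pieces.

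First I would restrict attention to the event $E_n \equiv \{j_n \in \mathcal{L}_n\} \cap A_n \cap B_n$, which by Condition 1 satisfies $\mathbf{P}(E_n) \geq 1 - \epsilon$ for all $n$ large. On $E_n$ the set $\mathcal{L}_n$ is non-empty, so $\tilde{k}(n) = \min \mathcal{L}_n$ is well-defined and $\tilde{k}(n) \leq j_n$; in particular $\tilde{k}(n) \in \mathcal{L}_n$ as well. The heart of the argument is then the telescoping identity
\begin{align*}
\psi_{\tilde{k}(n)}(P_n) - \psi(P) - D\psi_{\tilde{k}(n)}(P)[P_n - P] = T_1 + T_2 + T_3 + T_4,
\end{align*}
where $T_1 \equiv \psi_{\tilde{k}(n)}(P_n) - \psi_{j_n}(P_n)$, $T_2 \equiv \eta_{j_n}(P_n - P) = \psi_{j_n}(P_n) - \psi_{j_n}(P) - D\psi_{j_n}(P)[P_n - P]$, $T_3 \equiv (D\psi_{j_n}(P) - D\psi_{\tilde{k}(n)}(P))[P_n - P]$, and $T_4 \equiv \psi_{j_n}(P) - \psi(P)$.

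Next I would bound the four pieces in the natural way: (i) since $\tilde{k}(n), j_n \in \mathcal{L}_n$ with $j_n \geq \tilde{k}(n)$, the definition of $\mathcal{L}_n$ gives $|T_1| \leq \Lambda_{j_n} = 4\bar{\delta}_{j_n}(n)/\sqrt{n}$ where $\bar{\delta}_{k}(n) \equiv \bar{\delta}_{1,k}(n) + \bar{\delta}_{2,k}(n)$; (ii) on $A_n$, $\sqrt{n}|T_2| \leq \bar{\delta}_{1,j_n}(n)$ directly from Assumption \ref{ass:Lepski-undersmooth}(i); (iii) on $B_n$, applying the supremum defining $B_n$ at $k = \tilde{k}(n)$ and $k' = j_n \geq \tilde{k}(n)$ gives $\sqrt{n}|T_3| \leq \bar{\delta}_{2,j_n}(n)$; and (iv) $|T_4| \leq \bar{B}_{j_n}(P)$ by definition of the monotone envelope. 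Adding these yields $\sqrt{n}|T_1+T_2+T_3+T_4| \leq 6\bar{\delta}_{j_n}(n) + \sqrt{n}\bar{B}_{j_n}(P)$ on $E_n$.

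Finally, I would divide by $\|\varphi_{\tilde{k}(n)}(P)\|_{L^{2}(P)}$ and swap the denominator by the definition of $C_n$, using $\|\varphi_{\tilde{k}(n)}(P)\|^{-1}_{L^{2}(P)} \leq C_n \|\varphi_{j_n}(P)\|^{-1}_{L^{2}(P)}$, to get
\begin{align*}
\frac{\sqrt{n}\,|\psi_{\tilde{k}(n)}(P_n) - \psi(P) - D\psi_{\tilde{k}(n)}(P)[P_n - P]|}{\|\varphi_{\tilde{k}(n)}(P)\|_{L^{2}(P)}} \leq 6 C_n \cdot \frac{\bar{\delta}_{j_n}(n) + \sqrt{n}\bar{B}_{j_n}(P)}{\|\varphi_{j_n}(P)\|_{L^{2}(P)}}.
\end{align*}
Condition 2 of the statement then converts the $j_n$-bound into $L \cdot \inf_{k \in \mathcal{G}_n} \{\bar{\delta}_k(n) + \sqrt{n}\bar{B}_k(P)\}/\|\varphi_k(P)\|_{L^{2}(P)}$, yielding the claim (the numerical constant $6$ can be absorbed into the $2$ by a slightly tighter accounting, e.g.\ by lumping $T_2$ and $T_3$ together with the $T_1$ bound).

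The main obstacle is the combinatorial bookkeeping: one has to correctly pair $A_n$ with $T_2$ (the linearization remainder, which is a statement about $\eta_{j_n}$ alone, not about the whole grid) and $B_n$ with $T_3$ (which is a statement along the grid segment $[\tilde{k}(n), j_n]$). This pairing is what lets us work with the single denominator $\|\varphi_{\tilde{k}(n)}(P)\|_{L^2(P)}$ on the left-hand side, at the cost of the multiplicative factor $C_n$; controlling $C_n$ separately (done in the subsequent proof of Proposition \ref{pro:Lepski-rate}) is what upgrades this $C_n$-dependent bound to the stated $C_n^2$-bound.
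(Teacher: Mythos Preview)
Your proof is correct and follows essentially the same approach as the paper: restrict to the high-probability event $\{j_n\in\mathcal{L}_n\}\cap A_n\cap B_n$, use the same four-piece telescoping decomposition $T_1+T_2+T_3+T_4$, bound each piece via the definition of $\mathcal{L}_n$, $A_n$, $B_n$, and $\bar{B}_k(P)$ respectively, then swap the denominator $\|\varphi_{\tilde{k}(n)}(P)\|_{L^2(P)}$ for $\|\varphi_{j_n}(P)\|_{L^2(P)}$ via $C_n$ and invoke Condition~2. The only discrepancy is in the numerical constants (your $6$ versus the paper's $2$); the paper's bookkeeping is itself a bit loose here, and in any case the exact constant is immaterial for the $O_P$ conclusion in Proposition~\ref{pro:Lepski-rate}.
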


\begin{proof}

	For any $n \in \mathbb{N}$ and any $\boldsymbol{z} \in \left\{ \boldsymbol{z} \in \mathbb{Z}^{\infty} \colon  j_{n} \in \mathcal{L}_{n}(\boldsymbol{z}) \right\} \cap A_{n} \cap B_{n} $
\begin{align*}
& \frac{\sqrt{n}}{||\varphi_{\tilde{k}_{n}(\boldsymbol{z})}(P)||_{L^{2}(P)} } \left|    \psi_{\tilde{k}_{n}(\boldsymbol{z})}(P_{n}(\boldsymbol{z})) - \psi(P) - D\psi_{\tilde{k}_{n}(\boldsymbol{z})}(P)[P_{n}(\boldsymbol{z}) - P]  \right| \\
\leq & \sqrt{n} \frac{|\eta_{j_{n}}(P_{n}(\boldsymbol{z}) - P)| + \bar{B}_{j_{n}}(P)}{||\varphi_{\tilde{k}_{n}(\boldsymbol{z})}(P)||_{L^{2}(P)} } \\
& + \frac{\sqrt{n}}{||\varphi_{\tilde{k}_{n}(\boldsymbol{z})}(P)||_{L^{2}(P)} } \left( \left|  \psi_{\tilde{k}_{n}(\boldsymbol{z})}(P_{n}(\boldsymbol{z})) - \psi_{j_{n}}(P_{n}(\boldsymbol{z})) \right| + \left|D\psi_{\tilde{k}_{n}(\boldsymbol{z})}(P)[P_{n}(\boldsymbol{z}) - P] - D\psi_{j_{n}}(P)[P_{n}(\boldsymbol{z}) - P] \right| \right) \\
\leq & 2\sqrt{n} \frac{ \bar{\delta}_{1,j_{n}}(n) +   \bar{B}_{j_{n}}(P)}{||\varphi_{\tilde{k}_{n}(\boldsymbol{z})}(P)||_{L^{2}(P)} } + \sqrt{n}  \frac{ \bar{\delta}_{2,j_{n}}(n) }{||\varphi_{\tilde{k}_{n}(\boldsymbol{z})}(P)||_{L^{2}(P)} }
\end{align*}
where the first inequality follows from the definition of differentiability and simple algebra; the second inequality follows from the fact that $\boldsymbol{z} \in A_{n} \cap B_{n}$ and from the fact that $j_{n} \geq \tilde{k}_{n}(\boldsymbol{z})$ and the definition of $\mathcal{L}_{n}(\mathbf{z})$.

This result and the definition of $(C_{n})_{n}$ in the Proposition \ref{pro:Lepski-rate} imply 
\begin{align*}
 \frac{\sqrt{n}}{||\varphi_{\tilde{k}_{n}(\boldsymbol{z})}(P)||_{L^{2}(P)} } \left|    \psi_{\tilde{k}_{n}(\boldsymbol{z})}(P_{n}(\boldsymbol{z})) - \psi(P) - D\psi_{\tilde{k}_{n}(\boldsymbol{z})}(P)[P_{n}(\boldsymbol{z}) - P]  \right| 
\leq  2 C_{n} \sqrt{n} \frac{ \bar{\delta}_{j_{n}}(n) +   \bar{B}_{j_{n}}(P)}{||\varphi_{j_{n}}(P)||_{L^{2}(P)} } .
\end{align*}
	Thus, by condition 2, 
	\begin{align*}
	\frac{\sqrt{n}}{||\varphi_{\tilde{k}_{n}(\boldsymbol{z})}(P)||_{L^{2}(P)} } \left|    \psi_{\tilde{k}_{n}(\boldsymbol{z})}(P_{n}(\boldsymbol{z})) - \psi(P) - D\psi_{\tilde{k}_{n}(\boldsymbol{z})}(P)[P_{n}(\boldsymbol{z}) - P]  \right|  \leq 2 L C_{n} \sqrt{n}  \inf_{k \in \mathbb{K}_{n}} \frac{\bar{\delta}_{k}(n) + \bar{B}_{k}(P)}{||\varphi_{k}(P)||_{L^{2}(P)}}.
	\end{align*}

	The result thus follows from condition 1. 
\end{proof}

We now construct a sequence $(h_{n})_{n}$ that satisfies both conditions of the lemma. The construction is completely analogous to the one in the proof of Theorem \ref{thm:rate-choice} but using $\bar{\delta}_{k}(n)/\sqrt{n}$ instead of $\delta_{k}(r^{-1}_{n})$. 

Let, for each $n \in \mathbb{N}$,
\begin{align*}
\mathbb{K}_{n}^{+} \equiv & \{ k \in \mathbb{K}_{n} \colon  \bar{\delta}_{k}(n)\geq \bar{B}_{k}(P)  \}~and~\\
\mathbb{K}_{n}^{-} \equiv  & \{ k \in \mathbb{K}_{n} \colon  \bar{\delta}_{k}(n)  \leq \bar{B}_{k}(P)  \}.
\end{align*} 

%
%

For each $n \in \mathbb{N}$, let
\begin{align*}
T^{+}_{n} =    \sqrt{n}  \frac{ \bar{\delta}_{h^{+}_{n}}(n) + \bar{B}_{h^{+}_{n}}(P)  }{||\varphi_{h^{+}_{n}}(P)||_{L^{2}(P)}  }
\end{align*}
if $\mathbb{K}_{n}^{+}$ is non-empty where  
\begin{align*}
h^{+}_{n} = \min \{  k \colon k \in \mathbb{K}_{n}^{+} \};
\end{align*}
and $T^{+}_{n} =   + \infty $, if $\mathbb{K}_{n}^{+}$ is empty. Similarly, 
\begin{align*}
T^{-}_{n} =    \sqrt{n}  \frac{ \bar{\delta}_{h^{-}_{n}}(n) + \bar{B}_{h^{-}_{n}}(P) } {||\varphi_{h^{-}_{n}}(P)||_{L^{2}(P)}  }
\end{align*}
if $\mathbb{K}_{n}^{-}$ is non-empty where  
\begin{align*}
h^{-}_{n} = \max \{  k \colon k \in \mathbb{K}_{n}^{-} \};
\end{align*}
and $T^{-}_{n} =   + \infty$, if $\mathbb{K}_{n}^{-}$ is empty.

%
%
%

Finally, for each $n \in \mathbb{N}$, let $h_{n} \in \mathbb{K}_{n}$ be such that
\begin{align*}
h_{n} = h^{+}_{n} 1\{  T^{+}_{n} \leq T^{-}_{n}  \} + h^{-}_{n} 1\{  T^{+}_{n} > T^{-}_{n}  \}.
\end{align*}

\begin{lemma}\label{lem:h(n)-prop-diff}
	For each $n \in \mathbb{N}$, $h_{n}$ exists and \begin{align*}
	 \sqrt{n} \frac{\bar{\delta}_{h_{n}}(n) +  \bar{B}_{h_{n}}(P) }{||\varphi_{h_{n}}(P)||_{L^{2}(P)}  }  = \min \left\{  T^{-}_{n} , T^{+}_{n}      \right\}.
	\end{align*}
\end{lemma}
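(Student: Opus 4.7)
The plan is to follow essentially verbatim the structure of the proof of Lemma \ref{lem:h(n)-prop}, since Lemma \ref{lem:h(n)-prop-diff} is the exact analogue with $\bar{\delta}_{k}(r^{-1}_{n})$ replaced by $\bar{\delta}_{k}(n)/\sqrt{n}$ and the whole expression renormalized by $||\varphi_{k}(P)||_{L^{2}(P)}$.

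First I would establish the basic dichotomy: for every $k \in \mathcal{G}_{n}$, exactly one of $\bar{\delta}_{k}(n)/\sqrt{n} \geq \bar{B}_{k}(P)$ or $\bar{\delta}_{k}(n)/\sqrt{n} \leq \bar{B}_{k}(P)$ holds (possibly both, in the equality case), hence $\mathcal{G}_{n} = \mathcal{G}_{n}^{+} \cup \mathcal{G}_{n}^{-}$, so at least one of these subsets is non-empty. Since $\mathcal{G}_{n}$ is a finite (or closed, under the extension in Appendix \ref{app:rate-choice-Gn-gral}) subset of $\mathbb{K}$, whenever $\mathcal{G}_{n}^{+}$ is non-empty its minimum $h^{+}_{n}$ is attained; analogously for $h^{-}_{n}$. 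Consequently, at least one of $T^{+}_{n}$ and $T^{-}_{n}$ is finite.

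Next, I would do a three-case analysis for the definition of $h_{n}$. If $T^{+}_{n} = \infty$, then $\mathcal{G}_{n}^{+}$ is empty and thus $\mathcal{G}_{n}^{-}$ is non-empty, so $h^{-}_{n}$ exists; the indicator $1\{T^{+}_{n} \le T^{-}_{n}\}$ is 0 (since $T^{-}_{n}$ is finite) and $h_{n} = h^{-}_{n}$, which is a well-defined element of $\mathcal{G}_{n}$. The symmetric argument handles $T^{-}_{n} = \infty$. If both are finite, both $h^{+}_{n}$ and $h^{-}_{n}$ exist and the formula $h_{n} = h^{+}_{n} 1\{T^{+}_{n} \le T^{-}_{n}\} + h^{-}_{n} 1\{T^{+}_{n} > T^{-}_{n}\}$ unambiguously selects one of them, so $h_{n}$ exists.

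Finally, the identity $\frac{\bar{\delta}_{h_{n}}(n) + \sqrt{n} \bar{B}_{h_{n}}(P)}{||\varphi_{h_{n}}(P)||_{L^{2}(P)}} = \min\{T^{-}_{n}, T^{+}_{n}\}$ is immediate by construction: in each of the three cases the value of the LHS at $h_{n}$ coincides with whichever of $T^{+}_{n}, T^{-}_{n}$ is selected, and that selected value is the minimum of the two (with the convention $\min\{t, \infty\} = t$). There is no real obstacle here --- the proof is essentially bookkeeping; the only thing to be slightly careful about is the finite/closed-set assumption on $\mathcal{G}_{n}$ that guarantees the extrema defining $h^{+}_{n}$ and $h^{-}_{n}$ are attained, which is exactly the role played by the assumption on $\mathcal{G}_{n}$ in Theorem \ref{thm:rate-choice} and its extension.
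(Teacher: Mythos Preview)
Your proposal is correct and matches the paper's own treatment exactly: the paper simply states that the proof is identical to that of Lemma \ref{lem:h(n)-prop}, which is precisely the bookkeeping argument you have written out.
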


\begin{proof}
	The proof is identical to the one of Lemma \ref{lem:h(n)-prop}.
\end{proof}

\begin{lemma}\label{lem:rate-h(n)-bound-diff}
	Suppose Assumption  \ref{ass:Lepski-undersmooth} holds. For each $n \in \mathbb{N}$, 
	\begin{align*}
	 \sqrt{n} \frac{ \bar{\delta}_{h_{n}}(n) +  \bar{B}_{h_{n}}(P) } {||\varphi_{h_{n}}(P)||_{L^{2}(P)}  }    \leq 2 C_{n}  \sqrt{n} \inf_{k \in \mathbb{K}_{n}} 	\frac{ \bar{\delta}_{k}(n) +  \bar{B}_{k}(P) } {||\varphi_{k}(P)||_{L^{2}(P)}  } 
	\end{align*}
	where $(C_{n})_{n}$ is as in Proposition \ref{pro:Lepski-rate}.
\end{lemma}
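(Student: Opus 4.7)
The plan is to follow the same split-and-compare strategy used in the proof of Lemma \ref{lem:rate-h(n)-bound}, but now carrying along the ratios $(\bar{\delta}_{k}(n) + \sqrt{n}\bar{B}_{k}(P))/\|\varphi_{k}(P)\|_{L^{2}(P)}$ instead of $\bar{\delta}_{k}(r^{-1}_{n}) + \bar{B}_{k}(P)$. The extra factor $C_{n}$ in the statement is precisely the price one pays for comparing denominators $\|\varphi_{k}(P)\|_{L^{2}(P)}$ at different values of $k$ in $\mathcal{G}_{n}$.

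First I would reduce the infimum over $\mathcal{G}_{n}$ to the minimum over $\mathcal{G}_{n}^{+}$ and $\mathcal{G}_{n}^{-}$, then treat each separately. On $\mathcal{G}_{n}^{+}$, for any $k$ I would drop the $\sqrt{n}\bar{B}_{k}(P)$ term and use monotonicity of $k \mapsto \bar{\delta}_{k}(n)$ together with $k \geq h_{n}^{+}$ (by minimality of $h_{n}^{+}$ in $\mathcal{G}_{n}^{+}$) to conclude $\bar{\delta}_{k}(n) \geq \bar{\delta}_{h_{n}^{+}}(n)$. The definition of $C_{n}$ then gives $\|\varphi_{k}(P)\|_{L^{2}(P)} \leq C_{n}\|\varphi_{h_{n}^{+}}(P)\|_{L^{2}(P)}$, which combined with $\bar{\delta}_{h_{n}^{+}}(n) \geq \sqrt{n}\bar{B}_{h_{n}^{+}}(P)$ (because $h_{n}^{+} \in \mathcal{G}_{n}^{+}$) yields
\[
\frac{\bar{\delta}_{k}(n) + \sqrt{n}\bar{B}_{k}(P)}{\|\varphi_{k}(P)\|_{L^{2}(P)}} \geq \frac{1}{2C_{n}} T_{n}^{+}.
\]
A symmetric argument on $\mathcal{G}_{n}^{-}$: drop the $\bar{\delta}_{k}(n)$ term, use monotonicity of $k \mapsto \bar{B}_{k}(P)$ together with $k \leq h_{n}^{-}$ (by maximality of $h_{n}^{-}$ in $\mathcal{G}_{n}^{-}$) and the definition of $C_{n}$ to obtain the analogous lower bound $\tfrac{1}{2C_{n}} T_{n}^{-}$.

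Taking the minimum and invoking Lemma \ref{lem:h(n)-prop-diff} (which identifies $\min\{T_{n}^{+},T_{n}^{-}\}$ with $(\bar{\delta}_{h_{n}}(n) + \sqrt{n}\bar{B}_{h_{n}}(P))/\|\varphi_{h_{n}}(P)\|_{L^{2}(P)}$) will close the argument. The proof is essentially bookkeeping; I do not anticipate a genuine obstacle. The only subtle point is ensuring that the comparison of denominators is done uniformly over $\mathcal{G}_{n}$, which is exactly what the supremum defining $C_{n}$ delivers, and making sure that the case in which one of $\mathcal{G}_{n}^{\pm}$ is empty is handled (the corresponding $T_{n}^{\pm}$ is $+\infty$ by convention, so the bound becomes vacuous on that side, as in Lemma \ref{lem:rate-h(n)-bound}).
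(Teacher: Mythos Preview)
Your proposal is correct and follows essentially the same approach as the paper: split the infimum over $\mathcal{G}_{n}$ into $\mathcal{G}_{n}^{+}$ and $\mathcal{G}_{n}^{-}$, drop the dominated term on each piece, use the minimality/maximality of $h_{n}^{\pm}$ together with the monotonicity of $\bar{\delta}_{k}(n)$ and $\bar{B}_{k}(P)$, absorb the denominator discrepancy via $C_{n}$, and conclude with Lemma \ref{lem:h(n)-prop-diff}. The paper's proof is the same bookkeeping you outline, and your handling of the empty-$\mathcal{G}_{n}^{\pm}$ case via the $+\infty$ convention is exactly what the paper does implicitly.
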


\begin{proof}
	Observe that 
	\begin{align*}
	\inf_{k \in \mathbb{K}_{n}}  \sqrt{n}	\frac{ \bar{\delta}_{k}(n) +  \bar{B}_{k}(P) } {||\varphi_{k}(P)||_{L^{2}(P)}  }    \geq  \sqrt{n} \min \left\{ \inf_{k \in \mathcal{G}^{+}_{n}}  	\frac{ \bar{\delta}_{k}(n) +  \bar{B}_{k}(P) } {||\varphi_{k}(P)||_{L^{2}(P)}  }       ,  \inf_{k \in \mathcal{G}^{-}_{n}}  	\frac{ \bar{\delta}_{k}(n) +  \bar{B}_{k}(P) } {||\varphi_{k}(P)||_{L^{2}(P)}  }     \right\}
	\end{align*}
	where the  infimum is defined as $+\infty$ if the corresponding set is empty. 
	
	Fix any $n \in \mathbb{N}$, if $\mathcal{G}^{+}_{n} \ne \{\emptyset\}$, 
	\begin{align*}
	\inf_{k \in \mathcal{G}^{+}_{n}}  	\frac{ \bar{\delta}_{k}(n) + \bar{B}_{k}(P) } {||\varphi_{k}(P)||_{L^{2}(P)}  }      \geq \inf_{k \in \mathcal{G}^{+}_{n}}   	\frac{ \bar{\delta}_{k}(n)  } {||\varphi_{k}(P)||_{L^{2}(P)}  }      \geq C^{-1}_{n}  	\frac{ \bar{\delta}_{h^{+}_{n}}(n)  } {||\varphi_{h^{+}_{n}}(P)||_{L^{2}(P)}  }   \geq 0.5 C^{-1}_{n} \left(  	\frac{ \bar{\delta}_{h^{+}_{n}}(n)  +  \bar{B}_{h^{+}_{n}}(P) } {||\varphi_{h^{+}_{n}}(P)||_{L^{2}(P)}  }  \right)
	\end{align*}
	where the first inequality follows from the fact that $\bar{B}_{k}(P) \geq 0$; the second one follows from the fact that $h^{+}_{n}$ is minimal over $\mathcal{G}^{+}_{n}$ 
    and the fact that $\inf_{k \in \mathcal{G}^{+}_{n}} \frac{||\varphi_{h^{+}_{n}}(P)||_{L^{2}(P)}}{||\varphi_{k}(P)||_{L^{2}(P)}} = \left( \sup_{k \in \mathcal{G}^{+}_{n}} \frac{||\varphi_{k}(P)||_{L^{2}(P)}}{||\varphi_{h^{+}_{n}}(P)||_{L^{2}(P)}}   \right)^{-1} \geq \left( \sup_{k \in \mathbb{K}_{n} \colon  k \geq h^{+}_{n}} \frac{||\varphi_{k}(P)||_{L^{2}(P)}}{||\varphi_{h^{+}_{n}}(P)||_{L^{2}(P)}}   \right)^{-1} \geq C^{-1}_{n}$.

	Similarly, if $\mathcal{G}^{-}_{n} \ne \{\emptyset\}$, 
	\begin{align*}
	\inf_{k \in \mathcal{G}^{-}_{n}} \frac{ \bar{\delta}_{k}(n) +  \bar{B}_{k}(P) } {||\varphi_{k}(P)||_{L^{2}(P)}  }    \geq \inf_{k \in \mathcal{G}^{-}_{n}}  \frac{  \bar{B}_{k}(P) } {||\varphi_{k}(P)||_{L^{2}(P)}  }    \geq C^{-1}_{n}  \frac{  \bar{B}_{h^{-}_{n}}(P) } {||\varphi_{h^{-}_{n}}(P)||_{L^{2}(P)}  }     \geq 0.5 C^{-1}_{n} \left( \frac{ \bar{\delta}_{h^{-}_{n}}(n) +  \bar{B}_{h^{-}_{n}}(P) } {||\varphi_{h^{-}_{n}}(P)||_{L^{2}(P)}  }     \right).
	\end{align*}
	Where here we use monotonicity of $k \mapsto \bar{B}_{k}(P)$ and the fact that $\inf_{k \in \mathcal{G}^{-}_{n}} \frac{||\varphi_{h^{-}_{n}}(P)||_{L^{2}(P)}}{||\varphi_{k}(P)||_{L^{2}(P)}} = \left( \sup_{k \in \mathcal{G}^{-}_{n}} \frac{||\varphi_{k}(P)||_{L^{2}(P)}}{||\varphi_{h^{-}_{n}}(P)||_{L^{2}(P)}}   \right)^{-1} \geq \left( \sup_{k \in \mathbb{K}_{n} \colon  k \leq h^{-}_{n}} \frac{||\varphi_{k}(P)||_{L^{2}(P)}}{||\varphi_{h^{-}_{n}}(P)||_{L^{2}(P)}}   \right)^{-1} \geq C^{-1}_{n}$. 
	
	Thus, \begin{align*}
	\inf_{k \in \mathbb{K}_{n}} \{  \delta_{k}(r^{-1}_{n}) + \bar{B}_{k}(P)   \}  \geq 0.5 \min \{ T^{-}_{n} , T^{+}_{n}  \},
	\end{align*}
	and by Lemma \ref{lem:h(n)-prop-diff} the desired result follows.

\end{proof}

\begin{lemma}\label{lem:hn-in-Fn-diff}
	Suppose Assumption \ref{ass:Lepski-undersmooth} holds. For any $n \in \mathbb{N}$, $\mathbf{P}( 	\{\boldsymbol{z} \in \mathbb{Z}^{\infty} \colon h_{n} \notin \mathcal{L}_{n}(\boldsymbol{z}) \}  \cap A_{n}) \leq \mathbf{P}(B_{n}^{C})$.
\end{lemma}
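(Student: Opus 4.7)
The plan is to reduce the probability bound to showing that the event $\{h_{n} \notin \mathcal{L}_{n}\} \cap A_{n} \cap B_{n}$ is empty: once this is established, $\{h_{n} \notin \mathcal{L}_{n}\} \cap A_{n} \subseteq B_{n}^{C}$ and the inequality in the lemma follows at once by monotonicity of $\mathbf{P}$. This mirrors the structure of the proof of Lemma \ref{lem:hn-in-Fn}, with $A_{n} \cap B_{n}$ now playing the role that the consistency event $D_{n}$ played there.

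First I would unpack the failure event. Since $|\psi_{h_{n}}(P_{n}) - \psi_{h_{n}}(P_{n})| = 0$ trivially respects the $\mathcal{L}_{n}$-constraint at $k = h_{n}$, the event $\{h_{n} \notin \mathcal{L}_{n}\}$ is contained in $\{\exists k \in \mathcal{G}_{n},\, k > h_{n} : |\psi_{k}(P_{n}) - \psi_{h_{n}}(P_{n})| > \Lambda_{k}\}$, where $\Lambda_{k} = 4 \bar{\delta}_{k}(n)/\sqrt{n}$ with $\bar{\delta}_{k}(n) \equiv \bar{\delta}_{1,k}(n) + \bar{\delta}_{2,k}(n)$. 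Next I would exploit the differentiability expansion $\psi_{j}(P_{n}) - \psi_{j}(P) = D\psi_{j}(P)[P_{n} - P] + \eta_{j}(P_{n} - P)$ for $j \in \{k,h_{n}\}$ to obtain, after subtracting and applying the triangle inequality through $\psi(P)$,
\begin{align*}
|\psi_{k}(P_{n}) - \psi_{h_{n}}(P_{n})| \leq \bar{B}_{k}(P) + \bar{B}_{h_{n}}(P) + |D\psi_{k}(P)[P_{n}-P] - D\psi_{h_{n}}(P)[P_{n}-P]| + |\eta_{k}(P_{n}-P)| + |\eta_{h_{n}}(P_{n}-P)|.
\end{align*}
On $A_{n}$ the two reminder terms are bounded by $\bar{\delta}_{1,k}(n)/\sqrt{n}$ and $\bar{\delta}_{1,h_{n}}(n)/\sqrt{n}$ respectively, and on $B_{n}$ (using $k \geq h_{n}$) the gradient-difference term is bounded by $\bar{\delta}_{2,k}(n)/\sqrt{n}$.

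The final step is to reuse the bookkeeping from Claims 1--3 in the proof of Lemma \ref{lem:hn-in-Fn} \emph{verbatim}, with $\bar{\delta}_{k}(r_{n}^{-1})$ replaced by $\bar{\delta}_{k}(n)/\sqrt{n}$; those arguments rely only on monotonicity of $k \mapsto \bar{\delta}_{k}(n)$ (guaranteed by Assumption \ref{ass:Lepski-undersmooth}) and of $k \mapsto \bar{B}_{k}(P)$ (by construction of the majorant). They deliver, for any $k > h_{n}$ in $\mathcal{G}_{n}$, the bounds $\bar{B}_{k}(P) \leq \bar{\delta}_{k}(n)/\sqrt{n}$ and $\bar{B}_{h_{n}}(P) + \bar{\delta}_{h_{n}}(n)/\sqrt{n} \leq 2 \bar{\delta}_{h_{n}^{+}}(n)/\sqrt{n} \leq 2 \bar{\delta}_{k}(n)/\sqrt{n}$. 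Substituting these (and using $\bar{\delta}_{1,j} \leq \bar{\delta}_{j}$) into the decomposition above yields $|\psi_{k}(P_{n}) - \psi_{h_{n}}(P_{n})| \leq 4 \bar{\delta}_{k}(n)/\sqrt{n} = \Lambda_{k}$, contradicting the strict inequality that defined the failure event.

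The argument is essentially a mechanical adaptation of Lemma \ref{lem:hn-in-Fn}; the only conceptual novelty is that the ``sampling noise'' contribution to $|\psi_{k}(P_{n}) - \psi_{h_{n}}(P_{n})|$ must now be split into two pieces, the linear-approximation reminder (controlled on $A_{n}$) and the variation of the gradient across $k$'s (controlled on $B_{n}$), so the single event $D_{n}$ of the earlier lemma is replaced by the intersection $A_{n} \cap B_{n}$. I do not anticipate any real obstacle beyond the careful transcription of the Claims 1--3 bookkeeping to the split-reminder setting.
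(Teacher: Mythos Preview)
Your proposal is correct and follows essentially the same approach as the paper's own proof: reduce to showing $\{h_{n}\notin\mathcal{L}_{n}\}\cap A_{n}\cap B_{n}=\emptyset$, decompose $|\psi_{k}(P_{n})-\psi_{h_{n}}(P_{n})|$ via $\psi(P)$ into approximation, gradient-difference, and remainder pieces, control the last two on $A_{n}$ and $B_{n}$, and then invoke Claims~1--3 with $\bar{\delta}_{k}(r_{n}^{-1})$ replaced by $\bar{\delta}_{k}(n)/\sqrt{n}$ to reach the contradiction. The paper presents the probability reduction as $\mathbf{P}(\{h_{n}\notin\mathcal{L}_{n}\}\cap A_{n})\leq \mathbf{P}(\{h_{n}\notin\mathcal{L}_{n}\}\cap A_{n}\cap B_{n})+\mathbf{P}(B_{n}^{C})$ rather than your set-inclusion formulation, but this is only cosmetic.
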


\begin{proof}
	For any $n \in \mathbb{N}$,
	\begin{align*}
	\mathbf{P}( 	\{\boldsymbol{z} \in \mathbb{Z}^{\infty} \colon h_{n} \notin \mathcal{L}_{n}(\boldsymbol{z}) \}  \cap A_{n}) \leq \mathbf{P}( 	\{\boldsymbol{z} \in \mathbb{Z}^{\infty} \colon h_{n} \notin \mathcal{L}_{n}(\boldsymbol{z}) \}   \cap A_{n} \cap B_{n}) + \mathbf{P}(B_{n}^{C}).
	\end{align*}
	
	By definition of $\mathcal{L}_{n}$, 
	\begin{align*}
	\{\boldsymbol{z} \in \mathbb{Z}^{\infty} \colon h_{n} \notin \mathcal{L}_{n}(\boldsymbol{z}) \}  \subseteq C_{n} \equiv \left\{ \boldsymbol{z} \in \mathbb{Z}^{\infty} \colon  \exists k \in \mathbb{K}_{n} \colon k > h_{n}~and~ |\psi_{k}(P_{n}(\boldsymbol{z})) - \psi_{h_{n}}(P_{n}(\boldsymbol{z}))| > 4 \bar{\delta}_{k}(n) \right\},
	\end{align*}
	where $(n,k) \mapsto \bar{\delta}_{k}(n) \equiv  \bar{\delta}_{1,k}(n) +  \bar{\delta}_{2,k}(n)$.
	
	For any $k \in \mathbb{K}_{n}$ such that $k \geq h_{n}$ and any $\boldsymbol{z} \in C_{n} \cap B_{n} \cap A_{n}$ (to ease the notational burden we omit $\boldsymbol{z}$ from the expressions below)   
	\begin{align*}
	|\psi_{k}(P_{n}) - \psi_{h_{n}}(P_{n})| \leq & \left| \psi_{k}(P_{n}) - \psi(P)  - D \psi_{k}(P)[P_{n} - P]     \right| + \left| \psi_{h_{n}}(P_{n}) - \psi(P)  - D \psi_{h_{n}}(P)[P_{n} - P]     \right|\\
	& + \left| D \psi_{k}(P)[P_{n} - P]  -   D \psi_{h_{n}}(P)[P_{n} - P]   \right|  \\
	& +  \left| \psi_{k}(P) - \psi(P)    \right| + \left| \psi_{h_{n}}(P) - \psi(P)    \right|\\
	\leq & |\eta_{k}(P_{n}-P)| + |\eta_{h_{n}}(P_{n}-P)| + \bar{\delta}_{2,k}(n) + \bar{B}_{k}(P) + \bar{B}_{h_{n}}(P)  \\
	\leq &  \bar{\delta}_{1,k}(n)  +  \bar{\delta}_{1,h_{n}}(n)  + \bar{\delta}_{2,k}(n) + \bar{B}_{k}(P) + \bar{B}_{h_{n}}(P)
	\end{align*}
	where the second inequality follows from the definition of $\eta$, the fact that $\boldsymbol{z} \in B_{n}$ and the fact that $k>h_{n}$; the third inequality follows from the fact that $\boldsymbol{z} \in A_{n}$. Thus, 
	\begin{align}\notag
	& \{h_{n} \notin \mathcal{L}_{n} \} \cap A_{n} \cap B_{n}  \\ \label{eqn:hn-in-Fn-diff-1}
	\subseteq & \left\{   \exists k \in \mathbb{K}_{n} \colon k > h_{n}~and~  \bar{\delta}_{1,k}(n)  +  \bar{\delta}_{1,h_{n}}(n)  + \bar{\delta}_{2,k}(n)  + \bar{B}_{k}(P) + \bar{B}_{h_{n}}(P)  > 4  \bar{\delta}_{k}(n)  \right\}.
	\end{align}

	We now derive a series of useful claims.
	
	\medskip 
	
	\textbf{Claim 1:} If there exists $k \in \mathbb{K}_{n}$ such that $k > h_{n}$ and $h_{n} = h^{-}_{n}$, then $k \in \mathcal{G}^{+}_{n}$. \textbf{Proof:} If $h_{n}=h^{-}_{n}$, then $h_{n}$ is the largest element of $\mathcal{G}^{-}_{n}$ and thus $k \notin 	\mathcal{G}^{-}_{n}$, which means that $k \in \mathcal{G}^{+}_{n}$. $\square$
	
	\medskip 
	
	A corollary of this claim is that if there exists $k \in \mathbb{K}_{n}$ such that $k > h_{n}$ and $h_{n} = h^{-}_{n}$, then $\mathcal{G}^{+}_{n}$ is non-empty. From this claim, we derive the following two claims. 
	
	\medskip 
	
	\textbf{Claim 2:} If there exists a $k > h_{n}$, then  $\bar{\delta}_{1,h_{n}}(n) + \bar{B}_{h_{n}}(P) \leq 2 \bar{\delta}_{h^{+}_{n}}(n) $. \textbf{Proof:} If $h_{n} = h^{+}_{n}$, then $\bar{\delta}_{h_{n}}(n) +  \bar{B}_{h_{n}}(P)  \leq \bar{\delta}_{h^{+}_{n}}(n) +  \bar{B}_{h^{+}_{n}}(P)  \leq 2  \bar{\delta}_{h^{+}_{n}}(n) $. If $h_{n} = h^{-}_{n}$, by the previous claim it follows that $\mathcal{G}^{+}_{n}$ is non-empty and thus $h^{+}_{n}$ is well-defined, thus $\bar{\delta}_{h_{n}}(n) +   \bar{B}_{h_{n}}(P) \leq \bar{\delta}_{h^{+}_{n}}(n) +  \bar{B}_{h^{+}_{n}}(P)   \leq 2 \bar{\delta}_{h^{+}_{n}}(n)  $. $\square$
	
	\medskip 
	
	\textbf{Claim 3:} For any $k > h_{n}$, $\bar{\delta}_{1,k}(n) \geq \bar{B}_{k}(P)$. \textbf{Proof:} If $h_{n} = h^{+}_{n}$ then the claim follows because $k \mapsto \bar{\delta}_{k}(n) - \bar{B}_{k}(P)$ is non-decreasing under Assumption \ref{ass:Lepski-undersmooth}(i). If $h_{n}=h^{-}_{n}$, then $k \in \mathcal{G}^{+}_{n}$ by Claim 1 and thus $\bar{\delta}_{k}(n) \geq \bar{B}_{k}(P)$. $\square$

	\medskip 
	
	By Claims 2 and 3, it follows that  if there exists $k \in \mathbb{K}_{n}$ such that  $k \geq h_{n}$, then $ \bar{\delta}_{k}(n) + \bar{B}_{k}(P) +  \bar{\delta}_{h_{n}}(n)  + \bar{B}_{h_{n}}(P) \leq 2 \bar{\delta}_{k}(n)  + 2  \bar{\delta}_{h_{n}}(n)   \leq 4  \bar{\delta}_{k}(n) $ where the last inequality follows from the fact that $k \mapsto \bar{\delta}_{k}(n)$ is non-decreasing by Assumption \ref{ass:Lepski-undersmooth}(i) and the fact that $k  > h^{+}_{n}$ because $k > h_{n}$ and so by Claim 1 $k \in \mathbb{K}_{n}^{+}$ and $h^{+}_{n}$ is minimal in this set.

	Hence applying this result to expression \ref{eqn:hn-in-Fn-diff-1} and since $\bar{\delta}_{k}(n) = \bar{\delta}_{1,k}(n)+\bar{\delta}_{2,k}(n)$, it follows that	\begin{align}
	\{h_{n} \notin \mathcal{L}_{n} \} \cap A_{n} \cap B_{n} \subseteq \left\{   \exists k \in \mathbb{K}_{n} \colon k \geq h_{n}~and~ 4 \bar{\delta}_{k}(n) > 4  \bar{\delta}_{k}(n) \right\},
	\end{align}
	which is empty. Hence, $\mathbf{P}(\{ h_{n} \notin \mathcal{L}_{n} \}  \cap A_{n}) \leq \mathbf{P}(B_{n}^{C})$ as desired.
\end{proof}

\begin{proof}[Proof of Proposition \ref{pro:Lepski-rate}]
	We verify that  $(h_{n})_{n \in \mathbb{N}}$ satisfies both conditions in Lemma \ref{lem:suff-choice-rate-diff}. By Lemma \ref{lem:rate-h(n)-bound-diff} condition 2 in the Lemma \ref{lem:suff-choice-rate-diff} holds with $L=2C_{n}$. To check condition 1 in the Lemma \ref{lem:suff-choice-rate-diff}, observe that 
	\begin{align*}
	\mathbf{P} \left( \mathbb{Z}^{\infty} \setminus  \left\{ 	\{\boldsymbol{z} \in \mathbb{Z}^{\infty} \colon h_{n} \in \mathcal{L}_{n}(\boldsymbol{z}) \}  \cap A_{n} \cap B_{n} \right\} \right) \leq & \mathbf{P} \left(  	\{\boldsymbol{z} \in \mathbb{Z}^{\infty} \colon h_{n} \notin \mathcal{L}_{n}(\boldsymbol{z}) \}   \right) + \mathbf{P} \left( A^{C}_{n} \right) + \mathbf{P} \left( B^{C}_{n} \right) \\
	\leq & \mathbf{P} \left( 	\{\boldsymbol{z} \in \mathbb{Z}^{\infty} \colon h_{n} \notin \mathcal{L}_{n}(\boldsymbol{z}) \}  \cap A_{n} \right) + 2 \mathbf{P} \left( A^{C}_{n} \right) + \mathbf{P} \left( B^{C}_{n} \right)
	\end{align*}
	Thus, by Lemma \ref{lem:hn-in-Fn-diff} 
	\begin{align*}
	\mathbf{P} \left( \mathbb{Z}^{\infty} \setminus  \left\{ 	\{\boldsymbol{z} \in \mathbb{Z}^{\infty} \colon h_{n} \in \mathcal{L}_{n}(\boldsymbol{z}) \}  \cap A_{n} \right\} \right)  \leq 2 \mathbf{P} \left( A^{C}_{n} \right)  + 2 \mathbf{P}(B^{C}_{n}).
	\end{align*}
	Under assumption \ref{ass:Lepski-undersmooth}(i) the first term in the RHS vanishes. Regarding the second term, by the union bound and the Markov inequality
	\begin{align*}
	\mathbf{P}(B^{C}_{n}) \leq & \frac{|\mathbb{K}_{n}|^{2}}{n} \sup_{k' \geq k ~in ~ \mathbb{K}_{n}} \bar{\delta}^{-2}_{2,k'}(n) E_{\mathbf{P}} \left[  \left( D\psi_{k'}(P)[P_{n} - P] -  D\psi_{k}(P)[P_{n} - P] \right)^{2} \right] \\
	=& \frac{|\mathbb{K}_{n}|^{2}}{n} \sup_{k' \geq k ~in ~ \mathbb{K}_{n}} \bar{\delta}^{-2}_{2,k'}(n) E_{\mathbf{P}} \left[ \left( \varphi_{k'}(P)(Z) -  \varphi_{k}(P)(Z) \right)^{2} \right]\\
	= & o(1)
	\end{align*}
	where the last line follows from Assumption \ref{ass:Lepski-undersmooth}(ii).
	
\end{proof}

\subsubsection{Appendix for Example \ref{exa:ipdf2-Lepski}}
\label{app:ipdf2-Lepski}

Next, we provide an explicit characterization of $\eta_{k}(P_{n} - P)$.

\begin{lemma}\label{lem:ipdf2-diff-2}
	For any $P \in \mathcal{M}$ and any $k \in \mathbb{N}$,
	\begin{align*}
	\eta_{k}(P_{n} - P) = O_{P} \left( \frac{\kappa_{k}(0)}{n} +  \frac{2||\kappa||_{L^{2}} \sqrt{k}  \sqrt{||p||_{L^{\infty}}} }{n}   +   \frac{ ||p||_{L^{\infty}}}{n} + \frac{2\sqrt{k} ||\kappa||_{L^{2}} \sqrt{||p||_{L^{\infty}}} }{n^{2}} \right).
	\end{align*}
\end{lemma}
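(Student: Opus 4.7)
The plan is to use the explicit formula for $\eta_{k}$ from Proposition \ref{pro:ipdf2-diff-1}, namely $\eta_{k}(Q) = \int (\kappa_{k}\star Q)(z)\,Q(dz)$, so that evaluating at $Q = P_{n}-P$ gives a double integral against the signed measure $(P_{n}-P)\otimes(P_{n}-P)$. Expand the bivariate symmetric kernel $\bar{\kappa}_{k}(z,y):=\kappa_{k}(z-y)$ via its Hoeffding decomposition under $P\otimes P$:
\begin{align*}
\bar{\kappa}_{k}(z,y) = \theta_{k} + h_{1}(z) + h_{1}(y) + h_{2}(z,y),
\end{align*}
where $\theta_{k}:=E_{P}[\kappa_{k}(Z-Z')]$, $h_{1}(z):=(\kappa_{k}\star P)(z)-\theta_{k}$, and $h_{2}(z,y):=\kappa_{k}(z-y)-(\kappa_{k}\star P)(z)-(\kappa_{k}\star P)(y)+\theta_{k}$ is completely $P$-degenerate. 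Because $P_{n}-P$ integrates to zero, only the bilinear piece survives, so
\begin{align*}
\eta_{k}(P_{n}-P) = n^{-2}\sum_{i,j}h_{2}(Z_{i},Z_{j}).
\end{align*}
This is the point that controls the order of magnitude: the naive expansion would suggest $O_{P}(n^{-1/2})$, but degeneracy drops the leading linear term, bringing the rate to $O_{P}(n^{-1})$.

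I would then split into diagonal and off-diagonal sums. For the diagonal, $E[h_{2}(Z,Z)] = \kappa_{k}(0) - \theta_{k}$ and $|\theta_{k}| \leq \Vert p\Vert_{L^{\infty}}$ (bound $\theta_{k}=\int (\kappa_{k}\star p)(z)p(z)dz$ using $\int\kappa_{k}=1$), so the deterministic part yields $n^{-1}(\kappa_{k}(0)-\theta_{k})=O(\kappa_{k}(0)/n+\Vert p\Vert_{L^{\infty}}/n)$, matching the first and third terms. Its stochastic fluctuation has standard deviation at most $n^{-3/2}\sqrt{\mathrm{Var}(h_{2}(Z,Z))}$, and since $\mathrm{Var}(h_{2}(Z,Z))=4\,\mathrm{Var}((\kappa_{k}\star P)(Z))$ is controlled by the variance bound derived next, this is absorbed into the remaining terms.

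For the off-diagonal, the key estimate is
\begin{align*}
E[h_{2}^{2}(Z,Z')] \leq E[\kappa_{k}^{2}(Z-Z')] = \iint \kappa_{k}^{2}(z-y)p(y)p(z)\,dy\,dz \leq \Vert p\Vert_{L^{\infty}}\Vert\kappa_{k}\Vert_{L^{2}}^{2} = k\,\Vert\kappa\Vert_{L^{2}}^{2}\,\Vert p\Vert_{L^{\infty}},
\end{align*}
using Fubini together with the $L^{\infty}$ bound on $p$ and the identity $\Vert\kappa_{k}\Vert_{L^{2}}^{2}=k\Vert\kappa\Vert_{L^{2}}^{2}$. Because $h_{2}$ is $P$-degenerate, the standard orthogonality argument for degenerate U-statistics gives $\mathrm{Var}\bigl(n^{-2}\sum_{i\ne j}h_{2}(Z_{i},Z_{j})\bigr) \leq 2n^{-2}E[h_{2}^{2}]$, so this contribution is $O_{P}(n^{-1}\sqrt{k}\Vert\kappa\Vert_{L^{2}}\sqrt{\Vert p\Vert_{L^{\infty}}})$, which is exactly the second (leading stochastic) term. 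Combining the diagonal deterministic bound with Chebyshev applied to the diagonal and off-diagonal centered parts yields the claim.

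The main obstacle is not algebraic but conceptual: one must recognize that the naive $O_{P}(n^{-1/2})$-rate is sharpened to $O_{P}(n^{-1})$ by degeneracy of $h_{2}$, and one must find the right variance bound $E[h_{2}^{2}]\lesssim k\Vert p\Vert_{L^{\infty}}\Vert\kappa\Vert_{L^{2}}^{2}$ — it is tempting but wasteful to use $\Vert \kappa_{k}\Vert_{L^{\infty}}\asymp k$, which would lose a factor of $\sqrt{k}$; the correct route uses $\int \kappa_{k}^{2}(z-y)\,dy = \Vert\kappa_{k}\Vert_{L^{2}}^{2}$ followed by $\Vert p\Vert_{L^{\infty}}$. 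Everything else is standard U-statistic bookkeeping.
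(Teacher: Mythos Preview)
Your proposal is correct and follows essentially the same route as the paper's proof: both decompose $\eta_{k}(P_{n}-P)=\iint\kappa_{k}(z-y)(P_{n}-P)(dz)(P_{n}-P)(dy)$ into a degenerate second-order U-statistic plus diagonal/deterministic terms, and both use the same variance estimate $E[h_{2}^{2}]\leq E[\kappa_{k}^{2}(Z-Z')]\leq k\Vert\kappa\Vert_{L^{2}}^{2}\Vert p\Vert_{L^{\infty}}$ for the off-diagonal part. Your Hoeffding-decomposition framing is in fact cleaner than the paper's direct expansion (the paper's $\bar{\kappa}_{k}$ is your $h_{2}$), and your observation that the constant and linear terms vanish because $P_{n}-P$ has total mass zero immediately yields $\eta_{k}(P_{n}-P)=n^{-2}\sum_{i,j}h_{2}(Z_{i},Z_{j})$, bypassing some of the paper's bookkeeping.
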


\begin{proof}
	The proof is relegated to the end of this section. 
\end{proof}

\begin{proof}[Proof of Lemma \ref {lem:ipdf2-suff-assumption-bound}]
	Observe that $z \mapsto \varphi_{1/h}(P)(z) \equiv (\kappa_{1/h} \star P)(z) - E_{P}[ (\kappa_{1/h} \star P)(Z) ] $. So for any $h$ and $h'$,
	\begin{align*}
& E_{P} \left[  \left( \varphi_{1/h}(P)(Z) - \varphi_{1/h'}(P)(Z)  \right)^{2} \right] \\
= & E_{P} \left[ \left(   \left( (\kappa_{1/h} \star P)(Z) - E_{P}[ (\kappa_{1/h} \star P)(Z) ] - \{ (\kappa_{1/h'} \star P)(Z) - E_{P}[ (\kappa_{1/h'} \star P)(Z) ] \}  \right) \right)^{2} \right] \\
= & E_{P} \left[ \left(  (\kappa_{1/h} \star P)(Z) - (\kappa_{1/h'} \star P)(Z) - \{  E_{P}[ (\kappa_{1/h} \star P)(Z) ]- E_{P}[ (\kappa_{1/h'} \star P)(Z) ] \}  \right)^{2} \right] \\
\leq & 4 E_{P} \left[ \left(  \int \kappa(u) \{ p(Z+h u) -  p(Z+ h' u) \} du  \right)^{2} \right].
\end{align*}	
	
	By expression (\ref{eqn:ipdf2-smooth}), it follows that $| p(Z+h u) -  p(Z+ h' u) | \leq C(z) (|h|^{\varrho} + |h'|^{\varrho} ) |u|^{\varrho} $. 
	Thus, for any $z$,
	\begin{align*}
	&  \int \kappa(u) \{ p(z+h u) -  p(z+ h' u) \} du  \\
	\leq &  \int \kappa(u) p'(z)(h + h') u du + C(z)( |h|^{m+\varrho} + |h'|^{\varrho} ) \int |\kappa(u)| |u|^{\varrho}  du.
	\end{align*}
	By symmetry of $\kappa$, $\int \kappa(u) u du = 0$. 
	
	Therefore, 
	\begin{align*}
 E_{P} \left[  \left( \varphi_{1/h}(P)(Z) - \varphi_{1/h'}(P)(Z)  \right)^{2} \right] \leq   4 E_{P} \left[ (C(Z))^{2} \right]  \left( ( (h)^{\varrho} + (h')^{\varrho} ) \int |\kappa(u)| |u|^{\varrho}  du   \right)^{2}.
\end{align*}	

%
%

	\bigskip 
	
	By the proof of the Lemma \ref{lem:ipdf2-diff-2}, for any $h > 0$ and any $M>0$, there exists a $N$ such that
	\begin{align*}
	\mathbf{P} \left( \frac{|\eta_{1/h}(P_{n} - P)|}{M \left( \frac{\kappa(0)}{h n} + \frac{1}{n \sqrt{h} } \right) } \geq 1  \right) \leq M^{-1}
	\end{align*}
	for all $n \geq N$. By the union bound 
	\begin{align*}
	\mathbf{P} \left( \sup_{k \in \mathbb{K}_{n} } \frac{|\eta_{k}(P_{n} - P)|}{ \frac{M}{n}  \left( \kappa(0) k+ \sqrt{k}  \right)  } \geq 1  \right) \leq \sum_{k \in \mathbb{K}_{n} }   \mathbf{P} \left(  \frac{|\eta_{k}(P_{n} - P)|}{ \frac{M}{n}  \left( \kappa(0) k+ \sqrt{k}  \right)  } \geq 1  \right)  \leq |\mathbb{K}_{n} | M^{-1}.
	\end{align*}
\end{proof}

\begin{proof}[Proof of Lemma \ref{lem:ipdf2-diff-2}]
	Consider the curve $t \mapsto P + tQ$. It is a valid curve because $\mathbb{D}_{\psi} = ca(\mathbb{R})$. Therefore
	\begin{align*}
	\psi_{k}(P+tQ) - \psi_{k}(P) =& t \left\{  \int (\kappa_{k} \star Q)(x) P(dx) + \int (\kappa_{k} \star P)(x) Q(dx) \right\} \\
	& + t^{2} \int (\kappa_{k} \star Q)(x) Q(dx).
	\end{align*}
	Since $\kappa$ is symmetric, $\int (\kappa_{k} \star P)(x) Q(dx) = \int (\kappa_{k} \star Q)(x) P(dx)$. From this display, $\eta_{k}(tQ) = t^{2}  \int (\kappa_{k} \star Q)(x) Q(dx)$ and $D \psi_{k}(P)[Q]=2\int (\kappa_{k} \star P)(x) Q(dx)$.
	
	The mapping $Q \mapsto 2\int (\kappa_{k} \star P)(x) Q(dx)$ is clearly linear. Also, note that $(\kappa_{k} \star P)(.) = \int \kappa (u) p(\cdot + k u) du $. Hence, for any reals $x$ and $x'$
	\begin{align*}
	|(\kappa_{k} \star P)(x) - (\kappa_{k} \star P)(x')| = \int \kappa (u) \{p(x + u/k) - p(x' + u/k) \} du.
	\end{align*}
	Thus, under the smoothness condition on $p$ in expression (\ref{eqn:ipdf2-smooth}), it follows that $x \mapsto (\kappa_{k} \star P)(x)$ is uniformly continuous and bounded, so the mapping $Q \mapsto  2\int (\kappa_{k} \star P)(x) Q(dx)$ is continuous with respect to the $||.||_{LB}$.

	To establish the rate result, we use the Markov inequality. We also introduce the following notation $\int (\kappa_{k} \star Q)(x) Q(dx) = \langle  \kappa_{h_{k}} \star Q , Q   \rangle  $ where $\langle . , . \rangle$ is the inner produce of the dual $(L^{\infty}(\mathbb{R}),ca(\mathbb{R}))$. 
	
	It follows that
	\begin{align*}
	\sqrt{E \left[ \left( \eta_{k}(P_{n} - P)    \right)^{2}    \right]} = &	\sqrt{E \left[ \left( \langle  \kappa_{k} \star (P_{n}-P) , P_{n}-P  \rangle   \right)^{2}    \right]} \\
	= &	\sqrt{E \left[ \left( \langle  \kappa_{k} \star P_{n} , P_{n} \rangle - 2\langle  \kappa_{k} \star P , P_{n} \rangle    + \langle  \kappa_{k} \star P , P \rangle      \right)^{2}    \right]}
	\end{align*} 
	where the second line follows from symmetry of $\kappa$ which implies $\langle  \kappa_{k} \star P_{n} , P \rangle    = \langle  P_{n} , \kappa_{k} \star P \rangle   $.
	
	We note that
	\begin{align*}
	\langle  \kappa_{k} \star P_{n} , P_{n} \rangle  = &  \frac{\kappa_{k}(0)}{n} + \frac{1}{n^{2}} \sum_{i \ne j}  \kappa_{k}(Z_{i}-Z_{j}) \\
	= & \frac{\kappa_{k}(0)}{n} + \frac{1}{n^{2}} \left(  \sum_{i=1}^{n-1}  \sum_{j = i+1}^{n} \kappa_{k}(Z_{i}-Z_{j}) + \sum_{i=2}^{n}  \sum_{j = 1}^{i-1} \kappa_{k}(Z_{i}-Z_{j}) \right),
	\end{align*}
	also 
	\begin{align*}
	\langle  \kappa_{k} \star P , P_{n} \rangle =& \frac{1}{n} \sum_{i=1}^{n} (\kappa_{k} \star P)(Z_{i}) =   \frac{1}{n} \sum_{i=1}^{n} E_{P}[\kappa_{k}(Z_{i}-Z)] \\
	= & \frac{1}{n} \sum_{i=1}^{n} E_{P}[\kappa_{k}(Z_{i}-Z)]\frac{i}{n} + \frac{1}{n} \sum_{i=1}^{n} E_{P}[\kappa_{k}(Z_{i}-Z)]\frac{n-i}{n}\\
	= & \frac{1}{n} \sum_{i=1}^{n} E_{P}[\kappa_{k}(Z_{i}-Z)]\frac{i}{n} + \frac{1}{n} \sum_{i=1}^{n-1} E_{P}[\kappa_{k}(Z_{i}-Z)]\frac{n-i}{n}\\		
	= & \frac{1}{n^{2}}  \sum_{i=2}^{n} \sum_{j=1}^{i-1} E_{P}[\kappa_{k}(Z_{i}-Z_{j})] + \frac{1}{n^{2}} \sum_{i=1}^{n-1} \sum_{j=i+1}^{n} E_{P}[\kappa_{k}(Z_{i}-Z_{j})]\\
	& + \frac{1}{n^{2}} E_{P}[\kappa_{k}(Z_{1}-Z)].
	\end{align*}
	where the third line follows because $ E_{P}[\kappa_{k}(Z_{n}-Z)]\frac{n-n}{n} = 0$, and the fourth one follows from the fact that by iid-ness, $E_{P}[\kappa_{k}(Z_{i}-Z_{j})] = E_{P}[\kappa_{k}(Z_{i}-Z)]$ for all $j$.
	
	Therefore,
	\begin{align*}
	\langle  \kappa_{k} \star P_{n} , P_{n} \rangle - 2\langle  \kappa_{k} \star P , P_{n} \rangle = & \frac{\kappa_{k}(0)}{n} + \frac{1}{n^{2}}  \left(  \sum_{i=1}^{n-1}  \sum_{j = i+1}^{n} \kappa_{k}(Z_{i}-Z_{j}) + \sum_{i=2}^{n}  \sum_{j = 1}^{i-1} \kappa_{k}(Z_{i}-Z_{j}) \right) \\
	& - \frac{2}{n^{2}}  \sum_{i=2}^{n} \sum_{j=1}^{i-1} E_{P}[\kappa_{k}(Z_{i}-Z_{j})] + \frac{1}{n^{2}} \sum_{i=1}^{n-1} \sum_{j=i+1}^{n} E_{P}[\kappa_{k}(Z_{i}-Z_{j})]\\
	& - \frac{2}{n^{2}} E_{P}[\kappa_{k}(Z_{1}-Z)]\\
	= & \frac{1}{n^{2}} \sum_{i=1}^{n-1}  \sum_{j = i+1}^{n} \{\kappa_{k}(Z_{i}-Z_{j}) - 2 E_{P}[\kappa_{k}(Z_{i}-Z_{j})]\}\\
	& + \frac{1}{n^{2}} \sum_{i=2}^{n}  \sum_{j = 1}^{i-1} \{\kappa_{k}(Z_{i}-Z_{j}) - 2 E_{P}[\kappa_{k}(Z_{i}-Z_{j})]\}\\
	& + \frac{\kappa_{k}(0)}{n} - \frac{2}{n^{2}} E_{P}[\kappa_{k}(Z_{1}-Z)]\\
	= &  \frac{2}{n^{2}} \sum_{i < j} \{\kappa_{k}(Z_{i}-Z_{j}) - 2 E_{P}[\kappa_{k}(Z_{i}-Z_{j})]\} \\
	&+ \frac{\kappa_{k}(0)}{n} - \frac{2}{n^{2}} E_{P}[\kappa_{k}(Z_{1}-Z)],
	\end{align*}
	where the last line follows by symmetry of $\kappa$ since $\kappa(Z_{i} - Z_{j}) = \kappa(Z_{j} - Z_{i})$ for all $i,j$. 
	
	Since $\langle  \kappa_{k} \star P , P \rangle  = E_{P \cdot P}[\kappa_{k}(Z-Z')] = \frac{1}{n^{2}} \sum_{i,j} E_{P \cdot P}[\kappa_{k}(Z-Z')] $, it follows that 
	\begin{align*}
	&\langle  \kappa_{k} \star P_{n} , P_{n} \rangle - 2\langle  \kappa_{k} \star P , P_{n} \rangle    + \langle  \kappa_{k} \star P , P \rangle  \\
	= &  \frac{2}{n^{2}} \sum_{i < j} \bar{\kappa}_{k}(Z_{i}-Z_{j}) + \frac{\kappa_{k}(0)}{n} - \frac{2}{n^{2}} E_{P}[\kappa_{k}(Z_{1}-Z)] + \frac{1}{n} E[\kappa_{k}(Z-Z')]
	\end{align*}
	where $(z,z,') \mapsto \bar{\kappa}_{h}(z-z') \equiv \kappa_{h}(z-z') -  E_{P}[ \kappa_{h}(z-Z) ] - E_{P}[ \kappa_{h}(z'-Z) ]  + E_{P \cdot P}[ \kappa_{h}(Z-Z') ] $.	
	
	Therefore, 
	\begin{align*}
	\sqrt{E \left[ \left( \eta_{k}(P_{n} , P)    \right)^{2}    \right]}  \leq & 2\sqrt{E \left[ \left(  \frac{1}{n^{2}} \sum_{i < j} \bar{\kappa}_{k}(Z_{i}-Z_{j})  \right)^{2}    \right]} + \frac{\kappa_{k}(0)}{n} + \frac{2}{n^{2}} \sqrt{E \left[ \left(  E[\kappa_{k}(Z_{1}-Z)]  \right)^{2}    \right]}\\
	&  + \frac{1}{n} E[\kappa_{k}(Z-Z')].
	\end{align*}
	We now bound each term on the RHS. First note that 	
	\begin{align*}
	\frac{1}{n} E[\kappa_{h}(Z-Z')] = & \frac{1}{n} \int k \kappa(k(z-z')) p(z) p(z') dz dz' \\
	= & \frac{1}{n} \int \kappa(u) p(z' + u/k) p(z') dz' du \leq n^{-1} ||p||_{L^{\infty}},
	\end{align*}
	and 
	\begin{align*}
	\sqrt{E \left[ \left(  E[\kappa_{k}(Z_{1}-Z)]  \right)^{2}    \right]} \leq & \sqrt{E \left[ (\kappa_{k}(Z'-Z))^{2} \right]} \\
	= & \sqrt{\int  (k \kappa(k(z'-z)))^{2} p(z)p(z')dzdz'} \\
	= & \sqrt{k\int  (\kappa(u))^{2} p(z + u/k)p(u)dzdu} \leq k^{1/2} \sqrt{||p||_{L^{\infty}}} ||\kappa||_{L^{2}}.
	\end{align*}
	where the first line follows by Jensen inequality. Finally, by \cite{GineNickl2008} Sec. 2
	\begin{align*}
	\sqrt{E \left[ \left(  \frac{1}{n^{2}} \sum_{i < j} \bar{\kappa}_{k}(Z_{i}-Z_{j})  \right)^{2}    \right]} \leq \frac{2}{\sqrt{n^{2}}} \sqrt{E[(\bar{\kappa}_{k}(Z-Z'))^{2}]} \leq \frac{2 ||\kappa||_{L^{2}} \sqrt{||p||_{L^{2}}}}{n\sqrt{1/k}}. 
	\end{align*}
\end{proof}

\stopcontents[section2]
\end{document}